\UseRawInputEncoding

\documentclass[a4, 12pt]{amsart}
\usepackage{amssymb,amsthm,amstext,amsmath,amscd,latexsym,amsfonts,amscd}
\usepackage{color, mathrsfs, ulem}
\usepackage[all]{xy}


\textwidth=155mm
\textheight=215mm
\oddsidemargin=0mm
\evensidemargin=0mm
\topmargin=0mm

\def\m{\mathfrak m}
\def\p{\mathfrak p}

\def\Z{\mathbb Z}

\def\C{{\mathscr C}(R)}
\def\K{{\mathscr K}(R)}
\def\Ko{{\mathscr K}}
\def\uK{{\underline{{\mathscr K}(R)}}}
\def\F{{\mathrm{Add}(R)}}
\def\U{{\mathcal U }}

\def\Hom{\mathrm{Hom}}
\def\RHom{\mathrm{RHom}}

\def\Ker{\mathrm{Ker}}
\def\ker{\mathrm{ker}}
\def\Cok{\mathrm{Coker}}

\def\Ext{\mathrm{Ext}}

\def\Mod{\mathrm{Mod}}
\def\mod{\mathrm{mod}}
\def\Ass{\mathrm{Ass}}
\def\depth{\mathrm{depth}}

\def\transpose{\mathrm{Tr}}

\def\proj{\mathrm{proj}}
\def\Supp{\mathrm{Supp}}

\theoremstyle{plain} 
\newtheorem{theorem}{\bfseries Theorem}[section]
\newtheorem{theorem/definition}[theorem]{\textbf Theorem and Definition}
\newtheorem{lemma}[theorem]{\textbf Lemma}
\newtheorem{corollary}[theorem]{\textbf Corollary}
\newtheorem{proposition}[theorem]{\textbf Proposition}

\theoremstyle{definition}
\newtheorem{definition}[theorem]{\textbf Definition}
\newtheorem{definition/corollary}[theorem]{\textbf Definition and Corollary}
\newtheorem{remark}[theorem]{\textbf Remark}
\newtheorem{example}[theorem]{\textbf Example}
\newtheorem{notation}[theorem]{\textbf Notation}

\numberwithin{equation}{section}
\theoremstyle{plain}


\begin{document}
\title[unbounded complexes of projective modules]{Homotopy categories of unbounded complexes of projective modules}
\author{Yuji Yoshino}
\address{Department of Mathematics, Okayama University, Okayama, 700-8530, Japan}
\email{yoshino@math.okayama-u.ac.jp}

\pagestyle{headings}

\maketitle
\renewcommand{\thefootnote}{\fnsymbol{footnote}}
\footnote[0]{2000 {\it Mathematics Subject Classification}. Primary 13D02 ; Secondary 18G35 } 
\renewcommand{\thefootnote}{\arabic{footnote}}

\thispagestyle{empty} 
\begin{abstract}
We develop in this paper the stable theory for projective complexes, 
by which we mean to consider a chain complex of finitely generated projective modules as an object of the factor category of the homotopy category modulo split complexes. 
As a result of the stable theory we are able to prove that any complex of finitely generated projective modules over a generically Gorenstein ring is acyclic if and only if its dual complex is acyclic. 
This shows the dependence of total reflexivity conditions for modules over a generically Gorenstein ring. 
\end{abstract}
\vspace{12pt}
{\bf Contents}
\begin{enumerate}
\item[$1$] Introduction  
\item[$2$] Preliminary observation for complexes 
\item[$3$] *Torsion-free and *reflexive complexes
\item[$4$] Complexes over a generically Gorenstein ring
\item[$5$] Split complexes and $\F$
\item[$6$] The stable category of $\K$
\item[$7$] $\F$-approximations
\item[$8$] Contractions
\item[$9$] Remarks on partial $\F$-resolutions
\item[$10$] Counit morphism for the adjoint pair $(\Sigma^{n}, \Omega^n)$
\item[$11$] The main theorem and the proof 
\item[$12$] Applications
\end{enumerate}
\vspace{12pt}

\section{Introduction}

In this paper we are mainly interested in unbounded cochain complexes consisting of finitely generated projective modules over a commutative Noetherian ring. 
Of most interest to us in the present paper are the properties of complexes that are independent of any additional split summands. 
For this purpose we develop the stable theory for those complexes.  
For the module category such an idea was first proposed and established by Auslander-Bridger \cite{AB} under the name of \lq stable module theory\rq. 
We apply their idea to the homotopy category of complexes of finitely generated projective modules. 

The whole of our stable theory for complexes is devoted to prove the following single theorem.

\begin{theorem} [See Theorem \ref{last main theorem}]\label{1}
Let  $R$  be a commutative Noetherian ring that is a generically Gorenstein ring, and 
$X$ a complex of finitely generated projective  $R$-modules. 
Then, $X$  is acyclic  (i.e.  $H(X)=0$)  if and only if  the $R$-dual $X^*$  is acyclic (i.e. $H(X^*)=0$). 
\end{theorem}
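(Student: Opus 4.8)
\emph{Reduction to one implication.} Every finitely generated projective module is reflexive and the biduality morphisms commute with differentials, so $X\to X^{**}$ is an isomorphism of complexes; applying the asserted equivalence to $X^{*}$ in place of $X$ then yields the reverse implication. Thus it suffices to prove: $H(X)=0$ implies $H(X^{*})=0$. Put $Z^{n}=\Ker(X^{n}\to X^{n+1})$. Acyclicity of $X$ means that for each $n$ the complex $\cdots\to X^{n-2}\to X^{n-1}\to Z^{n}\to 0$ is a free resolution of $Z^{n}$. Dualizing, $X^{*}$ in any bounded window of degrees is a truncation of the $R$-dual of such a resolution, so every $H^{i}(X^{*})$ is, after reindexing, a value of $\Ext^{1}_{R}(Z^{m},R)$ --- higher Ext's collapse to first Ext's by the dimension-shift isomorphisms $\Ext^{i}_{R}(Z^{m},R)\cong\Ext^{1}_{R}(Z^{m-i+1},R)$ ($i\ge 1$) coming from $0\to Z^{m-1}\to X^{m-1}\to Z^{m}\to 0$. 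Hence $H(X^{*})=0$ if and only if $\Ext^{1}_{R}(Z^{n},R)=0$ for all $n$; in other words the theorem asserts that over a generically Gorenstein ring an acyclic complex of finitely generated projectives is totally acyclic.

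\emph{The generic reduction.} If $\p$ is a minimal prime of $R$ then $R_{\p}$ is zero-dimensional Gorenstein, hence self-injective, so $\Hom_{R_{\p}}(-,R_{\p})$ is exact; since $(X^{*})_{\p}=(X_{\p})^{*}$ and $X_{\p}$ is acyclic, $(X^{*})_{\p}$ is acyclic, and therefore $H(X^{*})$ is supported off the minimal primes of $R$. I then induct on $\dim R_{\p}$ to see that $H(X^{*})_{\p}=0$ for every prime $\p$: the case $\dim R_{\p}=0$ is the previous sentence, and otherwise the inductive hypothesis gives $H(X^{*})_{\q}=0$ for all $\q\subsetneq\p$, so that over the (again generically Gorenstein) local ring $R_{\p}$ the complex $X_{\p}$ is an acyclic complex of finitely generated free modules with $H((X_{\p})^{*})$ of finite length. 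Everything is thereby reduced to the following assertion: \emph{if $(R,\m)$ is a generically Gorenstein local ring and $X$ is an acyclic complex of finitely generated free $R$-modules with $H(X^{*})$ of finite length, then $H(X^{*})=0$.}

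\emph{The local heart of the matter.} This is where the apparatus of Sections 5--10 enters. First I would replace $X$ by a contracted representative (Section 8), one with no nonzero contractible direct summand; this affects neither the acyclicity of $X$ nor $H(X^{*})$. Now the differentials of $X$, hence also of $X^{*}$, have all their entries in $\m$, so the finiteness of $H(X^{*})$ is a genuine restriction on this minimal model. If $X\ne 0$ then $X$ is unbounded in both directions --- a bounded acyclic complex of projectives is contractible, hence $0$ after contraction --- every $Z^{n}$ is nonzero, and each $Z^{n}$ is reflexive, being the $R$-dual of a cokernel module of $X^{*}$, and is an infinite syzygy. I would then feed this into the $\F$-approximation theory of Section 7 and, above all, the counit morphism for the adjoint pair $(\Sigma^{n},\Omega^{n})$ of Section 10: this counit detects precisely the discrepancy between $X$ and the complex reconstituted from the modules $Z^{n}$, and its behaviour is governed by $\Ext^{1}_{R}(Z^{n},R)$. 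The aim is to show that a nonzero (finite-length) value of some $\Ext^{1}_{R}(Z^{n},R)$ would, through this counit analysis together with the collapse $\Ext^{i}_{R}(Z^{n},R)\cong\Ext^{1}_{R}(Z^{n-i+1},R)$, force the minimal complex $X$ to split off a nonzero free summand, contradicting minimality --- or else be incompatible with the finite-length bound. Either way $\Ext^{1}_{R}(Z^{n},R)=0$ for all $n$, that is, $H(X^{*})=0$.

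\emph{Where the difficulty lies.} The real obstacle, and the reason the earlier sections are organized as they are, is the two-sided unboundedness of $X$: there is no extremal homology on which to run the usual d\'evissage, and an acyclic complex of projectives need not be contractible, so one cannot argue one degree at a time. The stable category $\uK$ of Section 6, the $\F$-approximation and contraction machinery, and especially the counit of $(\Sigma^{n},\Omega^{n})$ are exactly what substitute for this missing induction, propagating the torsion and finite-length information uniformly along the whole complex. I expect the most delicate step to be controlling this counit over a \emph{non-Cohen--Macaulay} generically Gorenstein local ring, where depth is of no help and one must rely on the structural results about *torsion-free and *reflexive complexes from Sections 3 and 4.
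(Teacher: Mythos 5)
Your reductions are fine as far as they go: the duality swap of the two implications, the restatement of the claim as ``every acyclic complex of finitely generated projectives over a generically Gorenstein ring is totally acyclic'' via $H(X^*)=0\Leftrightarrow \Ext^1_R(Z^n,R)=0$ for all $n$, and the localization/Noetherian induction that reduces everything to a generically Gorenstein local ring with $H(X^*)$ of finite length are all correct (the paper reduces differently --- to a local ring with $\m H(X)=0$ by tensoring with Koszul complexes --- but that difference is immaterial). The problem is that these reductions are the easy part, and the proposal stops exactly where the theorem begins. Your ``local heart of the matter'' is a statement of intent (``I would then feed this into\dots'', ``The aim is to show\dots'') rather than an argument: you never prove that finite length of $H(X^*)$, acyclicity of $X$ and generic Gorensteinness force the $\Ext^1_R(Z^n,R)$ to vanish, and no mechanism is given by which the counit of $(\Sigma^n,\Omega^n)$ would ``force $X$ to split off a free summand.'' In the paper this step is the entire content of Sections 8--11: one needs that $\Delta^{(n,0)}(X)$ (the cone of the counit $\pi_X^{(n,0)}$) admits a finite, generically split $\F$-resolution (Theorem \ref{Delta has finite res}), that $\Hom_{\K}(X,Y)=0$ whenever $H(X^*)=0$ and $Y$ has a finite $\F$-resolution (Lemma \ref{perp}), and the rigidity statement Theorem \ref{S^{-1}f=0}; these combine to split the counit triangle and prove that all syzygies $\Omega^rX$ are *torsion-free and then *reflexive, which yields $\Ext_R^r(H(X),R)=0$ and finally the contradiction $\Ext_R^r(k,R)=0$ forcing $\dim R=0$. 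None of this is reconstructed, or even sketched at the level of which lemma does what.

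Two further points. First, there is a directional mismatch: the paper's machinery is built for the hypothesis $H(X^*)=0$ (it studies morphisms from $X$ into $\F$-resolvable complexes and syzygies of $X$), whereas you want to run it under $H(X)=0$ to control $H(X^*)$; that requires either dualizing the whole apparatus or applying it to $X^*$, and you do not say how. Second, the parenthetical claim that each $Z^n$ is reflexive ``being the $R$-dual of a cokernel module of $X^*$'' is not valid in this generality: duals of finitely generated modules are torsionless but need not be reflexive, and the paper only obtains reflexivity of duals under the stronger hypothesis that $R$ is Gorenstein in depth one (Lemma \ref{lemma ref}), which generic Gorensteinness does not provide. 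So as it stands the proposal has a genuine gap at the core of the proof.
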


Recall that a commutative Noetherian ring is called a generically Gorenstein ring if the total ring of quotients is a Gorenstein ring, or equivalently $R_{\p}$  is a Gorenstein ring for every associated prime $\p \in \Ass (R)$. 
As a matter of fact,  every  Noetherian integral domain, a little more generally, every reduced Noetherian ring, is a generically Gorenstein ring. 
A similar theorem to Theorem \ref{1}, but in a more special setting, was considered in \cite[Corollary 1.4]{Y2}.  

Analogously to the stable module theory of Auslander and Bridger, we will introduce the parallel notion of torsion-freeness and reflexivity for complexes,   which we call *torsion-free complexes and *reflexive complexes in this paper (Definition \ref{deftorfree}). 
We observe in Theorem \ref{exact} that there is an exact sequence similar to the Auslander-Bridger sequence. 
If the base ring $R$ is a generically Gorenstein ring, then as we shall show in Theorem \ref{theorem torsion-free}, a complex $X$ is *torsion-free if and only if  the cohomology modules  $H^i(X^*) \ (i \in \Z)$ are torsion-free as $R$-modules.    

A crucial point for the proof of  Theorem \ref{1} is how one can relate a generic condition of the ring  such as the generic Gorenstein condition with the *torsion-free or the *reflexive property for complexes. 
This will be accomplished  by considering a factor category of the homotopy category. 
To be more precise, let  $\K$  be the homotopy category of all complexes of finitely generated projective modules over a commutative Noetherian ring  $R$, and let  $\F$  be its additive full subcategory consisting of all split complexes. 
See Definition \ref{smallest additive sub} and Theorem \ref{characterization of F} for further details. 
We show in Lemmas \ref{right approximation} and \ref{left approximation} that $\F$  is functorially finite in $\K$ and hence every complex in $\K$  can be resolved by complexes in $\F$.

We define  $\uK$  to be the factor category $\K / \F$ and call it the stable category. 
Then we are able to define the syzygy functor $\Omega$ and the cosyzygy functor $\Sigma$  on $\uK$, and as a result we have an adjoint pair $(\Sigma, \Omega)$ of functors (Theorem \ref{adjoint}). 
Thus there is a natural counit morphism  
$\pi _X ^n : \Sigma ^{n}\Omega ^n X \to X$ for any positive integer  $n$  and for any complex  $X$. 
In terms of syzygy functors,  we can characterize the *torsion-free property for X  as the counit morphism $\pi_X^1$  is an isomorphism in  $\uK$ (Theorem \ref{torsion-free2}).  
We develop in \S 8 some new idea to construct complexes by successive use of mapping cone constructions, which we shall call the contraction. 
See  Theorem and Definition \ref{theorem/definition}. 

Now taking the mapping cones of the counit morphisms in $\K$, we have triangles of the form 
$$
\xymatrix{\Delta ^{(n, 0)} (X) \ar[r] & \Sigma ^{n}\Omega ^n (X) \ar[r]^(0.65){\pi_X^n}  & X \ar[r] & \Delta ^{(n, 0)}  (X)[1], } 
$$
by which we define the complexes  $\Delta^{(n, 0)} (X)$  for  $X$. 
Then we shall show that  all such complexes  $\Delta^{(n, 0)} (X)$  have a finite $\F$-resolution of length at most  $n-1$. 
See Theorem \ref{Delta has finite res} for more precise statement, which is one of the key results in order to prove Theorem \ref{1}. 
After observing these facts, we prove in {Lemma} \ref{main lemma to prove theorem} that any syzygy complex  $\Omega ^r X$  is *torsion-free if  $H(X^*)=0$. 
This is the second key result to prove Theorem \ref{1}. 
As a consequence of this theorem, we are eventually able to prove the main theorem \ref{1} in \S 11. 
 
 \vspace{12pt}

The following are the corollaries that are proved straightforwardly from Theorem \ref{1}, and each one is proved in the last section of this paper.

\begin{corollary}\label{2}
Assume that the ring  $R$ is a generically Gorenstein ring. 
Let  $f : X \to Y$  be a chain homomorphism between complexes of finitely generated projective modules over $R$. 
Then, $f$  is a quasi-isomorphism if and only if  the $R$-dual  $f^* : Y^* \to X^*$  is  a quasi-isomorphism. 
\end{corollary}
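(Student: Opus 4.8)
The plan is to reduce the corollary to the main theorem (Theorem~\ref{1}) by passing to mapping cones. Write $\mathrm{C}(f)$ for the mapping cone of $f\colon X\to Y$, so that $\mathrm{C}(f)^{n}=Y^{n}\oplus X^{n+1}$ with the usual differential. Two elementary facts are needed at the outset. First, since $X$ and $Y$ consist of finitely generated projective modules, so does $\mathrm{C}(f)$, hence $\mathrm{C}(f)$ is again an object of $\K$ to which Theorem~\ref{1} applies. Second, from the long exact cohomology sequence associated with the triangle $X\xrightarrow{f} Y\to \mathrm{C}(f)\to X[1]$ one sees that $f$ is a quasi-isomorphism if and only if $\mathrm{C}(f)$ is acyclic; the same remark applied to $f^{*}\colon Y^{*}\to X^{*}$ (whose terms are again finitely generated projective) shows that $f^{*}$ is a quasi-isomorphism if and only if $\mathrm{C}(f^{*})$ is acyclic.

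The heart of the argument is then the identification of $\mathrm{C}(f)^{*}$ with a shift of $\mathrm{C}(f^{*})$. Computing degreewise, $\bigl(\mathrm{C}(f)^{*}\bigr)^{n}=\Hom_{R}(\mathrm{C}(f)^{-n},R)=(Y^{*})^{n}\oplus (X^{*})^{n-1}$, which is precisely the degree-$n$ term of $\mathrm{C}(f^{*})[-1]$; equivalently, dualizing the triangle above and rotating gives $\mathrm{C}(f)^{*}\cong \mathrm{C}(f^{*})[-1]$ in $\K$. Since a shift does not affect acyclicity, $\mathrm{C}(f)^{*}$ is acyclic if and only if $\mathrm{C}(f^{*})$ is. Combining this with Theorem~\ref{1} applied to $\mathrm{C}(f)$, we obtain the chain of equivalences: $f$ is a quasi-isomorphism $\Longleftrightarrow$ $\mathrm{C}(f)$ is acyclic $\Longleftrightarrow$ $\mathrm{C}(f)^{*}$ is acyclic $\Longleftrightarrow$ $\mathrm{C}(f^{*})$ is acyclic $\Longleftrightarrow$ $f^{*}$ is a quasi-isomorphism, which is exactly the assertion.

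The argument is formal once Theorem~\ref{1} is available, and I expect the only delicate point to be the compatibility of the canonical degreewise identification $\mathrm{C}(f)^{*}\cong \mathrm{C}(f^{*})[-1]$ with the differentials — that is, tracking the Koszul signs built into the definitions of the mapping cone, the shift, and the $R$-dual of a complex. I would handle this either by recording the isomorphism $\mathrm{C}(f)^{*}\cong \mathrm{C}(f^{*})[-1]$ as a short preliminary lemma or by checking it directly on the $2\times 2$ block matrices of differentials; no further input beyond Theorem~\ref{1} is required.
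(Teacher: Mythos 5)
Your argument is correct and is essentially the paper's own proof: the paper also embeds $f$ in a triangle $X\to Y\to Z\to X[1]$, observes that $f$ (resp.\ $f^*$) is a quasi-isomorphism iff $H(Z)=0$ (resp.\ $H(Z^*)=0$, using the dual triangle $Z^*[-1]\to Y^*\to X^*\to Z^*$, which is exactly your identification of $\mathrm{C}(f)^*$ with a shift of $\mathrm{C}(f^*)$), and then applies Theorem \ref{last main theorem}. The sign bookkeeping you flag is handled in the paper simply by noting that $(-)^*$ is a triangle functor, so no separate lemma is needed.
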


\begin{corollary}\label{3}
Assume that the ring  $R$ is a generically Gorenstein ring. 
Let  $M$ be a finitely generated $R$-module. 
Then the following conditions are equivalent: 
\begin{enumerate}
\item
$M$ is a totally reflexive $R$-module. \medskip
\item
$\Ext_R^i (M, R) =0$  for all $i >0$. \medskip
\item
$M$ is an infinite syzygy, i.e. there is an exact sequence of infinite length of the form 
$
\xymatrix@C=24pt{0 \ar[r] & M \ar[r]& P_0 \ar[r]& P_1 \ar[r]& P_2 \ar[r]& \cdots,} 
$
where  each $P_i$  is a finitely generated projective $R$-module. 
\end{enumerate}
\end{corollary}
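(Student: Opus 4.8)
The plan is to deduce this from Theorem \ref{1} (equivalently Corollary \ref{2}) together with the definition of total reflexivity. Recall that $M$ is totally reflexive precisely when it admits a complete resolution: a complete resolution is an acyclic complex $P$ of finitely generated projective modules whose $R$-dual $P^*$ is again acyclic, together with an identification of $M$ with a cokernel of one of its differentials (so that the truncation $P^{\geq 0}$ is a projective resolution of $M$). The implication (1)$\Rightarrow$(2) is classical and needs no hypothesis on $R$: if $P$ is a complete resolution of $M$, then $\Ext_R^i(M,R)$ is computed by the cohomology of $P^*$ in the relevant degrees, which vanishes. Similarly (1)$\Rightarrow$(3) is immediate, since the part of the complete resolution in nonnegative cohomological degrees gives the required infinite coresolution of $M$ by projectives, with $M = \ker(P^0 \to P^1)$.

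The substantive content is the reverse implications, and here Theorem \ref{1} does the work. Suppose (3) holds, so that we have an exact sequence
\[
0 \to M \to P^0 \to P^1 \to P^2 \to \cdots
\]
with all $P^i$ finitely generated projective. Choose a projective resolution $\cdots \to P_1 \to P_0 \to M \to 0$ and splice it with the given coresolution to obtain a complex $P$ of finitely generated projectives,
\[
\cdots \to P_1 \to P_0 \to P^0 \to P^1 \to \cdots,
\]
which is acyclic by construction (exactness at the splice point $P_0 \to P^0$ holds because both halves have image/kernel equal to $M$). By Theorem \ref{1}, since $P$ is acyclic and consists of finitely generated projectives over the generically Gorenstein ring $R$, the dual complex $P^*$ is also acyclic. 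But $P^*$ being acyclic says exactly that the cohomology of $\Hom_R(P,R)$ vanishes in every degree; reading this off against the resolution $P_\bullet \to M$ gives $\Ext_R^i(M,R) = 0$ for all $i>0$, which is (2), and moreover it exhibits $P$ as a complete resolution of $M$, giving (1). Finally, to close the cycle one argues (2)$\Rightarrow$(1): assuming $\Ext_R^i(M,R)=0$ for all $i>0$, take a projective resolution $P_\bullet \to M$ by finitely generated projectives; then $\Hom_R(P_\bullet, R)$ is a complex of finitely generated projectives with $H^i = \Ext^i_R(M,R) = 0$ for $i > 0$, so after splicing with a projective resolution of the module $\Cok(P_0^* \to P_1^*) = \transpose M$ (the Auslander transpose) we obtain an acyclic complex whose dual is, by Theorem \ref{1} again, acyclic — yielding a complete resolution of $M$, hence total reflexivity. (Alternatively, $M^* $ has finite projective dimension data controlled by $P^*$, and one checks directly that the biduality map $M \to M^{**}$ is an isomorphism and $\Ext^i_R(M^*,R)=0$ for $i>0$ using the vanishing of $\Ext^i_R(M,R)$ and the acyclicity statement of Theorem \ref{1}.)

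The main obstacle is purely bookkeeping rather than conceptual: one must be careful that the splicing operations produce a genuinely \emph{acyclic} complex of finitely generated projectives — in particular that exactness holds at the junction degree — so that Theorem \ref{1} applies, and one must correctly match the cohomology of the dualized complex with the $\Ext$ modules and with the defining exact sequences of total reflexivity. No further input beyond Theorem \ref{1} and the standard equivalent descriptions of totally reflexive modules is needed; the point of the corollary is precisely that Theorem \ref{1} removes the a priori asymmetry between "the resolution is acyclic" and "its dual is acyclic" in the definition of a complete resolution.
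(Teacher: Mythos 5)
Your proposal is correct and follows essentially the same route as the paper: splice a projective resolution with the dualized data to get an acyclic complex of finitely generated projectives, apply Theorem \ref{1} to conclude that its dual is acyclic, and read off the reflexivity and $\Ext$-vanishing conditions; the paper's $(3)\Rightarrow(2)$ is exactly your splice argument, and its $(2)\Rightarrow(1)$ is your construction carried out with a projective resolution of $M^*$. The only imprecision is that in your $(2)\Rightarrow(1)$ the module one must resolve in order to extend $\Hom_R(P_\bullet,R)$ to the left is the kernel $M^*=\Ker(P_0^*\to P_1^*)$ rather than the cokernel $\transpose M$ (equivalently, take a resolution of $\transpose M$ continued past the presentation $P_0^*\to P_1^*$, whose tail is a resolution of $M^*$), after which exactness at $P_1^*$ uses $\Ext_R^1(M,R)=0$; with that reading your argument coincides with the paper's proof.
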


\begin{corollary}
Under the assumption that $R$ is a generically Gorenstein ring, we have the equality of G-dimension; 
$$
\mathrm{G\text{-}dim} _R M = \sup \{ n \in \Z \ | \ \Ext _R^n (M, R) \not= 0 \ \}, 
$$ 
for a finitely generated $R$-module $M$. 
\end{corollary}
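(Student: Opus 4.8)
The plan is to prove the two inequalities between $\mathrm{G\text{-}dim}_R M$ and $g := \sup\{\, n \in \Z \mid \Ext_R^n(M,R) \neq 0 \,\}$ separately; we may assume $M\neq 0$, the zero module being trivial, and then the defining set for $g$ is non-empty, for otherwise $M^{*}=0$ and $\Ext_R^{i}(M,R)=0$ for all $i>0$, so that $M$ would be totally reflexive by Corollary~\ref{3} and hence $M\cong M^{**}=0$, a contradiction. The inequality $g\le \mathrm{G\text{-}dim}_R M$ is the classical half and requires no hypothesis on $R$: assuming $d:=\mathrm{G\text{-}dim}_R M<\infty$, pick a resolution $0\to G_d\to\cdots\to G_0\to M\to 0$ by totally reflexive modules; since $\Ext_R^{j}(G_i,R)=0$ for all $j>0$, an induction on $d$ using the short exact sequences extracted from this resolution (exactly as in the computation of projective dimension) gives $\Ext_R^{n}(M,R)=0$ for every $n>d$, whence $g\le d$.

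For the reverse inequality $\mathrm{G\text{-}dim}_R M\le g$ --- the step that genuinely uses Theorem~\ref{1} --- we may assume $g<\infty$. Choose a resolution $\cdots\to P_1\to P_0\to M\to 0$ by finitely generated projective modules and let $K$ be its $g$-th syzygy, so that $0\to K\to P_{g-1}\to\cdots\to P_0\to M\to 0$ is exact (with $K=M$ when $g=0$). Dimension shifting along the $P_i$ yields $\Ext_R^{i}(K,R)\cong \Ext_R^{i+g}(M,R)$ for all $i\ge 1$, and the right-hand side vanishes because $i+g>g$; thus $\Ext_R^{i}(K,R)=0$ for all $i>0$. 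Now Corollary~\ref{3}, in the implication $(2)\Rightarrow(1)$ applied to $K$, shows that $K$ is totally reflexive. Therefore $0\to K\to P_{g-1}\to\cdots\to P_0\to M\to 0$ is a resolution of $M$ by totally reflexive modules of length $g$, so $\mathrm{G\text{-}dim}_R M\le g$.

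Combining the two inequalities gives the asserted equality; the case $\mathrm{G\text{-}dim}_R M=\infty$ is automatically included, since by the first step a finite value of $g$ forces $\mathrm{G\text{-}dim}_R M$ to be finite. I expect all of this to be routine homological bookkeeping except for one point: the assertion that vanishing of $\Ext_R^{n}(M,R)$ for $n>g$ forces the $g$-th syzygy of $M$ to be totally reflexive, over a ring that need not be Gorenstein. This is exactly what Corollary~\ref{3} --- and behind it the main theorem --- supplies, so the real work has already been done and no further obstacle remains.
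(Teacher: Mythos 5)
Your proposal is correct and follows essentially the same route as the paper: the easy inequality $g\le \mathrm{G\text{-}dim}_RM$ is standard, and the reverse inequality is obtained by passing to the $g$-th syzygy in a projective resolution and invoking the implication $(2)\Rightarrow(1)$ of Corollary \ref{3} to see that this syzygy is totally reflexive. Your extra care with the degenerate cases ($M=0$, empty sup, infinite values) is harmless and consistent with the paper's argument.
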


Jorgensen and \c{S}ega \cite{JS} gave an example of a module over a non-Gorenstein Artinian ring for which the implication $(2) \Rightarrow (1)$ in Corollary \ref{3} fails, hence the generic Gorensteinness assumption in the theorem is indispensable.

The following is a commutative version of Tachikawa conjecture that is also a consequence of Theorem \ref{1}. 
It should be noted that this has been proved by Avramov, Buchweitz and \c{S}ega \cite{ABS}. 
 
\begin{corollary}\label{4}
Let  $R$  be a Cohen-Macaulay ring with canonical module $\omega$. 
Furthermore assume that  $R$ is a generically Gorenstein ring. 
If  $\Ext _R ^i (\omega ,  R) =0$  for all $i >0$, then $R$ is Gorenstein. 
\end{corollary}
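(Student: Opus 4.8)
The plan is to derive the statement from Corollary \ref{3} by a classical homological argument, so that the generic Gorenstein hypothesis enters only through the implication $(2)\Rightarrow(3)$ of that corollary; everything else is valid over an arbitrary Cohen--Macaulay local ring with a canonical module, and we may therefore assume $R$ is local (the canonical module, the $\Ext$-vanishing and generic Gorensteinness all pass to $R_\m$). By Corollary \ref{3}, the hypothesis $\Ext_R^i(\omega,R)=0$ $(i>0)$ forces $\omega$ to be an infinite syzygy, so there is an exact sequence $0\to\omega\xrightarrow{\ \iota\ }P_0\to P_1\to\cdots$ with the $P_j$ finitely generated projective. Splicing it with a projective resolution $\cdots\to Q_1\xrightarrow{\ d_1\ }Q_0\xrightarrow{\ \pi\ }\omega\to 0$, using $d_0:=\iota\circ\pi$ as the map $Q_0\to P_0$, produces an acyclic complex
$$
T:\qquad \cdots\longrightarrow Q_1\xrightarrow{\ d_1\ }Q_0\xrightarrow{\ d_0\ }P_0\longrightarrow P_1\longrightarrow\cdots
$$
of finitely generated projective $R$-modules with $\cok(d_1)\cong\omega$.

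The key step is to prove that $\Hom_R(T,\omega)$ is acyclic. Here one uses that $\omega$, being the canonical module of the Cohen--Macaulay local ring $R$, has finite injective dimension. I would isolate the general fact: \emph{if $N$ is a module of finite injective dimension and $T$ is any acyclic complex of projective modules, then $\Hom_R(T,N)$ is acyclic}, and prove it by induction on $\mathrm{id}_R N$. The case $\mathrm{id}_R N=0$ is the exactness of $\Hom_R(-,N)$. For the inductive step pick a short exact sequence $0\to N\to I\to N'\to 0$ with $I$ injective and $\mathrm{id}_R N'=\mathrm{id}_R N-1$; applying $\Hom_R(T_i,-)$ term by term and using that each $T_i$ is projective (so that $\Ext_R^1(T_i,N)=0$) yields a short exact sequence of complexes $0\to\Hom_R(T,N)\to\Hom_R(T,I)\to\Hom_R(T,N')\to 0$, and the long exact cohomology sequence, together with the acyclicity of $\Hom_R(T,I)$ and the inductive hypothesis applied to $N'$, gives the claim.

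It remains to read off the conclusion from the exactness of $\Hom_R(T,\omega)$ at the term $\Hom_R(Q_0,\omega)$. In
$$
\Hom_R(P_0,\omega)\xrightarrow{\ d_0^{\ *}\ }\Hom_R(Q_0,\omega)\xrightarrow{\ d_1^{\ *}\ }\Hom_R(Q_1,\omega)
$$
the element $\pi$ satisfies $d_1^{\ *}(\pi)=\pi\circ d_1=0$ because $\mathrm{im}\,d_1=\ker\pi$, hence $\pi=d_0^{\ *}(g)=g\circ d_0=(g\circ\iota)\circ\pi$ for some $g:P_0\to\omega$; since $\pi$ is surjective, $g\circ\iota=\mathrm{id}_\omega$. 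Thus $\iota$ is a split monomorphism, $\omega$ is a direct summand of the projective module $P_0$, hence projective, hence free because it is finitely generated over the local ring $R$. A free canonical module of a Cohen--Macaulay local ring has rank one (localize at a minimal prime $\p$, where $R_\p$ is Artinian and $\omega_\p$, its canonical module, has the same length as $R_\p$), so $\omega\cong R$, which means $R$ is Gorenstein.

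The real obstacle is the key step: transferring the finiteness of $\mathrm{id}_R\omega$ onto the doubly infinite acyclic complex $T$ so as to force $\Hom_R(T,\omega)$ to be acyclic. Once that is done the splitting of $\iota$ is immediate. I emphasise that Theorem \ref{1} is used here only through Corollary \ref{3} --- that is the sole place the generic Gorenstein assumption is needed --- while the remainder of the argument works over any Cohen--Macaulay local ring with canonical module.
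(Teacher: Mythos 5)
Your proof is correct, and it shares the paper's overall strategy: both arguments feed the hypothesis $\Ext_R^i(\omega,R)=0$ $(i>0)$ into Corollary \ref{3} to obtain an exact sequence $0\to\omega\xrightarrow{\iota}P_0\to P_1\to\cdots$ of finitely generated projectives, and then show that $\iota$ splits, so that $\omega$ is projective and $R$ is Gorenstein. Where you differ is in the mechanism producing the splitting. The paper sets $M=\Ker(P_1\to P_2)$, notes that $M$ is maximal Cohen--Macaulay because it sits in the infinite exact sequence $0\to M\to P_1\to P_2\to\cdots$, and quotes duality over a Cohen--Macaulay ring with canonical module to get $\Ext_R^1(M,\omega)=0$, which splits $0\to\omega\to P_0\to M\to 0$ immediately. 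You instead splice the coresolution with a projective resolution of $\omega$ into a doubly infinite acyclic complex $T$ and prove, by induction on $\mathrm{id}_R\,\omega<\infty$, that $\Hom_R(T,\omega)$ is acyclic; exactness at $\Hom_R(Q_0,\omega)$ then yields the retraction $g\iota=\mathrm{id}_\omega$. The two routes are close cousins: your exactness statement at that degree amounts to the surjectivity of $\Hom_R(P_0,\omega)\to\Hom_R(\omega,\omega)$, which is precisely what $\Ext_R^1(M,\omega)=0$ provides in the paper. What your version buys is self-containedness --- it needs only the finiteness of the injective dimension of the canonical module and an elementary induction, rather than the MCM-vanishing/local duality package --- at the cost of a slightly longer argument; the localization to the local case and the concluding rank-one step (which the paper leaves implicit in ``it is enough to show that $\omega$ is projective'') are also handled correctly. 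In both proofs the generic Gorenstein hypothesis, and hence Theorem \ref{1}, enters only through Corollary \ref{3}.
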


\begin{corollary}\label{5}
Assume that the ring  $R$ is a generically Gorenstein ring. 
Let  $X$  be a complex of finitely generated projective modules. 

{
\begin{enumerate}
\item 
If $H(X)$ is bounded above, i.e. $X  \in D ^- (R)$, then there is an isomorphism  $X^* \cong \RHom _R(X, R)$ in the derived category $D(R)$
\item 
If  $H(X)$ and $H(X^*)$ are bounded above, i.e. $X, X^* \in D ^- (R)$, 
then we have the isomorphism in the derived category: 
$$
X \cong \RHom _R (\RHom _R (X, R), R). 
$$
\end{enumerate}}
\end{corollary}

\begin{corollary}\label{6}
Assume that the ring  $R$ is a generically Gorenstein ring. 
Let  $X$  be a complex of finitely generated projective modules. 
If all the cohomology modules $H^i(X) \ (i \in \Z)$ have dimension at most $\ell$  as $R$-modules,  
then so are  the modules  $H^i (X^*) \ (i \in \Z)$.  
In particular,  $X$  has cohomology modules of finite length if and only if  so does $X^*$. 
\end{corollary}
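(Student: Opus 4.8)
The plan is to reduce the statement to Theorem \ref{1} by a localization argument. First I would record two preparatory facts. (i) If $R$ is generically Gorenstein, then so is $R_\p$ for every $\p\in\Spec R$: indeed $\Ass(R_\p)=\{\q R_\p \mid \q\in\Ass(R),\ \q\subseteq\p\}$, and for such $\q$ one has $(R_\p)_{\q R_\p}\cong R_\q$, which is Gorenstein because $\q\in\Ass(R)$. (ii) For a complex $X$ of finitely generated projective $R$-modules and any $\p\in\Spec R$, the complex $X_\p$ again consists of finitely generated projective modules, now over $R_\p$, and there are natural identifications $H^i(X)_\p\cong H^i(X_\p)$ and $(X^*)_\p\cong(X_\p)^*$, since localization is exact and commutes with $\Hom$ out of finitely presented modules.

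Next I would observe that each cohomology module $H^i(X^*)$ is finitely generated, being a subquotient of the finitely generated projective module $(X^{*})^{i}$; hence $\Supp H^i(X^*)$ is a closed subset and $\dim H^i(X^*)=\sup\{\dim R/\p \mid \p\in\Supp H^i(X^*)\}$. Consequently, in order to prove that $\dim H^i(X^*)\le\ell$ for all $i$, it suffices to show the following: for every prime $\p$ with $\dim R/\p>\ell$ one has $H^i(X^*)_\p=0$ for all $i$, i.e. $(X^*)_\p$ is acyclic.

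So I would fix such a prime $\p$. By hypothesis $\dim H^i(X)\le\ell$ for all $i$, hence $\p\notin\Supp H^i(X)$, that is $H^i(X)_\p=0$; by (ii) this says $H^i(X_\p)=0$ for all $i$, i.e. $X_\p$ is acyclic. Now apply Theorem \ref{1} over the ring $R_\p$, which is Noetherian and, by (i), generically Gorenstein, to the complex $X_\p$ of finitely generated projective $R_\p$-modules: it follows that $(X_\p)^*$ is acyclic, and by (ii) again $(X^*)_\p\cong(X_\p)^*$ is acyclic. Thus $H^i(X^*)_\p=0$ for all $i$, which completes the proof that $\dim H^i(X^*)\le\ell$ for all $i$.

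Finally, the ``in particular'' clause is the special case $\ell=0$, since a finitely generated module over a Noetherian ring has finite length precisely when its dimension is at most $0$: the implication just proved gives one direction, and applying it with $X^{*}$ in place of $X$ gives the other, because $X^{**}\cong X$ as complexes (each $P\to P^{**}$ being an isomorphism for $P$ finitely generated projective), so $H^i(X^{**})\cong H^i(X)$. I do not expect a serious obstacle here: the whole content is carried by Theorem \ref{1}, and the only point requiring care is the routine verification in (i) and (ii) that Noetherianness, generic Gorensteinness, and the finiteness and projectivity of the terms are all inherited by $R_\p$ and $X_\p$.
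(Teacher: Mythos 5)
Your proposal is correct and follows essentially the same route as the paper: localize at a prime $\p$ with $\dim R/\p>\ell$, note that $R_{\p}$ is again generically Gorenstein and $X_{\p}$ is acyclic, and invoke the main theorem over $R_{\p}$ to conclude $(X^{*})_{\p}$ is acyclic. The only difference is that you spell out the routine verifications (behaviour of $\Ass$ under localization, compatibility of $(-)^{*}$ with localization, and the $\ell=0$ case via $X^{**}\cong X$) that the paper leaves implicit.
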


\vspace{6pt}
\noindent
{\bf Acknowledgement}: 
The author is grateful to Dr.~Shinya Kumashiro for pointing out several errors in the initial version of the paper and giving him a lot of valuable comments.  
The author also thanks Osamu Iyama, Ryo Takahashi, Luchezar Avramov, Srikanth Iyengar, Henning Krause and Bernhard Keller for their useful comments. 
{Finally he thanks the anonymous referee for his careful reading of the manuscript and his many insightful suggestions.} 

This research was supported by JSPS Grant-in-Aid for Scientific Research 19K0344801.

\vspace{6pt}
\section{Preliminary observation for  complexes} 

Throughout this paper we assume that  $R$ is a commutative Noetherian ring. 
We denote by $\mod (R)$ the abelian category of finitely generated $R$-modules and $R$-module homomorphisms. 
Furthermore we denote  by $\proj (R)$  the additive subcategory of  $\mod (R)$,  which consists of all finitely generated projective $R$-modules.

We denote by $\C = C (\proj (R))$  the additive category of complexes over  $\proj (R)$ and chain homomorphisms.  
We also denote by $\K = K (\proj (R))$ the homotopy category consisting of complexes over $\proj (R)$. 
Note by recalling the definition that  objects of  $\C$  and $\K$  are complexes consisting of finitely generated projective modules, which we denote cohomologically such as 
$$
X= \left[ 
\xymatrix{ 
\cdots \ar[r] & X ^{i-1} \ar[r]^(0.6){d_X^{i-1}}  & X^i \ar[r]^(0.4){d_X^i} &  X^{i+1} \ar[r] & \cdots \\
}\right],  
$$
where  each $X^i$ belongs to  $\proj (R)$. 
All cohomology modules $H^i(X) \ (i \in \Z)$ are necessarily finitely generated $R$-modules for  $X \in \C$.  
A morphism  $X \to Y$ in $\C$ is a cochain homomorphism, 
while a morphism $X \to Y$ in $\K$  is a homotopy equivalence class of a chain homomorphism from $X$ to $Y$, i.e.
$$
\Hom  _{\K} (X, Y) = \Hom _{\C} (X, Y)  / \{\text{chain homotopy}\}. 
$$
Both of $\Hom  _{\C} (X, Y)$ and $\Hom _{\K} (X, Y)$ have natural  $R$-module structures. 
However they are not necessarily finitely generated $R$-modules in general.  
\par\noindent
Eg. Consider the endomorphisms of the complex 
$\left[ \xymatrix{ \cdots \ar[r]& R \ar[r]^0 &R \ar[r]^0 &R \ar[r]&\cdots} \right]$. 

Note that a complex $X \in \C$ is the zero object as an object of  $\K$  if and only if it is a split exact sequence as a long exact sequence, which is called a null complex. 
(It is also known as a contractible complex.) 
Every complex $X \in \C$  has a direct sum decomposition in  $\C$ such as  $X = X' \oplus N$,  where  $N$  is a null complex and  $X'$  contains no null complex as a direct summand.\footnote{In general, let $ f: P \to Q$  be an $R$-homomorphism between finitely generated projective $R$-modules. 
Then there are direct decompositions $P = P' \oplus P''$, $Q =Q' \oplus Q''$ under which $f$ is described as  $\begin{pmatrix} f'& 0 \\ 0 & f'' \end{pmatrix}$ where  $f'': P'' \to Q''$  is an isomorphism, and $f' : P' \to Q'$ does not contain any direct summands that induces isomorphism. 
This is possible since $P$ and $Q$ are finitely generated. 
If we are given a complex $X= [\xymatrix{ \cdots \ar[r] & X^n \ar[r]^{d^n_X} &X^{n+1} \ar[r] & \cdots}]$  in $\C$, applying this argument to each ${d^n_X} $ we have desired decomposition. }
We should note that such a decomposition is not unique in general.

It is clear and well-known that a chain homomorphism $f$ in $\C$  factors through a null complex if and only if  $f$ is null homotopic. 
Therefore the category  $\K$  is a residue category of $\C$ by the ideal generated by the object set consisting of all null complexes. 
It is easy to verify that  $\C$ is a Frobenius category with null complexes as relatively projective and injective objects. 
In such a sense $\K$ has a structure of triangulated category. 
Recall that the shift functor $X \mapsto X[1]$  is defined as 
$X[1]^n= X^{n+1}$ {and} $d_{X[1]}^n = - d_X^{n+1}$. 
Furthermore there is a triangle  $X \to Y \to Z \to X[1]$  in $\K$  if and only if there is an exact sequence in $\C$ of the form; 
$$
\xymatrix@C=32pt{ 
0 \ar[r] & X \ar[r]  & Y \oplus N \ar[r] & Z \ar[r] & 0, }
$$
where $N$ is a null complex. 
One can find such description of triangles in Happel \cite[Chapter 1]{H}. 
The more general references for complexes and triangulated categories are Weibel \cite{W} and {Gelfand-Manin }\cite{MG}.

A remarkable advantage of  $\K$  is that it possesses a duality. 
For $X \in \K$ we are able to define the dual complex by
$$ 
X^* = \Hom _R (X, R), \quad d_{X^*}^n = {(-1)^{n+1}} \Hom _R (d_X^{-n}, R). 
$$
Note that  $X^*$  is again an object of  $\K$, since the dual of a finitely generated projective module is finitely generated projective. 
It is easy to see that the {duality}  functor 
$$
( - ) ^* \ : \ \K \longrightarrow \K^{\text{op}}, \quad X \mapsto X^*
$$
is a triangle functor between triangulated categories. 
Since  $X^{**}$  is naturally isomorphic to $X$, it actually yields the duality on $\K$.

\begin{notation}\label{c}
For a complex  $X \in \K$, 
we denote by  $C(X)$  
{the cokernel of the differential mapping  $d_X [-1] : X[-1] \to X$ as underlying graded $R$-module.
So  $C (X) = \oplus _{i \in \Z} C^i(X)$ where $C^i(X) = \Cok (X^{i-1} \to X^i)$.}
Similarly the cocycle  $Z(X)= \bigoplus _{i \in \Z}Z^i(X)$  is the kernel of  $d_X$ and the coboundary $B(X)= \bigoplus _{i \in \Z}B^i(X)$  is the image of   $d_X$. 

As  $C (X) = X /B(X)$ there is a short exact sequence of graded $R$-modules such as
\begin{equation}\label{HCB}
\xymatrix{0 \ar[r] & H(X) \ar[r] & C (X) \ar[r] & B (X)[1]  \ar[r] & 0. }
\end{equation}
\end{notation}

\vspace{12pt}

Let  $X$ be a complex in $\K$ and let  $M$ be an $R$-module.  
We denote by  $K(\Mod (R))$  the homotopy category of all complexes of any $R$-modules, and we regard  $M$ as a complex concentrated in degree zero. 
Recall that $\Hom _R(X, M)$ is the Hom complex and an element of the cohomology modules of this complex is nothing but the homotopy class of a chain map from $X$  to  $M$, i.e.
$$
H^{-i} (\Hom _R (X, M)) = \Hom _{K(\Mod ( R ))} (X[i], M). 
$$

\begin{definition}\label{rho}
Let  $X \in \K$, $M$  an $R$-module and  $i \in \Z$. 
As noted above, each element  $[f] \in H^{-i}(\Hom _R (X, M))$  is a homotopy class of a chain map  $f : X[i] \to M$, thus it induces a unique $R$-module homomorphism 
$H^0(f) : H^0(X[i]) = H^i(X) \to H^0(M) = M$,  hence an element $H^0(f) \in \Hom _R( H^i(X), M)$. 
We define an $R$-module homomorphism 
$$
\rho ^i_{X M} : H^{-i} (\Hom _R (X, M)) \longrightarrow \Hom _R (H^i(X), M) \ \ 
$$
by  $\rho ^i _{X M}([f]) = H^0 (f)$.\footnote{{This is the standard K\"unneth map; see \cite[IV.6]{CE}.}}

\end{definition}

\begin{theorem}\label{exact}
Under the circumstances in Definition \ref{rho}, there is an exact sequence of $R$-modules; 
{\small 
$$
\xymatrix{ 
0 \to \Ext _R ^1 (C^{i+1}(X), M) \to H^{-i} (\Hom _R (X, M))
\ar[r]^(0.52){\rho ^i_{X M}} & \Hom _R (H^i(X), M) \to \Ext_R ^2 (C^{i+1}(X), M), \\
}
$$
}
for each  $i \in \Z$. 
\end{theorem}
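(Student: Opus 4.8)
The plan is to reduce the statement to a standard computation with the brutal truncation / mapping-cone presentation of $X$ in terms of $C^{i+1}(X)$. The key observation is that the short exact sequence of graded modules in Notation~\ref{c}, read in the single degree $i+1$, comes from an honest short exact sequence of complexes. Concretely, I would produce a short exact sequence of complexes
$$
\xymatrix@C=28pt{0 \ar[r] & X \ar[r] & P \ar[r] & C^{i+1}(X)[-i-1] \ar[r] & 0,}
$$
or rather its local analogue around degree $i$: split $X$ at spot $i$ into its ``stupid'' truncations and identify the relevant subquotient complex with a projective resolution of $C^{i+1}(X)$ placed in the correct cohomological degree. Since each $X^j\in\proj(R)$, the naive truncation $\sigma_{\ge i+1}X$ is a (possibly unbounded to the right) complex of projectives whose only nonzero cohomology in low degree is $C^{i+1}(X)=\cok(d_X^i)$ sitting in degree $i+1$; thus a bounded piece of it is literally a finite projective presentation of $C^{i+1}(X)$.

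First I would make precise the complex $P_\bullet = [\,\cdots\to X^{i-1}\to X^i\to 0\to\cdots]$ (the good truncation $\sigma_{\le i}X$ shifted), whose cohomology is $H^j(X)$ for $j<i$ and $C^i(X)$ in degree $i$; dually the complex $Q_\bullet = [\,\cdots\to 0\to X^{i+1}\to X^{i+2}\to\cdots]$ has cohomology $C^{i+1}(X)$ in degree $i+1$ and $H^j(X)$ for $j\ge i+2$. Then $X$ is the mapping cone (up to shift) of a chain map $Q_\bullet[-1]\to P_\bullet$, giving a triangle in $\K$; applying $\Hom_{K(\Mod(R))}(-,M)$ produces a long exact sequence relating $H^{-i}(\Hom_R(X,M))$ to $H^\ast(\Hom_R(P_\bullet,M))$ and $H^\ast(\Hom_R(Q_\bullet,M))$. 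The point is that $\Hom_R(P_\bullet,M)$ in the relevant degrees computes $\Ext_R^\ast(C^i(X),M)$ only through the top, while the connecting maps feed in $\Ext_R^1$ and $\Ext_R^2$ of $C^{i+1}(X)$; chasing this identifies the kernel and cokernel of $\rho^i_{X,M}$ exactly as claimed. The map $\rho^i_{X,M}$ itself gets identified with the natural map $H^{-i}\Hom_R(X,M)\to\Hom_R(H^i(X),M)$ induced by the inclusion $H^i(X)[i]\hookrightarrow$ (a truncation of) $X$ in the derived category, i.e. with the edge map in the relevant spectral sequence.

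An alternative, perhaps cleaner, route is to use the two short exact sequences of graded $R$-modules that organize $X$: namely $0\to Z^{i+1}(X)\to X^{i+1}\to B^{i+2}(X)\to 0$, $0\to B^{i+1}(X)\to Z^{i+1}(X)\to H^{i+1}(X)\to 0$, together with $0\to C^i(X)\to X^{i+1}\to C^{i+1}(X)\to 0$ coming from $d_X^i$, and the defining sequence \eqref{HCB}. Splicing $0\to C^i(X)\to X^{i+1}\to X^{i+2}\to\cdots$ exhibits $\cdots\to X^{i+2}\to X^{i+1}$ as a cosyzygy-type resolution, so that $\Ext_R^k(C^{i+1}(X),M)$ for $k=1,2$ appears as the cohomology of $\Hom_R(-,M)$ applied to the stupid truncation $\sigma_{\ge i+1}X$ in degrees $i+2$ and $i+3$. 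Then $H^{-i}(\Hom_R(X,M))$, being cohomology of $\Hom_R(X,M)$ in degree $-i$, fits into the long exact sequence obtained from the short exact sequence of Hom complexes $0\to\Hom_R(\sigma_{\ge i+1}X,M)\to\Hom_R(X,M)\to\Hom_R(\sigma_{\le i}X,M)\to 0$; and $\Hom_R(\sigma_{\le i}X,M)$ in the relevant degree is simply $\Hom_R(C^i(X),M)$ — no, rather one reads off that its cohomology in degree $-i$ vanishes while degree $-i+1$ contributes, so the whole $H^{-i}$ comes from the right-hand truncation, with the correction term being the $\Ext^2$.

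The main obstacle I expect is bookkeeping the degrees and signs so that the connecting homomorphisms land precisely in $\Ext^1_R(C^{i+1}(X),M)$ on the left and $\Ext^2_R(C^{i+1}(X),M)$ on the right, and — more substantively — verifying that the middle map in the resulting four-term sequence genuinely coincides with $\rho^i_{X,M}$ as defined in Definition~\ref{rho}, rather than merely with some a priori different natural transformation. This compatibility check is the crux: one must show that the homotopy class $[f]\mapsto H^0(f)$ description of $\rho^i_{X,M}$ agrees with the edge-map description coming from the truncation triangle, which amounts to a diagram chase comparing the map $H^i(X)\to C^i(X)$ (or the surjection $C^i(X)\twoheadrightarrow H^{i}$? — precisely \eqref{HCB} in degree $i$ gives $H^i(X)\hookrightarrow C^i(X)$) with the evaluation of cocycles. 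Everything else — exactness of the four-term sequence, finite generation of the $\Ext$ modules, and that higher $\Ext$'s do not intrude because a truncated complex of projectives is a genuine finite projective resolution of $C^{i+1}(X)$ only up to length $2$ before more cohomology of $X$ appears — is routine homological algebra once the identification is pinned down.
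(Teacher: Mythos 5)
There is a genuine gap, and it sits precisely at the step your sketch treats as bookkeeping: the identification of the outer terms with $\Ext^1_R(C^{i+1}(X),M)$ and $\Ext^2_R(C^{i+1}(X),M)$. First, your description of the truncations is wrong: the brutal truncation $\sigma_{\ge i+1}X=[\,0\to X^{i+1}\to X^{i+2}\to\cdots]$ has cohomology $Z^{i+1}(X)$ (not $C^{i+1}(X)$) in degree $i+1$; it is the truncation ending at $X^{i+1}$ whose top cohomology is $C^{i+1}(X)$. Likewise the sequence $0\to C^i(X)\to X^{i+1}\to C^{i+1}(X)\to 0$ you propose to use is not exact: the kernel of $C^i(X)\to X^{i+1}$ is exactly $H^i(X)$, and this failure of exactness is the whole content of the theorem. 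More seriously, $\Ext^k_R(C^{i+1}(X),M)$ is computed from a projective resolution of $C^{i+1}(X)$, i.e.\ from $X^i\xrightarrow{d_X^i}X^{i+1}\to C^{i+1}(X)\to 0$ continued to the \emph{left} through syzygies of $B^{i+1}(X)$, which are not terms of $X$ at all; it cannot be read off from $\Hom_R(\sigma_{\ge i+1}X,M)$ in degrees $-(i+2)$, $-(i+3)$. Concretely, take $X=[\,R\xrightarrow{x}R\,]$ concentrated in degrees $i+1,i+2$ with $x$ a non-zero divisor: then $C^{i+1}(X)=R$, so $\Ext^1_R(C^{i+1}(X),M)=0$, while the cohomology of $\Hom_R(\sigma_{\ge i+1}X,M)$ in degree $-(i+2)$ is $\{m\in M\mid xm=0\}$, which is nonzero for suitable $M$. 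So the central identification driving both of your routes is false, not merely unverified.

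What the truncation long exact sequence actually gives is only that $H^{-i}(\Hom_R(X,M))$ is a quotient of $Z^{-i}(\Hom_R(X,M))\cong\Hom_R(C^i(X),M)$ by the image of a map coming from $\Hom_R(X^{i+1},M)$; to turn this into the stated four-term sequence you still need the module-level ingredients the paper uses: the short exact sequences $0\to B^{i+1}(X)\to X^{i+1}\to C^{i+1}(X)\to 0$ and $0\to H^i(X)\to C^i(X)\to B^{i+1}(X)\to 0$ (the latter is (\ref{HCB}) in degree $i$), the resulting identification of $\Ext^1_R(C^{i+1}(X),M)$ as the cokernel of $\Hom_R(X^{i+1},M)\to\Hom_R(B^{i+1}(X),M)$ together with the dimension shift $\Ext^1_R(B^{i+1}(X),M)\cong\Ext^2_R(C^{i+1}(X),M)$ (projectivity of $X^{i+1}$), the observation that this cokernel is exactly $\Hom_R(B^{i+1}(X),M)/B^{-i}(\Hom_R(X,M))$, and finally a snake-lemma comparison with the dual of (\ref{HCB}), under which the middle map is $\rho^i_{X,M}$ by inspection. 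None of these steps is supplied by your sketch, and the substitutes you propose (coresolutions by $X^{i+2},X^{i+3},\dots$, the non-exact sequence above) would not produce them; so as written the argument does not go through, although the overall shape of the four-term sequence you aim at is of course the right target.
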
 

\begin{proof}
We see from the exact sequence (\ref{HCB}) that there exists an exact sequence; 
{\small 
\begin{equation}\label{HCBdual}
\xymatrix@C=10pt{
0 \to \Hom _R (B^{i+1} (X), M) \ar[r] & \Hom _R(C^i (X), M) \ar[r] & \Hom _R (H^{i} (X), M) \ar[r] & \Ext _R ^1(B^{i+1}(X), M),  \\
}
\end{equation}
}
where we should note that  $\Ext _R ^1(B^{i+1}(X), M) \cong \Ext _R ^2 (C^{i+1}(X), M)$, since $X^{i+1} / B^{i+1}(X) \cong C^{i+1}(X)$ and $X^{i+1}$  is projective. 

Note that  $\Hom _{R} (X, M)^{-i} = \Hom _{R} (X^{i}, M)$  for all $i \in \Z$. 
Thus, from  the exact sequence  
$X^{i-1} \to  X^{i} \to C^{i}( X )  \to 0$,   it follows that 
$0 \to  \Hom _{R} (C^{i}( X ), M) \to \Hom _{R}(X, M)^{-i}  \to \Hom _{R}(X, M)^{-i+1}$  is exact, hence we have an isomorphism    
$$
\lambda : Z ^{-i} (\Hom _{R}(X, M)) \to \Hom _{R}(C^{i}( X ), M), 
$$
 where the left hand side is the $(-i)$th cocycle module of the complex  $\Hom _{R}(X, M)$. 
On the other hand, the $(-i)$th coboundary  $B^{-i}(\Hom _{R}( X , M))$  is the image the mapping 
$\Hom _{R}(X^{i+1}, M)  \to \Hom _{R}(X^{i}, M)$. 
From the exact sequence  $0 \to B^{i+1}( X ) \to X ^{i+1} \to C^{i+1}( X ) \to 0$, we have an exact sequence 
$$
\xymatrix{
\Hom _{R} (X^{i+1},  M) \ar[r]^(0.45){\nu} & \Hom _{R}(B^{i+1}( X ), M)  \ar[r] & \Ext _{R}^{1} (C^{i+1}( X ) , M) \ar[r] & 0.}  
$$ 
Since there is a commutative diagram  
$$
\xymatrix@R=24pt@C=42pt@M=4pt{
X^{i}  \ar@{->>}[dr] \ar[r]^{d_{X}^{i}}  &  X ^{i+1}  \\ 
    &  B^{i+1}( X ),   \ar@{^{(}->}[u]  \\
}
$$
we have a commutative diagram 
$$
\xymatrix{
\Hom _{R}(X^{i+1}, M)   \ar[d]^{\nu} \ar[r]^{(d_{X}^{i})^{*}} &  \Hom _{R}(X^{i}, M)  \\
\Hom _{R}(B^{i+1}( X ), M),   \ar@{^{(}->}[ur]  &    \\
}
$$
from which we see that the mapping $\nu$  has the image  $B^{-i}(\Hom _{R}( X , M))$.
Hence we have an exact sequence 
$$
\xymatrix{0 \ar[r]  & B^{-i}(\Hom _{R}( X , M)) \ar[r] & \Hom _{R}(B^{i+1}( X ), M) \ar[r] & \Ext _{R}^{1} (C^{i+1}( X ) , M) \ar[r] & 0.}  
$$
Combine all the mappings together, we finally have a commutative diagram with exact rows and columns:
{\small
$$
\xymatrix@C=12pt{
0  \ar[r] & B^{-i}(\Hom _{R}( X , M) ) \ar[d] \ar[r] &  Z^{-i}(\Hom _{R}( X , M)) \ar[d]^{\lambda} \ar[r] & H^{-i}(\Hom _{R}( X , M)) \ar[d]^{\rho} \ar[r] &  0 \\
 0 \ar[r] & \Hom _{R}(B^{i+1}( X ), M) \ar[d] \ar[r] & \Hom _{R}(C^{i}( X ), M) \ar[r] & \Hom _{R}(H^{i}( X ), M)  \ar[r] &  \Ext _{R}^{2}(C^{i+1}( X ), M) \\
   & \Ext _{R}^{1}(C^{i+1}( X ), M) \ar[d] &&&  \\
   & 0 &&& \\
}
$$}
Since  $\lambda$  is an isomorphism, we have the desired exact sequence by the snake lemma.  
\end{proof}


\begin{example}\label{transpose}
Let  $M$  be a finitely generated $R$-module and let  
$$
\xymatrix@C=32pt{
P_1 \ar[r]^f  & P_0 \ar[r] & M \ar[r] & 0}
$$
be a projective presentation of $M$, where each  $P_i$  are finitely generated and projective. 
Recall that the transpose $\transpose (M)$  of $M$ is defined to be the cokernel of the dual mapping  $f^* = \Hom _R(f, R)$  of  $f$.  

Now, set the complex $X$ to be $\left[\xymatrix{ 0 \ar[r]& P_0 ^* \ar[r]^{f^*} \ar[r]& P_{1}^* \ar[r]& 0}\right]$. 
Then we have that $C^1 (X) = \transpose (M)$ and 
$X^* = \left[\xymatrix{ 0 \ar[r] & P_{1} \ar[r]^{f} & P_0 \ar[r]& 0}\right]$. 
Therefore  $H^0(X^*) = M$  and  $H^0(X)^* = M^{**}$ in this case. 
It is easily verified that the mapping  $\rho _{X R}^0$ is {isomorphic to} the canonical mapping $M \to M^{**}$. 
Thus applying Theorem \ref{exact}, we have an exact sequence 
$$
\xymatrix{ 
0 \ar[r] & \Ext ^1 (\transpose (M), R) \ar[r] & M \ar[r] & M^{**} \ar[r] & \Ext^2(\transpose (M), R),  \\
}
$$
as shown in \cite[Chapter 2]{AB}.  
\end{example}

\vspace{6pt}
\section{*Torsion-free and *reflexive complexes}

\begin{definition}\label{deftorfree}
Let   $X \in \K$. 
We denote by  $X^{*}$  the $R$-dual complex  $\Hom _{R}(X, R)$. 
As we remarked in Definition \ref{rho},  we have a natural mapping 
$$
\rho ^{i}_{X R} :  H^{-i} ( X^{*} )  \to  H^{i}( X ) ^{*}
$$ 
for all $i \in \Z$. 
We say that  the complex  $X$ is {\bf *torsion-free} if  $\rho ^{i}_{X R}$  are injective mappings for all $i \in \Z$.  
Likewise, we say that $X$ is {\bf  *reflexive} if  $\rho ^{i}_{X R}$  are isomorphisms for all $i \in \Z$. 
\end{definition}

\begin{lemma}\label{restatement}
Let  $X$  be a complex in  $\K$. 
\begin{enumerate}
\item 
$X$ is *torsion-free if and only if it satisfies the following condition: \medskip 
\begin{itemize}
\item[$(*)$] For any chain map $f : X \to R[i]$ with $i \in \Z$, if  $H(f) =0$ then $f=0$ as a morphism in $\K$. 
\end{itemize} \medskip 
\item
Assume that $X$ satisfies the condition $(*)$. 
Then  $X$ is *reflexive if and only if it satisfies the following condition: \medskip 
\begin{itemize}
\item[$(**)$] If $a : H^{-i}( X ) \to R$ is an $R$-module homomorphism where  $i \in \Z$, 
then there is a chain map  $f : X \to R[i]$  such that   $H^{-i}( f ) = a$. 
\end{itemize}
\end{enumerate}
\end{lemma}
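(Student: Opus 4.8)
The plan is to obtain both statements by unwinding Definitions \ref{rho} and \ref{deftorfree}, after rewriting the maps $\rho^{i}_{X,R}$ concretely. Recall from just before Definition \ref{rho} the identification $H^{-i}(\Hom_{R}(X,R))=\Hom_{K(\Mod(R))}(X[i],R)$; since $X$ and $R$ lie in $\K$ this is $\Hom_{\K}(X[i],R)$, and applying the shift autoequivalence it becomes $H^{-i}(X^{*})\cong\Hom_{\K}(X,R[-i])$. Under this identification an element of $H^{-i}(X^{*})$ is the homotopy class of a chain map $f\colon X\to R[-i]$, and --- because $R[-i]$ has cohomology concentrated in the single degree $i$ --- its induced map $H(f)$ is nothing but one homomorphism $H^{i}(f)\colon H^{i}(X)\to R$. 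By the very construction of $\rho$ in Definition \ref{rho} one has $\rho^{i}_{X,R}([f])=H^{i}(f)$ (up to an irrelevant sign). Hence, after the obvious reindexing $i\mapsto -i$, the family $\{\rho^{i}_{X,R}\}_{i\in\Z}$ is identified with the family of $R$-linear maps
$$\theta^{i}\colon\ \Hom_{\K}(X,R[i])\ \longrightarrow\ \Hom_{R}\big(H^{-i}(X),R\big),\qquad f\longmapsto H^{-i}(f)\qquad(i\in\Z).$$

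Granting this, part (1) is immediate. By Definition \ref{deftorfree}, $X$ is *torsion-free iff every $\theta^{i}$ is injective, i.e. iff for every $i\in\Z$ and every chain map $f\colon X\to R[i]$ the equality $H^{-i}(f)=0$ forces $[f]=0$ in $\Hom_{\K}(X,R[i])$. Now $H^{-i}(f)=0$ is equivalent to $H(f)=0$ (the target $R[i]$ has cohomology only in degree $-i$), and $[f]=0$ in $\K$ is by definition the statement that $f$ is null-homotopic, i.e. $f=0$ as a morphism of $\K$. This is exactly condition $(*)$.

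For part (2), assume $X$ satisfies $(*)$, so by part (1) every $\theta^{i}$ is injective. By Definition \ref{deftorfree}, $X$ is then *reflexive iff in addition every $\theta^{i}$ is surjective. But surjectivity of $\theta^{i}$ says precisely that every $R$-homomorphism $a\colon H^{-i}(X)\to R$ equals $H^{-i}(f)$ for some chain map $f\colon X\to R[i]$, which is condition $(**)$.

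The argument is entirely bookkeeping; there is no real obstacle. The only points that need a moment's care are the shift/reindexing that turns the family $\{\rho^{i}_{X,R}\}$ into $\{\theta^{i}\}$, and the remark that for a chain map targeting a complex concentrated in a single cohomological degree, such as $R[i]$, the whole induced map on cohomology is encoded by the single homomorphism $H^{-i}(f)$ --- both of which are forced once one notes, from Definition \ref{rho}, that $\rho$ is literally the operation of passing to the induced homomorphism on cohomology.
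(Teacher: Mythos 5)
Your proposal is correct and is essentially the paper's own argument: the paper simply states "This is just a restatement of the definition," and your write-up supplies exactly the bookkeeping (the shift identification $H^{-i}(X^{*})\cong\Hom_{\K}(X,R[i])$ after reindexing, and the observation that $H(f)$ for $f\colon X\to R[i]$ is carried by the single map $H^{-i}(f)$) that makes that restatement explicit. No discrepancy with the paper's intended proof.
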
 

\begin{proof}
This is just a restatement of the definition. 
\end{proof}

\begin{remark}\hfil
\begin{enumerate}
\item
Let  $X$ be *torsion-free (resp. *reflexive). 
Then so are any shifted complexes  $X[i]$  for  $i \in \Z$. 
Any direct summands of $X$  are also *torsion-free (resp. *reflexive).   
\item
Any direct sums of *torsion-free complexes are *torsion-free. 
(As we will see in Section 5, the category $\K$ admits certain kind of infinite direct sums. 
This remark says that if  $\{X_{i} \ | \ i \in I\}$ is a set of *torsion-free complexes and if $X = \coprod  _{i \in I} X_i$ exists in $\K$,  then  $X$  is also *torsion-free.   The proof is clear from Lemma \ref{restatement}(1))
\item
Any direct sums of finite number of *reflexive complexes are *reflexive. 
\end{enumerate}
\end{remark}

The following is straightforward from Theorem \ref{exact}. 

\begin{theorem}\label{torsion-free criterion}
Let  $X \in \K$. 
\begin{enumerate}
\item 
$X$ is *torsion-free if and only if  $\Ext ^1 _R(C(X), R)=0$. \medskip 
\item
If  $\Ext ^1 _R(C(X), R)= \Ext ^2 _R(C(X), R)=0$, then $X$ is *reflexive. 
\end{enumerate}
\end{theorem}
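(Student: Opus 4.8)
The plan is to read both statements straight off the four-term exact sequence of Theorem \ref{exact}, specialized to $M = R$. For each $i \in \Z$ that sequence becomes
$$
0 \to \Ext ^1_R(C^{i+1}(X), R) \to H^{-i}(X^*) \xrightarrow{\ \rho ^i_{X,R}\ } H^i(X)^* \to \Ext ^2_R(C^{i+1}(X), R),
$$
using that $X^* = \Hom _R(X,R)$ and $H^i(X)^* = \Hom _R(H^i(X),R)$. Before invoking it, I would record the elementary bookkeeping fact that, since $C(X) = \bigoplus _{i\in\Z} C^i(X)$ as graded $R$-modules, one has $\Ext ^j_R(C(X),R) \cong \prod _{i\in\Z}\Ext ^j_R(C^i(X),R)$ for every $j$ (Ext sends direct sums in the first variable to products); in particular $\Ext ^j_R(C(X),R)=0$ if and only if $\Ext ^j_R(C^i(X),R)=0$ for all $i$, and since $i\mapsto i+1$ is a bijection of $\Z$ this is the same as $\Ext ^j_R(C^{i+1}(X),R)=0$ for all $i$.

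For part (1): by Definition \ref{deftorfree}, $X$ is *torsion-free precisely when $\rho ^i_{X,R}$ is injective for every $i\in\Z$. From the displayed sequence, injectivity of $\rho ^i_{X,R}$ is equivalent to vanishing of the left-hand term $\Ext ^1_R(C^{i+1}(X),R)$. Quantifying over $i$ and applying the observation above yields the stated equivalence with $\Ext ^1_R(C(X),R)=0$. For part (2): assuming $\Ext ^1_R(C(X),R)=\Ext ^2_R(C(X),R)=0$, the observation gives $\Ext ^1_R(C^{i+1}(X),R)=\Ext ^2_R(C^{i+1}(X),R)=0$ for every $i$, so the displayed sequence collapses to an isomorphism $\rho ^i_{X,R}\colon H^{-i}(X^*)\xrightarrow{\sim} H^i(X)^*$ for all $i$; hence $X$ is *reflexive.

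I do not expect any genuine obstacle here — this is exactly the kind of immediate corollary the phrase ``straightforward from Theorem \ref{exact}'' suggests. The only point requiring a moment's care is the index shift between $C^i(X)$ and $C^{i+1}(X)$ together with the passage from the individual graded pieces $C^i(X)$ to the full module $C(X)$, both of which are dispatched by the product-over-a-direct-sum description of $\Ext$; note one does not even need the cokernel term in (1), only the kernel term, and that part (2) does not assert the converse.
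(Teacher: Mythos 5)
Your argument is exactly the paper's intended one: the paper offers no written proof beyond declaring the theorem ``straightforward from Theorem \ref{exact}'', and specializing that exact sequence to $M=R$ and reading off injectivity (resp.\ bijectivity) of $\rho^i_{X,R}$ degreewise, together with the observation that $\Ext^j_R(C(X),R)=\prod_{i\in\Z}\Ext^j_R(C^i(X),R)$, is precisely that argument. Your handling of the index shift and the direct-sum-to-product bookkeeping is correct, so there is nothing to add.
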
 


\begin{corollary}\label{cor}
If  $R$  is a Gorenstein ring of dimension zero, then every complex $X \in \K$ is *reflexive (and hence *torsion-free).
\end{corollary}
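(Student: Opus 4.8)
The plan is to deduce Corollary \ref{cor} directly from Theorem \ref{torsion-free criterion}(2) together with the defining property of a zero-dimensional Gorenstein ring. Recall that over such a ring $R$ one has $\Ext^i_R(N, R) = 0$ for all $i > 0$ and every finitely generated $R$-module $N$; indeed, $R$ is then an injective $R$-module (it is its own injective hull in the Artinian Gorenstein case, $R$ being self-injective of injective dimension zero). So the whole argument reduces to checking that the hypothesis of Theorem \ref{torsion-free criterion}(2) is met.

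First I would recall that $C(X) = \bigoplus_{i \in \Z} C^i(X)$, where each $C^i(X)$ is the cokernel of $d_X^{i-1} : X^{i-1} \to X^i$, hence a finitely generated $R$-module (being a quotient of the finitely generated module $X^i$). Next I would note that $\Ext^n_R(-, R)$ commutes with finite direct sums in the first variable, so $\Ext^n_R(C(X), R) \cong \bigoplus_i \Ext^n_R(C^i(X), R)$; here one should be slightly careful, since $C(X)$ is a priori an infinite direct sum of the $C^i(X)$, but $\Ext$ in the paper's setup is applied degreewise, and in any case each summand vanishes. Since $R$ is zero-dimensional Gorenstein, $\Ext^1_R(C^i(X), R) = \Ext^2_R(C^i(X), R) = 0$ for every $i$, hence $\Ext^1_R(C(X), R) = \Ext^2_R(C(X), R) = 0$. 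Theorem \ref{torsion-free criterion}(2) then gives that $X$ is *reflexive, and the parenthetical remark that *reflexive implies *torsion-free follows from the definition (an isomorphism is injective), or equivalently from part (1) of the same theorem since $\Ext^1_R(C(X), R) = 0$.

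There is essentially no obstacle here: the corollary is a formal consequence of the previously established criterion. The only point requiring a word of care is the self-injectivity (equivalently, vanishing of all higher $\Ext$ into $R$) of a zero-dimensional Gorenstein ring; I would either cite this as standard or observe that $R$ zero-dimensional Gorenstein means $R$ has finite injective dimension as a module over itself, which for an Artinian ring forces injective dimension $0$, i.e. $R$ is self-injective. Given that, the proof is two lines.

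\begin{proof}
Since $R$ is a zero-dimensional Gorenstein ring, it is self-injective, so $\Ext^n_R(N, R) = 0$ for every finitely generated $R$-module $N$ and every $n > 0$. For any $X \in \K$ the graded module $C(X) = \bigoplus_{i} C^i(X)$ has finitely generated components $C^i(X)$, and applying $\Ext$ degreewise we get $\Ext^1_R(C(X), R) = \Ext^2_R(C(X), R) = 0$. By Theorem \ref{torsion-free criterion}(2), $X$ is *reflexive; in particular, each $\rho^i_{X,R}$ is an isomorphism, hence injective, so $X$ is *torsion-free as well.
\end{proof}
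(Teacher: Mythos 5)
Your proof is correct and follows exactly the paper's own argument: the paper's proof is the one-line observation that $\Ext_R^i(-,R)=0$ for all $i>0$ over a zero-dimensional Gorenstein ring, combined with Theorem \ref{torsion-free criterion}(2). Your extra care about the degreewise interpretation of $\Ext^n_R(C(X),R)$ is a reasonable elaboration but does not change the route.
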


\begin{proof}
In this case $\Ext_R^i( - ,R)=0$ for all $i>0$. 
\end{proof}

\begin{example}\label{module case}
Let  $M$ be a finitely generated $R$-module and  let 
$$
\xymatrix@C=32pt{\cdots \ar[r] & P_2 \ar[r] & P_1 \ar[r] & P_0 \ar[r] & M \ar[r] & 0} 
$$
be a projective resolution of $M$ with  $P_i \in \proj (R)$  for all $i >0$.  
\begin{enumerate}
\item
Setting 
$$
X = \left[ \xymatrix@C=32pt{\cdots \ar[r] & P_2 \ar[r] & P_1 \ar[r] & P_0 \ar[r] & 0} \right] \in \K, 
$$
we can easily see that the following three conditions are equivalent: \medskip
\begin{enumerate}
\item[(i)]
$\Ext_R^i(M, R) = 0$  for all $i>0$. \vspace{4pt}
\item[(ii)]  
$X$  is *torsion-free. \vspace{4pt}
\item[(iii)] 
$X$  is *reflexive.
\end{enumerate} \medskip 
\item 
Let  $n > 0$ be an integer. 
Considering the truncation of $X$, we set 
$$
X_{(n)} = \left[ \xymatrix@C=32pt{ 0 \ar[r] & P_n \ar[r]  & \cdots \ar[r] & P_1 \ar[r] & P_0 \ar[r] & 0} \right] \in \K.  
$$
Then  $X_{(n)}$  is *torsion-free if and only if 
$\Ext _R ^i (M, R) =0$  for $1 \leq i \leq n$, 
while  $X_{(n)}$  is *reflexive if and only if 
$\Ext _R ^i (M, R) =0$  for $1 \leq i \leq n+1$. 
\end{enumerate}

\end{example}

\begin{proposition}\label{triangle argument}
Let
$$
\xymatrix{
X \ar[r]^{a} &  Y \ar[r]^(0.45){b}   &   Z  \ar[r]^(0.4){c}  &  X[1] \\
}
$$  
be a triangle in $\K$. 
\begin{enumerate}
\item
Suppose that $H(b)^{*}: H(Z)^{*} \to H(Y)^{*}$ is injective.
If $X$ and  $Z$ are  *torsion-free, then so is $Y$. 
\item
Suppose that the sequence 
$\xymatrix{H(Z)^{*} \ar[r]^{H(b)^{*}} & H(Y)^{*} \ar[r]^{H(a)^{*}} & H(X)^{*}}$ is exact.
If $Z$ is *reflexive and if  $Y$  is *torsion-free, then $X$ is *torsion-free.
\item
Suppose that the sequence 
$\xymatrix{H(Z)^{*} \ar[r]^{H(b)^{*}} & H(Y)^{*} \ar[r]^{H(a)^{*}} & H(X)^{*}}$ is exact.
And assume that  $X$ and $Z$ are *reflexive and that  $Y$ is *torsion-free. 
Then $Y$ is *reflexive. 
\end{enumerate}
\end{proposition}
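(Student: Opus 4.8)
My approach is to reduce all three assertions to diagram chases with chain maps into the shifted base ring $R[i]$, using the reformulations of *torsion-freeness and *reflexivity in Lemma \ref{restatement}. Fix $i\in\Z$ and apply the cohomological functor $\Hom_{\K}(-,R[i])$ to the given triangle; this produces an exact sequence of $R$-modules
$$
\cdots \to \Hom_{\K}(Z,R[i]) \xrightarrow{b^{*}} \Hom_{\K}(Y,R[i]) \xrightarrow{a^{*}} \Hom_{\K}(X,R[i]) \xrightarrow{(c[-1])^{*}} \Hom_{\K}(Z[-1],R[i]) \to \cdots .
$$
Throughout I will use that consecutive morphisms of a triangle compose to zero (in particular $a\circ(c[-1])=0$, obtained by rotating the triangle backwards), that a shift of a *torsion-free (resp. *reflexive) complex is again such, and that a *reflexive complex satisfies both $(*)$ and $(**)$ of Lemma \ref{restatement} while a *torsion-free one satisfies $(*)$.

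For (1), I verify $(*)$ for $Y$. If $f\colon Y\to R[i]$ satisfies $H(f)=0$, then $H(fa)=0$, so $fa=0$ in $\K$ because $X$ is *torsion-free; exactness at $\Hom_{\K}(Y,R[i])$ then factors $f=gb$ for some $g\colon Z\to R[i]$. Since $H^{-i}(g)\circ H^{-i}(b)=H^{-i}(f)=0$, the hypothesis that $H(b)^{*}$ is injective forces $H(g)=0$; hence $g=0$ in $\K$ ($Z$ being *torsion-free), and therefore $f=0$.

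For (2), I verify $(*)$ for $X$. If $f\colon X\to R[i]$ satisfies $H(f)=0$, then $f\circ(c[-1])\colon Z[-1]\to R[i]$ has vanishing cohomology (already because $H(f)=0$), hence equals $0$ in $\K$ since $Z[-1]$ is *torsion-free; exactness at $\Hom_{\K}(X,R[i])$ factors $f=ga$ for some $g\colon Y\to R[i]$. From $H^{-i}(a)^{*}\bigl(H^{-i}(g)\bigr)=H^{-i}(f)=0$ and the assumed exactness of $H(Z)^{*}\to H(Y)^{*}\to H(X)^{*}$ we get $\gamma\colon H^{-i}(Z)\to R$ with $H^{-i}(g)=\gamma\circ H^{-i}(b)$; realizing $\gamma$ by a chain map $h\colon Z\to R[i]$ (using $(**)$ for the *reflexive $Z$), the map $g-hb$ has zero cohomology, hence vanishes in $\K$ ($Y$ being *torsion-free), so $f=ga=h(ba)=0$. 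For (3), note that $Y$, being *torsion-free, satisfies $(*)$, so by Lemma \ref{restatement}(2) it suffices to verify $(**)$ for $Y$. Given $a'\colon H^{-i}(Y)\to R$, use $(**)$ for the *reflexive $X$ to choose $\phi\colon X\to R[i]$ with $H^{-i}(\phi)=a'\circ H^{-i}(a)$. Then $\phi\circ(c[-1])$ has vanishing cohomology — here because $H^{-i}(\phi)=a'\circ H^{-i}(a)$ and $a\circ(c[-1])=0$ — hence equals $0$ in $\K$, so $\phi=ga$ for some $g\colon Y\to R[i]$. Now $\bigl(H^{-i}(g)-a'\bigr)\circ H^{-i}(a)=0$, so the exactness hypothesis yields $\gamma\colon H^{-i}(Z)\to R$ with $H^{-i}(g)-a'=\gamma\circ H^{-i}(b)$; writing $\gamma=H^{-i}(h)$ for a chain map $h\colon Z\to R[i]$ (again $(**)$ for $Z$), we obtain $H^{-i}(g-hb)=a'$, the required lift.

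The cohomology computations are routine; what needs care is the bookkeeping of the triangle: one must precompose with $c[-1]$, not $c$, so that $a\circ(c[-1])$ is literally the zero morphism, insert the identification $\Hom_{\K}(X[1],R[i])\cong\Hom_{\K}(X,R[i-1])$ correctly when reading off the long exact sequence, and recall that for a complex $W$ the condition ``$H$ of a chain map $W\to R[i]$ vanishes'' is just the single equation that the induced map $H^{-i}(W)\to R$ is zero. No Gorenstein hypothesis on $R$, nor any finiteness beyond the standing assumptions, enters. (Equivalently, one may package the same data as a morphism of long exact sequences along the natural maps $\rho^{i}_{-,R}$ and argue by a tailored four-lemma chase, but the argument above avoids identifying $\Hom_{\K}(W,R[i])$ with $H^{-i}(W^{*})$ altogether.)
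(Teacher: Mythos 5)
Your proposal is correct and follows essentially the same route as the paper's own proof: in each part you test against chain maps into $R[i]$ via Lemma \ref{restatement}, use the exactness of $\Hom_{\K}(-,R[i])$ applied to the triangle (together with $a\circ c[-1]=0$ and $b\circ a=0$) to factor morphisms, and then invoke the *torsion-free/*reflexive hypotheses exactly where the paper does. The only differences are cosmetic (explicit mention of the shift $c[-1]$ and a sign convention $H(g-hb)$ versus $H(g+hb)$ in part (3)), so there is nothing to add.
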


\begin{proof}
(1) Let  $f: Y  \to R[i]$  be a chain map with $i \in \Z$. 
Assume  $H(f) =0$. 
Then  $H(fa)= H(f)H(a)=0$. 
Since  $X$  is *torsion-free, it follows that  $fa = 0$  in  $\K$. 
Then there is a morphism  $g : Z \to R[i]$ such that  $f = gb$. 
Thus we have  $0 = H(f) = H(g)H(b)=H(b)^{*}(H(g))$  and since  $H(b)^{*}$  is injective, it follows    $H(g)=0$. 
However, since  $Z$ is *torsion-free, we have  $g=0$. 
Therefore  $f = gb =0$. 

(2) 
Let  $f: X \to R[i]$  be a chain map for  $i \in \Z$ and we assume that  $H(f) =0$. 
Then,  $H (f\cdot c[-1]) = 0$, and it follows that $f\cdot c[-1] =0$, since  $Z$  is *torsion-free. 
Hence there is a morphism  $g : Y \to R[i]$  with  $f = ga$. 
Note that there is a commutative diagram of graded $R$-modules with an exact row: 
$$
\xymatrix{ 
 H(X) \ar[r]^{H(a)}  &  H(Y) \ar[d]^{H(g)}  \ar[r]^{H(b)}  &  H(Z)  \\
 & H(R[i])=R[i]  &   \\
}
$$
Since  $H(a)^*(H(g)) = H(g) H(a) = H(f) = 0$, it follows from the  assumption that $H(g)$  induces a graded $R$-module homomorphism $\epsilon : H (Z) \to H(R[i])$ with  $H(g) = \epsilon H(b)$. 
Since  $Z$  is *reflexive, there is a chain map  $h : Z \to R[i]$ such that $H(h) = \epsilon$. 
Then, we have  $H(g-hb) = H(g) - H(h)H(b)=0$. 
Since  $Y$  is *torsion-free, it follows that  $g = hb$. 
Thus  $f = ga = hba=0$ as desired.

(3)
Let  $\alpha : H(Y) \to R$  be any element of  $H (Y)^*$. 
Since  $\alpha  H(a) \in H(X)^*$ and since  $X$  is *reflexive, there is a morphism  $f : X \to R$  such that $\alpha H(a) = H(f)$. 
Then we have 
$H(f\cdot c[-1]) = \alpha H(a)H(c[-1])=0$. 
Thus it follows from the *torsion-free property of $Z$ that $f\cdot c[-1] =0$. 
Then there is a morphism  $g : Y \to R$ with  $f = ga$. 
Therefore we have 
$\alpha H(a) = H(ga) = H(g)H(a)$, or equivalently  $(\alpha - H(g))H(a)=0$.
By the exact sequence 
 $\xymatrix{H(Z)^{*} \ar[r]^{H(b)^{*}} & H(Y)^{*} \ar[r]^{H(a)^{*}}  &  H(X)^{*}}$, we find an element  $\beta \in H(Z)^*$  satisfying   $\alpha - H(g) = \beta H(b)$. 
Since $Z$  is *reflexive, we have  $\beta = H (h)$  for some morphism  $h :Z \to R$. 
Thus we have  $\alpha = H (g) + \beta H(b) = H(g+hb)$.  
\end{proof}

\vspace{6pt}
\section{Complexes over a generically Gorenstein ring}

{Recall} that a finitely generated $R$-module $M$  is called {\bf torsionless}  if it satisfies one of the following equivalent conditions: 
(See also Example \ref{transpose}.) 
\begin{enumerate}
\item 
$M$  is a submodule of a free $R$-module.  \medskip
\item
The natural mapping  $M \to M^{**}$  is injective.  \medskip
\item
$\Ext _R ^1 (\transpose M, R) =0$.   \medskip
\end{enumerate}

On the other hand an $R$-module $M$ is said to be {\bf torsion-free} if the natural mapping  $M \to S^{-1}M$ is injective, where  $S$ is the multiplicatively closed subset  $R \backslash \bigcup _{\p \in \Ass (R)}\p$ consisting of all non-zero divisors of  $R$. 
Note that every torsionless module is torsion-free. 

Recall that a Noetherian commutative ring $R$  is said to be {\bf generically Gorenstein} if every localization  $R_{\p}$  for $\p \in \Ass( R )$ is a Gorenstein local ring, or equivalently the total quotient ring of  $R$  is a Gorenstein ring of dimension zero. 
The following lemma is well-known.

\begin{lemma}\label{lemma torsion-free}
Let  $R$  be a generically Gorenstein ring. 
Then a finitely generated $R$-module $M$ is torsionless if and only if $M$ is torsion-free. 
\end{lemma}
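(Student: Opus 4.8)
The plan is to prove both implications, the easy one being that torsionless always implies torsion-free (this is already remarked in the text and holds over any Noetherian ring, since a submodule of a free module is torsion-free). So the content is the converse: over a generically Gorenstein ring, a finitely generated torsion-free module $M$ is torsionless, i.e. the natural map $\eta_M : M \to M^{**}$ is injective. First I would note that injectivity of $\eta_M$ is a property that can be checked after localizing at the primes in $\Ass(R)$: indeed $\Ker(\eta_M)$ is a submodule of $M$, and since $M$ is torsion-free it embeds in $S^{-1}M = \prod_{\p \in \Ass(R)} M_\p$ (using that $R$ has finitely many associated primes and the total quotient ring is the product of the $R_\p$); so $\Ker(\eta_M) = 0$ iff $(\Ker \eta_M)_\p = 0$ for every $\p \in \Ass(R)$. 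Since $\eta_M$ localizes to $\eta_{M_\p}$ (finite generation and flatness of localization commute with $\Hom_R(-,R)$), it suffices to show each $\eta_{M_\p} : M_\p \to (M_\p)^{**}$ is injective.

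Next I would invoke the hypothesis: $R_\p$ is a Gorenstein local ring for every $\p \in \Ass(R)$, and moreover $\dim R_\p = 0$ since $\p$ is a minimal prime of its own support in the associated-prime sense — more precisely, an associated prime of a Noetherian ring localizes to an associated prime of the zero ideal in $R_\p$, which is the maximal ideal, so $R_\p$ is a zero-dimensional Gorenstein local ring, i.e. self-injective. Over a zero-dimensional Gorenstein (= quasi-Frobenius) local ring, $R_\p$ is an injective cogenerator, so the functor $\Hom_{R_\p}(-, R_\p)$ is faithful and exact, and the biduality map $\eta_N : N \to N^{**}$ is an isomorphism for every finitely generated $R_\p$-module $N$. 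Applying this with $N = M_\p$ gives that $\eta_{M_\p}$ is an isomorphism, in particular injective, which is exactly what was needed.

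Assembling: $\Ker(\eta_M)_\p = \Ker(\eta_{M_\p}) = 0$ for all $\p \in \Ass(R)$, hence $\Ker(\eta_M)$ — being torsion-free and thus embedding into the product of its localizations at $\Ass(R)$ — vanishes, so $M \to M^{**}$ is injective and $M$ is torsionless. The one point that deserves a careful word rather than a routine wave is the reduction step, namely that $\eta_M$ commutes with localization and that the kernel, being a submodule of a torsion-free module, is detected on $\Ass(R)$; everything after that is the standard fact that a zero-dimensional Gorenstein local ring is self-injective and hence an injective cogenerator. I expect the only mild obstacle is to state cleanly why $R_\p$ is \emph{zero-dimensional} Gorenstein (not merely Gorenstein), which follows because $\p \in \Ass(R)$ forces $\p R_\p \in \Ass(R_\p)$ and an associated prime of a local ring that is also the maximal ideal forces dimension zero; alternatively one cites directly the stated equivalent formulation that the total quotient ring is zero-dimensional Gorenstein.
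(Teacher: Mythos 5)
Your proof is correct and follows essentially the same route as the paper: use torsion-freeness to embed $M$ (equivalently, the kernel of the biduality map) into its localization at $S = R \setminus \bigcup_{\p \in \Ass(R)} \p$, and use that this localization is a zero-dimensional Gorenstein, hence self-injective, ring so that the localized biduality map is an isomorphism, forcing $M \to M^{**}$ to be injective. The paper simply works with the single localization $S^{-1}R$ instead of prime-by-prime at each $\p \in \Ass(R)$, which sidesteps your auxiliary remarks about $S^{-1}R$ decomposing as a product and each $R_\p$ being zero-dimensional, but the substance is identical.
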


\begin{proof}
We have only to prove the \lq if \rq  part of the lemma.   
Let   $S =  R \backslash \bigcup _{\p \in \Ass (R)}\p$.  
There is a commutative diagram; 
$$
\xymatrix@C=52pt{ 
 M \ar[r]^(0.35){\alpha}  \ar[d]_{\beta}  &   \Hom _R (\Hom _R (M, R), R) \ar[d]  \\
 S^{-1}M \ar[r]^(0.25){S^{-1}\alpha}  &  \Hom _{S^{-1}R}(\Hom _{S^{-1}R}({S^{-1}} M, S^{-1}R), S^{-1}R),  \\
} 
$$
where the vertical arrows are the mapping induced by the localization {at} $S$. 
If $M$ is torsion-free, then $\beta$ is injective. 
Since  $S^{-1}R$  is a Gorenstein ring of dimension zero, 
$S^{-1}\alpha$ is an isomorphism. 
As a result, it follows that $\alpha$ is injective, hence  $M$ is torsionless. 
\end{proof}

\begin{theorem}\label{theorem torsion-free}
Let  $R$  be a generically Gorenstein ring. 
Then the following two conditions are equivalent for  $X \in \K$: \medskip  
\begin{enumerate}
\item 
$X$  is *torsion-free. \medskip 
\item
Each cohomology module $H^i (X^* )$  is a torsion-free $R$-module for $i \in \Z$. \medskip
\end{enumerate}
\end{theorem}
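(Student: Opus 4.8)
The plan is to reduce the equivalence to a statement about the single graded module $C(X)$ and then apply the local characterization of torsion-freeness. By Theorem \ref{torsion-free criterion}(1), condition (1) is equivalent to $\Ext^1_R(C(X),R)=0$, i.e. to $\Ext^1_R(C^{i+1}(X),R)=0$ for every $i$. Since $C^{i+1}(X)=X^{i+1}/B^{i+1}(X)$ and $X^{i+1}$ is projective, we have $\Ext^1_R(C^{i+1}(X),R)\cong\Ext^1_R(\transpose N_i,R)$ for a suitable module $N_i$ (indeed $C^{i+1}(X)$ is, up to a projective summand, a cosyzygy-type module whose transpose is a syzygy of $B^{i+1}(X)$); more directly, from the exact sequence $0\to B^{i+1}(X)\to X^{i+1}\to C^{i+1}(X)\to 0$ one reads off that $\Ext^1_R(C^{i+1}(X),R)=0$ exactly when the dual sequence $0\to C^{i+1}(X)^*\to (X^{i+1})^*\to B^{i+1}(X)^*\to 0$ is exact, which by Lemma \ref{lemma torsion-free} (the generic Gorenstein hypothesis) is equivalent to $B^{i+1}(X)^*$ being, or not, an obstruction — the cleanest route is to show $\Ext^1_R(C^{i+1}(X),R)=0$ iff a certain cohomology module $H^j(X^*)$ is torsion-free.

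Concretely, first I would identify $H^{-i}(X^*)$ in terms of $C^{i+1}(X)$. Applying $\Hom_R(-,R)$ degreewise to $X$ and taking cohomology, the exact sequence $0\to B^{i+1}(X)\to X^{i+1}\xrightarrow{d} X^{i+2}$ together with $X^i\xrightarrow{d} X^{i+1}\to C^{i+1}(X)\to 0$ shows that $H^{-i-1}(X^*)$ sits in an exact sequence relating it to $\Hom_R(C^{i+1}(X),R)$ and $\Ext^1_R(C^{i+1}(X),R)$; in fact one gets a short exact sequence
\begin{equation*}
0\to \Ext^1_R(C^{i+1}(X),R)\to H^{-i-1}(X^*)\to \ker\bigl(\Hom_R(B^{i+1}(X),R)\to\Hom_R(X^i,R)\bigr)\to 0,
\end{equation*}
and the rightmost term embeds in $\Hom_R(B^{i+1}(X),R)$, which is torsionless, hence torsion-free. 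So $H^{-i-1}(X^*)$ is torsion-free if and only if its torsion submodule $\Ext^1_R(C^{i+1}(X),R)$ vanishes (here I use Lemma \ref{lemma torsion-free}: over a generically Gorenstein ring a finitely generated module is torsion-free iff it is torsionless, and a submodule of a torsionless module that is itself torsion is torsionless only if it is zero). Summing over $i$, condition (2) becomes $\Ext^1_R(C(X),R)=0$, which is condition (1) by Theorem \ref{torsion-free criterion}(1).

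The main obstacle I anticipate is pinning down precisely that short exact sequence for $H^{-i-1}(X^*)$ and verifying that the "error term" beyond $\Ext^1_R(C^{i+1}(X),R)$ is always torsion-free, so that torsion-freeness of $H^{\bullet}(X^*)$ is governed exactly by the vanishing of those $\Ext^1$'s. This is essentially a careful bookkeeping with the two four-term exact sequences coming from $0\to B^{i+1}(X)\to X^{i+1}\to C^{i+1}(X)\to 0$ and $0\to H^{i+1}(X)\to C^{i+1}(X)\to B^{i+2}(X)\to 0$, dualized; the projectivity of the $X^i$ makes all higher $\Ext$'s collapse appropriately. Once the sequence is in hand, the implication $(1)\Rightarrow(2)$ is immediate, and $(2)\Rightarrow(1)$ uses Lemma \ref{lemma torsion-free} to turn "torsion-free" into "the torsion part vanishes."
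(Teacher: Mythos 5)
Your overall plan---translate condition (1) into $\Ext^1_R(C(X),R)=0$ via Theorem \ref{torsion-free criterion}(1) and then read off both implications degreewise from an exact sequence relating $H^{\bullet}(X^*)$, $\Ext^1_R(C^{\bullet}(X),R)$ and a torsion-free term---is workable, and in the direction $(2)\Rightarrow(1)$ it is genuinely different in presentation from the paper, which argues directly with chain maps $f\colon X\to R[i]$, localizes, and uses Corollary \ref{cor}. However, as written there are two concrete defects. First, the displayed short exact sequence is wrong: the map $\Hom_R(B^{i+1}(X),R)\to\Hom_R(X^i,R)$ is pullback along the surjection $X^i\twoheadrightarrow B^{i+1}(X)$, hence injective, so your third term is zero and the sequence would assert $H^{-i-1}(X^*)\cong\Ext^1_R(C^{i+1}(X),R)$, which is false in general (the cohomological index is also off by one). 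The correct statement is precisely Theorem \ref{exact} with $M=R$: $\Ext^1_R(C^{i+1}(X),R)$ embeds into $H^{-i}(X^*)$ and the quotient embeds into $\Hom_R(H^i(X),R)$, which is torsionless, hence torsion-free. Citing that theorem repairs this step and gives $(1)\Rightarrow(2)$ immediately (no generic Gorenstein hypothesis is needed for that direction, consistent with the paper's remark).

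Second, and more seriously, your $(2)\Rightarrow(1)$ rests on the assertion that $\Ext^1_R(C^{i+1}(X),R)$ is the \emph{torsion} submodule of $H^{-i}(X^*)$, but you never justify that it is torsion, and Lemma \ref{lemma torsion-free}, which you invoke, does not supply this (the torsionless/torsion-free comparison is not the issue here). This torsionness is exactly where generic Gorensteinness must enter: with $S=R\setminus\bigcup_{\p\in\Ass(R)}\p$, Ext of finitely generated modules commutes with localization, and $S^{-1}R$ is a Gorenstein ring of dimension zero, hence self-injective, so $S^{-1}\Ext^1_R(C^{i+1}(X),R)\cong\Ext^1_{S^{-1}R}(S^{-1}C^{i+1}(X),S^{-1}R)=0$; thus $\Ext^1_R(C^{i+1}(X),R)$ is torsion, and being a submodule of the torsion-free module $H^{-i}(X^*)$ it must vanish, so Theorem \ref{torsion-free criterion}(1) concludes. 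With these two repairs your argument becomes a clean module-level alternative to the paper's proof, which instead shows $S^{-1}f=0$ for any chain map $f$ with $H(f)=0$ and then uses torsion-freeness of $H^i(X^*)$ to force $f=0$; both routes ultimately use the same inputs, namely self-injectivity of the total quotient ring and hypothesis (2).
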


\begin{proof}
$(1) \Rightarrow (2)$: 
Before the proof we recall that $N^*$  is torsionless for any finitely generated $R$-module $N$.
(If  $R^m \to N$ is a surjective mapping of $R$-modules,  then we have an injection  $N^*$  to a free module  $(R^m)^*$.)  

By definition  $\rho ^i_{X R} : H^{-i}(X^*) \to H ^{i}(X) ^*$  is injective. 
Since  $H^i (X)$  is a finitely generated $R$-module,   $H ^{i}(X) ^*$ is torsionless. 
This forces  $H^{-i}(X^*)$  to also be torsionless, and hence torsion-free. 

$(2) \Rightarrow (1)$: 
Let  $S = R \backslash \bigcup_{\p \in \Ass (R )}\p$  as in Lemma \ref{lemma torsion-free}. 
Now let  $f: X  \to R[i]$  be a chain map with $i \in \Z$ and assume  $H(f) =0$. We want to show that  $f =0$ as an element of  $H^i ( X^{*})$. 

Note that $S^{-1}f : S^{-1}X \to S^{-1}R[i]$  is a chain map with $H (S^{-1}f)=0$. 
Since  $S^{-1}R$  is a Gorenstein ring of dimension zero, we have from Corollary \ref{cor} that  $S^{-1}X$  is *torsion-free as a complex over  $S^{-1}R$, hence $S^{-1}f =0$  in  $\Ko(S^{-1}R)$.
This means that  $S^{-1}f = 0$  as an element of 
$H(\Hom _{S^{-1}R}(S^{-1}X, S^{-1}R[i]))$. 
Since each term of $X$ is a finitely generated $R$-module, we note that there is  a natural isomorphism
$\Hom _{S^{-1}R}(S^{-1}X, S^{-1}R[i])\cong S^{-1}\Hom _{R}(X, R[i])$, hence 
$$
H(\Hom _{S^{-1}R}(S^{-1}X, S^{-1}R[i]))\cong S^{-1}H(\Hom _{R}(X, R[i])) = S^{-1}H^i (X^*). 
$$
This shows that there is an element $s \in S$ with  $sf =0$ as an element of 
$H^i(X^{*})$. 
Since we assumed that  $H^i(X^{*})$  is a torsion-free $R$-module, we must have  $f =0$ as an element of $H^i(X^{*})$. 
\end{proof}

\begin{remark}
The implication $(1) \Rightarrow (2)$ in the theorem is generally true without the assumption of generic Gorensteinness.
But it is not the case for $(2) \Rightarrow (1)$. 

For example, let  $(R, \m, k)$ be a local ring with $\dim R >0$  and  $\depth R =0$. 
Note in this case that every  $k$-vector space is torsionless, hence torsion-free,  as an $R$-module, 
since $k$ is isomorphic to a submodule in $R$. 
Now let  $X$ be an $R$-free resolution of  $k$. 
Then it follows that $H^i (X^*) \cong \Ext _R ^i (k, R)$ is a torsion-free $R$-module for each $i$, hence $X$ satisfies the condition $(2)$. 
On the other hand, we note that  $H^i (X)^* \not= 0$ only if $i =0$. 
Hence the condition $(1)$ forces   $\Ext _R ^i (k, R) = 0$ for all $i >0$, which is an equivalent condition for $R$ to be   a Gorenstein ring of dimension zero. 
Therefore  $X$ does not satisfy the condition $(1)$. 
\end{remark}

Note that a finitely generated module  $M$ over a commutative Noetherian ring $R$  is said to be reflexive if the natural mapping  $M \to M^{**}$  is an isomorphism.

Recall that a commutative Noetherian ring $R$ is said to be {\bf Gorenstein in depth one} if each  $R_{\p}$  is a Gorenstein ring for all the prime ideals  $\p$ satisfying  $\depth R_{\p} \leqq 1$. 

First we remark the following (perhaps well-known) lemma. 

\begin{lemma}\label{lemma ref}
Assume that  $R$  is Gorenstein in depth one. 
\begin{enumerate}
\item
If  $M$  is a finitely generated $R$-module, then  $M^*$  is a reflexive $R$-module. \medskip
\item 
Let  $M \subseteq N$  be a submodule of a finitely generated $R$-module which is equal in depth one, i.e. $M_{\p} = N _{\p}$  if  $\depth R_{\p} \leqq 1$. 
Furthermore assume that both  $M$ and $N$ are reflexive. 
Then $M = N$.  \medskip 
\end{enumerate}
\end{lemma}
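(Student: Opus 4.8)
The plan is to reduce everything to the well-known local statement that over a Gorenstein local ring of dimension $\le 1$ (more precisely, once we invoke Gorenstein-in-depth-one, over every localization $R_\p$ with $\depth R_\p \le 1$) the relevant modules behave well, and then to use the standard "reflexive $=$ satisfies Serre's condition $(S_2)$ together with being torsion-free" criterion. For part (1): I would write $M^* = \Hom_R(M,R)$ and recall that for \emph{any} finitely generated $N$ the dual $N^*$ is torsionless, hence torsion-free, and moreover satisfies $(S_2)$; the point of the Gorenstein-in-depth-one hypothesis is that at each prime $\p$ with $\depth R_\p \le 1$ the local ring $R_\p$ is Gorenstein, so $\Ext^1_{R_\p}(\transpose M_\p, R_\p)$ and $\Ext^2_{R_\p}(\transpose M_\p, R_\p)$ vanish when $\dim R_\p \le 1$ (indeed $\Ext^{>0}$ over a zero-dimensional Gorenstein ring vanishes, and over a one-dimensional Gorenstein ring $\Ext^2 = 0$ automatically), so that Example \ref{transpose} gives the natural map $M_\p^{**} \to M_\p^{****}$ — equivalently the natural map $N \to N^{**}$ with $N = M_\p^*$ — is an isomorphism locally at such $\p$. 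Since $M^*$ is torsion-free (torsionless) and is locally reflexive in depth one, the cokernel and kernel of $M^* \to M^{**}$ are supported only in depth $\ge 2$, and a torsion-free module whose double-dual embedding is an isomorphism in depth one is reflexive; this is the $(S_2)$-type argument, which I would carry out by looking at the short exact sequences coming from $0 \to M^* \to M^{**} \to \Cok \to 0$ and using $\depth$ estimates.

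For part (2): given $M \subseteq N$ with $M_\p = N_\p$ whenever $\depth R_\p \le 1$, consider the short exact sequence $0 \to M \to N \to N/M \to 0$. The hypothesis says $(N/M)_\p = 0$ for all $\p$ with $\depth R_\p \le 1$, i.e. $\Supp(N/M)$ contains no prime of depth $\le 1$, so $\depth R_\p \ge 2$ for every $\p \in \Supp(N/M)$. If $N/M \ne 0$, pick an associated prime $\p$ of $N/M$; then $\depth (N/M)_\p = 0$ while $\depth R_\p \ge 2$. Now apply $\Hom_R(-, R)$ to the sequence and localize at $\p$: since $N$ is reflexive, hence torsion-free, and $N/M$ is supported in codimension $\ge 2$, the connecting maps show $\Ext^1_{R_\p}(N/M_\p, R_\p) = 0$ forces — after dualizing twice and using reflexivity of both $M$ and $N$ — that $M_\p \to N_\p$ is already an isomorphism, contradicting $(N/M)_\p \ne 0$. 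Concretely: dualizing $0 \to M \to N \to N/M \to 0$ twice and using $M \cong M^{**}$, $N \cong N^{**}$ gives a commutative diagram forcing $N/M \hookrightarrow \Ext^1_R(N/M, R)^{\vee}$-type control; the cleanest route is to note $\Ext^0_{R_\p}(N/M_\p, R_\p) = 0$ and $\Ext^1_{R_\p}(N/M_\p, R_\p) = 0$ because $\depth R_\p \ge 2$ and $N/M_\p$ has finite length support... — more robustly, use that a reflexive module satisfies $(S_2)$, so $\mathrm{grade}(\ann(N/M), R) \ge 2$ implies $\Hom_R(N/M,R) = \Ext^1_R(N/M,R) = 0$, and then the dualized sequence gives $N^* \cong M^*$, hence $N \cong N^{**} \cong M^{**} \cong M$, and one checks the composite is the inclusion.

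The main obstacle I anticipate is part (1): proving that $M^*$ is genuinely reflexive (not merely torsion-free and reflexive in depth one) requires the $(S_2)$ input, and I must be careful that the Gorenstein-in-depth-one hypothesis is exactly what upgrades "torsionless" to "reflexive after localizing at depth-$\le 1$ primes" via the Auslander--Bridger exact sequence of Example \ref{transpose} — the vanishing $\Ext^1_{R_\p}(\transpose(M^*_\p), R_\p) = \Ext^2_{R_\p}(\transpose(M^*_\p), R_\p) = 0$ holds for $R_\p$ Gorenstein with $\dim R_\p \le 1$, and I should double-check the edge case $\dim R_\p = 1$ where only $\Ext^2$ vanishes for free while $\Ext^1$ vanishes because $M^*_\p$, being a torsion-free module over a one-dimensional Gorenstein (hence in particular Cohen--Macaulay) local ring, is maximal Cohen--Macaulay and therefore totally reflexive. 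Part (2) is then essentially the standard "$(S_2)$-closure is unique" argument and should go through routinely once part (1) is in place.
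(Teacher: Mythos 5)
Your proposal is correct, and part (1) is essentially the paper's own argument: $M^*$ is torsionless so the biduality map $\alpha\colon M^*\to M^{***}$ is injective; Gorenstein in depth one makes $\alpha_{\p}$ an isomorphism whenever $\depth R_{\p}\leqq 1$ (in the paper via the observation that $M^*_{\p}$ is torsion-free, hence MCM and reflexive over the Gorenstein ring $R_{\p}$; in your version via the Auslander--Bridger sequence of Example \ref{transpose} --- both are fine); and then the cokernel, supported only at primes of depth $\geqq 2$, is killed by a depth argument. The paper carries out this last step explicitly --- localize at a minimal prime of $\Supp(\Cok\alpha)$, note that $M^*_{\p}$ and $M^{***}_{\p}$ are second syzygies hence of depth at least two, and contradict the depth lemma --- whereas you only gesture at ``depth estimates''; but since you have already recorded that duals satisfy the (depth-sense) $(S_2)$ condition, the missing lines are routine. (Minor slip: your displayed sequence should read $0\to M^*\to M^{***}\to \Cok\to 0$, not $M^{**}$.) For part (2) you take a genuinely different route from the paper: you dualize $0\to M\to N\to N/M\to 0$, use $\mathrm{grade}(\ann(N/M),R)\geqq 2$ (Rees's theorem) to get $\Hom_R(N/M,R)=\Ext^1_R(N/M,R)=0$, conclude $N^*\cong M^*$, and then transport this back through the biduality isomorphisms; the naturality of $(-)^{**}$ makes the composite the inclusion, so $M=N$. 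The paper instead repeats the part-(1) pattern: localize at a minimal prime of $\Supp(N/M)$, use that reflexive modules are second syzygies and hence have depth at least two there, and contradict the depth lemma. Both arguments are standard and correct; yours buys a localization-free, purely homological statement (and makes transparent that only reflexivity of $M$, $N$ plus the support condition are used), while the paper's is slightly shorter and uniform with its proof of (1).
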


\begin{proof}
(1) 
Since  $M^*$  is a torsionless module, the natural mapping $\alpha: M^* \to M^{***}$ is injective. 
Set  $C$  to be the cokernel of this map, i.e. $C = {\Cok} (\alpha)$. 
Then, by the assumption, we have  $C_{\p} = 0$  if  $\depth R_{\p} \leqq 1$. 
(Note that $M^{*}_{\p}$ are torsion-free, hence MCM's over Gorenstein rings $R_{\p}$ for those $\p$, hence  $\alpha _{\p}$  are isomorphisms.)
To prove  $C = 0$, let us assume that  $C \not= 0$ and take a minimal prime ideal $\p$  in $\Supp (C)$. 
Then, by the above, we must have  $\depth R_{\p} \geqq 2$. 
Note that there is an exact sequence of $R_{\p}$-modules 
$0 \to M _{\p}^* \to M_{\p}^{***} \to C_{\p} \to 0$, where $C_{\p}$ is a non-zero $R_{\p}$-module of finite length. 
Remark here that both $M _{\p}^*$ and $M_{\p}^{***}$  are second syzygy modules over $R_{\p}$. 
Since  $\depth R_{\p}\geqq 2$, it follows that such second syzygy modules have depth at least {2}. 
Noticing that $\depth \, C_{\p}=0$, we see that this contradicts the depth lemma  (see \cite[Proposition 1.2.9]{BH} {or \cite[1.2.6]{Av}}). 

\par
(2)
Setting   $C = N/M$, we want  to show  $C =0$. 
By the  assumption, if  $\depth R_{\p} \leqq 1$, then  $C_{\p} =0$. 
Thus every prime  $\p$  in $\Supp (C)$  satisfies $\depth R_{\p} \geqq 2$. 
Assuming $C \not= 0$, we take a minimal prime ideal in $\Supp (C)$. 
Then there is an exact sequence of $R_{\p}$-modules $0 \to M_{\p} \to N_{\p} \to C_{\p} \to 0$, where $C_{\p}$ is of finite length. 
Since $M_{\p}$  (resp. $N_{\p}$ ) is a reflexive $R_{\p}$-module, 
the depth of  $M_{\p}$ (resp. $N_{\p}$) is at least {2}. 
This contradicts the depth lemma again. 
\end{proof}

\begin{theorem}\label{theorem ref}
Suppose that $R$  is Gorenstein in depth one and let  $X$  be a complex  in $\K$. 
Then the following two conditions are equivalent: 
\begin{enumerate}
\item 
$X$  is *reflexive. \medskip 
\item
Each cohomology module $H^i (X^* )$  is a reflexive $R$-module for $i \in \Z$. 
\end{enumerate}
\end{theorem}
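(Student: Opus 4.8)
The plan is to follow the template of the proof of Theorem~\ref{theorem torsion-free}, but to exploit the Gorenstein-in-depth-one hypothesis in order to control $\Ext^2$ rather than merely $\Ext^1$, and then to invoke Lemma~\ref{lemma ref} to promote an equality ``in depth one'' to a genuine equality.

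The implication $(1)\Rightarrow(2)$ is the easy direction. If $X$ is *reflexive, then by definition each $\rho^i_{X,R}\colon H^{-i}(X^*)\to H^i(X)^*$ is an isomorphism, so $H^{-i}(X^*)\cong H^i(X)^*$. Since $H^i(X)$ is finitely generated, $H^i(X)^*$ is the $R$-dual of a finitely generated module and hence reflexive by Lemma~\ref{lemma ref}(1); therefore $H^{-i}(X^*)$ is reflexive for every $i$.

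For $(2)\Rightarrow(1)$ I would argue as follows. First, $R$ Gorenstein in depth one is in particular generically Gorenstein, since every associated prime $\p$ has $\depth R_\p=0\le 1$; and reflexive modules are torsion-free. Hence Theorem~\ref{theorem torsion-free} already forces $X$ to be *torsion-free under hypothesis (2), so every $\rho^i_{X,R}$ is injective and we may regard $H^{-i}(X^*)$ as a submodule of $H^i(X)^*$. Next, Theorem~\ref{exact} (with $M=R$) gives an embedding $\Cok(\rho^i_{X,R})\hookrightarrow \Ext^2_R(C^{i+1}(X),R)$. Now localize at an arbitrary prime $\p$ with $\depth R_\p\le 1$: then $R_\p$ is Gorenstein, hence Cohen--Macaulay, so $\dim R_\p=\depth R_\p\le 1$ and therefore $\Ext^2_{R_\p}$ of any finitely generated module into $R_\p$ vanishes. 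Since forming $\Ext$, cokernels and $R$-duals all commute with localization here (each $X^i$ is finitely generated projective), we get $(\Cok\rho^i_{X,R})_\p=0$, i.e. $(\rho^i_{X,R})_\p$ is an isomorphism whenever $\depth R_\p\le 1$. Thus the inclusion $H^{-i}(X^*)\subseteq H^i(X)^*$ is an equality in depth one; its source is reflexive by hypothesis (2) and its target is reflexive by Lemma~\ref{lemma ref}(1), so Lemma~\ref{lemma ref}(2) yields $H^{-i}(X^*)=H^i(X)^*$. Hence every $\rho^i_{X,R}$ is an isomorphism, which is precisely the *reflexivity of $X$.

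The only real obstacle is bookkeeping about where the hypothesis is spent: ``Gorenstein in depth one'' is used twice in $(2)\Rightarrow(1)$ --- first to kill $\Ext^2_{R_\p}$ at all depth-$\le 1$ primes, so that $\rho^i_{X,R}$ becomes an isomorphism in depth one, and again, through Lemma~\ref{lemma ref}, to know that both the sub- and the ambient module are reflexive, which is what upgrades ``equal in depth one'' to ``equal''. One must also verify the compatibility of $\rho^i_{X,R}$ with localization to legitimize the local-to-global step, but this is routine since $\rho$ is defined functorially and $X$ is a complex of finitely generated projectives over a Noetherian ring.
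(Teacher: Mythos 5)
Your proposal is correct and follows essentially the same route as the paper: the easy direction via Lemma \ref{lemma ref}(1), and for $(2)\Rightarrow(1)$ injectivity of $\rho^i_{X,R}$ from Theorem \ref{theorem torsion-free}, reduction via Lemma \ref{lemma ref}(2) to primes of depth at most one, and then surjectivity there from Theorem \ref{exact} together with the vanishing of $\Ext^2$ over a Gorenstein local ring of dimension at most one. Your explicit remarks that Gorenstein in depth one implies generically Gorenstein and that reflexive modules are torsion-free merely spell out steps the paper leaves implicit.
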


\begin{proof}
$(1) \Rightarrow (2)$: 
Since  $X$  is *reflexive, we have an isomorphism  $\rho ^i_{X R} : H^{-i}(X^*) \to H^i(X)^*$  for all $i \in \Z$. 
Note that each  $H^i(X)$  is a finitely generated $R$-module. 
It thus follows from Lemma \ref{lemma ref}(1) that  $H^i(X)^*$, hence  $H^{-i}(X^*)$ as well, is a reflexive $R$-module.

$(2) \Rightarrow (1)$: 
We want to show that the natural mapping $\rho ^i_{XR} : H^{-i}(X^*) \to H^i(X)^*$ is an isomorphism for each $i \in Z$. 
We know, from Theorem \ref{theorem torsion-free}, that $X$ is *torsion-free, hence all $\rho ^i_{XR}$ are injective. 
Thus, applying Lemma \ref{lemma ref}(2), we have only to show that  $(\rho ^i_{XR})_{\p}$ are isomorphisms for prime ideals $\p$ with  $\depth R_{\p} \leqq 1$. 
Therefore the proof is reduced to the case where the ring $R$  is a Gorenstein local ring of dimension at most one. 
Henceforth we assume $R$ is such a ring. 
In this case, we have   $\Ext _R ^2 (C(X), R) =0$, 
 thus  it results from Theorem \ref{exact} that $\rho ^i_{XR} : H^{-i}(X^*) \to H^i(X)^*$ is surjective for each $i \in \Z$. 
Since we know already that this is injective, each $\rho ^{i}_{XR}$ is an isomorphism. 
\end{proof}

\vspace{6pt}
\section{Split complexes and $\F$}

We note that $\K$  admits finite direct sums, and moreover some kind of infinite direct sums can be possibly taken inside $\K$. 
For example, let  $\{ X_j \ | \ j \in J\}$  be a set of complexes in  $\K$ and assume that  $X^i = \bigoplus _{j \in J} X_j^i$  is  a finitely generated $R$-module for each  $i \in \Z$. 
In such a case  the direct sum  $X = \coprod _{j \in J} X_j$  (or the coproduct in $\K$) is well-defined so that its $i$th component is $X^i$. 
Note in this case that the direct sum  coincides with the direct product $\prod _{j \in J} X_j$, as we see in the next lemma.  

The direct sum  $\coprod _{i \in \Z } R[i]$  is one of such typical examples of infinite direct sums, actually it is a complex of the form 
$\left[ \xymatrix{ \cdots \ar[r]^0 & R \ar[r]^{0} & R \ar[r]^(0.45){0} & R \ar[r]^(0.4){0}  & \cdots} \right]$ that belongs to $\K$. 
  
\begin{lemma}\label{sum-product lemma}
Let  $\{ X_j \ | \ j \in J \}$  be a set of complexes in  $\K$. 
Assume that,  for each $i \in \Z$,  there is a finite subset $J_i \subseteq J$ such that  $X_j^i \not= 0$ only if $j \in J_i$.  
Then  the coproduct $X = \coprod  _{j \in J} X_j$ exists in $\K$. 
Moreover in this case, the coproduct is a product in $\K$, i.e.
$
X = \prod _{j \in J} X_j
$. 
Hence there is an isomorphism of $R$-modules  
$$
\Hom _{\K} (Y, \  \coprod _{j \in J} X_j) \cong \prod _{j \in J} \Hom _{\K} (Y, X_j)
$$
for  all  $Y \in \K$. 
\end{lemma}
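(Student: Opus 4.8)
The plan is to exhibit the candidate coproduct by a completely explicit construction in the abelian category $\C$, and then to verify both universal properties \lq\lq degreewise\rq\rq, the whole point being that every infinite sum of morphisms that occurs is finite in each cohomological degree and hence defines an honest chain homomorphism. So the first step is to set $X^i := \bigoplus_{j \in J} X_j^i$; by hypothesis $X_j^i \neq 0$ only for $j \in J_i$, so this is a finite direct sum of finitely generated projective modules, hence again in $\proj(R)$, and with the diagonal differential $d_X^i = \bigoplus_j d_{X_j}^i$ we get an object $X \in \C$. It carries structural chain homomorphisms $\iota_j : X_j \to X$ and $p_j : X \to X_j$ with $p_k \iota_j = \delta_{jk}\,\mathrm{id}_{X_j}$ and with the locally finite completeness relation $\sum_j \iota_j p_j = \mathrm{id}_X$ (a finite sum in each degree). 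I want to prove that $(X, \{\iota_j\})$ is the coproduct of the $X_j$ in $\K$ and $(X, \{p_j\})$ is their product.

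For the coproduct: given $Y \in \K$ and morphisms $f_j : X_j \to Y$ in $\K$ represented by chain maps, I would put $f := \sum_j f_j p_j$. Since $p_j^i = 0$ whenever $j \notin J_i$, this sum is finite in each degree, so $f$ is a well-defined chain homomorphism (the chain-map identity at cohomological degree $i$ involves only degrees $i$ and $i+1$, where one may legitimately restrict to the finite index set $J_i \cup J_{i+1}$), and $f\iota_j = f_j$ literally. For uniqueness up to homotopy, if $g$ is another chain map with $g\iota_j$ homotopic to $f_j$ for every $j$, I set $h := g - f$, choose homotopies $s_j : X_j \to Y$ with $h\iota_j = d_Y s_j + s_j d_{X_j}$, and note that $s := \sum_j s_j p_j$ is again finite in each degree. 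Then, writing $h = h\cdot\mathrm{id}_X = \sum_j h\iota_j p_j$ and using the relations $d_{X_j}p_j = p_j d_X$, one computes $h = d_Y s + s d_X$, so $h \simeq 0$ and $g \simeq f$ in $\K$.

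The product assertion is then obtained by the mirror-image argument: for morphisms $g_j : Y \to X_j$ one sets $g := \sum_j \iota_j g_j$, which is finite in each degree because $\iota_j^i = 0$ for $j \notin J_i$, so that $p_k g = g_k$; and uniqueness up to homotopy is proved by the same assembly, this time using $\iota_j d_{X_j} = d_X \iota_j$ together with $\mathrm{id}_X = \sum_j \iota_j p_j$. Once $X$ is simultaneously the coproduct and the product of the $X_j$ in $\K$, the displayed isomorphism $\Hom_\K(Y, \coprod_j X_j) \cong \prod_j \Hom_\K(Y, X_j)$ of $R$-modules is nothing but the universal property of the categorical product.

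The only delicate part — and the step I would be most careful about — is precisely this bookkeeping: confirming that each infinite sum of morphisms written above really is finite in every degree (so that it is a morphism at all), and that the homotopies $\sum_j s_j p_j$ and $\sum_j \iota_j t_j$ combine compatibly with the differentials so that the partial sums never interfere. None of this is conceptually deep; it is a matter of keeping the indices $J_i$, $J_{i+1}$ and the signs straight. Beyond that I do not expect any genuine obstacle.
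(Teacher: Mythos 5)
Your proof is correct, and the object you build is exactly the one the paper uses: the degreewise direct sum $X^i=\bigoplus_{j\in J}X_j^i$ (finite in each degree by the hypothesis on the sets $J_i$, hence in $\proj(R)$) with the diagonal differential. Where you differ is in how the universal properties are verified. The paper does not check anything by hand in $\K$: it passes to the ambient homotopy category $K(\Mod(R))$, where coproducts and products are known to be computed degreewise, observes that under the local-finiteness hypothesis the degreewise coproduct coincides with the degreewise product and lands in the full subcategory $\K$, and then concludes by fullness that it is simultaneously a coproduct and a product in $\K$. You instead verify both universal properties directly inside $\K$, assembling the comparison maps as $\sum_j f_jp_j$ and $\sum_j \iota_jg_j$ and, more importantly, assembling the homotopies $\sum_j s_jp_j$ and $\sum_j \iota_jt_j$ for the uniqueness-up-to-homotopy step; your bookkeeping that every such sum is finite in each degree and commutes correctly with the differentials is exactly right, and the uniqueness argument via $h=\sum_j h\iota_jp_j$ is the point that genuinely needs the homotopies, which you handle. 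The trade-off: the paper's route is shorter and isolates the finiteness hypothesis as the only thing to check, at the cost of quoting the standard facts about (co)products in $K(\Mod(R))$; your route is self-contained and more elementary, at the cost of the explicit homotopy bookkeeping. Both are complete proofs, and your final deduction of $\Hom_{\K}(Y,\coprod_j X_j)\cong\prod_j\Hom_{\K}(Y,X_j)$ from the product property matches the paper's conclusion.
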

 
\begin{proof}
Let  $\Mod (R)$  be the abelian category consisting of all (not necessarily finitely generated) $R$-modules and we denote by $K (\Mod (R))$ the homotopy category of all complexes over  $\Mod (R)$. 
Now regarding   $\{ X_j \ | \ j \in J \}$  as an object set in $K (\Mod (R))$,  we see that the coproduct  $X$ in  $K(\Mod (R))$ is given as  $X ^i = \bigoplus _{j \in J}X^i_j$ with differentials defined diagonally by each $d_{X_j}^i$. 
Similarly the product in $K (\Mod (R))$ is given as  $\prod _{j \in J} X_j^i$. 
Now the assumption of the lemma assures that each $X^i$  is finitely generated, hence the coproduct $X$  in $K(\Mod (R))$ lies in its full subcategory $\K$. 
This shows that $X$ is in fact a coproduct in the category $\K$. 

Moreover, under the assumption in the lemma we have the equality 
$\bigoplus _{j \in J}X^i_j  =  \prod _{j \in J} X_j^i$ as $R$-modules for all $i \in \Z$.
Hence the last half of the lemma follows. 
\end{proof}

\begin{definition}\label{smallest additive sub}
Given an $X \in \K$, we define  $\mathrm{Add} (X)$  as the smallest additive subcategory of  $\K$ containing  $X$ that is closed under the shift functor and admits possibly infinite coproducts.     
Equivalently  $\mathrm{Add} (X)$ is the intersection of  all the full subcategories  $\U$ satisfying the following conditions:   
\begin{itemize}
\item[(i)]
$\U$  is  closed under isomorphism and $X \in \U$. \medskip
\item[(ii)]
If  $Y \in \U$ then $Y[i] \in \U$  for all $i \in \Z$. \medskip
\item[(iii)]
If  $Z$ is a direct summand of  $Y \in \U$ then  $Z \in \U$. \medskip
\item[(iv)]
Let $\{ Y_j \ | \ j \in J\}$ be a set of objects in  $\U$ and assume that the coproduct  $\coprod _{j \in J} Y_j$  in $\K$ exists. 
Then  $\coprod _{j \in J} Y_j \in \U$.
\end{itemize}
\end{definition}

(Note that $0$ is an object of $\U$ by (iii) and that all null complexes belong to $\U$ by (i).) 

In the rest of the paper we are particularly interested in  $\mathrm{Add}  (R)$, where  $R$  is regarded as a complex concentrated in degree $0$. 

If  the complex 
$$
X = \left[ \xymatrix@C=32pt{ \cdots \ar[r]^{d^{-2}_X} & X^{-1} \ar[r]^(0.6){d^{-1}_{X}} &  X^{0} \ar[r]^{d^{0}_{X}}  &  X^1 \ar[r]^{d^{1}_{X}} & \cdots} \right] , 
$$
satisfies the equalities  $d_{X}^{{i}}  = 0$  for all $i \in \Z$, then  $X$ belongs to $\F$, since $X$  is a direct sum $\coprod  _{i \in \Z} X^i [-i]$ with each  $X^i$  being a projective $R$-module. 
Such a complex $X$  is characterized by the condition that  $X \cong H(X)$ in $\C$, where we regard the graded $R$-module $H(X)$ as a complex with zero differentials.

Recall that  a complex $X \in \C$ is called {\bf split} if there is a graded $R$-module homomorphism $s : X \to X[-1]$  satisfying  $d_X s d_X = d_X$. 
(Cf. \cite[Definition (1.4.1)]{W}.) 
To state the following well-known lemma,  we recall the notation $C(X) = \mathrm{Coker} (d_X)$ and  $B(X) = \mathrm{Im} (d_X)$,  for a complex $X \in \C$,  as in Notation \ref{c}.

\begin{lemma}\label{characterization of split}
The following conditions are equivalent for  $X \in \C$. 
\begin{enumerate}
\item   $X$ is split. \medskip
\item  There is a direct sum decomposition  $X = X' \oplus N$  in  $\C$  where  $d_{X'} = 0$ and  $N$ is a null complex. \medskip
\item  $C(X) = \bigoplus _{i \in \Z} C^i (X)$  is a projective $R$-module {as underlying $R$-module}. \medskip
\item  The natural inclusion map  $B(X)= \bigoplus _{i \in \Z} B^i(X)  \hookrightarrow X = \bigoplus _{i \in \Z} X^i$  is a split monomorphism as graded $R$-modules. 
\end{enumerate}
\end{lemma}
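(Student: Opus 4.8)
The plan is to run the cycle of implications $(2)\Rightarrow(1)\Rightarrow(4)\Rightarrow(3)\Rightarrow(2)$. Note first that $(3)\Leftrightarrow(4)$ is essentially the formal equivalence, for the short exact sequence of graded modules $0\to B(X)\to X\to C(X)\to 0$, between ``the quotient $C(X)$ is projective'' and ``the sequence splits,'' so the real content sits in the other arrows. For $(2)\Rightarrow(1)$ I would write $X=X'\oplus N$ with $d_{X'}=0$ and $N$ null, pick a contracting homotopy $t\colon N\to N[-1]$ with $d_Nt+td_N=\mathrm{id}_N$, and note that multiplying by $d_N$ and using $d_N^{2}=0$ gives $d_Nt\,d_N=d_N$; extending $t$ by $0$ on $X'$ produces a graded map $s\colon X\to X[-1]$, and since $d_X=0\oplus d_N$ one gets $d_X s\,d_X=0\oplus d_Nt\,d_N=d_X$, so $X$ is split.

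For $(1)\Rightarrow(4)$, suppose $s\colon X\to X[-1]$ satisfies $d_X s\,d_X=d_X$, which in degrees reads $d^{i-1}s^{i}d^{i-1}=d^{i-1}$ for every $i$, with $d^{i}:=d_X^{i}$ and $s^{i}\colon X^{i}\to X^{i-1}$. The key observation is that the homogeneous degree-zero endomorphism $e=(e^{i})$ of the graded module $X$ with $e^{i}:=d^{i-1}\circ s^{i}\colon X^{i}\to X^{i}$ is idempotent, since $(e^{i})^{2}=(d^{i-1}s^{i}d^{i-1})s^{i}=d^{i-1}s^{i}=e^{i}$, and has image exactly $B^{i}(X)$: indeed $\mathrm{Im}(e^{i})\subseteq\mathrm{Im}(d^{i-1})=B^{i}(X)$, while if $b=d^{i-1}(y)\in B^{i}(X)$ then $e^{i}(b)=d^{i-1}s^{i}d^{i-1}(y)=b$. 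Hence $X=B(X)\oplus\ker(e)$ as graded $R$-modules, so $B(X)\hookrightarrow X$ is a split monomorphism; this is $(4)$. Then $(4)\Rightarrow(3)$ is immediate: $C(X)=X/B(X)\cong\ker(e)$ is a graded direct summand of $X=\bigoplus_{i}X^{i}$ whose terms are finitely generated projective, so $C(X)$ is projective.

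For $(3)\Rightarrow(2)$, assume $C(X)$ projective. Then $0\to B(X)\to X\to C(X)\to 0$ splits, so $B(X)$, hence $B(X)[1]$, is projective; consequently the exact sequence $0\to Z(X)\to X\to B(X)[1]\to 0$ induced by $d_X$ splits, and so does $(\ref{HCB})$, whence $H(X)$ is projective and $0\to B(X)\to Z(X)\to H(X)\to 0$ splits. Choosing graded complements $X=Z(X)\oplus V$ and $Z(X)=B(X)\oplus U$, one has that $d_X$ kills $U$ and sends $V$ isomorphically onto $B(X)[1]$; then setting $X':=U$ with zero differential and $N:=B(X)\oplus V$, I would check that $N$ is a subcomplex which decomposes as a direct sum of the two-term contractible complexes concentrated in degrees $i,i+1$ of the form $[\cdots 0\to V^{i}\xrightarrow{\sim}d^{i}(V^{i})\to 0\cdots]$ (so $N$ is null), and that $X=X'\oplus N$ in $\C$ with $d_{X'}=0$. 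This is $(2)$, closing the cycle.

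All the computations are routine module bookkeeping; the one step that needs genuine care is $(3)\Rightarrow(2)$, where the graded complements $U$ and $V$ must be checked to assemble into actual subcomplexes of $X$ and the complementary part $N$ must be seen to be truly contractible rather than merely acyclic — which is exactly why one first records that all of $B(X),Z(X),H(X),C(X)$ are projective before splitting off the pieces.
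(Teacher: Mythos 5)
Your proof is correct and takes essentially the same route as the paper: the paper also reduces everything to $(3)\Rightarrow(2)$ (dismissing $(2)\Rightarrow(1)\Rightarrow(4)\Rightarrow(3)$ as routine) and proves it by the very decomposition you construct, splitting $X$ as (a copy of $H(X)$) $\oplus$ $B(X)$ $\oplus$ (a complement mapped isomorphically onto $B(X)[1]$ by $d_X$) and observing that the last two pieces form a null subcomplex. Your explicit idempotent $e=d\circ s$ for $(1)\Rightarrow(4)$ and the contracting-homotopy computation for $(2)\Rightarrow(1)$ just flesh out steps the paper leaves to the reader.
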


\begin{proof}
The implications  $(2) \Rightarrow (1) \Rightarrow (4) \Rightarrow (3)$ are well-known and easily proved. 
We have only to show $(3) \Rightarrow (2)$.

If  $C(X)$  is projective, then the natural exact sequences of graded $R$-modules 
$$
\xymatrix@C=16pt{0 \ar[r]& B(X) \ar[r]& X \ar[r]& C(X) \ar[r]& 0,}
\quad  \xymatrix@C=16pt{0 \ar[r]& H(X) \ar[r]& C(X) \ar[r]&B(X)[1]\ar[r]& 0}
$$   
are splitting. 
Therefore each  $X^i$  decomposes to  $X_0^i \oplus X_1^i \oplus X_2 ^i$  where  $X_0^i \cong H^i(X)$ and $X_1^i \cong B^i(X)$, $X_2 ^i \cong B^{i+1}(X)$  for  $i \in \Z$, and the differential map  $d_X^i$ yields an isomorphism  $X_2^i \to X_1^{i+1}$, while it is zero on $X_0^i \oplus X_1^i$. 
Thus, part  $X_1 \oplus X_2$ of $X$ defines a null subcomplex $N$.  
Therefore, setting  $X' = X_0$ with zero differentials,  we have a direct sum decomposition  $X = X' \oplus N$. 
\end{proof}

As a result of the equivalence $(1) \Leftrightarrow (2)$ in the lemma, we see that all the split complexes in $\K$   belong  to  $\F$. 
We can show that the uniqueness of the direct sum decomposition in the meaning of $(2)$ in the lemma holds for a split complex. 

\begin{lemma}\label{uniqueness of decomposition}
Let  $X$  be a split complex belonging to  $\C$. 
Assume there are decompositions  $X = X_1 \oplus N_1 = X_2 \oplus N_2$ where  $d_{X_i} =0$ and $N_i$ is a null complex for $i =1, 2$. 
Then we have isomorphisms $X_1 \cong X_2$,   $N_1 \cong N_2$  in $\C$. 
\end{lemma}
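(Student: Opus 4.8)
The plan is to reduce the statement to the graded $R$-modules of homology and of boundaries, and in particular to avoid any appeal to cancellation of direct summands (which would be illegitimate for finitely generated projective modules over a general commutative Noetherian ring). Write $X = X_1 \oplus N_1 = X_2 \oplus N_2$ as in the statement. First I would record two consequences of the hypothesis $d_{X_i} = 0$: since the differential of a direct sum of complexes is the diagonal of the differentials of the summands, $d_X$ vanishes on the $X_i$-part and restricts to $d_{N_i}$ on the $N_i$-part, so $B(X) = B(N_i)$ for $i = 1, 2$; and applying the homology functor to $X = X_i \oplus N_i$, together with $H(N_i) = 0$ (as $N_i$ is null), gives $H(X) = H(X_i)$ for $i = 1, 2$. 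Since a complex with vanishing differential is isomorphic in $\C$ to its own homology regarded as a complex with zero differentials, it follows that $X_1 \cong H(X_1) = H(X) = H(X_2) \cong X_2$, which settles the first isomorphism.

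The substantive point is $N_1 \cong N_2$, and for this I would use the rigid structure of null complexes in place of any cancellation. A null complex is split (a contracting homotopy $s$ satisfies $d\,s\,d = d$), so the analysis carried out in the proof of Lemma \ref{characterization of split} applies to each $N_i$: in every degree $j$ there is a decomposition $N_i^j = P_i^j \oplus Q_i^j$ with $P_i^j \cong B^j(N_i)$ and $Q_i^j \cong B^{j+1}(N_i)$, on which $d_{N_i}^j$ restricts to an isomorphism $Q_i^j \to P_i^{j+1}$ and to zero on $P_i^j$. Grouping, for each $j$, the summand $Q_i^j$ (sitting in degree $j$) together with $P_i^{j+1}$ (sitting in degree $j+1$) exhibits $N_i$ as a direct sum in $\C$ of \emph{disk} complexes of the form $[\cdots \to 0 \to B^{j+1}(N_i) \xrightarrow{\mathrm{id}} B^{j+1}(N_i) \to 0 \to \cdots]$ concentrated in degrees $j$ and $j+1$. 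Hence a null complex is determined up to isomorphism in $\C$ by the graded projective module of its boundaries; since $B(N_1) = B(X) = B(N_2)$ by the previous paragraph, the disk decompositions of $N_1$ and $N_2$ are termwise isomorphic, and therefore $N_1 \cong N_2$.

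I do not expect a genuine difficulty, but the one point that must be handled with care is precisely the one just described: one cannot simply cancel the common summand $X_1 \cong X_2$ off of $X = X_1 \oplus N_1 = X_2 \oplus N_2$, and the replacement is the explicit description of null complexes, up to isomorphism in $\C$, as direct sums of disk complexes indexed by their boundary modules.
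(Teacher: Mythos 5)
Your proof is correct, and it is worth noting where it departs from the paper's argument. For $X_1\cong X_2$ the paper does not pass through homology at all: it writes the inclusion $X_1\hookrightarrow X_2\oplus N_2$ and the projection $X_2\oplus N_2\twoheadrightarrow X_1$ in matrix form, observes that the component factoring through the null complex $N_2$ is null homotopic and hence \emph{zero} (because source and target have zero differentials), and concludes $ca=1_{X_1}$, $ac=1_{X_2}$; your identification $X_i=H(X_i)\cong H(X)$ reaches the same conclusion more directly and avoids that remark altogether. For $N_1\cong N_2$ the two arguments are essentially the same structural fact in different clothing: the paper records that a null complex $N$ is the mapping cone of the identity on $N/Z(N)$, so it is determined up to isomorphism by the graded module $N/Z(N)$, and computes $N_i/Z(N_i)\cong X/Z(X)$ using $Z(X_i)=X_i$; you instead use the invariant $B(N)\cong N/Z(N)[-1]$, make the disk decomposition explicit via the splitting from Lemma \ref{characterization of split}, and compute $B(N_i)=B(X)$ using $d_{X_i}=0$. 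Both routes correctly sidestep the cancellation problem you flag, by exhibiting the null summand as a function of an invariant of $X$ alone; your version buys a slightly more self-contained and explicit description of null complexes, while the paper's is shorter because it quotes the cone-of-identity description as known.
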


\begin{proof}
Write the natural injection $X_1 \hookrightarrow X = X_2 \oplus N_2$  as  $\binom{a}{b}$ where $a : X_1 \to X_2$  and $b: X_1 \to N_2$. 
Similarly write the natural projection  $X_2 \oplus N_2 = X \twoheadrightarrow X_1$  as $(c, d)$ with $c : X_2 \to X_1$  and  $d : N_2 \to X_1$. 
Then we have $1 _{X_1} = ca + db$. 
Since the morphism $db$ factors through a null complex, it is null homotopic. 
Hence it follows from the next remark that  $db =0$ as a morphism in $\C$. 
Thus $ca = 1_{X_1}$. 
In the same way as this one can show  $ac = 1_{X_2}$. 
Hence  $a : X_1 \to X_2$  is an isomorphism in $\C$. 

To show $N_1 \cong N_2$  in $\C$ we remark that, for a null complex $N$, we have  $Z(N) \cong N/Z(N) [-1]$ as graded $R$-modules and $N$ is isomorphic to the mapping cone of the identity mapping on  $N/Z(N)$. 
Since  $Z(X_i) = X_i$ for  $i=1,2$, we have  $X/Z(X) = (X_i \oplus N_i)/(Z(X_i) \oplus Z(N_i) ) \cong  N _i / Z(N_i)$. 
Therefore $N_1/Z(N_1) \cong  N_2 /Z(N_2)$  as graded $R$-modules. 
Since both $N_1$ and $N_2$  are null complexes, we have  an isomorphism  $N_1 \cong N_2$ in $\C$, as remarked above.      
\end{proof}

\begin{remark} 
Let  $f: X \to Y$ be a morphism in $\C$, where we assume that  $d_X = d_Y =0$. 
If  $f$  is null homotopic, then $f=0$ in $\C$.  
In fact, this follows from that $f = d_Y h - h d_X =0$  for a homotopy $h$. 
\end{remark}

By a similar proof to the lemma above we can also show the following lemma.

\begin{lemma}
Let  $X$ and  $Y$  be complexes in $\C$ such that  $d_X=0$. 
If  $X$ is a direct summand of  $Y$  in $\K$, then it is also a direct summand of $Y$ in $\C$. 
\end{lemma}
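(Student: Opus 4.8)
The statement asserts that if $X$ is a complex with $d_X = 0$ and $X$ is a direct summand of $Y$ in $\K$, then $X$ is a direct summand of $Y$ in $\C$. The plan is to mimic the argument of Lemma \ref{uniqueness of decomposition}, exploiting the fact that a null-homotopic map out of a complex with zero differential vanishes in $\C$. First I would record the given data: there are morphisms $i : X \to Y$ and $p : Y \to X$ in $\K$ with $p i = 1_X$ in $\K$. Choosing chain-level representatives $\tilde\imath : X \to Y$ and $\tilde p : Y \to X$ in $\C$, the relation $p i = 1_X$ says $\tilde p \tilde\imath - 1_X$ is null homotopic in $\C$. Because $d_X = 0$, the Remark preceding this lemma (a null-homotopic endomorphism of a complex with zero differential is actually zero in $\C$) gives $\tilde p \tilde\imath = 1_X$ exactly, on the nose, in $\C$.

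Once $\tilde p \tilde\imath = 1_X$ holds in $\C$, the splitting is automatic: $e := \tilde\imath \tilde p : Y \to Y$ is an idempotent chain map in $\C$, and since $\C$ is an abelian category (noted in Section 2), idempotents split. Concretely, $Y \cong \operatorname{Im}(e) \oplus \operatorname{Ker}(e)$ in $\C$, and $\tilde\imath$ induces an isomorphism $X \xrightarrow{\ \sim\ } \operatorname{Im}(e)$ in $\C$ with inverse induced by $\tilde p$ (from $\tilde p\tilde\imath = 1_X$ and $e\tilde\imath = \tilde\imath$). Hence $X$ is a direct summand of $Y$ in $\C$, which is what we want.

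I do not expect any genuine obstacle here; the one point that requires a moment of care is the passage from "$pi = 1_X$ in $\K$" to "$\tilde p\tilde\imath - 1_X$ is null homotopic in $\C$," which is just the definition of morphisms in $\K$ as homotopy classes, together with the fact that representatives can be chosen chain-level. The decisive input is that $d_X = 0$, without which a null-homotopic endomorphism of $X$ need not vanish in $\C$ and the argument would break down. Everything after the exact identity $\tilde p\tilde\imath = 1_X$ in $\C$ is the standard "a retract in an idempotent-complete (here abelian) category is a direct summand" fact, so the proof is short.
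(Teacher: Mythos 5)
Your proof is correct and follows essentially the same route as the paper: both reduce to the observation that the null-homotopic map $1_X - \tilde p\,\tilde\imath$ has source and target with zero differential, hence vanishes on the nose in $\C$ by the preceding remark. The only difference is that you spell out the final step (the idempotent $\tilde\imath\,\tilde p$ splits, exhibiting $X$ as a summand of $Y$ in $\C$), which the paper leaves implicit.
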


\begin{proof}
Assume there are morphisms  $f : X \to Y$  and  $g : Y \to X$  in $\C$  such that $gf$ is chain homotopic to the identity morphism     $1_X$ on  $X$. 
Then it follows from the remark above that $1_X -gf = 0$ as a morphism in $\C$. 
\end{proof}

\begin{proposition}\label{coproduct}
Let  $\{ X_j \ | \ j \in J\}$ be a set of complexes in $\C$  such that  $d_{X_j}=0$ for all $j \in J$. 
Assume that the coproduct $\coprod _{j \in J} X_j$  in $\K$ exists. 
Then,  for any $i \in \Z$, there is a finite subset $J_i \subseteq J$ such that $X_j^i \not= 0$  only if    $j \in J_i$. 
In this case, the coproduct is an ordinary direct sum of  complexes. 
Hence  $\coprod _{j \in J} X_j$  has zero differentials, and it is a split complex as well.  
\end{proposition}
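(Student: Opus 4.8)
The plan is to first prove the local finiteness statement --- for each $i\in\Z$ only finitely many $X_j^i$ are nonzero --- and then deduce the rest formally. Granting local finiteness, Lemma \ref{sum-product lemma} applies and identifies $\coprod_{j\in J}X_j$ with the termwise (now finite) direct sum, whose differentials are assembled diagonally from the $d_{X_j}$; since every $d_{X_j}=0$, the resulting complex has zero differentials. Such a complex belongs to $\F$, as noted just before Lemma \ref{characterization of split} (equivalently, its module of cokernels $C(\coprod_j X_j)=\bigoplus_{j}\bigoplus_i X_j^i$ is projective, so Lemma \ref{characterization of split}(3) applies), hence it is split. Thus the coproduct is the ordinary direct sum of complexes and is a split complex.

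To prove local finiteness, fix $i$; replacing the $X_j$ by $X_j[-i]$ we may assume $i=0$. Write $Y=\coprod_{j\in J}X_j$ for the coproduct in $\K$, with structure morphisms $\iota_j:X_j\to Y$. Because $d_{X_j}=0$, a chain map $X_j\to R$ (with $R$ placed in degree $0$) is just its degree-$0$ component $X_j^0\to R$, and there is no nonzero null-homotopic such map since $d_{X_j}=0$ and $R$ is concentrated in one degree; hence $\Hom_{\K}(X_j,R)\cong\Hom_R(X_j^0,R)$. Note that $\Hom_R(X_j^0,R)\neq 0$ whenever $X_j^0\neq 0$: a nonzero finitely generated projective module is nonzero at some maximal ideal, and $\Hom$ commutes with localization there. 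Applying the universal property of the coproduct with test object $R$ gives, by restriction along the $\iota_j$,
$$
\Hom_{\K}(Y,R)\;\cong\;\prod_{j\in J}\Hom_{\K}(X_j,R)\;\cong\;\prod_{j\in J}\Hom_R(X_j^0,R).
$$
On the other hand, $\Hom_{\K}(Y,R)$ is a subquotient of $\Hom_R(Y^0,R)$ (chain maps $Y\to R$ are given by their degree-$0$ components, and passing to homotopy classes only quotients further), so it is finitely generated because $R$ is Noetherian and $Y^0$ is finitely generated; equivalently $\Hom_{\K}(Y,R)\cong H^0(Y^*)$ with $Y^*\in\C$.

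Now $\Hom_{\K}(Y,R)$ is a Noetherian $R$-module, hence so is its submodule $\bigoplus_{j\in J}\Hom_R(X_j^0,R)$ sitting inside the product above. A finitely generated submodule of a direct sum is contained in the sub-sum over a finite subset $F\subseteq J$, so $\Hom_R(X_j^0,R)=0$, and therefore $X_j^0=0$, for every $j\notin F$. This establishes local finiteness and completes the proof. The main obstacle is recognizing that the existence of the coproduct in $\K$ is already constrained by the automatic finite generation of the single module $\Hom_{\K}(Y,R)\cong H^0(Y^*)$; a secondary point needing care is the routine computation of $\Hom_{\K}(X_j,R)$ for zero-differential complexes, together with the fact that $P^*\neq 0$ for a nonzero finitely generated projective $P$.
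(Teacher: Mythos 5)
Your proof is correct, and it reaches the local finiteness statement by a genuinely different route from the paper. The paper's own argument stays inside the additive structure: from the coproduct property it produces a morphism $\bigoplus_{j\in J}X_j \to P$ in $K(\Mod(R))$, concludes that every finite sub-sum $\bigoplus_{k=1}^r X_{j_k}$ is a direct summand of $P$ in $\K$, upgrades this to a direct summand in $\C$ using the (unlabeled) lemma just before the proposition (a zero-differential complex that is a summand in $\K$ is a summand in $\C$), and then uses the finite generation of each component $P^i$ to bound the number of nonzero $X_j^i$. You instead test the coproduct $Y$ against the single object $R$ (after shifting to reduce to $i=0$): since $d_{X_j}=0$, $\Hom_{\K}(X_j,R)\cong\Hom_R(X_j^0,R)$, which is nonzero whenever $X_j^0\neq 0$; on the other hand $\Hom_{\K}(Y,R)\cong H^0(Y^*)$ is a finitely generated, hence Noetherian, $R$-module because $Y^0$ is finitely generated, and the universal property identifies it with $\prod_j \Hom_R(X_j^0,R)$, whose submodule $\bigoplus_j\Hom_R(X_j^0,R)$ must then be finitely generated, forcing all but finitely many $X_j^0$ to vanish. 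This bypasses the summand-transfer step (and the lemma it rests on) entirely, replacing it with a duality/finiteness argument; it also makes fully explicit the finiteness count that the paper's last sentence passes over rather quickly. What the paper's approach buys is that it never needs the nonvanishing of $(X_j^0)^*$ or the computation of $\Hom_{\K}(X_j,R)$, only degreewise finite generation of $P$; what yours buys is a shorter, more self-contained deduction from Noetherianness of $H^0(Y^*)$. Your concluding step (once local finiteness holds, Lemma \ref{sum-product lemma} identifies the coproduct, up to isomorphism in $\K$, with the termwise direct sum, which has zero differentials and is split by Lemma \ref{characterization of split}) agrees with what the paper leaves implicit.
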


\begin{proof}
Set  $P = \coprod_{j \in J} X_j$. 
By definition we have an isomorphism 
$$
\Hom _{\K} (P,  - ) \cong \prod _{j \in J} \Hom _{\K}(X_j , - ) \cong \Hom _{K(\Mod(R))} (\bigoplus _{j \in J} X_j , - ) |_{\K}
$$
as functors on  $\K$, where  $\bigoplus _{j \in J} X_j$  denotes the coproduct in  $K (\Mod (R))$. 
Therefore there is a morphism  $\bigoplus _{j \in J} X_j \to P$  in  $K (\Mod (R))$,  by which any finite direct {sum}  $\bigoplus _{k=1}^r X_{j_k}$  is a direct summand of  $P$ in the category $\K$. 
Then it follows from the previous lemma that  any such finite direct sums $\bigoplus _{k=1}^r X_{j_k}$ are direct summands of $P$ in the category $\C$. 
In particular,   for each $i \in \Z$,  any finite direct sum $\bigoplus _{k=1}^r X_{j_k}^i$ of $R$-modules is a direct summand of $P^i$.  

Under such a circumstance, for any $i \in \Z$, we claim that $X_j ^i = 0$ for almost all  $j \in J$ (i.e. for all $j \in J$  except a finite number of them).

In fact, if  $R$  is an integral domain, then this is true by rank argument, since $P^i$ is finitely generated and hence it has a finite rank.  
For general cases, set $\mathrm{Min} (R) = \{ \p _1, \ldots , \p _\ell\}$  and it follows from the domain case that $X_j^i  \otimes _R R/\p_i =0$ for almost all  $j \in J$.  
Then we have
$$
X_j^i \subseteq \p_1 X_j^i \subseteq \p _1 ^2 X_j^i \subseteq \cdots \subseteq \p _1^N X_j ^i \subseteq \p_1^N \p_2 X_j^i\subseteq \cdots \subseteq (\p_1 \p_2 \cdots \p_\ell)^N X_j^i
$$
for almost all $j \in J$ and any $N>0$. 
Taking $N$ enough large so that $(\p_1 \p_2 \cdots \p_\ell)^N=0$, we have $X_j^i=0$ for all such $j \in J$.
\end{proof}

Now we are able to state a main result of this section.

\begin{theorem}\label{characterization of F}
The following conditions are equivalent for  $X \in \K$. 

\begin{enumerate}
\item   $X$ belongs to $\F$. \medskip
\item  $X$ is a split complex. \medskip 
\item 
The natural mapping 
$$
H : \Hom _{\K }(X, Y) \longrightarrow  \Hom _{\mathrm{graded} R\mathrm{-mod} }(H(X), H(Y))
$$
which sends $f$  to $H(f)$  is injective for all $Y \in \K$.  \medskip
\item 
The natural mapping $H$  in the condition  $(3)$  is bijective for all $Y \in \K$. 
\end{enumerate}
\end{theorem}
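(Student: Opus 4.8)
The plan is to establish the cycle of implications $(2)\Rightarrow(1)\Rightarrow(4)\Rightarrow(3)\Rightarrow(2)$; of these only the last requires real work. Implication $(2)\Rightarrow(1)$ is essentially recorded after Lemma \ref{characterization of split}: a split complex decomposes in $\C$ as $X=X'\oplus N$ with $d_{X'}=0$ and $N$ a null complex, and then $N\in\F$ while $X'=\coprod_{i}X'^{i}[-i]$ is a coproduct of shifts of finitely generated projective modules, each a summand of some $R[-i]^{\oplus}$, so $X'\in\F$ and hence $X\in\F$. It is convenient to record here (and it is needed if one prefers to route $(1)\Rightarrow(4)$ through $(1)\Rightarrow(2)$) that a direct summand in $\K$ of a split complex is again split: reducing via $S=S'\oplus N$ to the case of a summand $Z$ of a complex $S$ with $d_{S}=0$, write $1_{Z}=gf+d_{Z}h+hd_{Z}$ with $f:Z\to S$, $g:S\to Z$ chain maps and $h$ a homotopy; since $g$ is a chain map into the zero-differential complex $S$ one has $d_{Z}g=gd_{S}=0$, whence
$$d_{Z}=d_{Z}(gf+d_{Z}h+hd_{Z})=(d_{Z}g)f+d_{Z}^{2}h+d_{Z}hd_{Z}=d_{Z}hd_{Z},$$
so $Z$ is split by definition.

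For $(1)\Rightarrow(4)$ I would show that the full subcategory $\mathcal{C}=\{X\in\K:H\text{ is bijective for every }Y\in\K\}$ (closed under isomorphism) satisfies conditions (i)--(iv) of Definition \ref{smallest additive sub} with respect to the object $R$, so that $\F=\mathrm{Add}(R)\subseteq\mathcal{C}$. Condition (i) amounts to the computation $\Hom_{\K}(R,Y)=H^{0}(\Hom_{R}(R,Y))=H^{0}(Y)=\Hom_{\mathrm{gr}}(H(R),H(Y))$, together with the observation that this identification is precisely the map $H$. Conditions (ii)--(iv) are formal: both functors $\Hom_{\K}(-,Y)$ and $\Hom_{\mathrm{gr}}(H(-),H(Y))$ send shifts to shifts, split off direct summands, and (by Lemma \ref{sum-product lemma} and the fact that $H^{n}$ commutes with the coproducts occurring in $\K$) convert such coproducts into products, and $H$ is compatible with all of this structure. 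Implication $(4)\Rightarrow(3)$ is trivial.

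For $(3)\Rightarrow(2)$ I would argue by contraposition. Assume $X$ is not split; replacing $X$ by the summand $X'$ in a decomposition $X=X'\oplus N$ with $N$ null (which affects neither hypothesis nor conclusion, since $X\cong X'$ in $\K$ and $C(X)=C(X')$) we may assume $X$ has no null summand, so $d_{X}\neq 0$ and, by Lemma \ref{characterization of split}, $C(X)$ is not projective; fix $j$ with $C^{j}(X)$ not projective. Then $0\to B^{j}(X)\to X^{j}\to C^{j}(X)\to 0$ is non-split, so with $M:=B^{j}(X)$ one has $\Ext^{1}_{R}(C^{j}(X),M)\neq 0$. By Theorem \ref{exact} applied with $i=j-1$, this nonzero class lies in the kernel of $\rho^{j-1}_{X,M}$ and thus yields a nonzero homotopy class of chain map $\phi:X[j-1]\to M$ with $H(\phi)=0$. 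The decisive remaining step is to replace the module $M$ by an honest object of $\K=K(\proj(R))$: I would take $Y$ to be a complex of finitely generated projectives whose cohomology realizes $M$ in the appropriate degree (a suitably shifted truncation of a projective resolution of $M$), use the quasi-isomorphism $Y\to M[1-j]$ to lift a suitable representative of $\phi$ to a morphism $\psi:X\to Y$ in $\K$, and check that $H(\psi)=0$ while $\psi\neq 0$ in $\K$ — contradicting $(3)$. Then $(2)\Rightarrow(1)$ closes the cycle and all four conditions are equivalent.

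The step I expect to be the genuine obstacle is that last one: converting the module-theoretic obstruction ``$C^{j}(X)$ is not projective'', detected only through $\Ext^{1}_{R}(C^{j}(X),-)$, into an actual non-null-homotopic morphism inside $K(\proj(R))$. The subtlety is that an unbounded complex of projectives need not be homotopy-projective, so lifting $\phi$ along $Y\to M[1-j]$ is not automatic; the complex $Y$ and the truncation degree have to be chosen so that the obstruction to this lifting vanishes (equivalently, so that $\Hom_{K(\Mod(R))}(X,-)$ applied to the acyclic cone of $Y\to M[1-j]$ annihilates the relevant class). Everything else reduces either to the closure-property bookkeeping of the second paragraph or to the short differential-level identities of the first.
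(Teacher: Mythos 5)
The one step your cycle actually hinges on is the end of $(3)\Rightarrow(2)$, and you leave it open: via Theorem \ref{exact} (with $M=B^{j}(X)$, finitely generated since $R$ is Noetherian, and the extension class of $0\to B^{j}(X)\to X^{j}\to C^{j}(X)\to 0$) you produce a nonzero homotopy class $\phi\in\Hom_{K(\Mod(R))}(X[j-1],M)$ with $H(\phi)=0$, but the conversion of $\phi$ into a nonzero morphism of $\K$ is flagged as an unresolved ``obstacle''. In fact that worry is misplaced, and settling it is exactly what makes the paper's proof of $(3)\Rightarrow(2)$ work: take $Y\in\K$ to be an ordinary bounded-above projective resolution of $M$ by finitely generated projectives, shifted so that the augmentation is a quasi-isomorphism onto the shifted stalk complex of $M$. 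Its mapping cone $C$ is a bounded-above acyclic complex, and for \emph{any} complex $X$ of projectives (bounded or not) every chain map $X\to C$ is null-homotopic: build the homotopy by descending induction on degree, using that $C$ vanishes in large degrees, that $Z(C)=B(C)$ by acyclicity, and that each $X^{n}$ is projective. No homotopy-projectivity of $X$ and no choice of truncation degree is needed; K-projectivity would only become an issue if the acyclic target were unbounded above. Hence $\Hom_{K(\Mod(R))}(X,Y)\to\Hom_{K(\Mod(R))}(X,M[1-j])$ is bijective, $\phi$ lifts to some $\psi\in\Hom_{\K}(X,Y)$ which is nonzero (otherwise $\phi$ would be null-homotopic) and satisfies $H(\psi)=0$ (because $H(Y)$ is concentrated in one degree, where it is $M$, and $H(\phi)=0$), contradicting $(3)$. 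This is precisely the identification $\Hom_{\K}(X[i],P)\cong H^{-i}(\Hom_{R}(X,M))$ used in the paper, where the conclusion is drawn for all $M$ at once, giving $\Ext^{1}_{R}(C(X),M)=0$ for every finitely generated $M$ and hence projectivity of $C(X)$; your contrapositive with the specific module $B^{j}(X)$ is an equivalent variant once the lifting is in place.

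A secondary gap sits in your $(1)\Rightarrow(4)$: to get $\F=\mathrm{Add}(R)\subseteq\mathcal{C}$ you must verify condition (iv) of Definition \ref{smallest additive sub} for $\mathcal{C}$, i.e.\ closure under \emph{every} coproduct of objects of $\mathcal{C}$ that happens to exist in $\K$. You appeal to ``$H^{n}$ commutes with the coproducts occurring in $\K$'', but the paper proves that such coproducts are computed degreewise only for complexes with zero differentials (Proposition \ref{coproduct}); for arbitrary objects of $\mathcal{C}$ the universal property only gives $\Hom_{\K}(\coprod_{j}Y_{j},Y)\cong\prod_{j}\Hom_{\K}(Y_{j},Y)$, which yields injectivity of $H$ on the coproduct but not surjectivity unless one also knows that $\bigoplus_{j}H(Y_{j})\to H(\coprod_{j}Y_{j})$ is surjective. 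The paper sidesteps this by proving $(1)\Rightarrow(2)$ first (so only coproducts of zero-differential complexes are ever taken) and then checking $(2)\Rightarrow(4)$ directly for $d_{X}=0$ by an explicit lifting/homotopy argument; your route can be repaired the same way, or by first completing $(3)\Rightarrow(2)$ as above so that members of $\mathcal{C}$ are known to be split before invoking closure under coproducts. The remaining parts of your proposal ($(2)\Rightarrow(1)$, summands of split complexes being split, $(4)\Rightarrow(3)$) are correct and agree with the paper.
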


\begin{proof}
We have shown the implication  $(2) \Rightarrow (1)$ in Lemma \ref{characterization of split}. 

\vspace{6pt}

$(1) \Rightarrow (2)$: 
Let  $\U$  be the subcategory of $\K$ consisting of all split complexes. 
Note that $R \in \U$ and that $\U$   is closed under shift functor,  and taking direct summands. 
If we prove that  $\U$ is closed under taking coproducts in $\K$, then  $\F \subseteq \U$ by Definition \ref{smallest additive sub} and the proof will be finished.  

Let  $\{ X_j \ | \ j \in J\}$ be a set of complexes in $\U$. 
By Lemma \ref{uniqueness of decomposition} each $X_j$ is uniquely decomposed into $X_j' \oplus N_j$  with  $d_{X_j'} = 0$ and  {a null complex} $N_j$.   
Since  $X_j \cong X_j'$  in $\K$, replacing  $X_j$  with  $X_j'$  we may assume $d_{X_j} =0$ for all $j \in J$. 
If the coproduct $\coprod _{j \in J} X_j$  exists in $\K$,  then it follows from the previous proposition it is split again hence belongs to $\U$.

\vspace{6pt}

$(2) \Rightarrow (4)$: 
As in the proof above we may assume that  $d_X=0$, hence  $X =H(X)$. 

Let  $f : X \to Y$ be a morphism in $\K$ and assume that  $H(f)=0$.  
Then the image of $f^i$  is contained in the coboundary $B ^i(Y)$ for $i \in \Z$. 
Since  $X^i$ is a projective module, there is an  $h^i : X^i \to Y^{i-1}$  with  $f^i = d_Y^{i-1} \cdot h^i$. 
Thus $\{ h^i\  | \ i \in \Z\}$  gives a homotopy, and we have  $f=0$  as a morphism in $\K$.   

To show the surjectivity of $H$, let  $a : H(X) \to H(Y)$  be a graded $R$-module homomorphism. 
Then each $a ^i : H^i(X) = X^i \to H^i(Y)$ is lifted to an $R$-module mapping  from  $X^i$ to the cocycle module $Z ^i (Y)$. 
These lifted maps define a chain map $f: X \to Y$ with $H(f) = a$.

\vspace{6pt}

$(4) \Rightarrow (3)$: Obvious.  

\vspace{6pt}

$(3) \Rightarrow (2)$: 
 Let $M$ be a finitely generated $R$-module and let $P \in \K$  be a projective resolution of  $M$. 
 Then note that the equality $\Hom _{\K} (X[i], P) = H^{-i}(\Hom _R (X, M))$ holds. 
To show this we introduce the notion of silly truncation 
$$
\sigma _{\leqq 1} (X[i]) := [\xymatrix{\cdots \ar[r] & X^0 \ar[r] & X^1 \ar[r] & \cdots \ar[r] & X^i \ar[r] & X^{i+1} \ar[r] & 0 }] 
$$
where $X^{i+1}$ sits in the $(+1)$st position. 
Note that this is K-projective. 
Then, since $P_i=0$ for $i > 0$, we have 
\begin{eqnarray*}
\Hom  _{\K} (X[i], P) &=& 
\Hom  _{\K} (\sigma _{\leqq 1} (X[i]), P) \\
&=& 
H^0(\Hom  _{R} (\sigma _{\leqq 1} (X[i]), M)) \\
&=& 
H^0(\Hom  _{R} (X[i], M)).
\end{eqnarray*}
Therefore  the mapping defined by taking cohomology modules 
 $H: \Hom _{\K} (X[i], P) \to \Hom _{\mathrm{graded} R-\mathrm{mod}} (H(X[i]), H(P)) =\Hom_R (H^i(X), M)$ 
 is just the same as $\rho _{X M}^i$  defined in Definition \ref{rho}. 
Thus the condition $(3)$  implies that $\rho _{X M}^i$  is injective for all $i \in \Z$  and for all $M \in \mod (R)$. 
It then follows from Theorem \ref{exact} that  
$\Ext ^1_R (C(X), M) =0$ for any $M \in \mod (R)$, and therefore  $C(X)$ is a projective $R$-module.   
Thus  $X$  is split by Lemma \ref{characterization of split}.
\end{proof}

As a result of Theorem \ref{characterization of F} {and Lemma \ref{characterization of split}}, we have the following corollary. 

\begin{corollary}
Every complex  $F$  in  $\F$  is decomposed as $F \cong \coprod _{j \in \Z} H^j (F)[-j]$ where  $H^j (F) \in \proj (R)$ for all $j \in \Z$. 
(Note that  $F^* = \coprod _{j \in \Z} H^j(F)^*[j]$ in this case. )
Moreover every complex in $\F$  is *reflexive. 
\end{corollary}

\begin{proposition}\label{injection proposition}
Let  $X, F \in \K$. 
Assume  that $F$ belongs to $\F$ and that  $X$ is *torsion-free (resp. *reflexive).  
Then the mapping 
$$
H \ : \ \Hom _{\K} (X, F) \longrightarrow \Hom _{\mathrm{graded} R\mathrm{-mod} }(H(X), H(F)) \ ; \ f \mapsto H(f) 
$$
is injective  (resp. bijective). 
\end{proposition}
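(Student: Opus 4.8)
The plan is to reduce the statement for a general $F \in \F$ to the case $F = R[j]$ (a single shift of the base ring), and then invoke the characterization of the *torsion-free and *reflexive properties directly. By the remark following Theorem \ref{characterization of F}, any $F \in \F$ decomposes as $F = \coprod_{j \in \Z} H^j(F)[-j]$ with each $H^j(F) \in \proj(R)$; and each projective $H^j(F)$ is a direct summand of a finite free module $R^{n_j}$. Using that $\F$ is closed under coproducts and summands and that $\Hom_\K(X, -)$ and $\Hom_{\mathrm{graded}\,R\text{-mod}}(H(X), H(-))$ both convert the relevant coproducts into products (Lemma \ref{sum-product lemma} for the first, and the analogous elementary fact for graded modules since $H(X)$ and each $H(F)$ are degreewise finitely generated), the map $H$ for $F$ is a product of the corresponding maps $H$ for the summands $R[j]$. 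Hence it suffices to prove injectivity (resp. bijectivity) of
$$
H \ : \ \Hom_\K(X, R[i]) \longrightarrow \Hom_{\mathrm{graded}\,R\text{-mod}}(H(X), H(R[i])) = \Hom_R(H^{-i}(X), R)
$$
for every $i \in \Z$.

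For injectivity, a morphism $f : X \to R[i]$ in $\K$ with $H(f) = 0$ is precisely a chain map $f : X \to R[i]$ with $H(f) = 0$, so the condition $(*)$ of Lemma \ref{restatement}(1), which holds because $X$ is *torsion-free, gives $f = 0$ in $\K$. This already disposes of the *torsion-free case. For the *reflexive case, surjectivity of $H$ for $F = R[i]$ is exactly the condition $(**)$ of Lemma \ref{restatement}(2): given $a : H^{-i}(X) \to R$, since $X$ is *reflexive (hence also *torsion-free, so $(*)$ holds and $(**)$ applies) there is a chain map $f : X \to R[i]$ with $H^{-i}(f) = a$, i.e. $H(f) = a$ as graded maps. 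Combined with injectivity, $H$ is bijective for each $R[i]$, and therefore for $F$.

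The only point that needs a little care is the bookkeeping in the reduction step: one must check that for $F = \coprod_j H^j(F)[-j]$ the two sides genuinely turn into the same products over $j$ (and over the finite-rank free modules covering each $H^j(F)$) compatibly with the natural transformation $H$, and that passing to a direct summand $H^j(F) \subseteq R^{n_j}$ is harmless because the summand maps are retracts of the free ones. This is the main (though routine) obstacle; once it is in place, the statement is immediate from Lemma \ref{restatement}. Concretely, I would first record that for any $Y \in \F$ and any $X$, naturality of $H$ in $Y$ plus Lemma \ref{sum-product lemma} gives a commutative square identifying $H$ for $Y$ with the product of the $H$ for the shifts $R[i]$ appearing (with multiplicity, via the free covers), and then conclude.
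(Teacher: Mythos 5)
Your proposal is correct and follows essentially the same route as the paper's own proof: decompose $F=\coprod_{j}H^j(F)[-j]$, use Lemma \ref{sum-product lemma} to turn both Hom-sets into products over the summands, and conclude for each summand from the definition of *torsion-free/*reflexive (equivalently Lemma \ref{restatement}). The only difference is that you spell out the harmless extra step of passing from a projective summand $H^j(F)$ to a free cover $R^{n_j}$, which the paper leaves implicit.
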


\begin{proof}
We may take $F$  as  it satisfies $d _F =0$, hence  $F = H(F)$. 
Then, as remarked above,  $F = \coprod _{j \in \Z} F^j[-j]$  with  $F^j \in \proj (R)$ and this coproduct is also a product. 
Therefore, 
\begin{eqnarray*}
&\Hom _{\K} (X, F) = \prod _{j \in \Z} \Hom _{\K} (X, F^j[-j]), \ \ \text{and} \phantom{SA} \\
&\Hom _{\mathrm{graded} R\mathrm{-mod} }(H(X), H(F)) = \prod_{j \in \Z} (H^j(X), F^j[-j]).
\end{eqnarray*} 
According as  $X$ is *torsion-free or  *reflexive, we have that 
$H :   \Hom _{\K} (X, F^j [{j}]) \longrightarrow  \Hom _{\mathrm{graded} R\mathrm{-mod}}(H(X), F^j [{j}])$ is injective or bijective for each  $i, j  \in \Z$. 
The proposition follows from this observation. 
\end{proof}

The following theorem is one of the crucial results on *torsion-free complexes, on which the proof of the main Theorem \ref{1} will deeply rely. 
See Sections 10 and 12.

\begin{theorem}\label{S^{-1}f=0}
Assume that  $X \in \K$  is *torsion-free and that  $F \in \F$. 
Let  $f \in \Hom _{\K}(X, F)$. 
Setting  $S = R \backslash \bigcup _{\p \in \Ass (R)} \p$, 
if  $S^{-1}f=0$  as a morphism $S^{-1}X \to S^{-1}F$  in  $\Ko (S^{-1}R)$, then we have that $f=0$ as a morphism in $\K$. 
\end{theorem}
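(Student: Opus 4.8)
The plan is to reduce the vanishing of $f$ to a statement about a chain map into a shifted copy of $R$, and then invoke the *torsion-free hypothesis through Lemma \ref{restatement}(1). First I would use the decomposition of objects of $\F$ established right after Theorem \ref{characterization of F}: we may replace $F$ by a complex with $d_F=0$, so that $F=\coprod_{j\in\Z}F^j[-j]$ with each $F^j\in\proj(R)$, and moreover this coproduct is simultaneously a product. By Lemma \ref{sum-product lemma} (or Proposition \ref{injection proposition}), the module $\Hom_{\K}(X,F)$ splits as $\prod_{j\in\Z}\Hom_{\K}(X,F^j[-j])$, and this decomposition is compatible with localization at $S$ since each $F^j$ is finitely generated projective, so that $S^{-1}F=\coprod_j (S^{-1}F^j)[-j]$ and $S^{-1}f$ decomposes componentwise as well. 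Hence it suffices to treat each component separately, i.e. to assume $F=P[-j]$ for a single finitely generated projective module $P$; after shifting we may even assume $j=0$, so $F=P$ is a projective module sitting in degree $0$.

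Next I would reduce from $P$ to $R$. Choose a finitely generated projective $Q$ with $P\oplus Q\cong R^n$ free; then $f$ is a direct summand of a map $X\to R^n$, and $S^{-1}f=0$ implies the localization of that map is zero, so it is enough to prove the statement when $F=R^n$, hence (taking components once more) when $F=R$, i.e. $f:X\to R$ is a chain map. Now apply $\rho^0_{X,R}\colon H^0(X^*)\to H^0(X)^*$: the element $[f]\in H^0(X^*)=\Hom_{\K}(X,R)$ maps to $H^0(f)\in H^0(X)^*$. Because $X$ is *torsion-free, $\rho^0_{X,R}$ is injective, so it suffices to show $H^0(f)=0$, equivalently $H(f)=0$ as a map of graded modules. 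Here is where the localization hypothesis enters: as in the proof of Theorem \ref{theorem torsion-free}, since each term of $X$ is finitely generated we have $\Hom_{S^{-1}R}(S^{-1}X,S^{-1}R)\cong S^{-1}\Hom_R(X,R)$, hence $H^0(S^{-1}X^*)\cong S^{-1}H^0(X^*)$, and the localization of $\rho^0_{X,R}$ is the map $\rho^0_{S^{-1}X,S^{-1}R}$ for the complex $S^{-1}X$ over $S^{-1}R$. The hypothesis $S^{-1}f=0$ in $\Ko(S^{-1}R)$ says precisely that the image of $[f]$ in $H^0(S^{-1}X^*)=S^{-1}H^0(X^*)$ is zero, so there is $s\in S$ with $s[f]=0$ in $H^0(X^*)$.

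It remains to pass from $s[f]=0$ to $[f]=0$. Applying $\rho^0_{X,R}$, we get $s\cdot H^0(f)=0$ in $H^0(X)^*$; but $H^0(X)^*$ is the $R$-dual of a finitely generated module, hence torsionless, hence torsion-free as an $R$-module (this is the remark recalled at the start of the proof of Theorem \ref{theorem torsion-free}), and $s$ is a nonzerodivisor on $R$, so multiplication by $s$ on $H^0(X)^*$ is injective and therefore $H^0(f)=\rho^0_{X,R}([f])=0$. By injectivity of $\rho^0_{X,R}$ we conclude $[f]=0$, i.e. $f=0$ in $\K$. Unwinding the two reduction steps (from $F$ general to $F=R$) then gives the theorem in full. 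The main obstacle I anticipate is bookkeeping rather than conceptual: one must be careful that all the reductions (splitting off summands of $F$, and replacing $P$ by a free module) are compatible with localization at $S$ and with the functor $(-)^*$, which is clean here only because every term of $X$ and every $F^j$ is finitely generated; and one should confirm that the infinite product decomposition of $\Hom_{\K}(X,F)$ localizes correctly, which again uses finite generation together with the local-finiteness condition on $F$ from Lemma \ref{sum-product lemma}.
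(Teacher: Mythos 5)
Your proof is correct, but it follows a recognizably different path from the paper's, so let me compare. The paper never reduces $F$: it observes that $S^{-1}f=0$ gives $S^{-1}H(f)=0$, cancels the denominator using the fact that $H(F)$ is a \emph{projective} graded $R$-module (so elements of $S$ act on it as nonzerodivisors) to get $H(f)=0$ outright, and then quotes Proposition \ref{injection proposition} (injectivity of $f\mapsto H(f)$ for *torsion-free $X$ and $F\in\F$) to conclude $f=0$. You instead re-run the decomposition underlying that proposition by hand --- splitting $F=\coprod_j F^j[-j]$ via Lemma \ref{sum-product lemma}, shifting, and embedding $P$ as a summand of $R^n$ to reduce to $F=R$ --- and then argue at the level of $[f]\in H^0(X^*)$: the localization identification $H^0(\Hom_{S^{-1}R}(S^{-1}X,S^{-1}R))\cong S^{-1}H^0(X^*)$ (exactly as in the paper's proof of Theorem \ref{theorem torsion-free}) gives $s[f]=0$ for some $s\in S$, and you cancel $s$ not in $H^0(X^*)$ (which need not be torsion-free without the generic Gorenstein hypothesis) but after applying $\rho^0_{X,R}$, in $H^0(X)^*$, which is the dual of a finitely generated module and hence torsionless and torsion-free; injectivity of $\rho^0_{X,R}$, i.e.\ the very definition of *torsion-freeness, then gives $[f]=0$. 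So the two proofs use the same two ingredients --- an injective ``take cohomology'' map coming from *torsion-freeness, and a torsion-freeness statement allowing cancellation of $s$ --- but deploy them in opposite order and on different modules: the paper cancels $s$ before applying injectivity, using projectivity of $H(F)$, while you cancel $s$ after applying $\rho$, using torsion-freeness of $H^0(X)^*$. Your route is longer because of the explicit reductions (all of which are legitimate and compatible with localization, as you note), but it has the mild virtue of needing only the definition of *torsion-free rather than Proposition \ref{injection proposition}; the paper's argument is shorter and stays with an arbitrary $F\in\F$ throughout. The side remark that the localization of $\rho^0_{X,R}$ is $\rho^0_{S^{-1}X,S^{-1}R}$ is true but not actually needed for your argument.
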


\begin{proof}
If  $S^{-1}f =0$  then $H(S^{-1}f)=0$  as an $S^{-1}R$-module homomorphism  $H(S^{-1}X) \to H(S^{-1}F)$. 
Thus  we see that  $S^{-1}H(f)=0$ as a mapping  $S^{-1}H(X) \to S^{-1}H(F)$.    
Since  $H(F)$  is a projective $R$-module, any elements of $S$ act on $H(F)$ as non zero divisors. 
It thus follows that  $H(f)=0$ as a mapping  $H(X) \to H(F)$. 
Then from Proposition \ref{injection proposition} we have  $f =0$. 
\end{proof}

\begin{corollary}\label{torsionless cor}
If $X \in \K$ is *torsion-free and  $F \in \F$, then 
$\Hom _{\K} (X, F)$  is a torsion-free $R$-module. 
\end{corollary}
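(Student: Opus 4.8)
The plan is to deduce Corollary~\ref{torsionless cor} directly from Theorem~\ref{S^{-1}f=0}. First I would recall the characterization of torsion-freeness for $R$-modules used earlier in the paper: an $R$-module $N$ is torsion-free exactly when the localization map $N \to S^{-1}N$ is injective, where $S = R \backslash \bigcup_{\p \in \Ass(R)} \p$. So I must show that, for $X \in \K$ *torsion-free and $F \in \F$, the canonical map $\Hom_{\K}(X, F) \to S^{-1}\Hom_{\K}(X, F)$ has trivial kernel.

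The key identification is that localization of Hom commutes with $S^{-1}$ in this setting. Since each term of $X$ is a finitely generated $R$-module (as $X \in \K = K(\proj(R))$), there is a natural isomorphism $S^{-1}\Hom_R(X, F) \cong \Hom_{S^{-1}R}(S^{-1}X, S^{-1}F)$ of complexes of $S^{-1}R$-modules, and hence, taking cohomology in degree $0$ (the Hom in the homotopy category is a cohomology module of the Hom complex), an isomorphism
$$
S^{-1}\Hom_{\K}(X, F) \cong \Hom_{\Ko(S^{-1}R)}(S^{-1}X, S^{-1}F).
$$
Under this identification the localization map $\Hom_{\K}(X, F) \to S^{-1}\Hom_{\K}(X, F)$ sends a morphism $f$ to $S^{-1}f$. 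So suppose $f$ lies in the kernel of the localization map; then $S^{-1}f = 0$ in $\Ko(S^{-1}R)$, and Theorem~\ref{S^{-1}f=0} immediately gives $f = 0$ in $\K$. Therefore the localization map is injective, which is precisely the statement that $\Hom_{\K}(X, F)$ is a torsion-free $R$-module.

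The only point requiring a little care is the commutation of localization with the Hom complex and with taking cohomology; both are standard because $S$ is a multiplicative set, localization is exact, and each $X^i$ is finitely presented so that $S^{-1}\Hom_R(X^i, F^i) \cong \Hom_{S^{-1}R}(S^{-1}X^i, S^{-1}F^i)$ termwise. I expect no real obstacle here: the substantive content has already been packaged into Theorem~\ref{S^{-1}f=0}, and this corollary is essentially a translation of that statement into the language of torsion-free modules. If one prefers to avoid even invoking the Hom-complex identification, one can argue more directly: an element of the kernel of $\Hom_{\K}(X,F) \to S^{-1}\Hom_{\K}(X,F)$ is a morphism $f$ killed by some $s \in S$, i.e. $sf = 0$ in $\K$; but then $S^{-1}f = 0$, and Theorem~\ref{S^{-1}f=0} forces $f = 0$.
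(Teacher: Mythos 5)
Your closing ``fallback'' argument is correct, and it is essentially the paper's own proof: the paper forms the commutative triangle consisting of the localization map $\alpha \colon \Hom_{\K}(X,F) \to S^{-1}\Hom_{\K}(X,F)$, the natural map $\gamma \colon f \mapsto S^{-1}f$ into $\Hom_{\Ko(S^{-1}R)}(S^{-1}X, S^{-1}F)$, and the connecting map $\beta(f/s)=\gamma(f)/s$, and deduces injectivity of $\alpha$ from injectivity of $\gamma$, which is exactly Theorem \ref{S^{-1}f=0}. Equivalently, as you say: an element $f$ of $\ker\alpha$ satisfies $sf=0$ in $\K$ for some $s\in S$, hence $s\cdot S^{-1}f=0$ in $\Ko(S^{-1}R)$, hence $S^{-1}f=0$ since $s$ is a unit there, and Theorem \ref{S^{-1}f=0} gives $f=0$.

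Your primary route, however, rests on a false identification: the map $\beta\colon S^{-1}\Hom_{\K}(X,F) \to \Hom_{\Ko(S^{-1}R)}(S^{-1}X,S^{-1}F)$ is \emph{not} an isomorphism in general, and the paper says so explicitly in the remark immediately following this corollary. Finite generation of each $X^i$ only gives the termwise identification $S^{-1}\Hom_R(X^i,F^j)\cong \Hom_{S^{-1}R}(S^{-1}X^i,S^{-1}F^j)$; but each degree of the Hom complex is an infinite product $\prod_{i\in\Z}\Hom_R(X^i,F^{i+n})$, and localization does not commute with infinite products. Concretely, for $X=F=\coprod_{i\in\Z}R[-i]$ one has $\Hom_{\K}(X,F)=\prod_{i\in\Z}R$ and $\Hom_{\Ko(S^{-1}R)}(S^{-1}X,S^{-1}F)=\prod_{i\in\Z}S^{-1}R$, and $S^{-1}\bigl(\prod_{i\in\Z}R\bigr)\to\prod_{i\in\Z}S^{-1}R$ fails to be surjective (elements with unbounded denominators are not hit). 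The good news is that the proof never needs $\beta$ to be an isomorphism, only that $\gamma=\beta\alpha$ sends $f$ to $S^{-1}f$ and is injective; so you should delete the claimed isomorphism and retain only your direct argument, which then coincides with the paper's.
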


\begin{proof}
There is a commutative diagram of  $R$-modules 
$$
\xymatrix{
\Hom _{\K} (X, F)  \ar[r]^{\alpha} \ar[rd]^{\gamma}  & S^{-1} \Hom _{\K} (X, F) \ar[d]^{\beta} \\
  & \Hom _{\Ko (S^{-1}R)}(S^{-1}X, S^{-1}F),  \\
}$$
where  $\alpha$  is a localization mapping by $S$ and  $\gamma$ is a natural mapping that sends $f$  to $S^{-1}f$. 
Note that  $\beta ({f}/{s}) = {\gamma (f)}/{s}$  for $f \in \Hom _{\K}(X, F)$  and $s \in S$. 
We have shown in Theorem \ref{S^{-1}f=0} that  $\gamma$  is injective. 
Thus  $\alpha$ is also injective, and hence $\Hom_{\K}(X, F)$  is a torsion-free $R$-module.     
\end{proof}

\begin{remark}
In the proof of the corollary, we should note that the natural mapping 
$$
\beta :  S^{-1}\Hom _{\K} (X, F) \longrightarrow \Hom _{\Ko(S^{-1}R)}(S^{-1}X, S^{-1}F)
$$
is not necessarily an isomorphism. 
For example, setting  $X = F = \coprod _{i \in \Z} R[-i]$, we have $\Hom _{\K} (X, F) = \prod _{i \in \Z} R$ and $\Hom _{\mathscr K (S^{-1}R)} (S^{-1}X, S^{-1}F) = \prod _{i \in \Z} S^{-1}R$.   
\end{remark}

\vspace{6pt}
\section{The stable category of $\K$}

The main objective of this paper is to consider the nature of complexes in $\K$ up to $\F$-summands, which we  call the stable theory  after the paper \cite{AB}. 

\begin{definition}\label{stable}
We denote by  $\uK$  the factor category  $\K$  modulo the subcategory $\F$: 
$$
\uK = \K / \F
$$ 
\end{definition}

We call $\uK$ the {\bf stable category} of  $\K$. 

The objects of $\uK$  are the same as $\K$, while the morphism set is given by
$$
\Hom _{\uK} (X, Y) = \Hom _{\K} (X, Y) / \F( X, Y), 
$$
for $X, Y \in \uK$, where  $\F (X,Y)$  is the $R$-submodule of $\Hom _{\K} (X, Y)$  consisting of all morphisms factoring through objects of $\F$. 
The object sets of $\K$ and $\uK$ are identical, but for an object  $X \in \K$, 
to discriminate it with an object in $\uK$, we often write  $\underline{X}$ for the corresponding object in $\uK$. 
Similarly we denote by $\underline{f}$  the corresponding morphism in $\uK$ for a given $f$ in $\K$.

Since  $\F$  is stable under the action of shift functor in $\K$, it should be noted that $\uK$ admits the shift functor so that 
$\underline{X[1]} = \underline{X}[1]$  for  $X \in \uK$. 
However $\uK$ is not a triangulated category, but merely an additive  $R$-linear category with the shift functor that is an auto-functor on it. 
($\uK$ is not triangulated, by which we mean that there is no triangle structure on  $\uK$ that makes the natural functor $\K \to \uK$ a triangle functor. 
This is true, since  $\F$  is not closed under triangles in $\K$.)   
  
First of all we remark on the commutativity of a diagram in $\uK$. 

\begin{lemma}\label{commutativity}
Let  $f : X \to Z$, $g : X \to Y$, $h : Y\to Z$. 
Then  $\underline{f} = \underline{h}  \underline{g}$  in $\uK$ if and only if 
there is  a commutative diagram in $\K$ of the following form: 
$$
\xymatrix@C=40pt@R=40pt{Y \oplus F  \ar[rr]^{(h \ a)} & & Z \\  
& X  \ar[lu]^{\binom{g}{b}} \ar[ru]_f &\\ } 
$$  
where  $F \in \F$. 
\end{lemma}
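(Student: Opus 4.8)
The statement is a routine but useful reformulation of what it means for two morphisms to agree in the factor category $\uK = \K/\F$. The plan is to prove both directions directly from the definitions of morphisms in a factor category and of the subcategory $\F$.

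For the \lq\lq if\rq\rq\ direction, suppose we have the commutative diagram in $\K$ with $F \in \F$, so that $f = (h\ a)\binom{g}{b} = hg + ab$ as morphisms in $\K$. The morphism $ab : X \to Z$ factors through $F \in \F$, so $\underline{ab} = 0$ in $\uK$; hence $\underline f = \underline{hg} + \underline{ab} = \underline h\cdot \underline g$. This direction is immediate.

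For the \lq\lq only if\rq\rq\ direction, assume $\underline f = \underline h\cdot\underline g$ in $\uK$. By definition of the morphisms in $\uK = \K/\F$, this means $f - hg \in \F(X,Z)$, i.e.\ $f - hg$ factors in $\K$ as $X \xrightarrow{b} F \xrightarrow{a} Z$ for some $F \in \F$ and morphisms $b : X \to F$, $a : F \to Z$ in $\K$. Then set the diagonal maps to be $\binom{g}{b} : X \to Y\oplus F$ and $(h\ a) : Y\oplus F \to Z$; the composite is $hg + ab = hg + (f - hg) = f$, so the triangle commutes in $\K$. Thus the required diagram exists.

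I do not anticipate any genuine obstacle here: the only thing to be careful about is the bookkeeping of what \lq\lq factors through an object of $\F$\rq\rq\ means (namely, factors through $F$ for some single object $F \in \F$, which is fine since $\F$ is closed under finite direct sums, being additive), and matching the matrix notation $(h\ a)$ and $\binom{g}{b}$ with composition in $\K$. Both implications reduce to the single identity $f = hg + ab$ read in the two directions.
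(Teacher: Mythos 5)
Your proof is correct and follows essentially the same route as the paper: the paper also observes that $\underline{f}=\underline{h}\cdot\underline{g}$ means $f-hg$ factors as $X\xrightarrow{b}F\xrightarrow{a}Z$ with $F\in\F$, giving $f=hg+ab$ and hence the diagram, and that the converse is immediate since $\underline{a}=\underline{b}=0$. No issues.
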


\begin{proof}
If  $f - hg$  factors through $F \in \F$, then there are  $a : F \to Z$  and  $b : X \to F$  that satisfy the equality  $f = hg + ab$.   
The converse is trivial since  $\underline{a}=\underline{b}=0$.   
\end{proof}

Note from this lemma that  $\underline{X} =0$ for  $X \in \K$  if and only if  $X \in \F$. 
In fact if  $1_{\underline{X}} = 0$,   then setting  $X=Z$, $Y=0$ and  $f = 1_X$ in the lemma, we see that  $X$ is a direct summand of  $F \in \F$  and hence  $X \in \F$. 
More generally we should note the following corollary holds.

\begin{corollary}
Let  $X, Y \in \K$. 
Then  $\underline{X} \cong \underline{Y}$  in $\uK$ 
if and only if  $X \oplus F \cong  Y \oplus F' $ in $\K$  for some  $F, F' \in \F$. 
\end{corollary}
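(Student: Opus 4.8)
The plan is to prove both implications directly from Lemma~\ref{commutativity} and its preceding remark, which already identifies when $\underline{X}=0$ in $\uK$. The ``if'' direction is essentially immediate: if $X\oplus F\cong Y\oplus F'$ in $\K$ with $F,F'\in\F$, then applying the canonical functor $\K\to\uK$ and using that $\underline{F}=\underline{F'}=0$ (since objects of $\F$ become zero in $\uK$) gives $\underline{X}\cong\underline{X}\oplus\underline{F}\cong\underline{Y}\oplus\underline{F'}\cong\underline{Y}$. So the content is entirely in the ``only if'' direction.

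For the ``only if'' direction, suppose $\underline{\phi}:\underline{X}\to\underline{Y}$ and $\underline{\psi}:\underline{Y}\to\underline{X}$ are mutually inverse isomorphisms in $\uK$, represented by morphisms $\phi:X\to Y$ and $\psi:Y\to X$ in $\K$. The idea is the standard ``adding a free summand to straighten an idempotent-up-to-$\F$'' trick. From $\underline{\psi\phi}=1_{\underline X}$, Lemma~\ref{commutativity} (applied with the appropriate choice of $f,g,h$, or more directly to the relation $1_X-\psi\phi$ factoring through some $F\in\F$) yields $F\in\F$ and maps $b:X\to F$, $a:F\to X$ with $1_X=\psi\phi+ab$ in $\K$; symmetrically there are $F'\in\F$ and $b':Y\to F'$, $a':F'\to Y$ with $1_Y=\phi\psi+a'b'$. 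Now consider the map
$$
\Phi=\begin{pmatrix}\phi & a'\\ b & 0\end{pmatrix} : X\oplus F' \longrightarrow Y\oplus F
$$
together with
$$
\Psi=\begin{pmatrix}\psi & a\\ b' & 0\end{pmatrix} : Y\oplus F \longrightarrow X\oplus F'.
$$
The plan is to show that after possibly enlarging $F$ and $F'$ by further summands from $\F$ (and correspondingly adjusting $\Phi,\Psi$ by block matrices), the composite $\Psi\Phi$ becomes the identity on $X\oplus F'$ and $\Phi\Psi$ the identity on $Y\oplus F$ in $\K$; this is exactly the classical argument that an isomorphism in an idempotent-complete quotient category lifts to an isomorphism after adding summands killed by the quotient. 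Concretely one computes $\Psi\Phi$ as a $2\times2$ block matrix over $X\oplus F'$, observes its off-diagonal and error terms all factor through objects of $\F$, and absorbs these into an honest automorphism by a sequence of elementary row/column operations — the point being that $\F$ is closed under finite (indeed all admissible) coproducts and under the shift, so all the auxiliary objects that appear stay in $\F$.

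The main obstacle I expect is bookkeeping rather than conceptual: making the block-matrix manipulation actually produce an isomorphism in $\K$ on the nose, not merely in $\uK$, which requires being careful that every correction term genuinely lands in a summand from $\F$ and that the iterated adjustments terminate. A cleaner route, which I would prefer if it goes through, is to invoke that $\K$ is an idempotent-complete (Krull--Schmidt-type) triangulated category and that $\F$ is a full additive subcategory closed under summands and the relevant coproducts, so the quotient functor $\K\to\uK=\K/\F$ detects isomorphisms exactly up to objects of $\F$ — i.e. to cite the general fact that in such a quotient, $\underline X\cong\underline Y$ forces $X\oplus F\cong Y\oplus F'$ with $F,F'\in\F$. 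If the paper prefers to stay self-contained, then the explicit $2\times 2$ construction above, with the error terms $ab$ and $a'b'$ handled as described, is the way to carry it out, and the only delicate point is verifying that one such enlargement already suffices (it does: take $\Psi\Phi$, note it differs from $1_{X\oplus F'}$ by a map through $\F$, hence is an isomorphism in $\K$ by a Nakayama/unit-plus-nilpotent-type observation once one checks the relevant endomorphism ring behaves well, and similarly for $\Phi\Psi$).
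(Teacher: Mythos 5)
Your ``if'' direction and your opening move for ``only if'' (use Lemma \ref{commutativity} to write $1_X=\psi\phi+ab$ and $1_Y=\phi\psi+a'b'$ with the correction terms passing through $F,F'\in\F$) agree with the paper, but the way you close the argument has a genuine gap. Computing with your matrices, $\Psi\Phi=\left(\begin{smallmatrix}1_X & \psi a'\\ b'\phi & b'a'\end{smallmatrix}\right)$, and your justification that this is an isomorphism in $\K$ ``because it differs from $1_{X\oplus F'}$ by a map through $\F$, hence a Nakayama/unit-plus-nilpotent observation applies'' is false: morphisms factoring through $\F$ are neither nilpotent nor in the radical of the endomorphism ring --- the zero endomorphism of a nonzero $F'\in\F$ differs from $1_{F'}$ by a morphism factoring through $\F$ and is not invertible. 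Concretely, $\Psi\Phi$ is invertible exactly when its Schur complement $b'a'-b'\phi\psi a'=(b'a')^2$ is an automorphism of $F'$, which fails in general: if $\phi$ happens to be an isomorphism already one may take $a=b=0$ and $a'=b'=0$ with $F,F'\neq 0$, and then $\Phi$ and $\Psi\Phi$ are visibly not isomorphisms. So the ``one enlargement suffices'' step, which is the entire content of the statement (isomorphisms in $\uK$ do not lift to $\K$ without changing the objects), is not established; and your fallback of citing idempotent completeness of $\K$ asserts, without proof, a nontrivial property of the homotopy category of unbounded complexes of finitely generated projectives that is not needed.

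What does work, and is the paper's proof, uses only the triangulated structure of $\K$: the identity $1_X=\psi\phi+ab$ says that $\binom{\phi}{b}\colon X\to Y\oplus F$ is a split monomorphism with retraction $(\psi\ \ a)$. In any triangulated category a split mono splits off its cone (complete it to a triangle $X\to Y\oplus F\to F'\xrightarrow{w}X[1]$; the retraction forces $w=0$, so the triangle splits), hence $Y\oplus F\cong X\oplus F'$ in $\K$ with no idempotent-completeness hypothesis. It then remains to see that $F'\in\F$: passing to $\uK$, where $\underline{F}=0$ and $\underline{\psi}$ is an isomorphism, one gets $\underline{F'}=0$, and by the remark following Lemma \ref{commutativity} this means precisely $F'\in\F$. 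If you want to keep an explicit matrix formulation, you must produce $F'$ as this cone rather than prescribing both $\F$-summands in advance and hoping the error terms can be absorbed.
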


\begin{proof}
If $\underline{g} : \underline{X} \to \underline{Y}$ is an isomorphism whose inverse morphism is  $\underline{h}$, then 
it follows from Lemma \ref{commutativity} that  $X$  is a direct summand of $Y \oplus F$  in $\K$ for some  $F \in \F$. 
Therefore  there exists an isomorphism $Y \oplus F \to X\oplus F'$ in which the restricted map  $Y \to X$  is given by  $h$. 
We have to show that  $F' \in \F$. 
Since  $\underline{F}=0$, we have an isomorphism  $\underline{Y} \to \underline{X} \oplus \underline{F'}$  in which  $\underline{h} : \underline{Y} \to \underline{X} $ is also an isomorphism. 
Taking  the inverse of the isomorphism 
$$
\begin{pmatrix} \underline{h} \\ \underline{a}  \end{pmatrix} :  \underline{Y} \to \underline{X} \oplus \underline{F'}, 
$$
we have 
$$
\begin{pmatrix} \underline{b} & \underline{c}  \end{pmatrix} :   \underline{X} \oplus \underline{F'} \to\underline{Y} 
$$
such that 
$$
\begin{pmatrix} \underline{h}\underline{b} & \underline{h}\underline{c} \\
 \underline{a}\underline{b} & \underline{a}\underline{c} \\
 \end{pmatrix}
 = \begin{pmatrix} 1_{\underline{X} } & 0 \\ 0 & 1_{\underline{F'}}  \end{pmatrix}. 
$$
Since $\underline{h}\underline{b} = 1_{\underline{X} }$ where $\underline{h}$ is an isomorphism, $\underline{b} $ is also an isomorphism. 
Then, since $\underline{a}\underline{b} =0$, we have $\underline{a}=0$. 
Therefore $1_{\underline{F'}}  =  \underline{a}\underline{c} =0$, and thus $\underline{F'} \cong 0$,
hence  $F' \in \F$. 
\end{proof}

\begin{remark}
Recall from  Theorem \ref{characterization of F} that   $X \in \K$  belongs to  $\F$  if and only if  $X$  is a split complex. 
Hence,  setting  $\mathcal{S}$  to be the full subcategory of  $\C$  that consists of all split complexes, we can also describe the stable category as  $\underline{\K} = \C / \mathcal{S}$.  
Therefore one can also prove that 
$\underline{X} \cong \underline{Y}$  in $\uK$ 
if and only if  $X \oplus T \cong  Y \oplus T' $ in $\C$  for some  $T, T' \in \mathcal{S}$. 
\end{remark}

\begin{definition}
Let  $f :  X\to Y$   be a morphism in $\K$. 
We say that  $f$  is {\bf cohomologically surjective} if  the cohomology mapping $H(f): H(X)  \to  H(Y)$ is surjective. 

We also define the complex  $Cone (f)  \in \K$  by the triangle 
$$\xymatrix@C=36pt{ Cone (f) [-1] \ar[r] & X \ar[r]^f  &  Y \ar[r] & Cone (f) }$$ 
in $\K$, which is actually the mapping cone of the chain map  $f$. 
\end{definition}

In general, for given morphisms  $f, g : X \to Y$ in $\K$,  that $\underline{f}= \underline{g}$ in $\uK$  does not mean $\underline{Cone (f)}\cong \underline{Cone (g)}$  in $\uK$. 

{For an example of this, let $R$ be a local ring and $x\in R$ a non-zero divisor. 
Set $X = Y = R$ and consider the morphisms  $f, g \in \Hom _{\K} (X, Y)$ defined by 
$f(a) =xa, g(a)=0$  for $a \in X=R$. 
Since  $R$ is a split complex as an object of $\K$, we have $\underline{f} = \underline{g}= 0$. 
In this case, $Cone (f) = [ \xymatrix{0 \ar[r] & R \ar[r]^x & R \ar[r] & 0}]$ is the Koszul complex and $\underline{Cone (f)} \not\cong 0$, while 
 $Cone (g) = [ \xymatrix{0 \ar[r] & R \ar[r]^0 & R \ar[r] & 0}]$ is a split complex hence 
 $\underline{Cone (g)} \cong 0$. 
Note in this example that $f$ and $g$ are not cohomologically surjective. }

\begin{theorem}\label{cone}
Let $f : X \to Y$  and  $f' : X' \to Y$  be morphisms in  $\K$. 
Assume that both $f$  and  $f'$  are cohomologically surjective. 
Further assume that  $\underline{X} \cong \underline{X'}$ in $\uK$   and that  $\underline{f}$ corresponds to  $\underline{f'}$ under the isomorphism 
$\Hom _{\uK}(\underline{X}, \underline{Y}) \cong   \Hom _{\uK}(\underline{X'}, \underline{Y})$. 
Then we have an isomorphism  $\underline{Cone (f)} \cong \underline{Cone ({f'})}$  in $\uK$. 
\end{theorem}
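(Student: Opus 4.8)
The plan is to reduce the statement to a situation where the two morphisms $f$ and $f'$ have the \emph{same} source $X$, and then analyze how the mapping cone changes when one alters $f$ by a morphism factoring through $\F$. First I would use the hypothesis $\underline{X}\cong\underline{X'}$ together with the corollary to Lemma \ref{commutativity}: there exist $F,F'\in\F$ with $X\oplus F\cong X'\oplus F'$ in $\K$. Replacing $X$ by $X\oplus F$ and $X'$ by $X'\oplus F'$ (which changes neither $\underline{Cone(f)}$ nor $\underline{Cone(f')}$, since $Cone(f\oplus 0_F)=Cone(f)\oplus F[1]$ and $F[1]\in\F$, and similarly for $f'$), I may assume outright that $X=X'$ as objects of $\K$. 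Under this reduction the hypothesis becomes: $f,f':X\to Y$ are cohomologically surjective and $\underline{f}=\underline{f'}$, i.e.\ $f-f'$ factors through some $G\in\F$; I must show $\underline{Cone(f)}\cong\underline{Cone(f')}$.

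The key step is then the following claim: if $f:X\to Y$ is cohomologically surjective and $a:G\to Y$ with $G\in\F$, then $\underline{Cone(f)}\cong\underline{Cone(f')}$ where $f'=f+a\cdot(\text{the inclusion } X\hookrightarrow X)$—more precisely, where $f'$ differs from $f$ by a morphism through $G$. To handle this cleanly I would instead compare both $f$ and $f'$ with the morphism $(f,a):X\oplus G\to Y$ (the map which is $f$ on $X$ and $a$ on $G$). Since $G\in\F$ is a direct sum of shifts of projectives, $H(G)$ is projective, and writing $f-f'=ab$ with $b:X\to G$, $a:G\to Y$, consider the automorphism $\theta$ of $X\oplus G$ given in matrix form by $\begin{pmatrix}1&0\\ b&1\end{pmatrix}$. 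Then $(f,a)\circ\theta$ equals $(f+ab, a)=(f', a)$ on $X\oplus G$. Hence $Cone((f,a))\cong Cone((f',a))$ in $\K$ via the isomorphism induced by $\theta$. Finally, $Cone((f,a))$ sits in a triangle, and because $G\in\F$ the octahedral-type comparison shows $\underline{Cone((f,a))}\cong\underline{Cone(f)}$: indeed $(f,a)$ is $f$ composed with the split projection $X\oplus G\to X$ up to the contribution of $a$, and since $f$ is cohomologically surjective one checks that adding a map from a split complex $G$ to $Y$ alters the cone only by a split summand. The same reasoning gives $\underline{Cone((f',a))}\cong\underline{Cone(f')}$, and chaining the isomorphisms yields the result.

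To make the last assertion precise I would argue as follows. The inclusion $X\hookrightarrow X\oplus G$ composed with $(f,a)$ is $f$, so by the octahedral axiom there is a triangle relating $Cone(f)$, $Cone((f,a))$ and $Cone(X\hookrightarrow X\oplus G)\cong G[1]$; thus $Cone((f,a))$ is an extension of $G[1]$ by $Cone(f)$, giving a triangle $Cone(f)\to Cone((f,a))\to G[1]\to Cone(f)[1]$. The connecting map $G[1]\to Cone(f)[1]$ is, up to shift, a morphism $G\to Cone(f)$; I claim it factors through $\F$ after passing to $\uK$, or better, that the triangle splits modulo $\F$. Here cohomological surjectivity of $f$ enters: $H(Cone(f))$ fits in the exact sequence $H(X)\xrightarrow{H(f)} H(Y)\to H(Cone(f))\to H(X)[1]\to\cdots$, and surjectivity of $H(f)$ kills the $H(Y)$ contribution, so $H(Cone(f))\cong \ker H(f)[1]$-type data; combined with $H(G)$ projective, the extension of $G[1]$ by $Cone(f)$ in $\K$ becomes split in $\uK$. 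Hence $\underline{Cone((f,a))}\cong \underline{Cone(f)}\oplus \underline{G[1]}=\underline{Cone(f)}$.

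The main obstacle I expect is precisely this last point: showing that the triangle $Cone(f)\to Cone((f,a))\to G[1]\to$ becomes split in $\uK$. It is not automatic—$\uK$ is not triangulated—so one cannot just invoke that extensions by an object which is zero in $\uK$ split. The argument must instead exhibit an explicit chain-level splitting: using that $H(f)$ is surjective one lifts a section of $H(G[1])\to H(Cone((f,a)))$ (possible since $H(G[1])$ is projective and the relevant cohomology sequence is exact on the nose because of surjectivity), and then uses the characterization of $\F$ in Theorem \ref{characterization of F}—that maps out of split complexes are detected on cohomology—to promote this cohomological section to an actual section modulo $\F$. Getting the bookkeeping right here, i.e.\ checking that cohomological surjectivity of $f$ really does force the needed exactness and liftability, is the technical heart of the proof; everything else is formal manipulation of mapping cones and the matrix automorphism $\theta$.
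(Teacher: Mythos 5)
Your proposal is correct and follows essentially the same route as the paper: reduce to a common source by adding $\F$-summands and transporting along the isomorphism $X\oplus F\cong X'\oplus F'$, compare $(f\ a)$ with $(f'\ a)$ via the automorphism $\begin{pmatrix}1&0\\ b&1\end{pmatrix}$ of $X\oplus G$, and establish the key claim $\underline{Cone(f\ a)}\cong\underline{Cone(f)}$ from cohomological surjectivity together with Theorem \ref{characterization of F}. The only (harmless) deviation is at that key step, where you split the triangle $Cone(f)\to Cone(f\ a)\to G[1]$ by lifting a graded section to an honest section in $\K$, whereas the paper shows directly that the connecting morphism $G\to Cone(f)$ has zero cohomology and hence vanishes; both arguments rest on the same two ingredients.
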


\begin{proof}
As the first step of the proof we prove the following isomorphism: 
\begin{equation}\label{claim}
\underline{Cone (f)} \cong  \underline{Cone (f \ a)} \quad \text{for any }\ F \in \F \ \text{and}  \ (f \ a) : X \oplus F \to Y.   \end{equation}
In fact, there is a commutative diagram in $\K$ whose rows and columns are triangles: 
$$
\xymatrix@C=32pt@R=32pt{
& F[-1]  \ar[d]^u \ar@{=}[r]  & F[-1] \ar[d]^0 & \\
Y[-1] \ar@{=}[d] \ar[r] & Cone (f) [-1] \ar[d] \ar[r]^(0.6)v & X \ar[d]^{\binom{1}{0}} \ar[r]^f &Y \ar@{=}[d] \\
Y[-1] \ar[r] & Cone (f \  a) [-1] \ar[d] \ar[r] & X \oplus F  \ar[d]^{(0\ 1)} \ar[r]^(0.6){(f \  a)} & Y \\
& F \ar@{=}[r] & F. & \\
}
$$
Since  $H(f)$ is surjective, note in this diagram  that  $H(v)$  is injective. 
Then that  $vu =0$, and hence  $H(v)H(u)=0$, forces $H(u)=0$. 
Thus by Theorem \ref{characterization of F} we have  $u=0$,   which shows  an isomorphism 
$Cone(f \  a) [-1] \cong Cone (f) [-1]  \oplus F$, and hence  (\ref{claim})  is proved. 

As the second step of the proof, we prove the theorem in the case of  $X = X'$. 
In this case we have  $f' = f + ab$  for $a : F \to Y$  and  $b : X \to F$  with  $F \in \F$, by virtue of {Lemma \ref{commutativity}}.  
Then there is a commutative diagram in $\K$ 
$$
\xymatrix@C=48pt@R=36pt{
X \oplus F \ar[d]_{
{\tiny \begin{pmatrix}1& 0 \\ b & 1\\ \end{pmatrix}}
} \ar[r]^{(f' \ a)} &  Y \ar@{=}[d] \\
X \oplus F \ar[r]^{(f \ a)} & Y. \\  
}
$$ 
Since the left vertical arrow is an isomorphism,  we have $Cone (f \ a) \cong  Cone (f' \ a)$ in $\K$, hence  
$\underline{Cone (f) } \cong \underline{Cone(f ')}$  by using (\ref{claim}). 

Now consider the general case of the theorem. 
Since  $\underline{X} \cong \underline{X'}$, there is an isomorphism  $g : X \oplus F \to X' \oplus F'$  for some  $F, F' \in \F$, and by the assumption we must have  $\underline{f} = \underline{f'} \cdot \underline{g}$. 
 Consider the morphisms  $(f \ 0) : X \oplus F  \to  Y$ and $(f' \ 0) : X' \oplus F'  \to  Y$, and we note that  they are cohomologically surjective. 
 On the other hand, since  $\underline{F}=\underline{F'}=0$,  we have equalities
 $$
 \underline{(f \ \  0)} =  \underline{f}= \underline{f'}\cdot \underline{g} =  \underline{(f' \ \ 0)\cdot g}. 
 $$ 
Thus it follows from the second step of this proof that  
$\underline{Cone (f \  0)} \cong   \underline{Cone ({(f' \ 0)\cdot g})}$. 
Note here that  $Cone {((f'\ 0) \cdot g) }\cong Cone (f' \ 0)$  in  $\K$, since  $g$  is an isomorphism in $\K$.    
Hence the isomorphism  $\underline{Cone (f)} \cong   \underline{Cone (f')}$  follows from (\ref{claim}).    
\end{proof}

\vspace{6pt}
\section{$\F$-approximations}

We are able to show that  the subcategory  $\F$  of  $\K$  is functorially finite in the sense of Auslander. 
(Cf.  Auslander \cite{A}.) 
For this we begin with recalling the definition of right approximations.  

\begin{definition}
Let  $X  \in \K $. 
A morphism  $p : F \to  X$ in  $\K $   is called a {\bf right $\F$-approximation} of  $X$  if 
 $F \in \F $  and  $\Hom_{\K }(G, p) : \Hom _{\K}(G, F) \to \Hom _{\K }(G, X)$    is surjective for any  $G \in \F $. 
\end{definition}

We should remark that the shift functor preserves the right $\F$-approximation property, i.e. $p : F \to X$  is a right $\F$-approximation if and only if so is $p[n] : F[n] \to X[n]$ for any $n \in \Z$.

\begin{lemma}\label{right approximation}
Let  $X \in \K$ and  $F \in \F$. 
Then   a morphism  $p : F \to  X$ in  $\K$  is a right  $\F$-approximation if and only if 
$p$  is cohomologically surjective. 
Moreover,  there  always exists a right $\F$-approximation of  $X$  for any  $X \in \K$. 
\end{lemma}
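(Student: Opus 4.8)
The plan is to prove the two implications of the characterization separately, and then deduce existence from the fact that every complex in $\K$ admits a cohomologically surjective map from a complex in $\F$.

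First I would prove the easy direction: if $p : F \to X$ is a right $\F$-approximation, then $p$ is cohomologically surjective. The idea is to test $p$ against a suitable object of $\F$. Given a cohomology class, pick any finitely generated projective module mapping onto the relevant piece of $H(X)$; more efficiently, let $Q \in \F$ be a complex with zero differentials that surjects (componentwise projectively) onto a complex whose cohomology computes $H(X)$ — concretely, one can take $Q$ built from a projective resolution of the $H^i(X)$ or simply a projective cover of $Z^i(X)$ arranged as a split complex — so that there is a map $Q \to X$ which is cohomologically surjective. Since $Q \in \F$, the approximation property lifts this map through $p$, and a map that factors a cohomologically surjective map must itself be cohomologically surjective; hence $H(p)$ is onto. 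Here I would use Theorem \ref{characterization of F}, which tells us that for $Q \in \F$ the cohomology functor $\Hom_\K(Q, -) \to \Hom_{\mathrm{graded}}(H(Q), H(-))$ is bijective, so surjectivity of $\Hom_\K(Q,p)$ is exactly surjectivity of $\Hom(H(Q), H(p))$, and choosing $H(Q)$ appropriately free/projective over the relevant modules forces $H(p)$ surjective.

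For the converse — the substantive direction — suppose $p : F \to X$ is cohomologically surjective with $F \in \F$, and let $G \in \F$ with a morphism $g : G \to X$; I must lift $g$ through $p$ up to homotopy. Again via Theorem \ref{characterization of F} the data of $g$ is the same as a graded $R$-module map $H(g) : H(G) \to H(X)$, and similarly a candidate lift $\tilde g : G \to F$ corresponds to a graded map $H(G) \to H(F)$. Since $H(G)$ is a projective graded $R$-module (each component $G^i$ is projective) and $H(p) : H(F) \to H(X)$ is surjective, the map $H(g)$ lifts to a graded map $H(G) \to H(F)$; transporting back through the bijection of Theorem \ref{characterization of F} gives $\tilde g : G \to F$ in $\K$ with $H(p\tilde g) = H(g)$. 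But both $g$ and $p\tilde g$ are maps from $G \in \F$ into $X$ agreeing on cohomology, and the injectivity part of Theorem \ref{characterization of F} forces $g = p\tilde g$ in $\K$. So $\Hom_\K(G,p)$ is surjective, i.e. $p$ is a right $\F$-approximation.

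Finally, for existence: given any $X \in \K$, I would construct a complex $F \in \F$ with zero differentials together with a cohomologically surjective map $F \to X$. The natural construction is to take, for each $i$, a projective module $P^i$ surjecting onto the cocycle $Z^i(X)$ (which is a finitely generated $R$-module since $X$ has finitely generated components and $R$ is Noetherian), assemble $F = \coprod_{i} P^i[-i]$ — this coproduct exists in $\K$ by Lemma \ref{sum-product lemma} since only finitely many $P^i$ are nonzero in each degree — and map $P^i \to Z^i(X) \hookrightarrow X^i$. This chain map has image containing all cocycles, so it is cohomologically surjective, and by the characterization just proved it is a right $\F$-approximation. The main obstacle I anticipate is the bookkeeping in the existence step — ensuring the chosen $F$ genuinely lies in $\F$ (i.e. that the coproduct stays inside $\K$, which is where the finiteness hypothesis on components and Lemma \ref{sum-product lemma} are essential) and that the constructed map is honestly cohomologically surjective rather than merely surjective on some subquotient.
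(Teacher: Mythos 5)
Your proposal is correct and follows essentially the same route as the paper: the converse direction and the existence step both rest, as in the paper, on the bijectivity of $H:\Hom_{\K}(G,-)\to\Hom(H(G),H(-))$ for $G\in\F$ from Theorem \ref{characterization of F} together with the graded projectivity of $H(G)$. The only difference is in the easy direction, where the paper simply tests against $R[-i]\in\F$ and uses $H^i(p)=\Hom_{\K}(R[-i],p)$, which is a bit more direct than your factorization through an auxiliary cohomologically surjective map $Q\to X$.
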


\begin{proof}
If  $p : F \to X$  is a right $\F$-approximation then 
$H^i(p)= \Hom _{\K}(R[-i], p)$ is surjective, since  $R[-i] \in \F$  for $i \in \Z$.

Conversely assume that  $H(p)$  is surjective, and let  $g : G  \to X$  be a morphism in $\K$ with  $G \in \F$. 
Then  $H(g) : H( G ) \to  H(X)$  factors through  $H(p)$, since  $H(G)$  is a graded projective $R$-module : 
$$
\xymatrix{ 
H(G) \hspace{6pt} \ar[d]_{\alpha} \ar[dr]^{H(g)}  &    \\
H(F)  \ar[r]^(0.45){H(p)}  &  H(X) \\
}
$$
Then,  by Theorem \ref{characterization of F},  there is a morphism  $a : G \to F$  such that  $H(a) = \alpha$, and since  $H(g) = H(pa)$, we have  $g = pa$. 

For the existence of right $\F$-approximation of  $X$, one has only to take a graded projective  $R$-module  $F$  which maps surjectively onto  $H(X)$.  
Then it follows from Theorem \ref{characterization of F} that this mapping is lifted to a chain homomorphism  $F \to X$ which is in fact a right $\F$-approximation of $X$. 
\end{proof}

If $p : F \to X$  is a right $\F$-approximation, then as we have shown in Theorem \ref{cone}, the mapping cone  $Cone (p)$  is uniquely determined as an object of $\uK$.

\begin{definition}\label{omega}
Let  $X \in \K$ and $p : F \to X$  be a right $\F$-approximation of  $X$.
We define $\Omega (X)$ (or simply denoted $\Omega X$) by the equality 
$$
\Omega (X) = Cone (p)[-1], 
$$
which is uniquely determined in the stable category 
$\uK$  by Theorem \ref{cone}.
Actually, $\Omega$  yields a functor  $\uK  \to \uK $ as follows: 

Let  $a : X \to Y$  be a morphism in $\K $.  
If $p_X : F_{X} \to X$  and  $p_Y : F_{Y}\to Y$  are right $\F $-approximations, then,  since  $ap_X$ factors through $p_Y$, we have the following commutative diagram, and as a result the morphism  $b : \Omega (X) \to \Omega (Y)$ is induced. 
$$
\xymatrix@C=48pt@R=36pt{ 
\Omega (X)  \ar[r]^{q_X} \ar[d]_b &  F_{X} \ar@{-->}[dl]^e \ar[r]^{p_X} \ar[d] & X \ar@{-->}[dl]^c \ar[r]^{\omega _X} \ar[d]_a &  \Omega (X) [1] \ar[d]^{b[1]}  \\ 
\Omega (Y) \ar[r]^{q_Y}  &  F_{Y}  \ar[r]^{p_Y}  &  Y  \ar[r]^{\omega _Y} & \Omega (Y) [1] \\
}
$$
If  $a$ factors through an object in $\F$, then it factors through $p_Y$, i.e. there is $c : X \to F_Y$ such that  $a = p_Y c$. 
Then we have  $b[1]\omega _X = \omega _Ya=\omega _Yp_Yc=0$, hence there is a morphism $e : F_X \to \Omega (Y)$  with  $b[1] = e[1]q_X[1]$ or $b=eq_X$. 
Thus $b$ factors through an object in $\F$. 
In such a way we see that the mapping 
$$
\Hom  _{\uK} (X,  Y)  \to   \Hom _{\uK} ( \Omega (X), \Omega (Y))
\quad ; \ \underline{a} \ \mapsto \underline{b} 
$$
is well-defined, hence we can define  $\Omega (\underline{a}) = \underline{b}$ for morphisms. 
We call $\Omega$ the {\bf syzygy functor} on  $\uK$. 
\end{definition}

\begin{definition}
Let  $X  \in \K $. 
A morphism  $q  : X  \to  G$ in  $\K $   is called a {\bf left  $\F$-approximation} of  $X$  if 
 $G \in \F $  and  $\Hom_{\K}(q, F) : \Hom _{\K}(G, F) \to \Hom _{\K}( X, F)$   are surjective mappings for all  $F \in \F $. 
\end{definition}

Recall that the dual complex  $X^{*} = \Hom _{R }(X, R)$  is again a complex belonging to  $\K$  and $X^{**} \cong X$. 
Also note that $X^* \in \F$  if and only if  $X \in \F $.

\begin{lemma}\label{left approximation}
Let  $X \in \K$ and  $G \in \F $. 
Then   a morphism  $q : X \to  G$ in  $\K$  is a left  $\F $-approximation 
if and only if 
the dual  $q^{*} : G^{*} \to X^{*}$  is a right $\F $-approximation, 
and the latter  is equivalent to that $q^{*}$ is cohomologically surjective. 
In particular,  there  exists a left  $\F $-approximation of  $X$  for any  $X \in \K$. 
\end{lemma}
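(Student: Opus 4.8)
The strategy is to reduce the statement about left $\F$-approximations to the already-established characterization of right $\F$-approximations (Lemma \ref{right approximation}) by applying the duality functor $(-)^*$ on $\K$. The key structural facts I will use are that $(-)^*$ is a triangle functor on $\K$ with $X^{**}\cong X$, that it restricts to a duality on $\F$ (since $X^*\in\F$ iff $X\in\F$, as noted just before the lemma), and the formula for dualizing an element of $\F$, namely $F=\coprod_{j}H^j(F)[-j]$ dualizes to $F^*=\coprod_j H^j(F)^*[j]$.

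First I would unwind the definition: a morphism $q:X\to G$ with $G\in\F$ is a left $\F$-approximation precisely when $\Hom_{\K}(q,F):\Hom_{\K}(G,F)\to\Hom_{\K}(X,F)$ is surjective for every $F\in\F$. Applying the duality $(-)^*$, and using that it is a bijection $\Hom_{\K}(A,B)\xrightarrow{\sim}\Hom_{\K}(B^*,A^*)$, the map $\Hom_{\K}(q,F)$ is identified with $\Hom_{\K}(F^*,q^*):\Hom_{\K}(F^*,G^*)\to\Hom_{\K}(F^*,X^*)$. As $F$ ranges over all of $\F$, so does $F^*$ (by the duality on $\F$), so the surjectivity condition for all $F$ is equivalent to: $\Hom_{\K}(F',q^*)$ is surjective for all $F'\in\F$, which is exactly the statement that $q^*:G^*\to X^*$ is a right $\F$-approximation of $X^*$. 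This gives the first equivalence. The second equivalence — that $q^*$ being a right $\F$-approximation is the same as $q^*$ being cohomologically surjective — is then immediate from Lemma \ref{right approximation} applied to the complex $X^*$ (noting $G^*\in\F$).

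For the existence of a left $\F$-approximation of an arbitrary $X\in\K$, I would invoke the existence of right $\F$-approximations (the last sentence of Lemma \ref{right approximation}) applied to $X^*$: there is a right $\F$-approximation $p:F\to X^*$ with $F\in\F$. Dualizing, $p^*:X^{**}\to F^*$ is a morphism with $F^*\in\F$; composing with the natural isomorphism $X\cong X^{**}$ yields a morphism $q:X\to F^*$ whose dual is (up to the canonical isomorphism) $p$, which is a right $\F$-approximation, so by the first part $q$ is a left $\F$-approximation of $X$.

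**Main obstacle.** The only delicate point is bookkeeping around the canonical identifications: one must check that the natural isomorphism $(-)^{**}\cong\mathrm{id}$ is compatible with the adjunction-style bijection $\Hom_{\K}(A,B)\cong\Hom_{\K}(B^*,A^*)$, so that "$q^*$ is a right $\F$-approximation" genuinely matches "$q$ is a left $\F$-approximation" after applying $(-)^*$ twice. This is routine but needs a clean statement; everything else is a direct translation through the duality.
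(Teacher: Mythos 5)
Your proposal is correct and follows essentially the same route as the paper: the paper likewise reduces the left approximation property to the right one by dualizing a test morphism $a: X \to F$ to $a^*: F^* \to X^*$, factoring it through $q^*$, dualizing back, and then invoking Lemma \ref{right approximation} for the cohomological-surjectivity characterization and for existence. Your Hom-functor phrasing and the explicit check that $F \mapsto F^*$ ranges over all of $\F$ are just a more detailed write-up of the same argument.
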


\begin{proof}
Assume that  $q^{*} : G^{*} \to X^{*}$  is a right $\F $-approximation. 
Let  $a : X \to F$ be a morphism in  $\K$  with  $F \in \F $. 
Then  $a ^{*} : F^{*} \to X^{*}$  is a morphism in  $\K$, hence it factors through  $q^{*}$. 
As a result, $a$ factors through  $q$, hence  $q$  is a left $\F $-approximation. The converse is proved similarly.  
\end{proof}

\begin{remark}\label{remark approximation}
Suppose we have a commutative diagram in $\K$ with  $F, F' \in \F$:  
$$
\xymatrix{ 
F \ar[r]^{p} \ar[d]_f &X    \\
F'  \ar[ur]_{p'}  &  \\
}
$$
In such a case if  $p$  is right $\F$-approximation, then so is $p'$. 
In fact if  $H(p)=H(p')H(f)$ is surjective, then so is $H(p')$. 

Similarly if a diagram 
$$
\xymatrix{ 
X \ar[r]^{q} \ar[dr]_{q'} & G    \\
& G'  \ar[u]  \\
}
$$
is commutative with  $G, G' \in \F$ and if  $q$ is a left $\F$-approximation, then $q'$ is a left $\F$-approximation as well.   
\end{remark}

\begin{corollary}\label{Gor dim 0}
Assume that  $R$  is a Gorenstein ring of dimension zero, and  let 
$$
\xymatrix{Y \ar[r]^q & F \ar[r]^p  & X \ar[r] & Y[1]}, 
$$
be a triangle in $\K$ where  $F \in \F$. 
Then,  $p$  is a right $\F$-approximation if and only if  $q$  is a left $\F$-approximation. 
\end{corollary}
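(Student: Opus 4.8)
The plan is to exploit the duality on $\K$ together with the fact that over a Gorenstein ring of dimension zero every complex in $\K$ is *reflexive (Corollary \ref{cor}), so the dualizing functor behaves essentially like a contravariant involution even at the level of syzygies. First I would dualize the given triangle. Since $(-)^*$ is a triangle functor, applying it to
$$
\xymatrix{Y \ar[r]^q & F \ar[r]^p & X \ar[r] & Y[1]}
$$
yields (up to the standard sign conventions and the identification $(Y[1])^* = Y^*[-1]$) a triangle in $\K$ of the form
$$
\xymatrix{X^* \ar[r]^{p^*} & F^* \ar[r]^{q^*} & Y^* \ar[r] & X^*[1]},
$$
with $F^* \in \F$ because $F \in \F$. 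This is again a triangle of the type considered in the statement, with the roles of $p$ and $q$ interchanged.

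Next I would invoke Lemma \ref{left approximation}: $q$ is a left $\F$-approximation of $X$ if and only if $q^* : F^* \to X^*$ is a right $\F$-approximation, and the latter is equivalent (by Lemma \ref{right approximation}) to $q^*$ being cohomologically surjective. Dually, $p : F \to X$ is a right $\F$-approximation if and only if $p$ is cohomologically surjective (Lemma \ref{right approximation} again), and one would like to relate this to $p^*$ being a left $\F$-approximation of $X^*$, i.e. to $p^*$ being "cohomologically surjective after dualizing" — but over a Gorenstein ring of dimension zero this is exactly where Corollary \ref{cor} enters: every complex is *reflexive, so the mappings $\rho^i_{X,R}$ are isomorphisms and cohomological surjectivity of a map and of its dual become interchangeable in the appropriate sense. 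Concretely, I would argue that in the dualized triangle above, $p$ is a right $\F$-approximation $\iff$ $H(p)$ is surjective $\iff$ (using that $H(X)$, $H(F)$, $H(Y)$ are finitely generated and $R$ is Gorenstein of dimension zero, so $\Ext^{>0}_R(-,R)=0$ and the long exact sequence of the triangle in cohomology dualizes exactly) $H(q^*)$ is surjective $\iff$ $q^*$ is a right $\F$-approximation of $X^*$ $\iff$ $q$ is a left $\F$-approximation of $X$.

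The cleanest way to run the middle equivalence is via the long exact cohomology sequence of the triangle: from $Y \to F \to X \to Y[1]$ we get
$$
\cdots \to H^{i-1}(X) \to H^i(Y) \to H^i(F) \to H^i(X) \to H^{i+1}(Y) \to \cdots,
$$
and $H(p)$ surjective is equivalent to the connecting map $H^i(X) \to H^{i+1}(Y)$ being zero for all $i$, i.e. to $0 \to H^i(Y) \to H^i(F) \to H^i(X) \to 0$ being exact for all $i$. Since $R$ is Gorenstein of dimension zero, $(-)^* = \Hom_R(-,R)$ is exact on $\mod(R)$, so this short exact sequence dualizes to $0 \to H^i(X)^* \to H^i(F)^* \to H^i(Y)^* \to 0$; combining with the *reflexivity isomorphisms $\rho^i$ (Corollary \ref{cor}) identifies this with the cohomology of the dualized triangle and shows $H(q^*)$ is surjective. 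The converse direction is symmetric, using $X^{**} \cong X$. The main obstacle I anticipate is bookkeeping: keeping the degree shifts, signs, and the distinction between $H^i(X^*)$ and $H^{-i}(X)^*$ straight through the dualization, and making sure the connecting-homomorphism reformulation of "cohomologically surjective" is correctly matched up on both sides — the mathematical content is light once Corollary \ref{cor} and Lemmas \ref{right approximation}, \ref{left approximation} are in hand, but the indexing is where an error would creep in.
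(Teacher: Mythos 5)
Your proposal is correct and follows essentially the same route as the paper: dualize the triangle, use that $R$ is self-injective (so $\Hom_R(-,R)$ is exact and dualizing commutes with taking cohomology, which is what the paper expresses as $H(p^*)=H(p)^*$), read off surjectivity of $H(q^*)$ from the long exact sequence, and conclude via Lemmas \ref{right approximation} and \ref{left approximation}. The only blemishes are notational slips (you write $q^*\colon F^*\to X^*$ and call $q$ a left $\F$-approximation "of $X$", where the source of $q$ is $Y$, so it should be $Y^*$ resp.\ $Y$), which do not affect the argument.
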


\begin{proof}
If $p$ is a right $\F$-approximation then $H(p)$  is a surjective $R$-module homomorphism. 
Then $H(p)^*$  is an injective homomorphism. 
Since $R$ itself is an injective $R$-module, noting that the equality  $H(p ^*) = H(p)^*$ holds, we see from 
the triangle  $\xymatrix{X^* \ar[r]^{p^*} & F^* \ar[r]^{q^*}  & Y^* \ar[r] & X^*[1]}$  that  $H(q^*)$  is surjective. 
Hence $q$  is a left $\F$-approximation by  Lemma \ref{left approximation}. 
The converse is proved in a similar manner. 
\end{proof}

\begin{definition}\label{omega inverse}
Let  $X \in \ \K$ and $q  : X  \to G$  be a left  $\F$-approximation of  $X$.
Embed  $q$ into a triangle  $\xymatrix{X \ar[r]^{q} & G \ar[r] & Z \ar[r]  & X[1]}$. 
We denote  the resulted $Z$  by $\Sigma (X)$ (or simply  $\Sigma X$). 
It follows from Lemma \ref{left approximation} that  
$$
\Sigma (X) = \Omega (X^{*})^{*},
$$  
which is uniquely determined as an object in  the stable  category  $\uK$. 
Actually, $\Sigma$  yields a well-defined functor  $\uK \to \uK$  in a similar manner to  the case of $\Omega$. 
We call $\Sigma$ the {\bf cosyzygy functor} on $\uK$. 
\end{definition}

\begin{remark}\label{remark on omega}
If  $R$  is a Gorenstein ring of dimension zero, then Corollary \ref{Gor dim 0} says that  
$\Sigma$ is actually the {inverse} of $\Omega$ as a functor on $\uK$.  
\end{remark}

\begin{proposition}\label{remark on omega by localization}
Let $S$ be a multiplicative closed subset of  $R$. 
Then the functor  $S^{-1} : \K \to \Ko (S^{-1}R)$  is defined naturally by taking the localization by  $S$. 
\begin{enumerate}
\item
If  $p : F \to X$  is a right $\F$-approximation in $\K$, then $S^{-1}p : S^{-1}F \to S^{-1}X$ is a right $\mathrm{Add}(S^{-1}R)$-approximation in $\Ko (S^{-1}R)$. \vspace{6pt} 
\item
If  $q : Y \to G$ is a left $\F$-approximation in $\K$, then $S^{-1}q : S^{-1}Y \to S^{-1}G$ is a left  $\mathrm{Add}(S^{-1}R)$-approximation in $\Ko (S^{-1}R)$. \vspace{6pt}
\item 
Let  $\Omega_{S^{-1}R}$  and  $\Sigma _{S^{-1}R}$  be the syzygy and cosyzygy functors on $\underline{\Ko(S^{-1}R)}$. 
Then the following squares are commutative: 
$$
\xymatrix{
\uK \ar[r]^{\Omega} \ar[d]_{S^{-1}} & \uK \ar[d]^{S^{-1}}  \quad  & \uK \ar[r]^{\Sigma} \ar[d]_{S^{-1}} & \uK \ar[d]^{S^{-1}}  \\
\underline{\Ko (S^{-1}R)} \ar[r]^(0.45){\Omega_{S^{-1}R} } & \underline{\Ko (S^{-1}R)}, \quad & 
\underline{\Ko (S^{-1}R)} \ar[r]^{\Sigma_{S^{-1}R} } & \underline{\Ko (S^{-1}R)} .  
}$$
\end{enumerate}
\end{proposition}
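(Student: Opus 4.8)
The plan is to reduce all three assertions to the characterizations of one-sided $\F$-approximations by cohomological surjectivity obtained in Lemmas \ref{right approximation} and \ref{left approximation}, together with the exactness of the localization functor $S^{-1}$. The functor $S^{-1}:\K\to\Ko(S^{-1}R)$ is well-defined because each $S^{-1}X^i$ is a finitely generated projective $S^{-1}R$-module; it is a triangle functor commuting with shifts and mapping cones, and since each $X^i$ is finitely presented the canonical map $S^{-1}\Hom_R(X,R)\to\Hom_{S^{-1}R}(S^{-1}X,S^{-1}R)$ is an isomorphism (this is exactly the compatibility already used in the proof of Theorem \ref{theorem torsion-free}), so $S^{-1}(X^{*})\cong(S^{-1}X)^{*}$ naturally in $X$. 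Finally, if $F\in\F$ then $F$ is split by Theorem \ref{characterization of F}, hence so is $S^{-1}F$ (localize the graded homotopy $s$ with $d_F s\, d_F=d_F$), and as $S^{-1}F$ has finitely generated terms the $S^{-1}R$-version of Theorem \ref{characterization of F} gives $S^{-1}F\in\mathrm{Add}(S^{-1}R)$; in particular $S^{-1}$ descends to a functor $\uK\to\underline{\Ko(S^{-1}R)}$.

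For assertion (1), Lemma \ref{right approximation} says $p:F\to X$ is a right $\F$-approximation precisely when $H(p)$ is surjective. Since localization is exact, $H(S^{-1}p)=S^{-1}H(p)$ is again surjective, and we have just seen $S^{-1}F\in\mathrm{Add}(S^{-1}R)$; applying Lemma \ref{right approximation} over the Noetherian ring $S^{-1}R$ shows that $S^{-1}p$ is a right $\mathrm{Add}(S^{-1}R)$-approximation. Assertion (2) follows by dualizing: by Lemma \ref{left approximation}, $q:Y\to G$ is a left $\F$-approximation if and only if $q^{*}:G^{*}\to Y^{*}$ is cohomologically surjective; applying $S^{-1}$ and using $S^{-1}(q^{*})\cong(S^{-1}q)^{*}$ together with exactness of $S^{-1}$ shows that $(S^{-1}q)^{*}$ is cohomologically surjective, so $S^{-1}q$ is a left $\mathrm{Add}(S^{-1}R)$-approximation by the $S^{-1}R$-version of Lemma \ref{left approximation}.

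Assertion (3) is then immediate from (1), (2), and the fact that $S^{-1}$ commutes with shifts and mapping cones. Choosing a right $\F$-approximation $p:F\to X$, one has $\Omega X=Cone(p)[-1]$ by Definition \ref{omega}, hence $S^{-1}\Omega X=Cone(S^{-1}p)[-1]$, which by (1) is exactly $\Omega_{S^{-1}R}(S^{-1}X)$ in $\underline{\Ko(S^{-1}R)}$; naturality in $X$ is inherited from the defining commutative diagram in Definition \ref{omega}, to which $S^{-1}$ applies. The statement for $\Sigma$ follows in the same way from a left $\F$-approximation $q:X\to G$ and the triangle $X\to G\to\Sigma X\to X[1]$ using (2), or alternatively from $\Sigma X=\Omega(X^{*})^{*}$ together with $S^{-1}(X^{*})\cong(S^{-1}X)^{*}$ and the already-established case of $\Omega$. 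I expect the only point that needs genuine care, rather than routine bookkeeping about exact functors and triangulated structures, to be the compatibility of the $R$-dual with localization, i.e.\ the isomorphism $S^{-1}(X^{*})\cong(S^{-1}X)^{*}$; but this is precisely the finitely-generated-terms argument already invoked in Section 4.
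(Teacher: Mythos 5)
Your proposal is correct and follows essentially the same route as the paper: part (1) via exactness of localization and the cohomological-surjectivity criterion of Lemma \ref{right approximation}, part (2) via the compatibility $S^{-1}\Hom_R(-,R)\cong\Hom_{S^{-1}R}(S^{-1}(-),S^{-1}R)$ together with Lemma \ref{left approximation}, and part (3) as a formal consequence of (1) and (2). The extra checks you supply (that $S^{-1}F$ lies in $\mathrm{Add}(S^{-1}R)$ and that $S^{-1}$ commutes with cones) are details the paper leaves implicit, not a different argument.
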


\begin{proof}
(1) If  $p$  is cohomologically surjective, then so is  $S^{-1}p$. 
\par\noindent 
(2) is clear from the fact that  $S^{-1} \Hom _R(q, R) \cong  \Hom _{S^{-1}R} (S^{-1}q, S^{-1}R)$  and  Lemma \ref{left approximation}. 
\par\noindent
(3) follows from (1) and (2).  
\end{proof}

In general,  $\Sigma $  is not necessarily the quasi-inverse of  $\Omega$, but we see that $\Sigma$  is a left adjoint to $\Omega$.

\begin{theorem}\label{adjoint}
As functors  from  $\uK$  to itself,  $(\Sigma, \Omega)$  is an adjoint pair, i.e. 
there are functorial isomorphisms
$$
\Hom  _{\uK} (\Sigma X,  Y)  \cong  \Hom _{\uK} ( X, \Omega Y ),
 $$
 for all $X, Y \in \uK $.
\end{theorem}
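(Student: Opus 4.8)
The assertion is that $(\Sigma,\Omega)$ is an adjoint pair on $\uK$, and the most economical route is to build the adjunction isomorphism directly from the defining triangles, then verify naturality. First I would fix right and left $\F$-approximations: for $Y\in\K$ a right $\F$-approximation $p_Y\colon F_Y\to Y$ gives a triangle $\Omega Y\xrightarrow{q_Y} F_Y\xrightarrow{p_Y} Y\xrightarrow{\omega_Y}\Omega Y[1]$, and for $X\in\K$ a left $\F$-approximation $i_X\colon X\to G_X$ gives a triangle $X\xrightarrow{i_X} G_X\xrightarrow{j_X}\Sigma X\xrightarrow{\varepsilon_X}X[1]$. Both exist by Lemmas \ref{right approximation} and \ref{left approximation}, and $\Omega,\Sigma$ are well defined functors on $\uK$ by Definitions \ref{omega} and \ref{omega inverse}.

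**Construction of the bijection.** Given a morphism $\underline{g}\colon \Sigma X\to Y$ in $\uK$, lift it to $g\colon\Sigma X\to Y$ in $\K$. The plan is to use the triangle $X[1]\xleftarrow{\varepsilon_X}\Sigma X\xleftarrow{j_X}G_X$ together with $p_Y\colon F_Y\to Y$: since $G_X\in\F$, the composite $g\cdot j_X\colon G_X\to Y$ factors through the right $\F$-approximation $p_Y$, say $g\cdot j_X = p_Y\cdot u$ for some $u\colon G_X\to F_Y$. Then $p_Y\cdot u\cdot i_X = g\cdot j_X\cdot i_X = 0$ (composite of two consecutive maps in the triangle on $X$), so $u\cdot i_X$ factors through $\Omega Y$ via $q_Y$, giving $v\colon X\to\Omega Y$ with $q_Y\cdot v = u\cdot i_X$. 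Define the correspondence $\underline{g}\mapsto\underline{v}$. In the reverse direction, given $\underline{v}\colon X\to\Omega Y$, the composite $q_Y\cdot v\colon X\to F_Y$ with $F_Y\in\F$ factors through the left $\F$-approximation $i_X$, say $q_Y\cdot v = u\cdot i_X$; then $p_Y\cdot u\cdot i_X = p_Y\cdot q_Y\cdot v = 0$, so $p_Y\cdot u$ factors through $j_X$ (using the triangle $G_X\xrightarrow{j_X}\Sigma X\xrightarrow{\varepsilon_X}X[1]$ and that $p_Y\cdot u\cdot i_X=0$, i.e.\ $p_Y u$ kills the "image" of $i_X$), yielding $g\colon\Sigma X\to Y$ with $g\cdot j_X = p_Y\cdot u$. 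One then checks these two assignments descend to $\uK$ and are mutually inverse there.

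**Well-definedness modulo $\F$ and inverseness.** The key lemma to invoke repeatedly is Lemma \ref{commutativity} together with the characterization $\underline{Z}=0\iff Z\in\F$, and Proposition \ref{injection proposition} is not needed here — rather the point is that the choices of the liftings $u,v,g$ above are unique up to maps factoring through objects of $\F$, because the ambiguity in $u$ lies in $\mathrm{Hom}_\K(G_X,\Omega Y\ \text{stuff})$... more precisely: any two choices of $u$ differ by a map $G_X\to Y$ factoring through $\omega_Y$, and any two lifts $v$ differ by a map factoring through $p_Y$ hence through $\F$. I would organize this as: (i) the map $\underline g\mapsto\underline v$ is independent of all choices; (ii) it is additive; (iii) composing the two constructions returns the original class, using that $i_X$ and $j_X$, $q_Y$ and $p_Y$ are consecutive triangle maps. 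The additive/$R$-linear structure is automatic since everything is built from compositions.

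**Naturality.** Finally I would check functoriality in $X$ and $Y$. Given $X'\to X$ and $Y\to Y'$ in $\K$, one gets induced maps on the approximation triangles (the approximations are functorial in $\uK$ by the constructions in Definitions \ref{omega}, \ref{omega inverse}), and chasing the diagram above shows the bijection $\mathrm{Hom}_{\uK}(\Sigma X,Y)\cong\mathrm{Hom}_{\uK}(X,\Omega Y)$ commutes with these induced maps. This is a routine but somewhat lengthy diagram chase.

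**Main obstacle.** The genuine difficulty is not the construction of the map but proving it is \emph{well defined on $\uK$} and \emph{bijective} — i.e.\ controlling the ambiguity of the liftings $u$, $v$, $g$ precisely enough to see that changing a representative of $\underline g$ by something factoring through $\F$ changes $v$ only by something factoring through $\F$, and that the two constructions are mutually inverse in $\uK$ rather than merely in $\K$. This requires careful use of the triangle axioms (the fact that a map killed by the connecting morphism factors through the previous term) combined with Lemma \ref{commutativity}. The naturality square is conceptually straightforward once the bijection is pinned down, so I expect to spend most of the effort on the well-definedness and inverseness steps.
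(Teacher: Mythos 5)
Your construction follows the same outline as the paper's proof of Theorem \ref{adjoint} (a correspondence built from the two approximation triangles), but the step you yourself single out as the crux --- well-definedness modulo $\F$ --- is set up in a way that fails. You define $v$ only by the left-square condition $q_Y v = u\, i_X$. For a fixed $u$, two solutions of this equation differ by a morphism $z\colon X \to \Omega Y$ with $q_Y z = 0$, i.e.\ by a morphism factoring through the connecting morphism $\omega_Y[-1]\colon Y[-1] \to \Omega Y$ of the approximation triangle --- not, as you assert, ``by a map factoring through $p_Y$ hence through $\F$''. Such a $z$ need not vanish in $\uK$. Concretely, let $(R,\m,k)$ be local and non-regular, let $Y$ be a minimal free resolution of $k$, and take the right $\F$-approximation $p_Y\colon P_0 \to Y$; then $\Omega Y$ is the brutal truncation and $\omega_Y\colon Y \to \Omega Y[1]$ is the chain map which is the identity in every degree $\leq -1$. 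If $\omega_Y$ factored through some $G\in\F$ with zero differential, say $\omega_Y \simeq \beta\alpha$, then in each degree $i\leq -2$ one would get $\mathrm{id}_{P_{-i}} = \beta^i\alpha^i + u_{-i+1}h^i + h^{i+1}u_{-i}$, where $\mathrm{im}\,\beta^i \subseteq \ker u_{-i} = \mathrm{im}\,u_{-i+1} \subseteq \m P_{-i}$ and both homotopy terms land in $\m P_{-i}$ by minimality; Nakayama then forces $P_{-i}=0$, contradicting $\mathrm{pd}_R k = \infty$. Hence $\underline{\omega_Y}\neq 0$ in $\uK$, and with $X=Y[-1]$, $g=0$, $u=0$ your recipe permits both $v=0$ and $v=\omega_Y[-1]$, which are distinct in $\uK$; so the assignment $\underline{g}\mapsto\underline{v}$, as you have set it up, is not well defined. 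The reverse direction has the dual defect: the ambiguity in $g$ consists of maps killed by precomposition with $j_X$, i.e.\ factoring through $\varepsilon_X$, and these too need not factor through $\F$.

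The missing ingredient is exactly what the paper's diagram records: one must choose $v$ so that $(v,u,g)$ is a full morphism of triangles, in particular so that the square with the connecting morphisms commutes, $v[1]\,\varepsilon_X = \omega_Y\, g$ (TR3 supplies such a $v$ once $p_Y u = g\, j_X$). That extra equation is what tames the ambiguity: if $g$ factors through $\F$, then $g$ factors through the right approximation $p_Y$, so $v[1]\,\varepsilon_X = \omega_Y g = 0$, and the long exact sequence of the triangle $X \to G_X \to \Sigma X \xrightarrow{\varepsilon_X} X[1]$ shows that $v$ factors through $G_X \in \F$; the same computation disposes of two different fill-ins for the same $g$, and dually for the inverse assignment. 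This is precisely the argument the paper recycles from Definition \ref{omega}. With that correction your remaining steps (linearity, mutual inverseness, naturality) go through essentially as in the paper.
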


\begin{proof}
To prove the theorem let  $a : \Sigma X \to Y$  be a morphism in $\K$. 
 Then it induces the following commutative diagram:  
$$
\xymatrix@C=48pt@R=36pt{ 
X \ar[d]_b \ar[r]^q  &  G_{X} \ar[d] \ar[r] &  \Sigma X \ar[d]_a \ar[r]  & X[1] \ar[d] \\ 
\Omega Y \ar[r] &  F_{Y} \ar[r]^p &  Y  \ar[r]  & \Omega Y [1],  
}
$$
where  $p$  is a right $\F$-approximation and  $q$ is a left $\F$-approximation. 
    Then, by the same reason in Definition \ref{omega} above, we see that 
$$
\Hom  _{\uK} (\Sigma X,  Y)  \to  \Hom _{\uK} ( X, \Omega Y)\quad ; \ \underline{a} \ \mapsto \underline{b} 
$$
is well-defined. 
Conversely, given a morphism $b  : X \to \Omega Y$, one can easily find an $a : \Sigma  X \to  Y$ that makes the diagram commutative. 
It thus gives the inverse to the above mapping:  
$$
\Hom _{\uK} ( X, \Omega Y) \to \Hom_{\uK} (\Sigma  X,  Y) \quad ; \ \underline{b} \ \mapsto \underline{a} 
$$
\end{proof}

We should  notice that the similar arguments to ours in this section and also the similar content to Theorem \ref{characterization of F} can be found  in those papers of J.D.Christensen \cite[Proposition 8.1]{C}  and Krause-Kussin \cite[Lemma 2.5]{KK}.

\begin{example}\label{exomega}
Let  $M$  be a finitely generated $R$-module and let  
$$
\xymatrix{
\cdots \ar[r] & P_n \ar[r]^(0.4){u_n} & P_{n-1} \ar[r] & \cdots \ar[r] & P_1 \ar[r]^{u_1} & P_0 \ar[r]^{u_0} & M \ar[r] &  0 \\
}
$$
be a projective resolution of $M$ where each $P_i$ are finitely generated. 
Now, set the complex $X$ to be $\left[ \xymatrix{0 \ar[r] & P_1 \ar[r]^{u_1} & P_{0} \ar[r] & 0}\right]$. 
Then the right $Add (R)$-approximation of $X$ is given as 
$$
\xymatrix{F_X \ar[d] _{p_X} &{=}  &[ 0 \ar[r] & P_2 \ar[d]^{u_2}  \ar[r]^0 & P_0 \ar[d]^1  \ar[r] & 0 ] \\
 X &{=}  &[0 \ar[r] & P_1 \ar[r] ^{u_1} & P_0 \ar[r] & 0]. }  
$$
Hence we have 
$$
\Omega X = \left[ \xymatrix{0 \ar[r] & P_{2} \ar[r]^{u_{2}} & P_{1} \ar[r] & 0}\right]. 
$$
More generally we have 
$$
\Omega ^n X (= \Omega (\Omega^{n-1}X)) = \left[ \xymatrix{0 \ar[r] & P_{n+1} \ar[r]^{u_{n+1}} & P_{n} \ar[r] & 0}\right],  
$$
for $n > 0$. 
On the other hand let  
$$
\xymatrix{
\cdots \ar[r] & Q_n \ar[r]^(0.4){v_n} & Q_{n-1} \ar[r] & \cdots \ar[r] & Q_1 \ar[r]^{v_1} & Q_0 \ar[r]^{v_0} & M^*  \ar[r] &  0 \\
}
$$
be a projective resolution of $M^*$. 
Then one can see that 
$$
\Sigma  X =\left[ \xymatrix{0 \ar[r] & P_{0} \ar[r]^(0.45){w} & Q_{0}^* \ar[r] & 0}\right],  
$$
where $w$ is the composition  
$\xymatrix@C=32pt{P_0 \ar[r]^{u_0} & M \ar[r]^(0.4){\text{natural}} & M^{**} \ar[r]^{v_0^*} & Q_0^*}$. 
For $n>1$, it can be easily seen that 
$$
\Sigma ^{n} X = \left[ \xymatrix{0 \ar[r] & Q_{n-2}^* \ar[r]^{v_{n-1}^*} & Q_{n-1}^* \ar[r] & 0}\right].  
$$
See also Example \ref{ex9.9}. 
\end{example}

\begin{lemma}\label{torsion-free}
Let   $X \in \K$. 
Then   $\Sigma X$  is *torsion-free as an object in $\K$.   
\end{lemma}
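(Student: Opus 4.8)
The plan is to verify the intrinsic characterization of *torsion-freeness from Lemma \ref{restatement}(1). I would write $\Sigma X$ via its defining triangle
$$
\xymatrix{X \ar[r]^{q} & G \ar[r]^{g} & \Sigma X \ar[r]^{\partial} & X[1]}
$$
of Definition \ref{omega inverse}, in which $q : X \to G$ is a left $\F$-approximation and $G \in \F$. Rotating backwards gives a triangle $\xymatrix{\Sigma X[-1] \ar[r]^(0.6){\bar\partial} & X \ar[r]^{q} & G \ar[r]^{g} & \Sigma X}$ with $\bar\partial = -\partial[-1]$ (the sign being irrelevant below), so in particular $q\bar\partial = 0$ in $\K$. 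Two ingredients will do all the work: $G \in \F$, so Theorem \ref{characterization of F} applies to morphisms out of $G$; and $R[j] \in \F$ for every $j \in \Z$, since $\F$ is closed under the shift functor by Definition \ref{smallest additive sub}. By Lemma \ref{restatement}(1) it then suffices to show that any chain map $f : \Sigma X \to R[i]$ with $H(f)=0$ vanishes in $\K$.

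The first step is to show that $\partial^{*}\colon \Hom_{\K}(X[1],R[i]) \to \Hom_{\K}(\Sigma X, R[i])$, $h \mapsto h\partial$, is the zero map. Given $h : X[1]\to R[i]$, its desuspension $h[-1]\colon X \to R[i-1]$ has codomain $R[i-1]\in\F$; since $q$ is a left $\F$-approximation, $h[-1]$ factors as $h[-1] = k q$ for some $k : G \to R[i-1]$. Hence $h[-1]\,\bar\partial = k\,(q\,\bar\partial) = 0$, and since $(h\partial)[-1] = h[-1]\,\partial[-1] = \pm\, h[-1]\,\bar\partial = 0$ and the shift functor is faithful, $h\partial = 0$.

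The second step handles a given $f : \Sigma X \to R[i]$ with $H(f)=0$. Then $H(fg) = H(f)H(g) = 0$; because $G \in \F$, the natural map $H$ of Theorem \ref{characterization of F} is injective on $\Hom_{\K}(G,R[i])$, so $fg = 0$ in $\K$. As $\K$ is triangulated and $fg = 0$, the morphism $f$ factors through $\partial$, i.e. $f = h\partial$ for some $h : X[1] \to R[i]$; by the first step $h\partial = 0$, so $f = 0$. Thus $\Sigma X$ satisfies condition $(*)$ of Lemma \ref{restatement} and is *torsion-free; since *torsion-freeness is preserved under adding $\F$-summands, this is independent of the chosen representative of $\Sigma X$ in $\K$.

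I do not expect a genuine obstacle here: the mechanism is simply that $R[j]\in\F$ turns "morphisms into $R[j]$" into "morphisms into $\F$", which the left $\F$-approximation $q$ is engineered precisely to absorb, and everything else is routine triangulated-category bookkeeping. The only points requiring care are tracking the single shift correctly, invoking the surjectivity direction of the left $\F$-approximation property, and using the standard fact that $\Hom_{\K}(-,R[i])$ is a cohomological functor.
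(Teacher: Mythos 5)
Your proof is correct and follows essentially the same route as the paper: compose $f$ with $g:G\to\Sigma X$, use Theorem \ref{characterization of F} (with $G\in\F$) to get $fg=0$, factor $f$ through the connecting morphism, and kill that factor via the left $\F$-approximation property of $q$ together with $q\bar\partial=0$. The only cosmetic difference is that you desuspend $h$ to apply the approximation property of $q$ itself, while the paper applies it to the shifted map $q[1]:X[1]\to G[1]$; these are the same argument.
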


\begin{proof}
Suppose we are given  $f \in H^{i}((\Sigma X)^{*})= \Hom _{\K}(\Sigma X, R[i])$  for some  $i \in \Z$  such that  $H(f)=0$. 
We want to show $f =0$. 

There is a triangle;  
$$
\xymatrix@C=36pt{
 G_{X} \ar[r]^(0.4)a & \Sigma  X \ar[d]_f  \ar[r]^b & X[1] \ar[r]^q & G_{X}[1] \\ 
&  R[i],  && }
$$
where $q$ is a left $\F$-approximation. 
Since  $H(fa)=H(f)H(a)=0$, we have  $fa=0$  by Theorem \ref{characterization of F}. 
Therefore there is a morphism $c : X[1] \to R[i]$  such that  $f=cb$. 
Since  $q$  is a left $\F$-approximation and  since  $R[i] \in \F$, 
we have  $c = eq$  for some  $e : G_{X}[1] \to R[i]$. 
Thus  $f = cb = eqb=0$ as desired. 
\end{proof}

\begin{theorem}\label{torsion-free2}
The following conditions are equivalent for  $X \in \K$. 
\begin{enumerate}
\item
$X$  is *torsion-free. \medskip
\item
There are complexes $Y \in \K$ and $F \in \F$  such that $X$  is a direct summand of  $\Sigma Y \oplus F$  in  $\K$.  \medskip 
\item
There is an isomorphism ${X}  \cong  { \Sigma \Omega X}$  in $\uK$. 
\end{enumerate}
\end{theorem}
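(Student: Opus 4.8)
The plan is to establish the three conditions cyclically, $(1)\Rightarrow(3)\Rightarrow(2)\Rightarrow(1)$, with essentially all of the work concentrated in $(1)\Rightarrow(3)$.

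The two easy arrows are formal. If $\underline{X}\cong\underline{\Sigma\Omega X}$ in $\uK$, then by the characterization of isomorphisms in $\uK$ established in \S6 there are $F,F'\in\F$ with $X\oplus F\cong\Sigma\Omega X\oplus F'$ in $\K$, so $X$ is a direct summand of $\Sigma(\Omega X)\oplus F'$; with $Y=\Omega X$ this is $(2)$. Conversely, if $X$ is a direct summand of $\Sigma Y\oplus F$, then $\Sigma Y$ is *torsion-free by Lemma \ref{torsion-free}, every complex in $\F$ is *reflexive and hence *torsion-free, and *torsion-freeness is inherited by finite direct sums and by direct summands (the remarks after Lemma \ref{restatement}); therefore $X$ is *torsion-free, which is $(1)$.

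For $(1)\Rightarrow(3)$ the strategy is to build an explicit representative of the counit $\pi^1_X:\Sigma\Omega X\to X$ and show it is a split epimorphism with kernel in $\F$. Fix a right $\F$-approximation $p:F\to X$ and the syzygy triangle $\Omega X\xrightarrow{q}F\xrightarrow{p}X\xrightarrow{\omega}\Omega X[1]$ (Definition \ref{omega}). Choose a left $\F$-approximation $j_0:\Omega X\to G_0$; since $j_0$ is a left $\F$-approximation and $F\in\F$, we may write $q=\bar q_0 j_0$. I would then enlarge: put $G=G_0\oplus F$, $j=\binom{j_0}{0}:\Omega X\to G$ (still a left $\F$-approximation, as one checks directly) and $\bar q=(\bar q_0,\ \mathrm{id}_F):G\to F$, so that $q=\bar q j$ and, decisively, $\bar q$ is split surjective in each degree. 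Form the cosyzygy triangle $\Omega X\xrightarrow{j}G\xrightarrow{g}\Sigma\Omega X\xrightarrow{\delta}\Omega X[1]$ (Definition \ref{omega inverse}) and complete the commutative square with left leg $\mathrm{id}_{\Omega X}$ to a morphism of triangles, obtaining $\varepsilon:\Sigma\Omega X\to X$ with $\varepsilon g=p\bar q$; in $\uK$ this $\varepsilon$ is the counit of the adjunction of Theorem \ref{adjoint}. Now two things fall out. First, since $\bar q$ is degreewise surjective and $H(p)$ is surjective (Lemma \ref{right approximation}), the identity $\varepsilon g=p\bar q$ forces $H(\varepsilon)$ to be surjective, i.e.\ $\varepsilon$ is cohomologically surjective. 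Second, because the left leg of the morphism of triangles is $\mathrm{id}_{\Omega X}$, whose cone is null, the $3\times3$ (octahedral) diagram yields $Cone(\varepsilon)\cong Cone(\bar q)$ in $\K$; and since $\bar q$ is degreewise split surjective onto projectives, computing the cokernels of the differentials of $Cone(\bar q)$ shows it is a split complex, so $C:=Cone(\varepsilon)$ lies in $\F$ (Lemma \ref{characterization of split} and Theorem \ref{characterization of F}).

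It remains to invoke the hypothesis. In the triangle $\Sigma\Omega X\xrightarrow{\varepsilon}X\xrightarrow{u}C\xrightarrow{v}\Sigma\Omega X[1]$, cohomological surjectivity of $\varepsilon$ gives $H(u)=0$; since $X$ is *torsion-free and $C\in\F$, Proposition \ref{injection proposition} upgrades this to $u=0$ in $\K$. A triangle whose second map is zero splits, so $\varepsilon$ is a split epimorphism and $\Sigma\Omega X\cong X\oplus C[-1]$ with $C[-1]\in\F$; passing to $\uK$ gives $\underline{\Sigma\Omega X}\cong\underline{X}$, which is $(3)$. The main obstacle — and the sole point at which *torsion-freeness is really used — is exactly this passage from $H(u)=0$ to $u=0$: for an arbitrary $X$ the enlarged construction still produces $\varepsilon$ with $Cone(\varepsilon)\in\F$ and $H(\varepsilon)$ surjective, yet $\varepsilon$ need not be invertible in $\uK$, so the reason the argument works is that the (enlarged) left approximation has been chosen so that $C$ genuinely lies in $\F$, which is precisely what makes Proposition \ref{injection proposition} applicable.
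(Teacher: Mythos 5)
Your proof is correct and follows essentially the same route as the paper: enlarge the left $\F$-approximation of $\Omega X$ by a copy of $F$ so that the comparison map to the right approximation becomes (split) surjective, deduce via the octahedron that the cone of the induced morphism $\Sigma\Omega X \to X$ lies in $\F$ and that this morphism is cohomologically surjective, and then use *torsion-freeness through Proposition \ref{injection proposition} to kill the connecting map and split the triangle. The only differences (adding a zero component to the left approximation rather than the map $a:\Omega X\to F$, and rotating the triangle so the $\F$-object appears as $Cone(\varepsilon)$ instead of $L$) are cosmetic.
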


\begin{proof}
The implication $(2) \Rightarrow (1)$ follows from Lemma \ref{torsion-free}, 
since direct sums (or direct summands) of *torsion-free complexes are *torsion-free as well. 

 The condition (3) means exactly that  $X \oplus F_{1} \cong \Sigma \Omega X \oplus F_{2}$  in  $\K$  for some  $F_{1}, F_{2} \in \F$. 
 Hence  (3)  implies  (2). 
 
It remains to prove $(1) \Rightarrow (3)$. 
 Let  
 $$
 \xymatrix@C=36pt{ 
 \Omega X  \ar[r]^a  &  F  \ar[r]^p &  X \ar[r] & \Omega X[1]  \\ 
 \Omega X  \ar@{=}[u] \ar[r]^q  &  G   \ar[u]^{f}  \ar[r] &  \Sigma \Omega X \ar[r] \ar[u]^{\pi}  & \Omega X[1]  \ar@{=}[u] \\ }
$$ 
be  a commutative diagram in  $\K$  whose rows are triangles in $\K$,  where  $p$ (resp. $q$)  is a right (resp. left) $\F$-approximation and the morphism $f$  is induced by the definition of  left $\F$-approximations. 
Note in this diagram that we can take such diagram in such a way that  $H(f)$  is a surjective graded $R$-module homomorphism. 
In fact, if necessary, we may replace  $q : \Omega X \longrightarrow G$  by  
$\binom{q}{a} : \Omega X \longrightarrow  G \oplus F$. 
Thus we may assume that  $L := Cone (f) [-1]$  belongs to $\F$. 
Note also that  $\pi$  is cohomologically surjective, since both  $f$ and  $p$  are so.  

Since there is a commutative diagram by the octahedron axiom:
$$
\xymatrix{
 & L[1] \ar@{=}[r] & L[1]  \\
\Omega X \ar[r]^a  & F \ar[r] ^p \ar[u]& X \ar[r] \ar[u]& \Omega X [1]  \\ 
\Omega X  \ar@{=}[u] \ar[r]^q  & G  \ar[u]_f \ar[r] ^p & \Sigma\Omega X \ar[u]_{\pi} \ar[r] & \Omega X [1]  \ar@{=}[u]\\ 
 & L \ar[u]  \ar@{=}[r] & L,  \ar[u]  \\
} 
$$
we have the following triangle in $\K$:  
 $$
 \xymatrix@C=36pt{ 
L \ar[r] & \Sigma \Omega X  \ar[r]^(0.6){\pi}  &  X  \ar[r]^(0.45)b  &  L[1].  \\}
$$ 
Since  $H(\pi)$  is surjective, we see that  $H(b)=0$  by the cohomology long exact sequence. 
Then, since we are assuming that $X$  is *torsion-free,  it follows from Proposition \ref{injection proposition} that  $b=0$  as a morphism in $\K$. 
Thus the triangle splits and we have the isomorphism  $\Sigma \Omega X \cong  X \oplus L$  in $\K$  with $L \in \F$. 
\end{proof}

\begin{remark}\label{torsion-free precover}
By the adjoint property proved in Theorem  \ref{adjoint}, there is a natural  counit morphism 
$\pi : \Sigma \Omega X \to X$  for any $X \in \uK$. 
Theorem \ref{torsion-free2} says that this is actually a right *torsion-free approximation of $X$ in the following sense: 
If $\underline{f} : \underline{Y} \to \underline{X}$ is a morphism in $\underline{\K}$ where $Y$ is *torsion-free, then $\underline{f}$ factors through $\pi$.

In fact, $\pi_Y$ in the following commutative diagram in $\underline{\K}$ is an isomorphism: 
$$
\xymatrix{
\Sigma\Omega Y \ar[d]_{\pi_Y} \ar[r]^{\Sigma\Omega f} & \Sigma\Omega X \ar[d]_{\pi}  \\
Y \ar[r]^{f} & X}
$$
\end{remark}

\begin{lemma}\label{reflexive}
Suppose that  $R$  is a generically Gorenstein ring. 
If  $X \in \K$  is *torsion-free, then  $\Sigma X$ is *reflexive. 
\end{lemma}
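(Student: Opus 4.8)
The strategy is to reduce the *reflexivity of $\Sigma X$ to a question about the cohomology modules of its dual $(\Sigma X)^*$, via Theorem \ref{theorem ref} (applicable once we know $R$ is generically Gorenstein, hence in particular that we may try to pass to the Gorenstein-in-depth-one case) together with Theorem \ref{torsion-free}, which already tells us $\Sigma X$ is *torsion-free for any $X$. The first observation is that $(\Sigma X)^* \cong \Omega(X^*)$ up to summands in $\F$, by Definition \ref{omega inverse}; concretely, if $q : X \to G$ is a left $\F$-approximation with triangle $X \xrightarrow{q} G \to \Sigma X \to X[1]$, then dualizing gives a triangle $(\Sigma X)^* \to G^* \xrightarrow{q^*} X^* \to (\Sigma X)^*[1]$ in which $q^*$ is a right $\F$-approximation by Lemma \ref{left approximation}. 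Hence $(\Sigma X)^* = \Omega(X^*)$ in $\uK$, and the cohomology modules $H^i((\Sigma X)^*)$ are, up to finitely generated projective summands, syzygies of the modules $H^i(X^*)$ appearing in the long exact cohomology sequence of that triangle.

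The heart of the argument is then to show that each $H^i((\Sigma X)^*)$ is a torsion-free $R$-module when $X$ is *torsion-free. I would argue as follows: since $X$ is *torsion-free, Theorem \ref{theorem torsion-free} tells us each $H^j(X^*)$ is torsion-free; and since $R$ is generically Gorenstein, Lemma \ref{lemma torsion-free} tells us each $H^j(X^*)$ is in fact torsionless, i.e.\ embeds in a free module. Now from the triangle $(\Sigma X)^* \to G^* \xrightarrow{q^*} X^*$, the cohomology long exact sequence presents $H^i((\Sigma X)^*)$ as the kernel of the map $H^i(G^*) \to H^i(X^*)$ modulo the image of $H^{i-1}(X^*)$; more precisely there is a short exact sequence $0 \to \mathrm{cok}(H^{i-1}(q^*)) \to H^i((\Sigma X)^*) \to \ker(H^i(q^*)) \to 0$. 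Here $H^i(G^*)$ is a (finitely generated) projective $R$-module since $G \in \F$, so $\ker(H^i(q^*))$ is a submodule of a free module, hence torsionless. The delicate piece is the cokernel term $\mathrm{cok}(H^{i-1}(q^*))$: this is a quotient, and quotients of torsionless modules need not be torsion-free.

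To handle the cokernel I expect one must use that $q^*$ is not just any map but a right $\F$-approximation, i.e.\ $H(q^*)$ is surjective (Lemma \ref{right approximation}). Surjectivity of $H(q^*)$ forces the cokernel term to vanish outside... no — surjectivity of $H(q^*): H(G^*)\to H(X^*)$ is exactly what makes the connecting maps behave, so in fact the long exact sequence collapses to short exact sequences $0 \to H^i((\Sigma X)^*) \to H^i(G^*) \xrightarrow{H^i(q^*)} H^i(X^*) \to 0$ with $H^i(q^*)$ surjective; thus $H^i((\Sigma X)^*)$ is literally a submodule of the projective module $H^i(G^*)$, hence torsionless, hence torsion-free. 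Then Theorem \ref{theorem torsion-free} does not immediately give *reflexivity, so the final step is to invoke that over a generically Gorenstein ring one can pass to the Gorenstein-in-depth-one localizations (as in the proof of Theorem \ref{theorem ref}): localizing at a prime $\p$ with $\depth R_\p \le 1$, the ring $R_\p$ is Gorenstein of dimension at most one, so $\Ext^2_{R_\p}(C(\Sigma X)_\p, R_\p)=0$ and Theorem \ref{torsion-free criterion} (or Theorem \ref{exact}) shows $(\rho^i_{\Sigma X, R})_\p$ is an isomorphism; since $\Sigma X$ is *torsion-free the maps $\rho^i_{\Sigma X,R}$ are injective globally, and Lemma \ref{lemma ref}(2) applied to their source and target — both of which are reflexive by Lemma \ref{lemma ref}(1), being duals — upgrades them to isomorphisms. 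The main obstacle is keeping straight which modules are merely torsion-free versus torsionless versus reflexive and correctly exploiting the approximation property of $q^*$; once the long exact sequence is seen to split into those short exact sequences, the rest is a localization argument parallel to Theorem \ref{theorem ref}.
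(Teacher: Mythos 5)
Your proposal has a genuine gap, and in fact it cannot be repaired along the route you sketch, because nowhere in your argument do you use the hypothesis that $X$ is *torsion-free. The short exact sequences $0 \to H^i((\Sigma X)^*) \to H^i(G^*) \to H^i(X^*) \to 0$ that you extract from the dualized approximation triangle are valid for \emph{every} $X \in \K$ (they only use that $q^*$ is cohomologically surjective), so all they show is that the cohomology of $(\Sigma X)^*$ is torsionless; via Theorem \ref{theorem torsion-free} this merely recovers Lemma \ref{torsion-free}, namely that $\Sigma X$ is *torsion-free for any $X$ whatsoever. If your argument proved *reflexivity from this alone, it would prove that $\Sigma X$ is *reflexive for every $X$ over a generically Gorenstein ring, which is false: with $a,b$ a regular sequence, the complex $X_1 = [\,0 \to R \to R^2 \to 0\,]$ of Section 9 is not *torsion-free, $\Omega X_0 = X_1$, and $\Sigma X_1 \cong X_0 \oplus (\text{an }\F\text{-summand})$ by Theorem \ref{torsion-free2}, where $X_0 = [\,0 \to R^2 \to R \to 0\,]$ is *torsion-free but not *reflexive; since direct summands of *reflexive complexes are *reflexive, $\Sigma X_1$ is not *reflexive.

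The concrete failure is in your final step: generic Gorensteinness gives $R_\p$ Gorenstein only for $\p \in \Ass(R)$, i.e.\ for primes of depth zero, so you are not entitled to "localizing at a prime $\p$ with $\depth R_\p \le 1$, the ring $R_\p$ is Gorenstein"; Theorem \ref{theorem ref} and Lemma \ref{lemma ref} assume the strictly stronger hypothesis that $R$ is Gorenstein in depth one and are unavailable here (also, the source $H^{-i}((\Sigma X)^*)$ of $\rho^i_{\Sigma X,R}$ is not a dual module, so Lemma \ref{lemma ref}(1) does not apply to it). The paper's proof works differently and uses the hypothesis at a precise spot: given $\alpha : H^n(\Sigma X) \to R$, reflexivity of $G_X \in \F$ lifts $\alpha H(p)$ to a morphism $b : G_X \to R[-n]$; then $H(bq)=0$ and the \emph{*torsion-freeness of $X$} force $bq=0$, so $b$ factors as $b = ap$; finally one localizes at the total quotient ring $S^{-1}R$ (Gorenstein of dimension zero), where Corollary \ref{Gor dim 0} makes $S^{-1}p$ cohomologically surjective, giving $S^{-1}(\alpha - H(a)) = 0$, and torsion-freeness of the dual module $H(\Sigma X)^*$ yields $\alpha = H(a)$. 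Any correct proof must make $X$'s *torsion-freeness do comparable work.
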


\begin{proof}
We have shown in Lemma \ref{torsion-free} that  $\Sigma  X$ is *torsion-free. 
To prove that it is *reflexive, let  $\alpha : H^n(\Sigma  X) \to R$ be a  homomorphism of $R$-modules, where $n \in \Z$. 
We want to show that there is a morphism $a : \Sigma X \to R[-n]$  in $\K$  satisfying  $H(a) = \alpha$. 
By definition, there is a triangle 
$$
 \xymatrix@C=36pt{ 
 X \ar[r] ^(0.4){q} &  G_X \ar[r]^(0.4)p &  \Sigma X \ar[r]^r & X[1],  \\}
$$
where $q$  is a left  $\F$-approximation. 
Therefore we have a long exact sequence of cohomology modules; 
$$
\xymatrix@C=36pt{ 
H^n(X) \ar[r]^(0.45){H(q)}  & H^n(G_X) \ar[r]^(0.45){H(p)}  &  H^n(\Sigma X)  \ar[d]_{\alpha}  \ar[r]^{H(r)}  & H^{n+1}(X)  \\
& & R &  \\}
$$
Note that  $G_X \in \F$  is *reflexive, and thus there is a morphism  $b : G_X \to R[-n]$  such that  $H(b) = \alpha H (p)$.
Then it is clear that $H(bq) = \alpha H(p)H(q)=0$. 
Since $X$  is *torsion-free, it follows that  $bq=0$. 
{(See Lemma \ref{restatement}.)}
Thus there is a morphism  $a : \Sigma X \to R[-n]$  with  $b = ap$. 
Note that  $(\alpha -H(a))H(p) = 0$. 
Let  $S$  be the set of all non-zero divisors of $R$. 
Since  $S^{-1}R$  is a Gorenstein ring of dimension zero, 
$$
\xymatrix@C=36pt{ 
S^{-1}X \ar[r]^(0.45){S^{-1}q} & S^{-1}G_X \ar[r]^(0.4){S^{-1}p}  &  S^{-1}\Sigma  X \ar[r]^{S^{-1}r}  & S^{-1}X[1]  \\}
$$
is a triangle in which $S^{-1}q$  is a left $\mathrm{Add}(S^{-1}R)$-approximation and 
$S^{-1}p$ is a right approximation in $\mathscr K (S^{-1}R)$ by Corollary \ref{Gor dim 0}.  
In particular, $S^{-1}H(p) = H(S^{-1}p)$  is a surjective mapping by Lemma \ref{right approximation}. 
Since   $(\alpha -H(a))H(p) = 0$, we see that  $S^{-1} ( \alpha -H(a)) = 0$  as  an element  of  $S^{-1}\left( H(\Sigma X)^*\right)$. 
Noting that the $R$-dual of any finitely generated module is torsion-free,  we  see that  $H(\Sigma X)^*$  is a torsion-free $R$-module. 
Consequently we have that  $\alpha = H(a)$  as  an element  of  $H^n(\Sigma (X))^*$. 
\end{proof}

Combining this lemma with Theorem \ref{torsion-free2} or with Lemma \ref{torsion-free} we obtain the following theorem. 

\begin{theorem}\label{reflexive cor}
Under the assumption that $R$ is generically Gorenstein,  $\Sigma ^{2}X =\Sigma (\Sigma  X)$  is always *reflexive for any  $X \in \K$. 
\end{theorem}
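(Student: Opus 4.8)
The plan is to obtain the statement as a short deduction from the two preceding results, so the proof will be essentially one line. First I would invoke Lemma \ref{torsion-free}, which guarantees that $\Sigma Y$ is *torsion-free for every $Y \in \K$; taking $Y = X$ shows that $\Sigma X$ itself is *torsion-free. (Equivalently, one may read this off from the implication $(2)\Rightarrow(1)$ of Theorem \ref{torsion-free2}, since $\Sigma X$ is trivially a direct summand of $\Sigma X \oplus 0$.) Then I would apply Lemma \ref{reflexive}: the ring $R$ is generically Gorenstein by hypothesis and $\Sigma X$ has just been shown to be *torsion-free, so that lemma, applied with $\Sigma X$ in the role of its input complex, yields that $\Sigma(\Sigma X) = \Sigma^2 X$ is *reflexive, which is exactly the assertion.

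One small point worth checking is well-definedness: $\Sigma$ is only a functor on $\uK$, so $\Sigma X$, and hence $\Sigma^2 X$, is pinned down merely up to summands lying in $\F$. This is harmless, because every complex in $\F$ is *reflexive and, by the Remarks following Definition \ref{deftorfree}, both the *torsion-free and the *reflexive conditions pass to direct summands and are unaffected by adjoining split complexes; therefore any representative in $\K$ of $\Sigma^2 X$ is *reflexive.

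I do not anticipate any genuine obstacle at this stage. All of the substance is already carried by Lemma \ref{reflexive} (which in turn rests on Corollary \ref{cor}, on the coincidence of torsion-free and torsionless modules over a generically Gorenstein ring supplied by Lemma \ref{lemma torsion-free}, and on a localization argument in the spirit of Theorem \ref{theorem torsion-free}), together with the fact from Lemma \ref{torsion-free} that $\Sigma$ always lands among the *torsion-free complexes; the theorem merely records the formal combination of these two facts.
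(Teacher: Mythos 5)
Your proposal matches the paper's argument exactly: the paper also obtains the theorem by combining Lemma \ref{torsion-free} (so that $\Sigma X$ is *torsion-free) with Lemma \ref{reflexive} applied to $\Sigma X$, and your added remark on well-definedness up to $\F$-summands is a harmless, correct observation. No gaps.
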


Similarly to Theorem \ref{torsion-free2} one can characterize the *reflexivity property for complexes as follows:  

\begin{corollary}
Assume that $R$ is a generically Gorenstein ring. 
Then the following two conditions for  $X \in \K$ are equivalent: 
\begin{enumerate}
\item 
$X$  is *reflexive.  \vspace{6pt}
\item
$\Sigma ^{2}\Omega^2 X \cong X $  in $\uK$. 
\end{enumerate}
\end{corollary}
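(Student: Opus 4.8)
The plan is to prove the two implications separately; $(2)\Rightarrow(1)$ is formal, while $(1)\Rightarrow(2)$ will be reduced to the single assertion that $\Omega X$ is *torsion-free whenever $X$ is *reflexive, which I expect to be the main obstacle. For $(2)\Rightarrow(1)$: by Theorem \ref{reflexive cor} the complex $\Sigma^{2}\Omega^{2}X$ is *reflexive, and an isomorphism $\Sigma^{2}\Omega^{2}X\cong X$ in $\uK$ lifts (cf.\ the corollary following Lemma \ref{commutativity}) to an isomorphism $X\oplus F\cong \Sigma^{2}\Omega^{2}X\oplus F'$ in $\K$ with $F,F'\in\F$. Since every complex in $\F$ is *reflexive and a finite direct sum of *reflexive complexes is *reflexive (see the remark following Definition \ref{deftorfree}), the right-hand side is *reflexive; hence so is $X\oplus F$, and hence so is its direct summand $X$.

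For $(1)\Rightarrow(2)$: a *reflexive complex is in particular *torsion-free, so Theorem \ref{torsion-free2} gives $X\cong\Sigma\Omega X$ in $\uK$. Granting that $\Omega X$ is *torsion-free, Theorem \ref{torsion-free2} applied to $\Omega X$ gives $\Omega X\cong\Sigma\Omega^{2}X$ in $\uK$; applying the functor $\Sigma$ on $\uK$ yields $\Sigma\Omega X\cong\Sigma^{2}\Omega^{2}X$, and composing with the previous isomorphism gives $X\cong\Sigma^{2}\Omega^{2}X$ in $\uK$, as desired.

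It therefore remains to show that $\Omega X$ is *torsion-free when $X$ is *reflexive. Fix a right $\F$-approximation $p:F\to X$, giving a triangle $\Omega X\xrightarrow{a}F\xrightarrow{p}X\xrightarrow{w}\Omega X[1]$ in $\K$ with $H(p)$ surjective (Lemma \ref{right approximation}) and $pa=0$. Let $f:\Omega X\to R[i]$ be a chain map with $H(f)=0$; by Lemma \ref{restatement}(1) it suffices to show $f=0$ in $\K$. Since $H(f\cdot w[-1])=0$ and $X[-1]$ is *torsion-free, Lemma \ref{restatement}(1) gives $f\cdot w[-1]=0$, so $f$ factors as $f=ga$ for some $g:F\to R[i]$. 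From $H(g)H(a)=H(f)=0$ together with the exactness of $H(\Omega X)\xrightarrow{H(a)}H(F)\xrightarrow{H(p)}H(X)$, the map $H(g)$ annihilates $\Ker H(p)=\Im H(a)$, so by surjectivity of $H(p)$ there is a graded $R$-module homomorphism $\epsilon:H(X)\to H(R[i])$ with $H(g)=\epsilon\cdot H(p)$. This is exactly the point where *reflexivity of $X$ (rather than mere *torsion-freeness) is used: by Lemma \ref{restatement}(2) the homomorphism $\epsilon$ equals $H(h)$ for some chain map $h:X\to R[i]$. Then $H(g-hp)=\epsilon H(p)-\epsilon H(p)=0$, and since $F\in\F$, Theorem \ref{characterization of F} forces $g=hp$ in $\K$, whence $f=ga=h(pa)=0$. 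The crux here is the recognition that the surjective half of the *reflexive condition, encoded as the lifting property $(**)$ of Lemma \ref{restatement}(2), is precisely what produces the chain map $h$; everything else is bookkeeping with the adjoint pair $(\Sigma,\Omega)$ and Theorems \ref{torsion-free2} and \ref{reflexive cor}.
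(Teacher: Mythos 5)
Your proposal is correct and follows essentially the same route as the paper: $(2)\Rightarrow(1)$ via Theorem \ref{reflexive cor}, and $(1)\Rightarrow(2)$ by showing $\Omega X$ is *torsion-free and then applying Theorem \ref{torsion-free2} and the functor $\Sigma$. The only difference is that where the paper simply cites Proposition \ref{triangle argument}(2) applied to the approximation triangle, you re-derive that special case inline (using Lemma \ref{restatement} and Theorem \ref{characterization of F}), which is a valid, if slightly longer, way of supplying the same key step.
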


\begin{proof}
Theorem \ref{reflexive cor} says that $(2) \Rightarrow (1)$ holds. 
Assume  $X$  is *reflexive. 
Then take a right $\F$-approximation sequence  
$\xymatrix@C=28pt{ \Omega X \ar[r] &F_0 \ar[r]& X \ar[r] &\Omega X[1]}$. 
Since right $\F$-approximations are cohomologically surjective, we have an exact sequence of $R$-modules: 
$$
\xymatrix{0 \ar[r] &H(X)^* \ar[r] &H(F_0)^* \ar[r] &H (\Omega X)^* }
$$
Thus we can apply Proposition \ref{triangle argument}(2), and we see that $\Omega X$  is *torsion-free. 
Then it follows from Theorem \ref{torsion-free2} that $\Sigma \Omega ^2 X = \Sigma \Omega (\Omega X) \cong \Omega X$. 
Thus applying $\Sigma $ to the both sides, we have  $\Sigma ^{2}\Omega ^2 X \cong \Sigma \Omega X$ and the last equals $X$, since $X$ is *torsion-free.
\end{proof}

\vspace{6pt}
\section{Contractions}

\begin{definition}\label{def Kexact}
We say that a finite sequence of morphisms in $\K$;  
\begin{equation}\label{Kexact1}
\xymatrix@C=32pt{ 
0 \ar[r] & X_n  \ar[r]^{q_n} & F_{n-1} \ar[r]^{f_{n-1}} & F_{n-2} \ar[r] & \cdots \ar[r] & F_1 \ar[r]^{f_1} & F_0 \ar[r]^{p_0} & X_0 \ar[r] & 0}  
\end{equation}
is $\K${\bf -exact} if  there are triangles 
$$
\xymatrix@C=32pt{ 
X_{i+1} \ar[r]^{q_{i+1}}  &  F_{i}  \ar[r]^{p_{i}}  &  X_{i}  \ar[r]^(0.4){\omega_i} &  X_{i+1}[1] \\
}
$$
and equalities 
$$
f_i= q_i  \, p_i
$$
for $0 \leq i \leq n-1$. 
The $\K$-exact sequence (\ref{Kexact1}) can be described in a single diagram as 
$$
\xymatrix@C=32pt{
 & & X_{n-1} \ar[dr]^{q _{n-1}}  & X_{n-2} \ar[dr] & & X_1 \ar[dr]^{q_1} & & & \\ 
0 \ar[r] & X_n  \ar[r]^{q_n} & F_{n-1} \ar[u]_{p_{n-1}} \ar[r]^{f_{n-1}} & F_{n-2} \ar[u]_{p_{n-2}} \ar[r] & \cdots \ar[r] & F_1 \ar[u] _{p_1} \ar[r]^{f_1} & F_0 \ar[r]^{p_0} & X_0 \ar[r] & 0.}  
$$

We also call the $\K$-exact sequence (\ref{Kexact1})   a {\bf  partial $\F$-resolution of}  $X_0$  if  $F_i \in \F$  for all $0 \leq i < n$.   
By definition an {\bf $\F$-resolution of  $X_0$ of length $n-1$}  is a partial $\F$-resolution with $X_n = 0$. 
\end{definition}

Note that, in the paper \cite[Notation 3.2]{IY},  we call a $\K$-exact sequence an $(n+1)$-angle in $\K$.
However in the present paper we are interested only in the \lq exactness'  of the sequence, and not in the length $n$. 
For this reason we use the term \lq$\K$-exact sequence' instead of $(n+1)$-angle.

If we are given such a $\K$-exact sequence (\ref{Kexact1}), we have a natural morphism $\widetilde{\omega_n} : X_0 \to X_n[n]$ that is defined by the composition 
$\omega _{n-1}[n-1]  \omega _{n-2}[n-2] \cdots \omega _1[1]  \omega_0$ of the morphisms in the relevant triangles. 
Notice that the morphism $\widetilde{\omega_n} : X_0 \to X_n[n]$  is {determined by  $\omega _i$  in (\ref{Kexact1}), and hence it is uniquely determined by the collection of triangles in (\ref{Kexact1})}, which we call the {\bf connecting morphism} of the $\K$-exact sequence (\ref{Kexact1}).

\begin{theorem/definition}\label{theorem/definition}
Suppose we are given a partial $\F$-resolution; 
\begin{equation}\label{Kexact2}
\xymatrix@C=32pt{
 & & X_{n-1} \ar[dr]^{q _{n-1}}  & X_{n-2} \ar[dr] & & X_1 \ar[dr]^{q_1} & & & \\ 
0 \ar[r] & X_n  \ar[r]^{q_n} & F_{n-1} \ar[u]_{p_{n-1}} \ar[r]^{f_{n-1}} & F_{n-2} \ar[u]_{p_{n-2}} \ar[r] & \cdots \ar[r] & F_1 \ar[u] _{p_1} \ar[r]^{f_1} & F_0 \ar[r]^{p_0} & X_0 \ar[r] & 0.}  
\end{equation}
Furthermore we assume that each  $F_i  \in \F$  contains no null complex as a direct summand for $0 \leq i \leq n-1$. 
Then there is a triangle of the form
\begin{equation}\label{contraction sequence}
\xymatrix@C=32pt{
X_n[n-1] \ar[r]^(0.6){\psi _n} & \widetilde{F} \ar[r]^{\varphi_n} & X_0 \ar[r]^(0.4){\widetilde{\omega_n}} & X_n [n],  
}
\end{equation}
where  $\widetilde{\omega_n}$  is the connecting morphism of the sequence  (\ref{Kexact2}) and the following conditions are satisfied: 
\begin{enumerate}
\item
There is an equality as underlying graded $R$-modules   
$$
\widetilde{F}=
F_{n-1}[n-1] \oplus F_{n-2}[n-2] \oplus \cdots \oplus F_1[1] \oplus F_0.  
$$ 
\item
Let $in_i : F_i[i] \to \widetilde{F}$  and  $pr_i: \widetilde{F} \to F_i[i]$  be respectively a natural injection and a natural projection of graded $R$-modules according to the direct sum decomposition in $(1)$. 
Denoting  by $d_{\widetilde{F}}$ the differential mapping of  $\widetilde{F}$, we have equalities of graded $R$-module homomorphisms; 
$$
\begin{cases}
\  pr_j \,  d_{\widetilde{F}} \, in_i = 0  & \text{for} \ \ 0 \leq i \leq j \leq n-1,  \\   
\  pr_{i-1} \, d_{\widetilde{F}} \, in_i = f_i [i]   & \text{for} \ \  1 \leq i \leq n-1.
\end{cases}
$$  
\item
The natural inclusion $in_0 : F_0 \to \widetilde{F}$ and the natural projection $pr_{n-1} : \widetilde{F} \to F_{n-1} [n-1]$ {are chain maps and they} yield  the morphisms in $\K$  which make the following diagrams in $\K$ commutative;
\begin{equation}\label{phipsi}
\xymatrix@C=32pt{
X_n [n-1] \ar[r]^(0.65){\psi_n} \ar[rd]_{q_n[n-1]} &  \widetilde{F} \ar[d]^{pr_{n-1}}  & \qquad F_0 \ar[d]_{in_0} \ar[rd]^{p_0} \qquad &  \\
& F_{n-1}[n-1]   &  \widetilde{F} \ar[r]_(0.4){\varphi _n}  & \quad X_0 \quad   \\
}
\end{equation}
{\item {\phantom{}}} 
As an object of  $\K$, such a complex  $\widetilde{F}$  is unique up to isomorphism. 
\par
{\rm 
We call $\widetilde{F}$ the  {\bf contraction}  of the partial  $\F$-resolution  (\ref{Kexact2}). 
The triangle (\ref{contraction sequence}) is called the  {\bf  contracted triangle}  of (\ref{Kexact2}).} 
\end{enumerate}
\end{theorem/definition}

\begin{proof}
We prove the theorem by the induction on $n$. 
If $n=1$, then there is a triangle 
$$
\xymatrix@C=32pt{ 
X_1 \ar[r]^{q _1} &F_0 \ar[r]^{p_0} &X_0 \ar[r]^(0.4){\omega _0} & X_1[1].
}
$$ 
Hence, {setting} $\widetilde{F} = F_0$, $\psi_1=q_1$  and  $\varphi _1= p_0$ in this case, {we see} the conditions (1) - (3) are satisfied.

Now we assume  $n > 1$. 
Setting $\widetilde{F'}$ as the contraction of the partial $\F$-resolution  
$$
\xymatrix@C=32pt{ 
0 \ar[r] & X_{n-1} \ar[r]^{q_{n-1}} & F_{n-2} \ar[r]^{f_{n-2}} &F_{n-3} \ar[r]^{f_{n-3}} &\cdots \ar[r] F_1 \ar[r]^{f_1} & F_0 \ar[r]^{p_0} & X_0 \ar[r] & 0, }
$$
we assume that the theorem holds for this partial resolution and  $\widetilde{F'}$. 
Then we have the following octahedron diagram: 

\begin{equation}\label{proven diagram}
\xymatrix@C=40pt@R=36pt{
& \widetilde{F'} \ar[d]^{\iota_{n-1} }\ar@{=}[r] & \widetilde{F'} \ar[d]^{\varphi_{n-1}} & \\ 
X_n [n-1] \ar[r]^{\psi_n}  \ar@{=}[d] & \widetilde{F} \ar[r]^{\varphi_n} \ar[d]^{pr_{n-1}} & X_0 \ar[r]^{\widetilde{\omega_n}}  \ar[d] ^{\widetilde{\omega_{n-1}}} & X_n [n] \ar@{=}[d] \\
X_n [n-1] \ar[r]^{q_n [n-1]}  & F_{n-1} [n-1]  \ar[d]^{\alpha_{n-1}[1]} \ar[r]^{p_{n-1}[n-1]} & X_{n-1}[n-1] \ar[d]^{\psi_{n-1}[1]}  \ar[r]^(0.6){\omega_{n-1}[n-1]}& X_n[n]  \\   
& \widetilde{F'}[1]  \ar@{=}[r] & \widetilde{F'}[1]  &  
}
\end{equation}
where  $\alpha _{n-1} = \psi_{n-1}\, p_{n-1}[n-2]$.  
In fact, the third column and the third row of this diagram are triangles by the induction hypothesis and the $\K$-exactness assumption. 
The second column is a triangle given by setting $\widetilde{F} = Cone (\alpha_{n-1})$.   
The second row gives the desired triangle for the case of $n$. 

\par
Since  $\widetilde{F}$  is the mapping cone of the morphism $\alpha _{n-1}$,  it equals $F_{n-1}[n-1]  \oplus \widetilde{F'}$ as an underlying graded $R$-module, and since  $d_{F_{n-1}}=0$,  the summand $F_{n-1}[n-1]$ is mapped by $\alpha _{n-1}[1]$  into  $\widetilde{F'}[1]$  under the differential  $d_{\widetilde{F}}$. 
This proves  $pr _{n-1} \, d_{\widetilde{F}} \, in_{n-1} = 0$  and the restriction of  $d_{\widetilde{F}}$  {to}  $\widetilde{F'}$  is its own differential $d_{\widetilde{F'}}$. 
It then follows from the induction hypothesis on $\widetilde{F'}$ that  
$pr_j \,  d_{\widetilde{F}} \, in_i = 0$  holds for $0 \leq i \leq j \leq n-1$. 
If $1 \leq i \leq n-2$  then the equality $pr_{i-1} \, d_{\widetilde{F}} \, in_i = pr_{i-1} \, d_{\widetilde{F'}} \, in_i = f_i [i]$ holds by the induction hypothesis. 
To prove that  $pr_{n-2} \, d_{\widetilde{F}} \, in_{n-1} = f_{n-1} [n-1]$, we note that $d_{\widetilde{F}} \, in _{n-1} = \alpha _{n-1} [1]$ and that $pr'_{n-2} : \widetilde{F'} \to F_{n-2}[n-2]$  is a chain map by the induction hypothesis. 
We consider the following diagram: 
\begin{equation} 
\xymatrix@C=46pt@R=46pt{
F_{n-1}[n-1] \ar[r]^{\alpha_{n-1}[1] }   \ar[d] _{p_{n-1}[n-1]} &  \widetilde{F'} [1]  \ar[d]^{pr'_{n-2}[1]} \\ 
X_{n-1}[n-1] \ar[r]_{q_{n-1}[n-1]} \ar[ur]_{\psi_{n-1}[1]}   & F_{n-2}[n-1]  \\ 
}
\end{equation}
The upper left triangle is commutative by the definition of  $\alpha _{n-1}$, and the lower right triangle is also commutative by the induction hypothesis for $\widetilde{F'}$. 
Therefore the square above is a commutative diagram, and thus we have 
$pr' _{n-2}[1] \, \alpha_{n-1} [1] = q_{n-1}[n-1] \, p_{n-1}[n-1] = f_{n-1}[n-1]$. 
This proves    $pr_{n-2} \, d_{\widetilde{F}} \, in_{n-1} = f_{n-1} [n-1]$, and the conditions (1) and (2) are  proved.  

We can see from (2) that  $d_{\widetilde{F}} \, in _0 =0$ and  $pr_{n-1} \, d_{\widetilde{F}}=0$, hence  $in_0$ and $pr_{n-1}$  are chain maps. 
The commutativity of the left triangle in (\ref{phipsi})  follows from the diagram (\ref{proven diagram}). 
To prove that the right triangle in (\ref{phipsi}) is also commutative, note that  $p_0 = \varphi_{n-1} \, in'_0$ holds by the induction hypothesis, where  $in'_0 : F_0 \to \widetilde{F'}$  is the natural injection. 
Note form the diagram (\ref{proven diagram}) that  $\varphi _{n} \, \iota _{n-1} = \varphi _{n-1}$  and  $\iota_{n-1} \, in' _0 = in_0$. 
Thus we obtain  $\varphi_n \, in _0 = \varphi _n \, \iota _{n-1} \, in'_0 = \varphi_{n-1}\, in'_0 = p_0$. 

Since the connecting morphism $\widetilde{\omega_n}$  is uniquely determined by the $\K$-exact sequence, 
noting that  $\widetilde{F} \cong  Cone (\widetilde{\omega_n}[-1])$  in $\K$, 
we see that  $\widetilde{F}$  is unique up to isomorphism in $\K$. 
\end{proof}

\begin{remark}\label{matrix form psi} 
Another interpretation of the conditions (1)-(3) in the theorem is the following: 
Now returning to the setting of  Theorem \ref{theorem/definition}, the contraction of the $\K$-exact sequence (\ref{Kexact2}) is  $F_{n-1}[n-1]\oplus \cdots \oplus F_1[1] \oplus F_0$  as an underlying graded $R$-module and the differential $d_{\widetilde{F}}$ is given by a matrix of the form; 
\begin{equation}\label{contraction diff}
d_{\widetilde{F}} = 
\begin{pmatrix}
 0 & 0 &    \dots\dots &  0 &0  \\
 f_{n-1}[n-1] & 0 &    \dots\dots & 0 &0 \\
 a_{n-1 \, n-3} & f_{n-2}[n-2] &    \dots\dots & 0 &0 \\
 a_{n-1 \, n-4} & a_{n-2 \, n-4} &    \dots\dots & 0 &0 \\
 \vdots & \vdots  & \ddots & \vdots &\vdots  \\
 a_{n-1 \, 0}& a_{n-2 \, 0}& \dots\dots  & f_1[1] & \phantom{AA}0\phantom{AA} \\
\end{pmatrix},  
\end{equation}
where  each  $a _{i \, j} : F_i[i] \to F_j[j+1]$  is a graded $R$-homomorphism. 
On the other hand, the commutativity of the diagrams  (\ref{phipsi}) says that as underlying graded $R$-module homomorphisms 
$\psi _n$  and  $\varphi _n$  are represented respectively by the following matrices: 
$$
\begin{array}{l}
\psi _n = 
\begin{pmatrix}
q_{n}[n-1] \\ a_{n\, n-2} \\ \vdots \\ a_{n \, 0}
\end{pmatrix}
: X_n [n-1] \longrightarrow F_{n-1}[n-1] \oplus F_{n-2}[n-2] \oplus \cdots \oplus F_1[1] \oplus F_0 \vspace{6pt} \\
\varphi _n = 
\begin{pmatrix}
b_{n-1 \, 0}  & \cdots & b_{1\, 0}  & p_0 \\ 
\end{pmatrix}
: F_{n-1}[n-1] \oplus F_{n-2}[n-2] \oplus \cdots \oplus F_1[1] \oplus F_0  \longrightarrow X_0
\end{array}
$$
for some graded $R$-homomorphisms $a_{n \, i} : X_n[n-1] \to F_i [i]$ and  $b _{i \, 0} : F_i [i] \to X_0$.   
\end{remark}

\begin{definition}\label{morphism of resolutions}
Assume that we have a commutative diagram  
\begin{equation}\label{morphism1}
\xymatrix@C=32pt{ 
0 \ar[r] & X_n  \ar[r]^{q_n^F} & F_{n-1} \ar[r]^{f_{n-1}} &  \cdots \ar[r] & F_1 \ar[r]^{f_1} & F_0 \ar[r]^{p_0^F} & X_0 \ar[r] & 0\\  
0 \ar[r] & Y_n  \ar[r]^{q_n^G} \ar[u]^{t_n} & G_{n-1} \ar[r]^{g_{n-1}} \ar[u]^{s_{n-1}} &  \cdots \ar[r] & G_1 \ar[r]^{g_1} \ar[u]^{s_1} & G_0 \ar[r]^{p_0^G} \ar[u]^{s_0} & Y_0 \ar[r] \ar[u]^{t_0}  & 0, }  
\end{equation}
where the rows are partial $\F$-resolutions. 
We say that the diagram (\ref{morphism1}) gives a morphism between the partial $\F$-resolutions if there are commutative diagrams
$$
\xymatrix@C=32pt{ 
X_{i+1} \ar[r]^{q_{i+1}^F}  &  F_{i}  \ar[r]^{p_{i}^F}  &  X_{i}  \ar[r]^(0.4){\omega_i^F} &  X_{i+1}[1] \\
Y_{i+1} \ar[r]^{q_{i+1}^G} \ar[u]^{t_{i+1}} &  G_{i}  \ar[r]^{p_{i}^G} \ar[u]^{s_{i}} &  Y_{i}  \ar[r]^(0.4){\omega_i^G} \ar[u]^{t_i}& Y_{i+1}[1] \ar[u]^{t_{i+1}{[1]}}, \\
}
$$
where each row is a triangle in $\K$ for  $0 \leq i < n$, and  $f_i= q_i^F p_i^F$,  $g_i= q_i^Gp_i^G$  for  $1\leq i <n$. 

In such a case we have a morphism $\widetilde{s}$  between the contractions with the diagram;   
\begin{equation}\label{morphism of contractions}
\xymatrix@C=32pt{
X_n[n-1] \ar[r]^(0.6){\psi _n^F} & \widetilde{F} \ar[r]^{\varphi_n^F} & X_0 \ar[r]^(0.4){\widetilde{\omega_n^F}} & X_n [n]  \\
Y_n[n-1] \ar[r]^(0.6){\psi _n^G} \ar[u]^{t_{n}[n-1]}& \widetilde{G} \ar[r]^{\varphi_n^G} \ar[u]^{\widetilde{s}} & Y_0 \ar[r]^(0.4){\widetilde{\omega_n^G}} \ar[u]^{t_{0}}& Y_n [n]. \ar[u]^{t_{n}[n]}  
}
\end{equation}
Since the morphisms  $t_0$  and  $t_n$  are given beforehand, 
such a morphism  $\widetilde{s}$  obviously exists so that the diagram (\ref{morphism of contractions})  will be commutative. 
Unfortunately  it is not unique in general.  
\end{definition}

But we can prove the following theorem. 

\begin{theorem}\label{lower triangle matrix}
Under the circumstances in Definition \ref{morphism of resolutions}, we furthermore assume that all the $F_i,  G_i \  ( 1 \leq i <n)$ do not contain  any null complexes as direct summands. 
Then we can take a morphism $\widetilde{s} : \widetilde{G} \to \widetilde{F}$   
so that it is represented by the following type of lower {triangular} matrix as an underlying graded $R$-module homomorphism according to the direct sum decompositions  
  $\widetilde{G} = G _{n-1}[n-1] \oplus G_{n-2}[n-2] \oplus G_0$ and  $\widetilde{F} = F _{n-1}[n-1] \oplus F_{n-2}[n-2] \oplus F_0$: 
    
 $$
\begin{pmatrix}
 s_{n-1}[n-1] & 0 &  0 &   \dots\dots & 0  \\
 *  & s_{n-2}[n-2] &  0 &    \dots\dots &0 \\
 *  & *  & s_{n-3}[n-3]  &    \dots\dots & 0 \\
 \vdots & \vdots  & \vdots & \ddots &\vdots  \\
 * & * & * &  \dots\dots  & s_0 \\
\end{pmatrix}  
$$

In this case the following diagram is commutative: 
$$
\xymatrix@C=48pt{
\widetilde{F}  \ar[r] ^(0.4){pr^F_{n-1}} & F_{n-1}[n-1] \\
\widetilde{G}  \ar[r] ^(0.4){pr^G_{n-1}} \ar[u]^{\widetilde{s}} & G_{n-1}[n-1] \ar[u]^{s_{n-1}[n-1]} }
$$ 
\end{theorem}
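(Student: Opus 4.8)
The plan is to induct on $n$, mimicking the construction of the contraction in Theorem \ref{theorem/definition} itself, and to build $\widetilde{s}$ compatibly with the octahedron diagrams (\ref{proven diagram}) used there. For $n=1$ the contractions are $\widetilde{F}=F_0$, $\widetilde{G}=G_0$, and one simply sets $\widetilde{s}=s_0$; the asserted matrix shape and the final commutative square are then trivial. For the inductive step, write the truncated partial $\F$-resolutions $0\to X_{n-1}\to F_{n-2}\to\cdots\to F_0\to X_0\to 0$ and $0\to Y_{n-1}\to G_{n-2}\to\cdots\to G_0\to Y_0\to 0$ with their contractions $\widetilde{F'}$, $\widetilde{G'}$; by the induction hypothesis there is a morphism $\widetilde{s'}:\widetilde{G'}\to\widetilde{F'}$ of the required lower-triangular form, compatible with $t_0,t_{n-1}$, and satisfying $pr^{F'}_{n-2}\,\widetilde{s'}=s_{n-2}[n-2]\,pr^{G'}_{n-2}$.

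Next I would recall that $\widetilde{F}=Cone(\alpha^F_{n-1})$ and $\widetilde{G}=Cone(\alpha^G_{n-1})$, where $\alpha^F_{n-1}=\psi^F_{n-1}\,p^F_{n-1}[n-2]:F_{n-1}[n-1]\to\widetilde{F'}[1]$ and similarly for $G$. The key point is that the square
$$
\xymatrix@C=40pt@R=32pt{
G_{n-1}[n-1] \ar[r]^{\alpha^G_{n-1}} \ar[d]_{s_{n-1}[n-1]} & \widetilde{G'}[1] \ar[d]^{\widetilde{s'}[1]} \\
F_{n-1}[n-1] \ar[r]^{\alpha^F_{n-1}} & \widetilde{F'}[1]
}
$$
commutes in $\K$. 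Indeed $\alpha^F_{n-1}\,s_{n-1}[n-1]=\psi^F_{n-1}\,p^F_{n-1}[n-2]\,s_{n-1}[n-1]=\psi^F_{n-1}\,t_{n-1}[n-2]\,p^G_{n-1}[n-2]$ by commutativity of the triangle maps in Definition \ref{morphism of resolutions}, while $\widetilde{s'}[1]\,\alpha^G_{n-1}=\widetilde{s'}[1]\,\psi^G_{n-1}\,p^G_{n-1}[n-2]$; so it suffices that $\widetilde{s'}\,\psi^G_{n-1}=\psi^F_{n-1}\,t_{n-1}[n-2]$, which is exactly the left square of diagram (\ref{morphism of contractions}) for the truncated resolutions, guaranteed by the induction hypothesis. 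Having this commuting square, the functoriality of the mapping cone gives a morphism $\widetilde{s}:Cone(\alpha^G_{n-1})\to Cone(\alpha^F_{n-1})$, i.e. $\widetilde{s}:\widetilde{G}\to\widetilde{F}$, whose underlying graded $R$-module map is the block-diagonal $s_{n-1}[n-1]\oplus\widetilde{s'}$ with respect to $\widetilde{G}=G_{n-1}[n-1]\oplus\widetilde{G'}$, $\widetilde{F}=F_{n-1}[n-1]\oplus\widetilde{F'}$. Since $\widetilde{s'}$ is lower triangular by induction, $\widetilde{s}$ has the asserted lower-triangular shape; one checks that the top-left block is $s_{n-1}[n-1]$ and that compatibility with $t_0$, $t_n$ is inherited from the octahedron. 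Finally, the claimed commutative square follows immediately: $pr^F_{n-1}$ is the projection onto the first block $F_{n-1}[n-1]$, $pr^G_{n-1}$ the analogous projection, and the $(1,1)$-entry of $\widetilde{s}$ is $s_{n-1}[n-1]$ with all other entries in the first row zero, so $pr^F_{n-1}\,\widetilde{s}=s_{n-1}[n-1]\,pr^G_{n-1}$.

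The main obstacle I expect is not the algebra but the bookkeeping: one must verify that the mapping-cone morphism $\widetilde{s}$ can be chosen so that every commutativity requirement in (\ref{morphism of contractions}) holds simultaneously with the matrix shape, since cone morphisms are not unique. The device for handling this is to carry the stronger inductive hypothesis that $\widetilde{s'}$ is lower triangular \emph{and} satisfies $pr^{F'}_{n-2}\,\widetilde{s'}=s_{n-2}[n-2]\,pr^{G'}_{n-2}$; this last identity is what makes the square with $\alpha^G_{n-1},\alpha^F_{n-1}$ commute on the nose rather than merely up to homotopy, and it propagates to level $n$ because the new top block of $\widetilde{s}$ is exactly $s_{n-1}[n-1]$. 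Once the square commutes strictly, the induced cone map exists honestly and the rest is formal. A secondary, purely notational point is to keep track of the shifts $[\,i\,]$ and signs in the differentials $d_{\widetilde{F}}$, $d_{\widetilde{G}}$, but these are governed entirely by condition (2) of Theorem \ref{theorem/definition} and require no new idea.
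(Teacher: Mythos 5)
Your overall strategy (induction on $n$, realizing $\widetilde{F}$ and $\widetilde{G}$ as cones of $\alpha_{n-1}^F$ and $\alpha_{n-1}^G$, and producing $\widetilde{s}$ blockwise from $s_{n-1}[n-1]$ and $\widetilde{s'}$) is the same as the paper's, and it does produce \emph{some} lower-triangular morphism $\widetilde{G}\to\widetilde{F}$. But there is a genuine gap: by Definition \ref{morphism of resolutions}, the morphism $\widetilde{s}$ the theorem asks for must make diagram (\ref{morphism of contractions}) commute with the morphisms $t_0$ and $t_n$ that are given beforehand, and your construction never uses $t_n$. The cone map induced from your square with $\alpha_{n-1}^F,\alpha_{n-1}^G$ is compatible with the column triangles $\widetilde{F'}\to\widetilde{F}\to F_{n-1}[n-1]$ of (\ref{proven diagram}) (i.e.\ with $\widetilde{s'}$ and $s_{n-1}$), not with the contracted row triangles $X_n[n-1]\to\widetilde{F}\to X_0$; the required identities $\widetilde{s}\,\psi_n^G=\psi_n^F\,t_n[n-1]$ and $\varphi_n^F\,\widetilde{s}=t_0\,\varphi_n^G$ are precisely where the work lies, and saying they are ``inherited from the octahedron'' skips it. The paper handles this by writing the given $t_n$ itself in matrix form $\begin{pmatrix}s_{n-1}&0\\ u&t_{n-1}[-1]\end{pmatrix}$ with respect to the cone decompositions $Y_n\cong G_{n-1}\oplus Y_{n-1}[-1]$ and $X_n\cong F_{n-1}\oplus X_{n-1}[-1]$, and then takes the lower-left entry of $\widetilde{s}$ to be $\psi_{n-1}^F\,u[n-1]$: the off-diagonal term of $\widetilde{s}$ is dictated by $t_n$, not by an arbitrary homotopy. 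With your choice (a homotopy $h$ witnessing the $\alpha$-square) there is no reason that $\psi_{n-1}^F\,u[n-1]-h$ is null-homotopic, so compatibility with the prescribed $t_n$ may fail; and this compatibility is not cosmetic, since it is exactly what is used afterwards in diagrams (\ref{FGLtilder}) and (\ref{FGLproj}), where $\widetilde{a}$ must be compatible with the identity on $\Omega^n X[n-1]$ and with $\pi_X^{(n,0)}$.

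A secondary slip: you claim the strict identity $pr^{F'}_{n-2}\widetilde{s'}=s_{n-2}[n-2]\,pr^{G'}_{n-2}$ is what makes the $\alpha$-square commute on the nose. It is not relevant: $\alpha_{n-1}=\psi_{n-1}\,p_{n-1}[n-2]$ involves $\psi_{n-1}$, not $pr_{n-2}$, so the identity you actually need is $\widetilde{s'}\,\psi_{n-1}^G=\psi_{n-1}^F\,t_{n-1}[n-2]$, which the induction hypothesis supplies only as an equality in $\K$, i.e.\ up to homotopy. That by itself is harmless for the lower-triangular shape (the homotopy lands in the permitted lower-left corner), but it shows that the strict-versus-homotopy bookkeeping cannot be dispatched by a formal appeal to cone functoriality; the paper's explicit matrix computation, carrying $t_n$ along, is what closes both issues.
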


\begin{proof}
The last part of the theorem follows from the first, since  $pr _{n-1}^F$ and  $pr _{n-1}^G$ are represented by matrices of the form 
$(1 \ 0 \   \cdots \ 0)$. 

We prove the first part by induction on $n$.

If $n=1$, then $\widetilde{F}=F_0$, $\widetilde{G}=G_0$ and $\widetilde{s} = s_0$. 
Hence the statement is true.

Assume $n \geq 2$. 
Under the same notation as in the proof of Theorem  \ref{theorem/definition}, 
$\widetilde{F}$ and $\widetilde{G}$  are the mapping cones of the morphisms   $\alpha _{n-1}^F = \psi _{n-1} ^F p_{n-1}^F[n-2]$ and 
 $\alpha _{n-1}^G = \psi _{n-1} ^G p_{n-1}^G[n-2]$ respectively. 
By the induction hypothesis for $n-1$  we may assume that we have such an $\widetilde{s'}$ that is represented by a lower {triangular} matrix and that makes the following diagram commutative.  
$$
\xymatrix@C=48pt{
X_{n-1}[n-2] \ar[r]^(0.6){\psi _{n-1}^F} & \widetilde{F'} \ar[r]^{\varphi_{n-1}^F} & X_0 \ar[r]^(0.4){\widetilde{\omega_{n-1}^F}} & X_{n-1} [{n-1}]  \\
Y_{n-1}[n-2] \ar[r]^(0.6){\psi _{n-1}^G} \ar[u]^{t_{n-1}[n-2]}& \widetilde{G'} \ar[r]^{\varphi_{n-1}^G} \ar[u]^{\widetilde{s'}} & Y_0 \ar[r]^(0.4){\widetilde{\omega_{n-1}^G}} \ar[u]^{t_{0}}& Y_{n-1} [n-1]. \ar[u]_{t_{n-1}[n-1]}  
}
$$
Since there is a triangle  $\xymatrix@C=24pt{X_n \ar[r] & F_{n-1} \ar[r]^{p_{n-1}^F} & X_{n-1} \ar[r] & X_n [1]} $ in $\K$,  
we see that  $X_n$ is isomorphic to the mapping cone of  $p_{n-1}^F[-1]$ i.e.
$F_{n-1} \oplus X_{n-1} [-1]$  is its underlying graded $R$-module and it has the differential  
$d _{X_n} = \begin{pmatrix} 0& 0 \\ p_{n-1}^F  & d_{X_{n-1}[-1]}\end{pmatrix}$. 
Similarly  $Y_n \cong G_{n-1}  \oplus Y_{n-1} [-1]$ as an underlying graded $R$-module with 
$d _{Y_n} = \begin{pmatrix} 0& 0 \\ p_{n-1}^G & d_{Y_{n-1}[-1]}\end{pmatrix}$. 
Note that  $X_{n-1}[-1]$  is a subcomplex of this mapping cone, and  $F_{n-1}$  is a quotient of it. 
{So the commutative diagram with rows being triangles; 
$$\xymatrix{
X_{n-1}[-1]  \ar[r] &X_n \ar[r] & F_{n-1} \ar[r] & X_{n-1} \\ 
Y_{n-1}[-1] \ar[u]^{t_{n-1}[-1]} \ar[r] & Y_n \ar[r] \ar[u]^{t_n} & G_{n-1} \ar[r] \ar[u]^{s_{n-1}} & Y_{n-1}  \ar[u]^{t_{n-1}}  
}$$
}
{is represented by a commutative diagram of exact sequences; 
$$\xymatrix{
0 \ar[r] & X_{n-1}[-1]  \ar[r] &X_n \ar[r] & F_{n-1} \ar[r] & 0 \\ 
0 \ar[r] & Y_{n-1}[-1] \ar[u]^{t_{n-1}[-1]} \ar[r] & Y_n \ar[r] \ar[u]^{t_n} & G_{n-1} \ar[r] \ar[u]^{s_{n-1}} & 0  
}$$
}

Since  $t_n$  maps the subcomplex  ${Y}_{n-1}[-1]$ into  the subcomplex  ${X}_{n-1}[-1]$, 
we can see that  $t_n$  is represented by a matrix 
$$
\xymatrix@C=90pt{
G_{n-1} \oplus Y_{n-1} [-1]  \ar[r]^{
\begin{pmatrix}
s_{n-1} & 0 \\ u & t_{n-1}[-1]   
\end{pmatrix}
}
& F_{n-1} \oplus X_{n-1}[-1], 
}
$$  
where  $u : G_{n-1} \longrightarrow X_{n-1}[-1]$  is a graded $R$-homomorphism. 
Identifying those relevant complexes under such isomorphisms, we also see from the inductive construction of $\widetilde{F}$ in the proof of Theorem \ref{theorem/definition}  that  the morphism $\psi _n ^F : X_n[n-1] \longrightarrow \widetilde{F}$  is given by the chain map 
$$
\xymatrix@C=72pt{
F_{n-1}[n-1] \oplus X_{n-1}[n-2]  \ar[r]^(0.6){
\begin{pmatrix}
1 & 0 \\ 0 & \psi _{n-1}^F   
\end{pmatrix}
}&  F_{n-1}[n-1] \oplus \widetilde{F'}.  
}
$$
{See the diagram (\ref{proven diagram}).}
Similarly  $\varphi_n ^F : \widetilde{F} \longrightarrow X_0$ is represented by 
$$
\xymatrix@C=72pt{
F_{n-1}[n-1] \oplus \widetilde{F'}  \ar[r]^(0.6){
\begin{pmatrix}
0 & \varphi _{n-1}^F \\    
\end{pmatrix}}
&  X_0.  
}
$$
Finally it is easy to see that the following diagram is commutative: 
$$
\xymatrix@C=60pt@R=60pt{
F_{n-1}[n-1] \oplus X_{n-1} [n-2] \ar[r]^(0.6){
\tiny{\begin{pmatrix}
1 & 0 \\ 0 & \psi _{n-1}^F   
\end{pmatrix}
} } 
& F_{n-1} [n-1] \oplus \widetilde{F'}  \ar[r]^(0.6){
\tiny{\begin{pmatrix}
0 & \varphi _{n-1}^F \\    
\end{pmatrix}}}  
& X_0  \\  
G_{n-1}[n-1] \oplus Y_{n-1} [n-2] \ar[r]_(0.6){
\tiny{\begin{pmatrix}
1 & 0 \\ 0 & \psi _{n-1}^G   
\end{pmatrix}
} } \ar[u]^{
\tiny{\begin{pmatrix}
s_{n-1}[n-1] & 0 \\ u[n-1] & t_{n-1}[n-2]  
\end{pmatrix}}}
& G_{n-1} [n-1] \oplus \widetilde{G'}  \ar[r]_(0.6){
\tiny{\begin{pmatrix}
0 & \varphi _{n-1}^G \\    
\end{pmatrix}}} \ar[u]^{
\tiny{\begin{pmatrix}
s_{n-1}[n-1] & 0 \\ \psi_{n-1}^Fu[n-1]  & \widetilde{s'}  
\end{pmatrix}}
}
& Y_0  \ar[u]^{t_0} \\  
}
$$
Therefore we can take the matrix  
$\begin{pmatrix}
s_{n-1}[n-1] & 0 \\ \psi_{n-1}^Fu[n-1]  & \widetilde{s'}  
\end{pmatrix}$  as $\widetilde{s}$. 
Since $\widetilde{s'}$ is taken to be a lower {triangular} matrix by the induction hypothesis, so is $\widetilde{s}$. 
\end{proof}

\vspace{6pt}
\section{Remarks on partial $\F$-resolutions}

\begin{definition}\label{degenerate}
$\ $
\par\noindent
(1) We say that a partial $\F$-resolution (\ref{Kexact2}) is {\bf split} if 
each  $q_i$  in Definition \ref{def Kexact}  has a left inverse, i.e. $q_i$  is a split monomorphism,  for all $1 \leq i \leq n$. 
This is equivalent to $\omega _i =0$  for all $0 \leq i < n$, with the notation in Definition \ref{def Kexact}. 
{See \cite[Lemma 1.4]{H}. } 

\medskip 

\par\noindent
(2) We say that a partial $\F$-resolution (\ref{Kexact2}) is {\bf degenerate} if one can choose  the differential  $d_{\widetilde{F}}$  {such that} $pr_{j} {\circ} d_{\widetilde{F}} {\circ}  in_i = 0$ {for all $1 \leq i, j \leq n-1$ with $j \not= i-1$} under the notation of Theorem \ref{theorem/definition}. 
This is equivalent to saying that one can take the differential of the form     
\begin{equation}\label{diff degenerate}
d_{\widetilde{F}} = 
\begin{pmatrix}
0                & 0 &  0 &   \dots\dots & 0 &0  \\
f_{n-1}[n-1] & 0 & 0 &    \dots\dots &  0 &0  \\
0 & f_{n-2}[n-2] & 0 &    \dots\dots & 0 &0 \\
0 & 0 & f_{n-3}[n-3] &    \dots\dots & 0 &0 \\
\vdots & \vdots & \vdots  & \ddots & \vdots &\vdots  \\
0 &0&0& \dots\dots  & f_1[1] & \phantom{AA}0\phantom{AA} \\
\end{pmatrix}
\end{equation}
as a {graded} $R$-module mapping from $\widetilde{F}=F_{n-1}[n-1] \oplus F_{n-2}[n-2] \oplus \cdots \oplus F_1[1] \oplus F_0 
$ to $\widetilde{F}[1]$. 
Note in this case that we have an equality 
$$
\widetilde{F} = \coprod  _{i \in \Z} \left[
\xymatrix{
0 \ar[r] & F_{n-1} ^i \ar[r]^(0.4){f_{n-1}^i} &  F_{n-2}^i \ar[r] & \cdots \ar[r] & F_1^i \ar[r]^{f_1^i}  \ar[r] & F_0^i \ar[r] & 0} 
 \right] [-i]. 
$$
\end{definition}

The following proposition will be necessary in {a} later argument of this paper. 

\begin{proposition}
Let 
\begin{equation}\label{split resolution}
\xymatrix@C=32pt{ 
0 \ar[r] & F_{n-1} \ar[r]^{f_{n-1}} & F_{n-2} \ar[r] & \cdots \ar[r] & F_1 \ar[r]^{f_1} & F_0 \ar[r]^{p_0} & X_0 \ar[r] & 0}  
\end{equation}
be an $\F$-resolution of length $n-1$ {where each $F_i$ has no null complex as a direct summand}  and  let  $\widetilde{F}$  be its contraction. 
Assume that $n \geq 2$  and $f_{n-1}$  has a left inverse in $\K$. 
Then  $X_0 \cong \widetilde{F}$ and the morphism  $pr_{n-1} : \widetilde{F} \longrightarrow  F_{n-1}[n-1]$  is zero in  $\K$. 
\end{proposition}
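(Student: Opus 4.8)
The plan is to read off the first assertion directly from the contracted triangle, and to deduce $pr_{n-1}=0$ by reducing it, through the inductive construction of the contraction in the proof of Theorem \ref{theorem/definition}, to the vanishing of a single connecting morphism. First, since (\ref{split resolution}) is an $\F$-resolution of length $n-1$, it is the partial $\F$-resolution (\ref{Kexact2}) with $X_n=0$, so Theorem \ref{theorem/definition} supplies the contracted triangle $0\to\widetilde{F}\xrightarrow{\varphi_n}X_0\to 0$ in $\K$; hence $\varphi_n$ is an isomorphism and $X_0\cong\widetilde{F}$. This step uses neither $n\geq 2$ nor the hypothesis on $f_{n-1}$.

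For $pr_{n-1}=0$ I would re-run the inductive step of the proof of Theorem \ref{theorem/definition}. Let $\widetilde{F'}$ denote the contraction of the truncated partial $\F$-resolution $0\to X_{n-1}\xrightarrow{q_{n-1}}F_{n-2}\to\cdots\to F_0\xrightarrow{p_0}X_0\to 0$ (it is here that $n\geq 2$ enters). The octahedron (\ref{proven diagram}) contains, as its second column, the triangle $\widetilde{F'}\xrightarrow{\iota_{n-1}}\widetilde{F}\xrightarrow{pr_{n-1}}F_{n-1}[n-1]\xrightarrow{\alpha_{n-1}[1]}\widetilde{F'}[1]$ with $\alpha_{n-1}=\psi_{n-1}\,p_{n-1}[n-2]$. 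Since consecutive morphisms in a triangle compose to zero, $\alpha_{n-1}[1]\circ pr_{n-1}=0$, so it suffices to show $\alpha_{n-1}[1]$ is a monomorphism in $\K$. Because $X_n=0$, the last triangle $X_n\to F_{n-1}\xrightarrow{p_{n-1}}X_{n-1}\to X_n[1]$ of (\ref{split resolution}) forces $p_{n-1}$ to be an isomorphism, so $\alpha_{n-1}[1]$ is monic as soon as $\psi_{n-1}$ is.

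Finally, $\psi_{n-1}$ sits in the contracted triangle $X_{n-1}[n-2]\xrightarrow{\psi_{n-1}}\widetilde{F'}\xrightarrow{\varphi_{n-1}}X_0\xrightarrow{\widetilde{\omega_{n-1}}}X_{n-1}[n-1]$ of the truncated resolution, and rotating this triangle one step backwards shows $\psi_{n-1}$ is monic whenever $\widetilde{\omega_{n-1}}=0$. As the connecting morphism is $\widetilde{\omega_{n-1}}=\omega_{n-2}[n-2]\circ\cdots\circ\omega_1[1]\circ\omega_0$, it is enough to prove $\omega_{n-2}=0$; but $f_{n-1}=q_{n-1}p_{n-1}$ with $p_{n-1}$ invertible, so the assumption that $f_{n-1}$ has a left inverse in $\K$ makes $q_{n-1}\colon X_{n-1}\to F_{n-2}$ a split monomorphism, and feeding this into the triangle $X_{n-1}\xrightarrow{q_{n-1}}F_{n-2}\xrightarrow{p_{n-2}}X_{n-2}\xrightarrow{\omega_{n-2}}X_{n-1}[1]$ (a split monomorphism being a monomorphism) gives $\omega_{n-2}=0$. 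The main obstacle is purely bookkeeping: indexing the truncated resolution so that (\ref{proven diagram}) and the connecting-morphism formula apply verbatim, and verifying that the base case $n=2$ (where $\widetilde{F'}=F_0$, $\psi_1=q_1$, $\varphi_1=p_0$, $\widetilde{\omega_1}=\omega_0$ and $\alpha_1=f_1$) is subsumed by the same chain of reductions; the remaining ingredients are the standard triangulated facts that a triangle with a zero map splits and that split monomorphisms are monic.
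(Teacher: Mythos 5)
Your proof is correct, but it takes a genuinely different route from the paper's. The paper's argument is a short explicit computation at the level of graded modules: it writes $pr_{n-1}$ as the matrix $(1\ 0\ \cdots\ 0)$, takes a left inverse $v$ of $f_{n-1}$, defines the degree $-1$ map $\widetilde{v}=(0\ \ v[n-1]\ \ 0\ \cdots\ 0):\widetilde{F}[1]\to F_{n-1}[n-1]$, and checks from the lower-triangular shape (\ref{contraction diff}) of $d_{\widetilde{F}}$ that $pr_{n-1}=\widetilde{v}\,d_{\widetilde{F}}$, i.e.\ $pr_{n-1}$ is null homotopic. You instead argue formally in the triangulated category $\K$: $X_n=0$ makes $p_{n-1}$ invertible, the left inverse of $f_{n-1}=q_{n-1}p_{n-1}$ then makes $q_{n-1}$ a split monomorphism, hence $\omega_{n-2}=0$, hence the connecting morphism $\widetilde{\omega_{n-1}}$ of the truncated resolution vanishes, so $\psi_{n-1}$, and therefore $\alpha_{n-1}=\psi_{n-1}p_{n-1}[n-2]$, is a split monomorphism; feeding this into the triangle $\widetilde{F'}\xrightarrow{\iota_{n-1}}\widetilde{F}\xrightarrow{pr_{n-1}}F_{n-1}[n-1]\xrightarrow{\alpha_{n-1}[1]}\widetilde{F'}[1]$ from (\ref{proven diagram}) gives $\alpha_{n-1}[1]\,pr_{n-1}=0$ and hence $pr_{n-1}=0$ (the first part, $\varphi_n$ being an isomorphism when $X_n=0$, is the same in both proofs). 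Your version buys a triangle-theoretic explanation and some extra information (the contracted triangle of the truncated resolution splits, $\widetilde{F'}\cong X_{n-1}[n-2]\oplus X_0$), and it sidesteps the small point that a left inverse in $\K$ must be realized as an honest left inverse of graded maps; on the other hand it leans on the inductive cone construction inside the proof of Theorem \ref{theorem/definition} (the identification of the second column of (\ref{proven diagram})), whereas the paper's homotopy uses only the matrix description of $d_{\widetilde{F}}$ stated in Remark \ref{matrix form psi}, and it also produces the explicit homotopy, which is what gets reused in the generically split situation via Corollary \ref{split pr}.
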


\begin{proof}
The isomorphism  $X_0 \cong \widetilde{F}$ follows from the contraction sequence (\ref{contraction sequence}) in Theorem \ref{theorem/definition} by setting   $X_n=0$. 
Note that $pr_{n-1}$ is represented by the matrix 
$$
\begin{pmatrix} 1 & 0 & \cdots & 0 \end{pmatrix} : F_{n-1}[n-1] \oplus \cdots \oplus F_1[1] \oplus F_0 \longrightarrow F_{n-1}[n-1]. 
$$
Let  $v$  be a left inverse of  $f_{n-1}$, i.e. $v : F_{n-2} \to F_{n-1}$ such that  $v f _{n-1}= 1_{F_{n-1}}$ and set 
$\widetilde{v} : \widetilde{F}[1]  \to F_{n-1}[n-1]$ as a graded $R$-homomorphism given by the matrix 
$$
\begin{pmatrix} 0 & v[n-1] & 0 & \cdots & 0 \end{pmatrix} : F_{n-1}[n] \oplus F_{n-2}[n-1] \oplus \cdots \oplus F_0 \longrightarrow F_{n-1}[n-1]. 
$$
Then, since the differential $d_{\widetilde{F}}$  is represented by the matrix (\ref{contraction diff}), 
it is easy to see that   $pr_{n-1} = \widetilde{v} d_{\widetilde{F}}$. 
Hence  $pr_{n-1}$  is null homotopic.   
\end{proof}

\begin{corollary}\label{split pr}
Assume that the $\F$-resolution (\ref{split resolution}) is split and  $n \geq 2$. 
Then $pr_{n-1} = 0$  in $\K$.  
\end{corollary}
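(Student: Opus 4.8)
The plan is to deduce this corollary from the Proposition immediately preceding it. In view of that Proposition, it suffices to verify that when the $\F$-resolution (\ref{split resolution}) is split and $n \geq 2$, the morphism $f_{n-1}$ has a left inverse in $\K$.

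First I would unwind the relevant definitions. By Definition \ref{def Kexact}, the $\F$-resolution (\ref{split resolution}) carries triangles $X_{i+1} \to F_i \to X_i \to X_{i+1}[1]$, whose three morphisms are $q_{i+1}$, $p_i$, $\omega_i$, for $0 \le i \le n-1$, together with the factorizations $f_i = q_i\, p_i$; in particular $f_{n-1} = q_{n-1}\, p_{n-1}$, where $p_{n-1} : F_{n-1} \to X_{n-1}$ and $q_{n-1} : X_{n-1} \to F_{n-2}$. Since an $\F$-resolution of length $n-1$ is by definition a partial $\F$-resolution with $X_n = 0$, the triangle for $i = n-1$ becomes $0 \to F_{n-1} \to X_{n-1} \to 0$, which forces $p_{n-1}$ to be an isomorphism in $\K$.

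Next I would invoke the split hypothesis, which says precisely that each $q_i$ $(1 \le i \le n)$ is a split monomorphism; in particular $q_{n-1}$ is, the index $n-1$ being admissible exactly because $n \ge 2$. Picking a left inverse $r : F_{n-2} \to X_{n-1}$ of $q_{n-1}$ and putting $v := p_{n-1}^{-1} r$, one computes $v f_{n-1} = p_{n-1}^{-1} r q_{n-1} p_{n-1} = p_{n-1}^{-1} p_{n-1} = 1_{F_{n-1}}$. Thus $f_{n-1}$ has a left inverse in $\K$, and the Proposition then gives $pr_{n-1} = 0$ in $\K$, as desired.

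The argument is short, the real work having been done in the preceding Proposition; the only mild subtleties are recognizing that $X_n = 0$ makes $p_{n-1}$ invertible in $\K$ and keeping track of the index range so that the split hypothesis indeed covers $q_{n-1}$, which is where the assumption $n \ge 2$ is needed. So I do not anticipate a genuine obstacle.
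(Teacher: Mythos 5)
Your proposal is correct and follows exactly the route the paper intends: the corollary is stated as an immediate consequence of the preceding Proposition, and your verification that splitness (a left inverse of $q_{n-1}$, available since $n\geq 2$) together with $X_n=0$ (which makes $p_{n-1}$ an isomorphism in $\K$) gives a left inverse of $f_{n-1}=q_{n-1}p_{n-1}$ is precisely the missing detail. Nothing further is needed.
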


\begin{lemma}\label{split case}
We assume that the following conditions are satisfied for the partial $\F$-resolution (\ref{Kexact2}) : 
\begin{enumerate}
\item
$X_0, X_n$  belong to $\F$ and they have no null complexes as direct summands. 
\item
As a sequence of  graded $R$-modules, the sequence  
$$
\xymatrix@C=32pt{ 
0 \ar[r] & X_n  \ar[r]^{q_n} & F_{n-1} \ar[r]^{f_{n-1}} & F_{n-2} \ar[r] & \cdots \ar[r] & F_1 \ar[r]^{f_1} & F_0 \ar[r]^{p_0} & X_0 \ar[r] & 0}  
$$
is exact. 
\end{enumerate}
Then  the partial $\F$-resolution is degenerate. 
The contracted triangle (\ref{contraction sequence}) is realized by the morphisms represented by the following  of underlying graded $R$-module homomorphisms: 
$$
\begin{array}{l}
\psi _n = 
\begin{pmatrix}
q_{n}[n-1] \\ 0 \\ \vdots \\ 0
\end{pmatrix}
: X_n [n-1] \longrightarrow \widetilde{F} = F_{n-1}[n-1] \oplus F_{n-2}[n-2] \oplus \cdots \oplus F_1[1] \oplus F_0 \vspace{12pt} \\
\varphi _n = 
\begin{pmatrix}
0 & \cdots & 0 & p_0 \\ 
\end{pmatrix}
: \widetilde{F} = F_{n-1}[n-1] \oplus F_{n-2}[n-2] \oplus \cdots \oplus F_1[1] \oplus F_0  \longrightarrow X_0
\end{array}
$$
\end{lemma}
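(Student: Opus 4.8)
The plan is to show first that, under the hypotheses, the whole $\K$-exact sequence collapses --- every intermediate $X_i$ lies in $\F$ and every $\omega_i$ vanishes in $\K$, so the partial $\F$-resolution is split --- and then to exhibit the degenerate complex assembled from the $f_i$ as a contraction, which forces the stated shapes of $d_{\widetilde F}$, $\psi_n$ and $\varphi_n$.

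For the first step I would use descending induction on $i$ (from $i=n$ down to $i=1$) to prove that $X_i\in\F$, that $q_i\colon X_i\to F_{i-1}$ is a split monomorphism in $\C$ with $\Cok(q_i)\in\F$, and that $\omega_{i-1}=0$ in $\K$ with $H(p_{i-1})$ surjective. The base case $i=n$ is immediate: $q_n$ is injective and $\Cok(q_n)\cong\Im(f_{n-1})$ is a projective direct summand of $F_{n-2}$ (an exact sequence of projectives splits, so all its images are projective summands), so $q_n$ is a split monomorphism in $\C$, its triangle splits, and the claimed conclusions hold at the index $n-1$ (in particular $X_{n-1}\cong\Cok(q_n)\in\F$ modulo a null complex). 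For the inductive step, knowing $X_i\in\F$ and $H(p_i)$ onto, one gets $\Im(q_i)=q_i(X_i)=\Im(f_i)$, again a projective summand of $F_{i-1}$, and $\ker(q_i)=0$ since $q_i(p_i(z))=0$ forces $z\in\ker(f_i)=\ker(p_i)$ (using hypothesis (2) together with the triangle identity $\Im(q_{i+1})=\ker(p_i)$); hence $q_i$ is again split in $\C$ and everything propagates one step down. Consequently $\widetilde{\omega_n}=\omega_{n-1}[n-1]\cdots\omega_0=0$ in $\K$.

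For the second step I would put $\widetilde G=\bigoplus_{i=0}^{n-1}F_i[i]$ with the degenerate differential (\ref{diff degenerate}), equivalently $\widetilde G=\coprod_{k\in\Z}[\,0\to F_{n-1}^k\to\cdots\to F_0^k\to 0\,][-k]$. From hypothesis (2), $B(\widetilde G)$ is a direct sum of the projective modules $\Im(f_i)$ and $H(\widetilde G)\cong X_0\oplus X_n[n-1]$; feeding this into (\ref{HCB}) shows $C(\widetilde G)$ is projective, so $\widetilde G\in\F$ by Lemma \ref{characterization of split}, and in fact $\widetilde G\cong X_0\oplus X_n[n-1]$ in $\K$, the summand inclusion of $X_n[n-1]$ being $\binom{q_n[n-1]}{0}$ (since the top homology of $\widetilde G$ is $\Im(q_n)[n-1]\subseteq F_{n-1}[n-1]$) and the summand projection onto $X_0$ being $(0\ \cdots\ 0\ p_0)$ (since it factors through $pr_0$ and $F_0\to F_0/\ker(p_0)\cong X_0$). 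Taking these as $\psi_n$ and $\varphi_n$, the triangle $X_n[n-1]\xrightarrow{\psi_n}\widetilde G\xrightarrow{\varphi_n}X_0\xrightarrow{0}X_n[n]$ is the split one, its third map equals $\widetilde{\omega_n}=0$, and $\widetilde G,\psi_n,\varphi_n$ plainly satisfy conditions (1)--(3) of Theorem and Definition \ref{theorem/definition}; by the uniqueness clause there, $\widetilde G$ is the contraction $\widetilde F$. This yields both the degeneracy and the asserted matrix forms for $\psi_n$ and $\varphi_n$.

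The step I expect to be the real obstacle is the descending induction: because $\F$ is not closed under mapping cones, the membership $X_i\in\F$ for the middle indices is not formal, and the argument turns entirely on the point that hypothesis (2) forces each $q_i$ to be a split monomorphism already in $\C$, so that the triangles degenerate in turn. Once that is in hand, only bookkeeping with shifts, block matrices and the uniqueness of the contraction remains.
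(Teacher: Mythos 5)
Your Step 2 is essentially the paper's own proof: the paper also takes $\widetilde F=F_{n-1}[n-1]\oplus\cdots\oplus F_0$ with the differential (\ref{diff degenerate}), checks that $d_{\widetilde F}^2=0$, $d_{\widetilde F}\psi_n=0$ and $\varphi_n d_{\widetilde F}=0$ (here is where the exactness in hypothesis (2), resp.\ the no-null-summand hypothesis, is used to know that $f_{i}f_{i+1}$, $f_{n-1}q_n$ and $p_0f_1$ vanish as genuine graded $R$-module maps, not merely in $\K$), and then observes that $X_n[n-1]\xrightarrow{\psi_n}\widetilde F\xrightarrow{\varphi_n}X_0\xrightarrow{0}X_n[n]$ is a (split) triangle; your use of (\ref{HCB}) and Lemma \ref{characterization of split} to see $\widetilde F\cong X_0\oplus X_n[n-1]$ is a reasonable way to make the paper's ``easy to see'' explicit.

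The genuine problem is your Step 1, which you yourself identify as the crux. Your descending induction argues with set-theoretic kernels and images of $q_i$ and $p_i$ and invokes ``the triangle identity $\Im(q_{i+1})=\ker(p_i)$''. No such identity exists: a triangle in $\K$ does not give an exact sequence of underlying graded modules, the intermediate complexes $X_i$ ($1\le i\le n-1$) are only given as objects of $\K$ (they need not have zero differential, and even after you know $X_i\in\F$ they are only isomorphic in $\K$ to zero-differential complexes), and $p_i,q_i$ are homotopy classes, so the element chase proving $\ker(q_i)=0$ and ``$q_i$ split in $\C$'' is not legitimate; the only degreewise exactness you are given is hypothesis (2), which constrains the composites $f_i=q_ip_i$, not $q_i$ and $p_i$ separately. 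The splitness you are after is in fact true, but a correct argument has to be run in $\K$ via cohomology (e.g.\ once the triangle at stage $i+1$ splits, take a section $s$ of $p_{i+1}$ in $\K$, write $q_{i+1}=f_{i+1}s$, show $H(q_{i+1})$ is a split monomorphism of graded modules onto $\ker f_i$, and use Theorem \ref{characterization of F} to split $q_{i+1}$ in $\K$). More importantly, the full splitness is not needed at all: the lemma only requires $\widetilde{\omega_n}=0$, and this already follows from the sound base case of your induction --- hypothesis (2) makes $q_n$ a split monomorphism of zero-differential complexes, so $\omega_{n-1}=0$ and hence the composite $\widetilde{\omega_n}=\omega_{n-1}[n-1]\cdots\omega_0$ vanishes (alternatively, $H(p_0)=p_0$ is surjective and $X_0\in\F$, so $\omega_0=0$ by Theorem \ref{characterization of F}). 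With that shortcut your Step 2 stands and the proof is complete; as written, however, the inductive step of Step 1 does not hold up.
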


\begin{proof}
Set  $d_{\widetilde{F}}$  as in (\ref{diff degenerate}) and we see by a straightforward  computation that 
$d_{\widetilde{F}}^2 =0$,  $d_{\widetilde{F}} \psi _n = 0$  and  $\varphi _n d_{\widetilde{F}}= 0$ where  $\psi_n$ and $\varphi _n$ are given as in the lemma. 
Therefore the matrices given in the lemma define chain homomorphisms. 
It is then easy to see that the sequence 
$$
\xymatrix@C=32pt{
X_n[n-1] \ar[r]^(0.6){\psi_n} &\widetilde{F} \ar[r]^{\varphi_n} & X_0 \ar[r]^(0.4){0} & X_n [n]   
}$$
is a triangle in $\K$. 
{(By the condition (2), the mapping $p_0 : F_0 \to X_0$ is surjective as an underlying graded module homomorphism, hence it is a split epimorphism in $\F$. 
Thus, in the triangle $$\xymatrix{X_1 \ar[r] ^{q_1} & F_0 \ar[r]^{p_0}  & X_0 \ar[r]^{\omega_0}  & X_1[1]}$$ 
we have  $\omega _0 =0$. 
Therefore $\widetilde{\omega _n} =0$ in the contracted triangle above. )}
\end{proof}

\begin{corollary}
If a partial $\F$-resolution is split, then it is degenerate. 
\end{corollary}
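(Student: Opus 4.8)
The plan is to reduce the claim to Lemma~\ref{split case}. Recall from Definition~\ref{degenerate}(1) that the partial $\F$-resolution (\ref{Kexact2}) is split exactly when $\omega_i=0$ for all $0\le i<n$, and then each of the defining triangles $X_{i+1}\to F_i\to X_i\to X_{i+1}[1]$ splits, so that $F_i\cong X_{i+1}\oplus X_i$ in $\K$. In particular every $X_i$ is a direct summand of one of the complexes $F_j\in\F$, whence $X_i\in\F$ for all $i$, and the connecting morphism $\widetilde{\omega_n}=\omega_{n-1}[n-1]\cdots\omega_0$ is zero.

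Next I would pass to convenient representatives. Since each $F_i\in\F$ contains no null summand, Lemma~\ref{characterization of split} forces $d_{F_i}=0$; and since each $X_i\in\F$, I may replace $X_i$ by the zero-differential complex isomorphic to it in $\K$. This replacement changes neither the $F_i$ nor the $f_i$, hence neither the complex $\widetilde F$ nor what it means for the resolution to be degenerate, while it only conjugates $q_n$ and $p_0$ by isomorphisms; so it suffices to prove degeneracy after the replacement. Using the splitting of each triangle I would further arrange $F_i=X_{i+1}\oplus X_i$ with $q_{i+1}$ the inclusion and $p_i$ the projection, so that $f_i=q_ip_i$ becomes the evident map of matrix $\left(\begin{smallmatrix}0&1\\0&0\end{smallmatrix}\right)$; a direct inspection then shows that the underlying sequence of graded $R$-modules $0\to X_n\to F_{n-1}\to\cdots\to F_0\to X_0\to 0$ is (split) exact.

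At this point both hypotheses of Lemma~\ref{split case} are in force: $X_0,X_n$ and all the $F_i$ lie in $\F$ with no null summands, and the above graded sequence is exact. That lemma then provides $d_{\widetilde F}$ in the degenerate form (\ref{diff degenerate}) together with $\psi_n,\varphi_n$ realizing the contracted triangle, which is precisely the assertion that the resolution is degenerate. The step I expect to require the most care is the reduction in the second paragraph — verifying that moving to zero-differential representatives genuinely leaves $\widetilde F$ (and hence the statement ``degenerate'') unchanged, and that one can simultaneously bring the $F_i$ and the maps into the split form making the graded sequence exact. A more computational route that sidesteps Lemma~\ref{split case} is to set $d_{\widetilde F}$ equal to (\ref{diff degenerate}) from the outset and check $d_{\widetilde F}^2=0$ using $f_{i-1}f_i=q_{i-1}(p_{i-1}q_i)p_i=0$ in $\K$; since the $F_j$ have zero differential, Theorem~\ref{characterization of F} identifies $\Hom_{\K}$ between them with graded-module homomorphisms, so $f_{i-1}f_i=0$ as graded maps, and likewise $f_{n-1}q_n=0$ and $p_0f_1=0$; then $\psi_n=(q_n[n-1],0,\dots,0)^{\mathrm t}$ and $\varphi_n=(0,\dots,0,p_0)$ are chain maps and $X_n[n-1]\to\widetilde F\to X_0\to X_n[n]$ with zero connecting morphism is the contracted triangle by the uniqueness clause of Theorem~\ref{theorem/definition}.
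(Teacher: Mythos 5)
Your main argument is correct and is essentially the intended one: the corollary is placed so as to follow from Lemma~\ref{split case}, and your reduction --- splitness gives that every $X_i$ is a direct summand of some $F_j$, hence lies in $\F$; passing to zero-differential representatives is harmless because the degeneracy condition only involves the $F_i$, the $f_i$, $q_n$, $p_0$ and the (here vanishing) connecting morphism, and it transports along the block-diagonal conjugations you describe; identifying $F_i\cong X_{i+1}\oplus X_i$ (an isomorphism already in $\mathscr{C}(R)$, since all complexes involved have zero differentials) then makes the underlying graded sequence split exact --- is exactly what is needed to invoke that lemma.

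However, the ``more computational route that sidesteps Lemma~\ref{split case}'' at the end is not valid as stated. Once the $F_j$ (and $X_0$, $X_n$) have zero differentials, the matrix (\ref{diff degenerate}) squares to zero and your $\psi_n$, $\varphi_n$ are chain maps for \emph{any} partial $\F$-resolution, with no use of splitness: this is precisely the situation of Example~\ref{ex9.9}, where the resolution is nonetheless not degenerate. The actual content of degeneracy is that $X_n[n-1]\to\widetilde F\to X_0\to X_n[n]$ is a triangle in $\K$ whose third morphism is the connecting morphism $\widetilde{\omega_n}$; the uniqueness clause of Theorem~\ref{theorem/definition} can only be applied after this triangle property is established, and establishing it is exactly where the graded (split) exactness --- i.e.\ Lemma~\ref{split case} --- enters. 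So the shortcut does not in fact sidestep the lemma; your first route is the one to keep.
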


We should note that all partial $\F$-resolutions of length $n \leq 2$ are degenerate. 
In fact, if  $n=2$  then the $\K$-exact sequence is 
$\xymatrix@C=24pt{ 0 \ar[r] &X_2 \ar[r] & F_1 \ar[r]^{f_1} & F_0 \ar[r] & X \ar[r]  & 0}$ where one can take $F_1$ and $F_2$ have no null complexes as summands, 
and $\widetilde{F}$  is the mapping cone of   $f_1$, therefore the sequence is degenerate.

Note also that, even if there is a $\K$-exact sequence 
$$
\xymatrix@C=32pt{
0 \ar[r] & F_n  \ar[r]^{f_n}  & F_{n-1} \ar[r] & \cdots \ar[r] &  F_1 \ar[r]^{f_1} & F_0 \ar[r] & X \ar[r] & 0}
$$
for assigned  $F_i \in \F$ and $f_i$, the rightmost complex  $X$ is not uniquely determined.

In fact, $X$ depends not only on $f_i$ but also on  $p_i$, $q_i$ with  $f _i =p_i q_i$ as  in Definition \ref{theorem/definition}. 

{
For example consider the $\F$-resolution 
$$
\xymatrix@C=35pt{0 \ar[r] & F_2=R \ar[r]^{\tiny{\begin{pmatrix}  a\\ b \end{pmatrix}}} & F_1 = R\oplus R \ar[r]^{\tiny{\begin{pmatrix}  0 & 0 \\ -b  & a \end{pmatrix}}} & F_0 = R[1] \oplus R  \ar[r] & X_0 \ar[r] &0, }
$$
where $a, b \in R$. 
Then, under the notation of (\ref{theorem/definition}),  $q_1$ is described as 
$$
\xymatrix{X_1 \ar[d]^{q_1}  &=  & [ 0 \ar[r] \ar[d]  & R \ar[r]^{\tiny{\begin{pmatrix}  a\\ b \end{pmatrix}}} \ar[d]^c  & R^2 \ar[r] \ar[d]^{\tiny{\begin{pmatrix}  -b & a  \end{pmatrix}}} & 0] \\ 
F_0 &= & [0 \ar[r] & R \ar[r]^0 & R \ar[r] & 0], }
$$
where one can take any element of $R$ as $c$. 
If $c=0$, then 
$$X_0= R[1] \oplus [\xymatrix@C=30pt{0 \ar[r] & R \ar[r]^{\tiny{\begin{pmatrix}  a\\ b \end{pmatrix}}} & R^2 \ar[r]^(0.55){\tiny{\begin{pmatrix}  -b  & a \end{pmatrix}}} & R \ar[r] & 0}]. 
$$
If $c=1$, then 
    $$X_0= [\xymatrix@C=30pt{0 \ar[r] & R^2 \ar[r]^(0.55){\tiny{\begin{pmatrix}  -b  & a \end{pmatrix}}} & R \ar[r] & 0}]. \quad {\qed} $$
} 

\vspace{16pt}

For $X \in \K $ and for an  integer  $n > 0$, 
we define the $n$-th syzygy and cosyzygy by  induction on $n$; 
$$
\Omega ^0 X = \Sigma ^0 X = X, \quad 
\Omega ^{n} X = \Omega (\Omega ^{n-1}X), \quad 
\Sigma ^{n} X = \Sigma (\Sigma ^{n-1}X).
$$
Recall from Definitions \ref{omega} and \ref{omega inverse} that  $\Omega ^nX$ and $\Sigma^{n}X$  are uniquely determined as objects in $\uK$, or in other words they are  unique  up to $\F$-summands as objects in $\K$.   
Actually they define the functors  $\Omega^n, \Sigma^{n} : \uK \to \uK$, and Theorem \ref{adjoint} assures  that 
$(\Sigma ^{n}, \Omega ^{n})$ is an adjoint pair for each  $n>0$. 

\begin{definition}\label{def of omega}
Let  $X \in \K$ and take a right $\F$-approximation $p _0 : F_0 \to X$. 
We embed $p_0$  into a triangle to get the first syzygy $\Omega X$; 
$$
\xymatrix@C=32pt{ 
\Omega X \ar[r]^{q _1} & F_0 \ar[r]^{p _0}  &  X \ar[r]^(0.4){\omega _1^X} & \Omega X[1]. } 
$$
Similarly but as for the dual version to this, we have a triangle for any $Y \in \K$;  
$$
\xymatrix@C=32pt{ 
\Sigma Y [-1] \ar[r]^(0.6){\omega _{-1}^Y} & Y \ar[r]^{{q ^{0}}}  &  G_0 \ar[r]^(0.4){{p ^{0}}}  & \Sigma Y,}
$$
where  {$q^{0}$}  is a left $\F$-approximation. 
In such a way we have morphisms  $\omega _1^X$  and  $\omega _{-1}^Y$. 
Now let  $n$ be a positive integer. 
We define inductively 
$$
\omega _n ^X =  {\omega _{n-1}^{\Omega X[1]}\omega _1^X} \ : \ X \longrightarrow  \Omega ^n X [n],  \quad 
\omega _{-n} ^Y =   \omega _{-1}^{Y} \omega _{-n+1}^{\Sigma Y[-1]} \ : \ \Sigma ^{n}Y [-n] \longrightarrow  Y .
$$
\end{definition}

Let  $X$  be an arbitrary object in $\K$. 
Note from the definition of $\Omega ^i$  that there are triangles 
$$
\xymatrix@C=32pt{ 
\Omega ^{i+1} X \ar[r]^(0.6){q _{i+1}} & F_i \ar[r]^{p _i} & \Omega ^{i}X \ar[r]^(0.4){\omega _1^{\Omega^i X}} &  \Omega ^{i+1}X[1],} 
$$ 
where $F_i \in \F$ and  $p_i$ is a right $\F$-approximation of $\Omega ^i X$ for all $i \geq 0$.
Hence, when $n$ is a positive integer, we have a partial $\F$-resolution of the form; 
\begin{equation}\label{resolution}
\xymatrix@C=32pt{ 
0 \ar[r] &  \Omega ^n X  \ar[r]^{q _n} &  F_{n-1} \ar[r]^{f_{n-1}} & \cdots \ar[r] & F_1 \ar[r]^{f_1} &  F_0 \ar[r]^{p_0}  & X \ar[r]  & 0, } 
\end{equation}
where $F_i \in \F$ and  $f_i = q _i p _i$ for $0 \leq i < n$. 
We note here that we may assume that all the $F_i \ (0 \leq i < n)$ have zero differentials, because we can take them up to    isomorphisms in  $\uK$. 
(Cf. Theorem \ref{characterization of F}.)
{Therefore from now on we assume that the $F_i$'s have zero differentials.} 
Hence   $H(F_i)= F_i$ for all $0 \leq i < n$. 
Note also that $\omega _n ^X$ defined in Definition \ref{def of omega}  is the connecting morphism of the partial $\F$-resolution (\ref{resolution}).

The following theorem is a {partial} restatement of Theorem \ref{theorem/definition}. 

\begin{theorem}\label{omega triangle}
Under the circumstances above, there is a triangle in $\K$; 
$$
\xymatrix@C=32pt{ 
\Omega ^nX [n-1] \ar[r]^(0.65){\psi_n} &  \widetilde{F} \ar[r]^{\varphi_n}  &  X \ar[r]^(0.35){\omega _n^X} & \Omega ^nX [n],}
$$
where the morphisms $\psi _n : \Omega ^n X[n-1] \to \widetilde{F}$ 
and $\varphi  _n : \widetilde{F} \to X$ make the following diagrams commutative:
\begin{equation}\label{psiphi}
\xymatrix@C=32pt{
\Omega ^nX [n-1] \ar[r]^(0.65){\psi_n} \ar[rd]_{q_n[n-1]} &  \widetilde{F} \ar[d]^{pr_{n-1}}  & \qquad F_0 \ar[d]_{in_0} \ar[rd]^{p_0} \qquad &  \\
& F_{n-1}[n-1],  & \widetilde{F} \ar[r]_(0.4){\varphi _n}  & \quad X. \quad   \\
}
\end{equation}
\end{theorem}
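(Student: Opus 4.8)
The plan is to obtain the statement as an immediate consequence of Theorem and Definition \ref{theorem/definition}; this is precisely why it is billed as a restatement, and the only real content is to match up the data of the two formulations. First I would recall that, just above, the partial $\F$-resolution (\ref{resolution}) was assembled from the right $\F$-approximation triangles $\Omega^{i+1}X \to F_i \to \Omega^i X \to \Omega^{i+1}X[1]$ by setting $f_i = q_i p_i$, and that by Theorem \ref{characterization of F} we may assume each $F_i$ ($0 \le i \le n-1$) has zero differentials, i.e. $H(F_i) = F_i$. A complex with zero differentials has every cohomology module equal to the corresponding term, whereas a null complex is acyclic; hence such an $F_i$, and likewise every direct summand of it (which again has zero differentials), contains no nonzero null complex as a direct summand. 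Therefore the hypothesis of Theorem \ref{theorem/definition} is met for the partial $\F$-resolution (\ref{resolution}), and applying it produces the contraction $\widetilde{F}$ together with the contracted triangle (\ref{contraction sequence}) and the commutative diagrams (\ref{phipsi}).

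It then remains only to read off the conclusion under the substitution $X_0 = X$ and $X_n = \Omega^n X$. With this identification the contracted triangle (\ref{contraction sequence}) becomes
$$\xymatrix@C=32pt{\Omega^n X[n-1] \ar[r]^(0.65){\psi_n} & \widetilde{F} \ar[r]^{\varphi_n} & X \ar[r]^(0.4){\widetilde{\omega_n}} & \Omega^n X[n],}$$
whose third morphism $\widetilde{\omega_n}$ is, by Theorem \ref{theorem/definition}, the connecting morphism of (\ref{resolution}). As was recorded in the paragraph preceding the present theorem, that connecting morphism is exactly $\omega_n^X$ from Definition \ref{def of omega} --- it is the composite $\omega_{n-1}[n-1]\cdots\omega_1[1]\,\omega_0$ of the maps appearing in the approximation triangles --- so the triangle above is the one asserted. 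Finally, the two commutative squares (\ref{psiphi}), expressing $pr_{n-1}\psi_n = q_n[n-1]$ and $\varphi_n\, in_0 = p_0$, are literally the squares (\ref{phipsi}) of Theorem \ref{theorem/definition} under the same substitution.

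I do not expect a genuine obstacle here: everything is a direct translation, and the only point requiring a word of care is the harmless bookkeeping in the first paragraph, needed so that Theorem \ref{theorem/definition} applies verbatim to (\ref{resolution}). Should one prefer not to insist on the absence of null (in particular zero) summands among the $F_i$, one can instead appeal to the fact, also established in Theorem \ref{theorem/definition}, that the contraction is unique up to isomorphism in $\K$: the octahedral and mapping-cone construction used there goes through unchanged, so the triangle and the commutative squares are unaffected.
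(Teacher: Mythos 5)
Your proposal is correct and matches the paper, which offers no separate proof but presents Theorem \ref{omega triangle} exactly as you do: as Theorem and Definition \ref{theorem/definition} applied to the partial $\F$-resolution (\ref{resolution}) with $X_0 = X$, $X_n = \Omega^n X$, after noting that the $F_i$ may be taken with zero differentials (hence with no null summands) and that the connecting morphism of (\ref{resolution}) is $\omega_n^X$. Your explicit check that a zero-differential complex has no nonzero null summand is a harmless elaboration of what the paper leaves implicit.
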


\medskip 

\begin{remark} 
We shall make several remarks on  the partial $\F$-resolution  (\ref{resolution}). 
Firstly we see from Lemma \ref{right approximation}  that the following is an exact sequence of graded $R$-modules; 
$$
\xymatrix@C=32pt{ 
0 \ar[r]& H(\Omega ^n X)  \ar[r]^(0.55){H(q _n)} & F_{n-1} \ar[r]^{f_{n-1}} & \cdots \ar[r] &  F_1 \ar[r]^{f_1} & F_0 \ar[r]^(0.4){H(p_0)}   & H(X) \ar[r] & 0,}
$$
which means that there are exact sequences of  $R$-modules
$$
\xymatrix@C=32pt{ 
0 \ar[r] & H^i(\Omega ^n X)  \ar[r]^(0.55){H^i(q _n)} & F_{n-1}^i \ar[r]^{f_{n-1}^i} & \cdots \ar[r] & F_1^i \ar[r]^{f_1^i} & F_0^i \ar[r]^(0.4){H^i(p_0)} & H^i(X) \ar[r] & 0,} 
$$
for all  $i \in \Z$. 
The diagram (\ref{psiphi}) induces the commutative diagram of cohomology modules: 
$$
\xymatrix@C=32pt{
H(\Omega ^nX) [n-1] \ar[r]^(0.65){H(\psi_n)} \ar[rd]_{H(q_n)[n-1]} &  H(\widetilde{F}) \ar[d]^{H(pr_{n-1})}  & \quad F_0 \ar[d]_{H(in_0)} \ar[rd]^{H(p_0)} \qquad &  \\
& F_{n-1}[n-1],  & H(\widetilde{F}) \ar[r]_(0.4){H(\varphi _n)}  & \quad H(X). \quad   \\
}
$$
Since  $H(q_n)$  is injective, so is $H(\psi _n)$.  
Similarly $H(\varphi_n)$  is surjective, as $H(p_0)$  is surjective. 
As a consequence, it follows from the contracted triangle in Theorem \ref{omega triangle} that there is an exact sequence of graded $R$-modules; 
$$
\xymatrix@C=32pt{ 
0 \ar[r] &H(\Omega ^nX) [n-1] \ar[r]^(0.65){H(\psi_n)} &  H(\widetilde{F}) \ar[r]^{H(\varphi_n)}  &  H(X) \ar[r] &0}.
$$
\end{remark} 

\begin{example}
Let  $M$ be a finitely generated $R$-module and 
\begin{equation}\label{projective resolution}
{Y} =\left[  \xymatrix@C=24pt{
\cdots \ar[r] & P_n \ar[r]^{u_n} & P_{n-1} \ar[r] & \cdots \ar[r] & P_1 \ar[r]^{u_1} & P_0 \ar[r] &0} \right]
\end{equation}
be an $R$-projective resolution of $M$, i.e. ${Y} \in \K$  and there is a quasi-isomorphism  ${Y}\to M$. 
In this case it is obvious that  ${\Omega ^n Y}$  is the truncated complex  \linebreak 
$\left[  \xymatrix@C=24pt{ \cdots \ar[r] & P_{n+1} \ar[r]^{u_n} & P_n \ar[r] &0} \right]$, 
and there is a $\K$-exact sequence 
$$
\xymatrix@C=24pt{
0 \ar[r] &\Omega ^n {Y} \ar[r] & P_{n-1} \ar[r]^{u_{n-1}} & P_{n-2} \ar[r] & \cdots \ar[r] & P_1 \ar[r]^{u_1} & P_0 \ar[r] &{Y} \ar[r] & 0.}
$$
In this case the contraction of this partial $\F$-resolution is the complex 
$$
\xymatrix@C=24pt{
0 \ar[r] & P_{n-1} \ar[r] & P_{n-2} \ar[r] & \cdots \ar[r] & P_1 \ar[r] & P_0 \ar[r] & 0, }
$$
hence it is degenerate. 
\end{example}

Even  for such natural constructions we should remark that there are partial $\F$-resolutions of the form (\ref{resolution}) that are not degenerate. 

\begin{example}\label{ex9.9}
Let $M$ be a finitely generated $R$-module and ${Y}$ a projective resolution of $M$ given as in (\ref{projective resolution}).
We consider a complex of length one; 
$$
{X} = \left[ \xymatrix@C=24pt{0 \ar[r] & P_1 \ar[r]^{u_1} & P_0 \ar[r] & 0} \right]. 
$$
As we remarked in Example \ref{exomega} we see that 
$$
\Omega ^n {X} = \left[ \xymatrix@C=24pt{0 \ar[r] & P_{n+1} \ar[r]^{u_{n+1}} & P_n \ar[r] & 0} \right]. 
$$
In fact,  set
$$
f_{i+1} = \begin{pmatrix}
u_{i+1} & 0 \\
0 & u_{i+3}[1]
\end{pmatrix} \ : F_{i+1} := P_{i+1} \oplus P_{i+3}[1] \longrightarrow F_i := P_i \oplus P_{i+2}[1]
$$
for $i \geq 0$, where  each  $F_i$ is a complex with zero differential mappings. 
Furthermore we set 
$$
p_0 = \begin{pmatrix}
1  & 0 \\
0 & u_{2}[1]
\end{pmatrix} \ :
F_0 = P_0 \oplus P_2[1]  \longrightarrow {X} = P_0 \oplus P_1[1], 
$$
and 
$$
q_n  = \begin{pmatrix}
u_n  & 0 \\
0 & 1 
\end{pmatrix} \ :
\Omega ^n {X}= P_n \oplus P_{n+1}[1] \longrightarrow F_{n-1} = P_{n-1} \oplus P_{n+1}[1].  
$$
Then we have a partial $\F$-resolution  
$$
\xymatrix@C=32pt{ 
0 \ar[r] &  \Omega ^n {X}  \ar[r]^{q _n} &  F_{n-1} \ar[r]^{f_{n-1}} & \cdots \ar[r] & F_1 \ar[r]^{f_1} &  F_0 \ar[r]^{p_0}  & {X} \ar[r]  & 0, } 
$$
as in (\ref{resolution}).

In this example we can observe that if  $n \geq 3$  then the partial $\F$-resolution is never {degenerate}. 

For example, in the case  $n=3$, setting a graded $R$-module homomorphism 
$$
g = \begin{pmatrix}
0 & 0 \\ 1 & 0 \\
\end{pmatrix} : F_2[2] = P_2[2] \oplus P_4[3] \longrightarrow F_0[1] = P_0[1] \oplus P_2[2], 
$$ 
we can see that the differential of $\widetilde{F}$  is given by 
$$
d_{\widetilde{F}} = 
\begin{pmatrix}
0 & 0 &0 \\ f_2[2]  & 0 &0 \\ g & f_1[1] & 0
\end{pmatrix} : 
\widetilde{F} = F_2[2] \oplus F_1[1] \oplus F_0 \longrightarrow 
\widetilde{F} [1] = F_2[3] \oplus F_1[2] \oplus F_0[1],  
$$  
which shows that the sequence is not degenerate. 
{(The reason why this is not degenerate is clear from the following observation: 
As underlying graded modules, $X$ and $\Omega^3X$ do not contain $P_2$ as a direct summand. 
Therefore the $P_2$ component in $\widetilde{F}$ must split off.)}
\end{example}


\begin{definition}\label{def gen split}
We say that a partial $\F$-resolution   
$$
\xymatrix@C=32pt{
 & & X_{n-1} \ar[dr]^{q _{n-1}}  & X_{n-2} \ar[dr] & & X_1 \ar[dr]^{q_1} & & & \\ 
0 \ar[r] & X_n  \ar[r]^{q_n} & F_{n-1} \ar[u]_{p_{n-1}} \ar[r]^{f_{n-1}} & F_{n-2} \ar[u]_{p_{n-2}} \ar[r] & \cdots \ar[r] & F_1 \ar[u] _{p_1} \ar[r]^{f_1} & F_0 \ar[r]^{p_0} & X_0 \ar[r] & 0.}  
$$
is {\bf generically split}  if the localized $\K$-exact sequence 
{\small 
$$
\xymatrix@C=20pt{ 
 & & S^{-1}X_{n-1} \ar[dr]^(0.5){S^{-1}q _{n-1}}  & S^{-1}X_{n-2} \ar[dr] & & S^{-1}X_1 \ar[dr]^{S^{-1}q_1} & & & \\ 
0 \ar[r] & S^{-1}X_n  \ar[r]_{S^{-1}q_n} & S^{-1}F_{n-1} \ar[u]^{S^{-1}p_{n-1}} \ar[r]_{S^{-1}f_{n-1}} & S^{-1}F_{n-2} \ar[u] \ar[r] & \cdots \ar[r] &{S^{-1}F_1 \ar[u]}  \ar[r]_{S^{-1}f_1} & S^{-1}F_0 \ar[r]_{S^{-1}p_0} & S^{-1}X_0 \ar[r] & 0}  
$$}
is split in $\Ko (S^{-1}R)$ in the sense of Definition \ref{degenerate}(1),    where   $S = R \backslash \bigcup _{\p \in \Ass (R)} \p$. 
\end{definition}

\begin{example}
Let  $a$ be an element of $R$.  
Then for the complex  $X_1 := [\  0 \to R \overset{a}\longrightarrow  R \to 0\ ]$ there are triangles; 
$$
\xymatrix@C=30pt{
R \ar[r]^a & R \ar[r]  &X_1  \ar[r] & R[1] , \ \ X_1 \ar[r] & R[1]  \ar[r]^a  &R[1]  \ar[r] & X_1[1].  \\
}
$$
Hence $X_0 := R[1]$ has the following type of finite $\F$-resolution: 
$$
\xymatrix@C=30pt{
0 \ar[r] &R \ar[r]^a & R \ar[r]^0 & R[1] \ar[r]^(0.4)a  &X_0= R[1]  \ar[r] &0.    \\
}
$$
If  $a$  is a non-zero divisor then this resolution is generically split. 
However whenever $a$ is a non-unit, the sequence is not split and not degenerate. 
\end{example}

\begin{example}
Let  $a, b \in R$  and assume that  $a, b$  is a regular sequence on $R$. 
Now let 
$$
X_0 = [ \xymatrix@C=30pt{  0 \ar[r] &R^2 \ar[r] ^{(a\  b)}  & R \ar[r] &0} ], \ \ 
X_1 = [ \xymatrix@C=30pt{  0 \ar[r] &R \ar[r] ^{\binom{b}{-a}}  & R^2  \ar[r] &0} ],  
$$
and note that  $X_0$  is *torsion-free but not *reflexive, while $X_1$  is not *torsion-free.  
One can easily see that there is an $\F$-resolution of  $X_0$;  
$$
\xymatrix@C=32pt{ 
&&X_1 \ar[dr] && \\
0 \ar[r] & R \ar[r]^{\binom{b}{-a}} & R^2  \ar[r]_(0.35){\tiny{\begin{pmatrix} 0 &0 \\ a & b \end{pmatrix}}} \ar[u]_{p_1}  & R[1] \oplus R  \ar[r] ^(0.6){p_0} & X_0  \ar[r] & 0,}  
$$
where $p_0, p_1$ are chain maps defined respectively as 
$$
\xymatrix@C=30pt{
 0 \ar[r] & R \ar[r] ^0 \ar[d]_{\binom{b}{-a}} & R \ar[d]^1 \ar[r] & 0 \quad  &0 \ar[r] & 0 \ar[r] \ar[d] & R^2 \ar[d]^1 \ar[r] & 0 \\
 0 \ar[r] &R^2 \ar[r] _{(a\  b)}  & R \ar[r] &0,  &0 \ar[r] &R \ar[r] _{\binom{b}{-a}}  & R^2 \ar[r] &0.} 
$$
It is easy to see that  $p_0$ and $p_1$ are right $\F$-approximations, hence  $\Omega X_0 = X_1$ and $\Omega ^2 X_0 =0$ in $\uK$. 
We should notice that the $\F$-resolution above {may not be} a split sequence, but {may} generically split. 
\end{example}

\vspace{6pt}
\section{Counit morphism for the adjoint pair $(\Sigma^{n}, \Omega^n)$}

It follows from Theorem \ref{adjoint} that there is an isomorphism 
\begin{equation}\label{Omega nn}
\Hom  _{\uK} (\Sigma ^{n-i}\Omega^n X,  \Omega ^i X)  \cong  \Hom _{\uK} (\Omega^n X , \Omega^n X ),
\end{equation}
for all $X \in \K$  and  $0 \leq i \leq n$.
Thus we can take a morphism in $\K$;  
$$
\pi _X^{(n, i)} : \Sigma^{n-i}\Omega^n X \to \Omega ^i X
$$
which yields a unique element of  $\Hom  _{\uK} (\Sigma ^{n-i}\Omega^n X,  \Omega ^i X)$
that corresponds to the identity on $\Omega ^{n}  X$ in the right hand side of (\ref{Omega nn}).  

If  $i=0$, then $\underline{ \pi _X ^{(n, 0)} }  \in \Hom  _{\uK} (\Sigma ^{n}\Omega^n X,  X)$
is a counit morphism for the adjoint pair $(\Sigma ^{n}, \Omega ^{n})$. 
If $i = n$ then  $\underline{ \pi _X ^{(n, n)} }$ is the identity on $\underline{\Omega ^nX}$. 

Adding an $\F$-summand to  $\Sigma^{n-i}\Omega^n X$ if necessary, we may take the morphism  $\pi _X^{(n, i)}$ 
as cohomologically surjective. 
Under such a circumstance, we make a triangle 
$$
\xymatrix@C=32pt{ 
\Delta^{(n, i)} (X)  \ar[r] & \Sigma^{n-i}\Omega^n X \ar[r]^(0.6){\pi_X^{(n, i)}} & \Omega ^i X \ar[r] & \Delta^{(n, i)} (X)[1] \\
}
$$
and define $\Delta^{(n, i)} (X) \in \K$ by this triangle.

Note that there is a short exact sequence of graded $R$-modules; 
$$
\xymatrix@C=32pt{ 
0 \ar[r] & H(\Delta^{(n, i)} (X))  \ar[r]  & H(\Sigma^{n-i}\Omega^n X)  \ar[r]^(0.6){H(\pi_X^{(n, i)})} &  H(\Omega ^i X  ) \ar[r] & 0,  \\
}
$$
for all $X \in \K$  and $0 \leq i \leq n$. 

Since $\underline{\pi_X^{(n, i)}}$  is uniquely determined as a morphism in $\uK$, 
Theorem \ref{cone} implies the following lemma.

\begin{lemma}
For each $X \in \K$ and positive integers  $0 \leq i \leq n$,  the complex $\Delta^{(n, i)}(X)$ defined above  is uniquely determined as an object of  $\uK$.  {\qed}
\end{lemma}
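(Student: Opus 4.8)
The plan is to obtain the statement as a direct consequence of Theorem \ref{cone}. Recall that $\Delta^{(n,i)}(X)$ is by construction $Cone(\pi_X^{(n,i)})[-1]$, where $\pi_X^{(n,i)} : \Sigma^{n-i}\Omega^n X \to \Omega^i X$ is a \emph{cohomologically surjective} representative in $\K$ of the morphism $\underline{\pi_X^{(n,i)}}$, and this morphism is already uniquely determined in $\uK$ by the adjunction isomorphism (\ref{Omega nn}). Since the shift functor is an autofunctor of $\uK$, it suffices to show that the class of $Cone(\pi_X^{(n,i)})$ in $\uK$ is independent of all the choices involved, namely: the $\K$-models chosen for $\Sigma^{n-i}\Omega^n X$ and for $\Omega^i X$, and the particular cohomologically surjective lift $\pi_X^{(n,i)}$.

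First I would fix $\K$-models $P$ of $\Sigma^{n-i}\Omega^n X$ and $Q$ of $\Omega^i X$ and treat the dependence on the lift only. Any cohomologically surjective representative of $\underline{\pi_X^{(n,i)}}$ then has the form $\pi : P\oplus F \to Q$ with $F \in \F$ (one passes from an arbitrary $\K$-lift $P \to Q$ to a cohomologically surjective one by adding a right $\F$-approximation of $Q$ to the source). For two such, $\pi : P\oplus F \to Q$ and $\pi' : P\oplus F' \to Q$, the additional $\F$-summands are invisible in $\uK$, so under the identifications $\underline{P\oplus F} \cong \underline P \cong \underline{P\oplus F'}$ both $\underline\pi$ and $\underline{\pi'}$ equal $\underline{\pi_X^{(n,i)}}$; hence they correspond under the isomorphism $\Hom_{\uK}(\underline{P\oplus F},\underline Q) \cong \Hom_{\uK}(\underline{P\oplus F'},\underline Q)$. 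Theorem \ref{cone}, applied with target $Y = Q$ and the cohomologically surjective maps $\pi, \pi'$, now yields $\underline{Cone(\pi)} \cong \underline{Cone(\pi')}$.

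To remove the dependence on the chosen models, I would reduce two constructions with different models to a common target. If $Q$ and $Q'$ are two $\K$-models of $\Omega^i X$, then by the corollary following Lemma \ref{commutativity} there are $G, G' \in \F$ and a $\K$-isomorphism $\theta : Q\oplus G \to Q'\oplus G'$; moreover $\theta$ can be taken so that $\underline\theta$ is the canonical identification $\underline Q \cong \underline{\Omega^i X} \cong \underline{Q'}$. Replacing $\pi : P\oplus F \to Q$ by $\theta \circ (\pi \oplus 1_G) : P\oplus F\oplus G \to Q'\oplus G'$ keeps it cohomologically surjective, changes its mapping cone only by a null summand, and leaves the induced morphism in $\uK$ equal to $\underline{\pi_X^{(n,i)}}$; the same is done on the other side, after which both constructions sit over the target $Q'\oplus G'$ with sources differing only by $\F$-summands, and Theorem \ref{cone} applies as before. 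Combining with the previous paragraph proves the lemma.

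The only delicate point is the last paragraph: one must be sure the comparison isomorphism $\theta$ of targets (and the analogous ones for the models of $\Sigma^{n-i}\Omega^n X$ in the source) can be chosen compatibly with the canonical identifications in $\uK$, so that the two morphisms genuinely correspond under the isomorphism $\Hom_{\uK}(\underline X, \underline Y) \cong \Hom_{\uK}(\underline{X'},\underline Y)$ demanded by Theorem \ref{cone}. Once that compatibility is arranged — which is exactly what the corollary to Lemma \ref{commutativity} provides — everything is formal.
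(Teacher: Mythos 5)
Your argument is correct and is essentially the paper's own proof: the paper simply observes that $\underline{\pi_X^{(n,i)}}$ is uniquely determined in $\uK$ and that the chosen representatives are cohomologically surjective, so Theorem \ref{cone} gives the uniqueness of $\Delta^{(n,i)}(X)$ in $\uK$. You have merely spelled out the routine bookkeeping (choice of cohomologically surjective lift and of $\K$-models, handled via the corollary to Lemma \ref{commutativity}) that the paper leaves implicit.
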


As in \S 9 we have triangles of the form; 

$$
\xymatrix@C=32pt@R=12pt{
\Omega ^{i+1} X \ar[r]^(0.6){q_{i+1}} &  F_i \ar[r]^(0.4){p_i}  &  \Omega ^{i}X \ar[r]  &   \Omega ^{i+1}X[1], \\
\Sigma ^{i}\Omega^nX \ar[r]^(0.6){q ^{n-i}}  &  G_{n-i-1} \ar[r]^(0.4){p ^{n-i-1}} & \Sigma ^{i+1}\Omega ^n X \ar[r] & \Sigma^{i}\Omega^n X[1],  }
$$ 
where $F_i , \ G_{n-i-1} \in \F$ and  $p_i$ (resp.  $q ^{n-i}$) is a right (resp. left)  $\F$-approximation for all $0 \leq i < n$.

{Since each $q^{i+1}$ is a left $\F$-approximation,} setting  as $v _{n}$  the identity morphism  on   $\Omega ^nX$,  by  induction on $n-i$, we find morphisms $v _i :  \Sigma ^{n-i}\Omega ^n X \to \Omega^{i}X$ and  $a_i : G_{i} \to F_i$  that make the following diagrams commutative: 
$$
\xymatrix@C=40pt@R=32pt{
\Omega^{i+1}X \ar[r]^{q _{i+1}}  & F_i \ar[r]^{p _i}  &  \Omega^{i}X \ar[r] &   \Omega^{i+1}X[1] \\  
\Sigma ^{n-i-1}\Omega^nX  \ar[r]^(0.6){q^{i+1}}  \ar[u]^{v_{i+1}} &   G_{i}  \ar[r]^(0.4){p ^{i}} \ar[u]^{a_{i}} &   \Sigma ^{n-i}\Omega ^n X \ar[r]  \ar[u]^{v_i} &   \Sigma^{n-i-1}\Omega^n X[1] \ar[u]^{v _{i+1}[1]}, \\}
$$
for  $0 \leq i < n$. 
Here we can take such $a _i$ to be surjective graded $R$-module homomorphisms. 
Actually, 
{if we add $F_i$ to $G_i$ and $\Sigma ^{n-i}\Omega^nX$ as a direct summand, then the following diagram is also commutative.
$$
\xymatrix@C=40pt@R=32pt{
\Omega^{i+1}X \ar[r]^{q _{i+1}}  & F_i \ar[r]^{p _i}  &  \Omega^{i}X \ar[r] &   \Omega^{i+1}X[1] \\  
\Sigma ^{n-i-1}\Omega^nX  \ar[r]^(0.5){\tiny{\begin{pmatrix} q^{i+1} \\ 0 \end{pmatrix}}}  \ar[u]^{v_{i+1}} &   G_{i}\oplus F_i  \ar[r]^(0.4){\tiny{\begin{pmatrix} p^{i} & 0 \\ 0 &1 \end{pmatrix}}} \ar[u]^{(a_{i}\ 1)} &   \Sigma ^{n-i}\Omega ^n X  \oplus F_i \ar[r]  \ar[u]^{(v_i\  p_i)} &   \Sigma^{n-i-1}\Omega^n X[1] \ar[u]^{v _{i+1}[1]} \\}
$$
Replacing  $a_i$  by $(a_i \ 1)$, we assume that all $a_i$'s are surjective $R$-module homomorphisms.}

Then, since  $p_i a_i$ is cohomologically surjective, we see that  $v_i$ is also cohomologically surjective. 
Therefore we may take all such $v_i$ to be equal to  $\pi_X ^{(n, i)}$  for  $0 \leq i < n$. 
 
Thus we have a commutative diagram in which the rows are $\K$-exact sequences; 

\begin{equation}\label{FGtotal}
\xymatrix@C=36pt@R=32pt{
0 \ar[r] & \Omega^n X \ar[r]^{q_n}  &  F_{n-1} \ar[r]^{f_{n-1}} & \cdots \ar[r]^{f_1} &  F_0 \ar[r]^{p_0} & X \ar[r] & 0 \\ 
0 \ar[r] &  \Omega^n X \ar[r]^{q^{n}} \ar@{=}[u] & G_{n-1} \ar[r]^{g^{n-1}}  \ar[u]^{a _{n-1}} &  \cdots \ar[r]^{g^{1}}  &  G_{0} \ar[r]^(0.4){p^{0}} \ar[u]^{a _{0}} &  \Sigma^{n}\Omega^nX  \ar[r] \ar[u]^{\pi_X^{(n, 0)}} & 0 , \\
}
\end{equation}
where $F_i {, G_i} \in \F$,  $f_i = q _i p _i$ and  $g^{i} = q ^{i} p ^{i}$  for $1 \leq i < n$. 
This diagram is divided into two  commutative diagrams whose rows are $\K$-exact sequences as well: 

\begin{equation}\label{FGright}
\xymatrix@C=36pt@R=32pt{
0 \ar[r] & \Omega^i X \ar[r]^{q_i}  &  F_{i-1} \ar[r]^{f_{i-1}} & \cdots \ar[r]^{f_1} &  F_0 \ar[r]^{p_0} & X \ar[r] & 0 \\ 
0 \ar[r] &  \Sigma^{n-i}\Omega^n X \ar[r]^{q^{i}} \ar[u]^{\pi_X^{(n, i)}} & G_{i-1} \ar[r]^{g^{i-1}}  \ar[u]^{a _{i-1}} &  \cdots \ar[r]^{g^{1}}  &  G_{0} \ar[r]^(0.4){p^{0}} \ar[u]^{a _{0}} &  \Sigma^{n}\Omega^nX  \ar[r] \ar[u]^{\pi_X^{(n, 0)}} & 0 , \\
}
\end{equation}

\begin{equation}\label{FGleft}
\xymatrix@C=36pt@R=32pt{
0 \ar[r] & \Omega^n X \ar[r]^{q_n}  &  F_{n-1} \ar[r]^{f_{n-1}} & \cdots \ar[r]^{f_{i+1}} &  F_i \ar[r]^{p_i} & \Omega ^i X \ar[r] & 0 \\ 
0 \ar[r] &  \Omega^n X \ar[r]^{q^{n}} \ar@{=}[u] & G_{n-1} \ar[r]^{g^{n-1}}  \ar[u]^{{a _{n-1}}} &  \cdots \ar[r]^{g^{i+1}}  &  G_{i} \ar[r]^(0.4){p^{i}} \ar[u]^{a _{i}} &  \Sigma^{n-i}\Omega^nX  \ar[r] \ar[u]^{\pi_X^{(n, i)}} & 0 , \\
}
\end{equation}

Now set  $L_i = \Ker \ a_i$  the kernel as a graded $R$-module homomorphism for $0 {<}  i \leq n$. 
Since  each $a_i$ is surjective as a graded $R$-module homomorphism, we  see that  $L_i \in \F$.  
Then the successive use of octahedron axiom to the diagram (\ref{FGleft}) will show that  there is a commutative diagram whose columns are triangles and rows are $\K$-exact sequences:
\begin{equation}\label{FGL}
\xymatrix@C=32pt@R=28pt{
0 \ar[r] & \Omega^n X \ar[r]^{q_n} & F_{n-1} \ar[r]^{f_{n-1}} &  \cdots  \ar[r]^{f_{i+1}} & F_i \ar[r]^{p_i} & \Omega ^i X \ar[r] & 0 \\ 
0 \ar[r]  &  \Omega^n X \ar@{=}[u] \ar[r]^{q^{n}}  &  G_{n-1} \ar[u]^{a_{n-1}} \ar[r]^{g^{n-1}} &  \cdots \ar[r]^{g^{1}} & G_{i} \ar[u]^{a_{i}} \ar[r]^(0.4){p^{i}}  & \Sigma^{n-i}\Omega^nX \ar[u]^{\pi_X^{(n, i)}}  \ar[r] & 0  \\
  & 0 \ar[r] & L_{n-1}  \ar[u]^{b^{n-1}} \ar[r]^{\ell ^{n-1}}  &  \cdots \ar[r]^{\ell ^{i+1}}  &  L_{i} \ar[u]^{b^{i}} \ar[r] &  \Delta^{(n, i)} (X) \ar[u] \ar[r] & 0 \\ 
}
\end{equation}

In fact we prove by induction on $n-i$ that the third row of the diagram (\ref{FGL}) is a $\K$-exact sequence. 
If $n-i=1$  then the following octahedron diagram proves this. 
$$
\xymatrix@C=32pt@R=28pt{
   & L_{n-1} \ar[r]^(0.4){{\cong}}   &  \Delta^{(n, n-1)} (X)  &  \\ 
 \Omega ^n X \ar[r]^{q_n} & F_{n-1} \ar[u] \ar[r]^{p_{n-1}} & \Omega ^{n-1} X \ar[u] \ar[r] &  \Omega^{n} X  {[1]} \ar@{=}[d] \\ 
 \Omega ^n X \ar@{=}[u] \ar[r]^{q^{n}}  &  G_{n-1} \ar[u]^{a_{n-1}} \ar[r]^(0.5){p^{n-1}}  & \Sigma \Omega ^n X \ar[u]_{\pi_X^{(n, n-1)}}  \ar[r] &  \Omega ^n  X {[1]} \\
   & L_{n-1} \ar[u]^{b^{n-1}} \ar[r]^(0.4){{\cong}} &  \Delta^{(n, n-1)} (X) \ar[u]  &   }
$$
If  $n - i \geq 2$,  then applying the induction hypothesis, we see that in the diagram 
$$
\xymatrix@C=32pt@R=28pt{
0 \ar[r] & \Omega^n X \ar[r]^{q_n} & F_{n-1} \ar[r]^{f_{n-1}} &  \cdots  \ar[r]^{f_{i+2}} & F_{i+1} \ar[r]^{p_{i+1}} & \Omega ^{i+1} X \ar[r] & 0 \\ 
0 \ar[r]  &  \Omega^n X \ar@{=}[u] \ar[r]^{q^{n}}  &  G_{n-1} \ar[u]^{a_{n-1}} \ar[r]^{g^{n-1}} &  \cdots \ar[r]^{g^{i+2}} & G_{i+1} \ar[u]^{a_{i+1}} \ar[r]^(0.4){p^{i+1}}  & \Sigma^{n-i-1}\Omega^nX \ar[u]^{\pi_X ^{(n, i+1)}} \ar[r] & 0  \\
  & 0 \ar[r] & L_{n-1}  \ar[u]^{b^{n-1}} \ar[r]^{\ell ^{n-1}}  &  \cdots \ar[r]^{\ell ^{i+2}}  &  L_{i+1} \ar[u]^{b^{i+1}} \ar[r] &  \Delta^{(n, i+1)}(X) \ar[u] \ar[r] & 0,  \\ 
}$$
the third row is $\K$-exact. 
On the other hand, there is a commutative diagram where all rows and columns are triangles: 
$$
\xymatrix@C=32pt@R=28pt{
\Omega ^{i+1} X  \ar[r]^{q_{i+1}} & F_i \ar[r]^{p_i} & \Omega ^{i}X \\
\Sigma^{n-i-1}\Omega^nX \ar[r]^(0.7){q^{i+1}} \ar[u]^{\pi_X^{(n, i+1)}} & G_i \ar[r]^(0.4){p^i} \ar[u]^{a_i} & \Sigma^{n-i}\Omega^nX \ar[u] ^{\pi^{(n, i)}_X}\\
\Delta^{(n, i+1)}(X)  \ar[r] \ar[u] & L_{i} \ar[r] \ar[u]^{b_i} & \Delta^{(n, i)}(X) \ar[u]  \\
}
$$
(First we take the top left square with $a_i$ being surjective, then apply the $9$ lemma {(cf. \cite[Exercise 10.2.6, p.378]{W})} to get this commutative diagram.)

In particular we have a $\K$-exact sequence 
$$\xymatrix@C=32pt{
0 \ar[r] & \Delta^{(n, i+1)}(X)  \ar[r] & L_{i} \ar[r]  & \Delta^{(n, i)}(X) \ar[r] & 0  \\
}$$
Combining the sequences above we finally obtain the $\K$-exact sequence:  
\begin{equation}\label{Lsequence}
\xymatrix@C=32pt@R=28pt{
0 \ar[r] & L_{n-1}  \ar[r]^{\ell ^{n-1}}  & L_{n-2} \ar[r]  &  \cdots \ar[r]^{\ell ^{i+1}}  &  L_{i} \ar[r] &  \Delta^{(n, i)} (X)  \ar[r] & 0 \\ 
}
\end{equation}
This proves that all the rows in  the  diagram (\ref{FGL}) are $\K$-exact sequences. 
 
Letting  $\widetilde{L^{(n, i)}}$  be the contraction of the $\F$-resolution (\ref{Lsequence}), we have the isomorphism  $  \Delta^{(n, i)} (X)  \cong \widetilde{L^{(n, i)}}$. 
We have thus proved the following theorem. 

\begin{theorem}\label{Delta has finite res}
Let  $X \in \K$  and $0 \leq i \leq n$. 
Then  $\Delta ^{(n, i)}(X)$  has a finite $\F$-resolution of length  $n-i-1$. 
In this case  $\Delta ^{(n, i)}(X)$ is isomorphic in $\K$ to the contraction of such a finite $\F$-resolution. 
\end{theorem}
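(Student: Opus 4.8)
The plan is to extract a finite $\F$-resolution of $\Delta^{(n,i)}(X)$ directly from the two parallel $\K$-exact sequences already produced: the partial $\F$-resolution of $\Omega^i X$ appearing in the top row of (\ref{FGleft}) and the $\K$-exact sequence built from the cosyzygy triangles in its bottom row, linked by the vertical comparison maps. First I would fix such a diagram, using the normalizations described in the preceding discussion: after adding $\F$-summands I may assume each $a_j\colon G_j\to F_j$ is surjective on the underlying graded $R$-modules and that the middle verticals are the counit morphisms $\pi_X^{(n,j)}$; and since all the $F_j,G_j$ lie in $\F$ I may replace them by their zero-differential parts, so that $d_{F_j}=d_{G_j}=0$. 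Then I would set $L_j:=\Ker a_j$ for $i\le j\le n-1$.

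The first point to settle is that $L_j\in\F$. With $d_{F_j}=d_{G_j}=0$ this is immediate: $L_j$ is a subcomplex of $G_j$ with zero differential, the graded exact sequence $0\to L_j\to G_j\to F_j\to 0$ splits because $F_j$ is graded projective, so $L_j$ is a graded direct summand of the graded projective module $G_j$; hence $L_j$ is a graded projective module with zero differential, i.e.\ $L_j\in\F$ by Theorem \ref{characterization of F}. Discarding null summands, I may further assume that no $L_j$ has a null complex as a direct summand, which is what will be needed to invoke the contraction machinery.

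Next I would run the iterated octahedron argument indicated in (\ref{FGL}). By downward induction on $n-i$ — the base case $n-i=1$ being a single octahedron applied to the bottom square linking $\Omega^{n-1}X$, $\Sigma\Omega^nX$ and $\Omega X$ — one obtains for each $j$ a triangle
\[
\Delta^{(n,j+1)}(X)\longrightarrow L_j\longrightarrow \Delta^{(n,j)}(X)\longrightarrow \Delta^{(n,j+1)}(X)[1],
\]
and splicing these together (using the $3\times 3$-lemma exactly as in the displayed diagram, and the defining triangle of each $\Delta^{(n,j)}(X)$) produces the $\K$-exact sequence (\ref{Lsequence}), namely $0\to L_{n-1}\to\cdots\to L_i\to\Delta^{(n,i)}(X)\to 0$. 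Since $L_i,\dots,L_{n-1}\in\F$, this is an $\F$-resolution of $\Delta^{(n,i)}(X)$ of length $n-i-1$. Finally, applying Theorem and Definition \ref{theorem/definition} to this $\F$-resolution — legitimate because the $L_j$ have no null summands — its contracted triangle (\ref{contraction sequence}) has $X_n=0$, hence degenerates to an isomorphism in $\K$ between the contraction $\widetilde{L}$ and $\Delta^{(n,i)}(X)$, giving the second assertion.

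The main obstacle is the iterated octahedron: one must keep every row of (\ref{FGL}) $\K$-exact simultaneously throughout the induction, and must track the connecting morphisms carefully enough that the connecting morphism of (\ref{Lsequence}) is the one whose contraction is $\Delta^{(n,i)}(X)$ on the nose, rather than merely a complex with the same underlying graded module. This is exactly where the preliminary normalizations — surjectivity of the $a_j$ on underlying modules and the identification of the middle verticals with the counits $\pi_X^{(n,j)}$ — earn their keep, since they are what make the $L_j$ genuine objects of $\F$ and allow the octahedra to be stacked coherently.
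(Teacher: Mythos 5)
Your proposal follows essentially the same route as the paper: the comparison diagram between the syzygy-side partial $\F$-resolution and the cosyzygy-side $\K$-exact sequence with surjective graded maps $a_j$, the kernels $L_j\in\F$, the iterated octahedron/$3\times 3$ argument producing the $\K$-exact sequence $0\to L_{n-1}\to\cdots\to L_i\to\Delta^{(n,i)}(X)\to 0$, and the contraction with $X_n=0$ yielding the isomorphism. Your explicit verification that $L_j\in\F$ and the remark about discarding null summands before invoking Theorem and Definition \ref{theorem/definition} are correct elaborations of steps the paper treats briefly.
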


\begin{remark}
{　}
\begin{enumerate} 
\item  If $R$  is a Gorenstein ring of dimension zero (i.e. a self-injective algebra), then we can take all the $a_i$  {to be} isomorphisms and hence one can take  $L_i=0$  for all $0 \leq i < n$. 
Thus we have  $\underline{\Delta ^{(n, i)} (X)} =0$ or $ \Delta ^{(n, i)}(X) \in \F$ for all $0 \leq i \leq n$. 
Moreover for any choice of  $a_i$  the sequence (\ref{Lsequence}) is a split sequence in this case. 
{See Corollary \ref{Gor dim 0} and Remark \ref{remark on omega}.} 
\item  
 Let  $S$  be a multiplicatively closed subset of  $R$. 
Then localizing by $S$ and going through the whole procedure again leads to the localization of the sequence (\ref{Lsequence}).
As a consequence,  we observe an isomorphism 
$$
S^{-1}\Delta _R^{(n, i)} (X)   \cong  \Delta ^{(n, i)} _{S^{-1}R}(S^{-1}X), 
$$
in the stable category  $\underline{\Ko (S^{-1}R)}$ for all $0 \leq i < n$, 
 and the localized sequence of  (\ref{Lsequence}) by  $S$  is an $\mathrm{Add}(S^{-1}R)$-resolution of $\Delta ^{(n, i)} _{S^{-1}R}(S^{-1}X)$. 
\end{enumerate} 
\end{remark}

By this remark, if $R$ is a generically Gorenstein ring, then the $\F$-resolution (\ref{Lsequence}) is generically split. 
Thus we have proved the following theorem. 

\begin{theorem}\label{gen split resolution}
Let  $R$  be a generically Gorenstein ring. 
For any  $X \in \K$  and $0 \leq i \leq n$,  
 $\Delta ^{(n, i)}(X)$  has a finite $\F$-resolution of length  $n-i-1$ that is generically split. 
\end{theorem}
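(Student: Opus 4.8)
The plan is to show that the $\F$-resolution (\ref{Lsequence}), which was already constructed in the paragraph preceding Theorem \ref{Delta has finite res}, is itself generically split; no new resolution needs to be built. Recall that for each $X \in \K$ and $0 \le i \le n$ the diagram (\ref{FGL}) produced a $\K$-exact sequence
$$
0 \longrightarrow L_{n-1} \longrightarrow L_{n-2} \longrightarrow \cdots \longrightarrow L_i \longrightarrow \Delta^{(n,i)}(X) \longrightarrow 0
$$
with every $L_j \in \F$, and by Theorem \ref{Delta has finite res} this is an $\F$-resolution of $\Delta^{(n,i)}(X)$ of length $n-i-1$. So the only thing that remains is to verify the generic splitness condition of Definition \ref{def gen split} for this particular resolution.

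To do so I would apply the localization functor $S^{-1}(-)$ with $S = R\backslash\bigcup_{\p \in \Ass(R)}\p$. As observed in the Remark immediately above, the entire construction of the diagram (\ref{FGL}) is compatible with localization: by Proposition \ref{remark on omega by localization} localizing the chosen right and left $\F$-approximations again yields approximations over $S^{-1}R$, the chosen surjections $a_i$ localize to surjections, and the successive octahedra are preserved. Consequently $S^{-1}$ applied to (\ref{Lsequence}) is precisely the sequence (\ref{Lsequence}) built over the ring $S^{-1}R$ for the complex $S^{-1}X$, namely an $\mathrm{Add}(S^{-1}R)$-resolution of $\Delta^{(n,i)}_{S^{-1}R}(S^{-1}X)$.

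Finally, since $R$ is generically Gorenstein, $S^{-1}R$ is a Gorenstein ring of dimension zero. Part (1) of the Remark records that over such a ring the sequence (\ref{Lsequence}) is split in the sense of Definition \ref{degenerate}(1) — indeed the $a_i$ may then be taken to be isomorphisms, so the $L_j$ all vanish, but for any choice of the $a_i$ the sequence is split. Hence the localization of (\ref{Lsequence}) by $S$ is split in $\Ko(S^{-1}R)$, which is exactly the defining condition for (\ref{Lsequence}) to be generically split. Combining this with Theorem \ref{Delta has finite res} finishes the proof.

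I do not expect a genuine obstacle here: the statement is essentially a corollary of Theorem \ref{Delta has finite res} together with the localization remarks. The one point that must be handled with care, and which is already granted by the Remark, is that the octahedron and $9$-lemma construction of (\ref{FGL}) and (\ref{Lsequence}) really does commute with $S^{-1}(-)$, so that $\Ker(S^{-1}a_i) = S^{-1}L_i$ and the localized resolution coincides with the one produced intrinsically over $S^{-1}R$; once this bookkeeping is in place, the reduction to the zero-dimensional Gorenstein case is immediate.
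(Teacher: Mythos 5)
Your proposal is correct and follows essentially the same route as the paper: the paper's proof is precisely the observation that the already-constructed resolution (\ref{Lsequence}) localizes (via the compatibility of the whole diagram (\ref{FGL}) with $S^{-1}(-)$) to the corresponding $\mathrm{Add}(S^{-1}R)$-resolution over the zero-dimensional Gorenstein ring $S^{-1}R$, where by part (1) of the preceding Remark it is split for any choice of the $a_i$, so (\ref{Lsequence}) is generically split and the theorem follows from Theorem \ref{Delta has finite res}. Your extra care about $\Ker(S^{-1}a_i) = S^{-1}L_i$ and the preservation of approximations under localization is exactly the bookkeeping the paper subsumes in its Remark.
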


\vspace{6pt}
\section{The main theorem and the proof }

The following {lemma} is one of the most essential observations to prove the main theorem. 
 
\begin{lemma}\label{main lemma to prove theorem}
Let    $R$  be a generically Gorenstein ring, and  let $X \in \K $. 
If  $H ( X^{*} ) = 0$, then  $\Omega ^r X$  is *torsion-free for each non-negative integer $r$.  
\end{lemma}

\medskip
To prove {this lemma} we prepare several {preliminary} lemmas. 

\begin{lemma}\label{perp0}
Let  $X$  be a complex in $\K$ and assume that  $H(X^*)=0$. 
Then we have $\Hom _{\K} (X, F)=0$ for all $F \in \F$.
\end{lemma}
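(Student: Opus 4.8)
The plan is to reduce the vanishing $\Hom_{\K}(X,F)=0$ to the case where $F$ is a shift of $R$, and then to identify the relevant Hom-group with a cohomology module of $X^*$. First I would use that every $F \in \F$ decomposes as $F = \coprod_{j \in \Z} H^j(F)[-j]$ with $H^j(F) \in \proj(R)$, and that since $X$ has bounded-above support in each degree (more precisely, since each $X^i$ is fixed, only finitely many $H^j(F)$ can contribute a nonzero map in a given degree) the coproduct is also a product, so that
$$
\Hom_{\K}(X,F) \cong \prod_{j \in \Z} \Hom_{\K}\bigl(X, H^j(F)[-j]\bigr).
$$
Thus it suffices to show $\Hom_{\K}(X,P[m])=0$ for every finitely generated projective $R$-module $P$ and every $m \in \Z$.

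Next I would handle the case $P = R$: here $\Hom_{\K}(X,R[m]) = H^m(\Hom_R(X,R)) = H^m(X^*)$, which is zero by hypothesis. To pass from $R$ to an arbitrary finitely generated projective $P$, I would choose a finitely generated projective $Q$ with $P \oplus Q \cong R^n$ free; then $\Hom_{\K}(X,P[m])$ is a direct summand of $\Hom_{\K}(X,R^n[m]) \cong H^m(X^*)^{\oplus n} = 0$, hence itself zero. Combining this with the product decomposition above gives $\Hom_{\K}(X,F)=0$ for all $F \in \F$, as desired.

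The only point requiring a little care — and the main (mild) obstacle — is the first reduction: justifying that $\Hom_{\K}(X, -)$ turns the coproduct $\coprod_j H^j(F)[-j]$ into a product. This is exactly the content of Lemma \ref{sum-product lemma} (or of the hom-product formula already quoted there), applied to the set $\{H^j(F)[-j] \mid j \in \Z\}$, whose $i$th components are nonzero for only finitely many $j$ because $F^i$ is finitely generated; so the product $\prod_j \Hom_{\K}(X, H^j(F)[-j])$ is the correct description of $\Hom_{\K}(X,F)$. Once that identification is in place, the argument is purely formal, using only the identification $\Hom_{\K}(X,R[m]) = H^m(X^*)$ and the splitting-off of projective summands from a free module.
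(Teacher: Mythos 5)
Your proposal is correct and follows essentially the same route as the paper: decompose $F$ as $\coprod_{j}H^j(F)[-j]$, use Lemma \ref{sum-product lemma} (the finiteness of the $i$th components coming from $F^i$ being finitely generated) to turn $\Hom_{\K}(X,-)$ of this coproduct into a product, and reduce to $\Hom_{\K}(X,R[m])=H^m(X^*)=0$, with the passage to an arbitrary finitely generated projective $P$ via a free complement exactly as the paper implicitly does. No gaps.
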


\begin{proof}
Note that  $H(X^*)$  is the cohomology module of  the complex $\Hom _R(X, R)$, hence we have the equality  $H(X^*)= \bigoplus _{i \in \Z} \Hom _{\K}(X, R[i])$. 
Thus if   $H(X^*)=0$,  then we see that  $\Hom _{\K}(X, P[i])=0$  for any finitely generated projective $R$-module $P$ and an integer  $i$.
Recall from Theorem \ref{characterization of F} and Proposition \ref{coproduct} that any complex  $F  \in \F$  is  isomorphic to a direct sum  $\bigoplus_{i \in \Z} F^i [-i]$ with  $F^i$  being a {finitely generated} projective $R$-module for each $i \in \Z$.  
On the other hand it follows from Lemma \ref{sum-product lemma} the direct sum is a product in $\K$. 
Hence $\Hom _{\K} (X, F) = \prod _{i \in \Z} \Hom _{\K} (X, F^i [-i]) =0$ as desired.
\end{proof}

\begin{lemma}\label{perp}
Let  $X, Y \in \K$. 
Assume the following conditions: 
\begin{enumerate}
\item
$Y$  has an $\F$-resolution of finite length. \medskip
\item
$H(X^*)=0$. \medskip
\end{enumerate}

Then we have $\Hom _{\K} (X, Y)=0$. 
\end{lemma}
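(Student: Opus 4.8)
The plan is to prove Lemma \ref{perp} by induction on the length of a finite $\F$-resolution of $Y$. The base case is when $Y$ itself belongs to $\F$ (an $\F$-resolution of length $0$, meaning the $\K$-exact sequence reduces to $Y \cong F_0$), and this is exactly the content of Lemma \ref{perp0}: the hypothesis $H(X^*)=0$ forces $\Hom_{\K}(X, F)=0$ for every $F \in \F$.

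For the inductive step, suppose $Y$ admits a finite $\F$-resolution of length $n \geq 1$, so that by Definition \ref{def Kexact} there is a triangle
$$
\xymatrix@C=32pt{ Y_1 \ar[r]^{q_1} & F_0 \ar[r]^{p_0} & Y \ar[r]^(0.4){\omega_0} & Y_1[1], }
$$
where $F_0 \in \F$ and $Y_1$ has an $\F$-resolution of length $n-1$. Applying the cohomological functor $\Hom_{\K}(X, -)$ to this triangle yields an exact sequence
$$
\xymatrix@C=20pt{ \Hom_{\K}(X, F_0) \ar[r] & \Hom_{\K}(X, Y) \ar[r] & \Hom_{\K}(X, Y_1[1]). }
$$
By Lemma \ref{perp0} the left-hand term vanishes. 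For the right-hand term, note that $Y_1[1]$ also has an $\F$-resolution of finite length (namely $n-1$, shifted), since shifting a $\K$-exact sequence preserves $\K$-exactness and $\F$ is closed under the shift functor; hence the induction hypothesis gives $\Hom_{\K}(X, Y_1[1]) = 0$. Therefore $\Hom_{\K}(X, Y) = 0$, completing the induction.

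I do not anticipate a serious obstacle here; the only point requiring a little care is the bookkeeping in the inductive step, namely checking that the truncation $Y_1$ appearing in the first triangle of the $\F$-resolution genuinely inherits a finite $\F$-resolution of length one less, and that passing to the shift $Y_1[1]$ does not disturb this — both of which follow immediately from the definitions in Section 8. One could also phrase the argument slightly more efficiently by invoking the contracted triangle of Theorem \ref{theorem/definition} directly: if $\widetilde{F}$ is the contraction of the finite $\F$-resolution of $Y$ with $Y_n = 0$, then $Y \cong \widetilde{F} \in \F$ when $Y_n=0$... but in fact a finite $\F$-resolution in the sense of Definition \ref{def Kexact} with $X_n = 0$ already makes $\widetilde{F} \cong Y$, so $Y \in \F$ and the lemma collapses to Lemma \ref{perp0} outright. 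The inductive formulation above is the safest route and is the one I would write out.
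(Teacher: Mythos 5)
Your main argument is correct and is essentially the paper's proof: induction on the length of the $\F$-resolution, base case by Lemma \ref{perp0}, and in the inductive step the exact sequence $\Hom_{\K}(X,F_0)\to\Hom_{\K}(X,Y)\to\Hom_{\K}(X,Y_1[1])$ obtained from the defining triangle, with both outer terms vanishing (the paper phrases the right-hand vanishing as $\Hom_{\K}(X,Y'[i])=0$ for all $i$ by the induction hypothesis, which is the same bookkeeping point about shifts that you address).

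One caution about your closing aside: it is \emph{not} true that a finite $\F$-resolution with $X_n=0$ forces $Y\in\F$. The contraction $\widetilde{F}$ is isomorphic to $Y$ in $\K$, but $\widetilde{F}$ is only a direct sum of the $F_i[i]$ as a graded module; its differential is the matrix of Remark \ref{matrix form psi} and is nonzero in general, so $\widetilde{F}$ need not be split. For instance, in the paper's example with a regular sequence $a,b$, the complex $X_0=[\,0\to R^2\xrightarrow{(a\ b)} R\to 0\,]$ has an $\F$-resolution of length $2$ but does not lie in $\F$ (its cokernel module $R/(a,b)$ is not projective). So the lemma does not collapse to Lemma \ref{perp0}; the inductive formulation you committed to is the one that works.
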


\begin{proof}
This is obvious from the previous lemma and utilizing the induction on the length $\ell$ of the $\F$-resolution of  $Y$. 
In fact, if  $\ell =0$ then $Y \in \F$ hence  $\Hom _{\K} (X, Y)=0$  by Lemma \ref{perp0}. 
If  $\ell >0$ then there is a triangle  $$\xymatrix{Y'  \ar[r] & F_0 \ar[r] & Y \ar[r] & Y'[1]},$$ where  $F_0 \in \F$ and $Y'$  has an $\F$-resolution of length $\ell -1$. 
Thus $\Hom _{\K} (X, Y'[i]) =0$ for all $i \in \Z$  by the induction hypothesis. 
Since there is an exact sequence of $R$-modules;  
$$
\xymatrix{\Hom _{\K}(X, F_0) \ar[r] & \Hom _{\K}(X, Y) \ar[r] & \Hom _{\K}(X, Y'[1]),}  
$$
which results that  $\Hom _{\K}(X, Y)=0$. 
\end{proof}

\medskip 
Now we proceed to the proof of {Lemma} \ref{main lemma to prove theorem}. 
 
In  this proof we assume that  $R$ is generically Gorenstein and  $H(X^*)=0$. 
It is clear that  $X$ is *torsion-free, since $H(X^*) =0 \to H(X)^*$ is injective. 
We shall prove  that  so is $\Omega ^rX$ for  $r \geq 1$.  
 
Let  $n\geq 2$  be an integer. 
We have the following commutative diagram from (\ref{FGL}); 

\begin{equation}\label{FGL0}
\xymatrix@C=32pt@R=28pt{
0 \ar[r] & \Omega^n X \ar[r]^{q_n} & F_{n-1} \ar[r]^{f_{n-1}} &  \cdots  \ar[r]^{f_{1}} & {F_0} \ar[r]^{p_0} & X \ar[r] & 0 \\ 
0 \ar[r]  &  \Omega^n X \ar@{=}[u] \ar[r]^{q^{n}}  &  G_{n-1} \ar[u]^{a_{n-1}} \ar[r]^{g^{n-1}} &  \cdots \ar[r]^{g^{1}} & {G_{0}} \ar[u]^{a_{0}} \ar[r]^(0.4){p^{0}}  & \Sigma^{n}\Omega^nX \ar[u]^{\pi_X^{(n, 0)}}  \ar[r] & 0  \\
  & 0 \ar[r] & L_{n-1}  \ar[u]^{b^{n-1}} \ar[r]^{\ell ^{n-1}}  &  \cdots \ar[r]^{\ell ^{1}}  &  L_{0} \ar[u]^{b^{0}} \ar[r] &  \Delta^{(n, 0)} (X) \ar[u] \ar[r] & 0,  \\ 
}
\end{equation}
where the rows are $\K$-exact sequences and the columns are  triangles.  
Taking the contracted triangles of the rows we obtain the following commutative diagram whose rows and columns are triangles:  

\begin{equation}\label{FGLtilder}
\xymatrix@C=32pt@R=32pt{
 & \widetilde{L}[1]   \ar[r] ^(0.4){\cong} &  \Delta^{(n, 0)} (X)[1]  &   \\ 
\Omega^n X [n-1] \ar[r]^(0.6){\psi^F _n} & \widetilde{F}  \ar[u]^{\lambda}  \ar[r]^(0.4){\varphi _n ^F} & X \ar[u]^{\sigma}  \ar[r]^{\widetilde{\omega_n}^F} & \Omega ^nX [n] \\ 
\Omega^n X [n-1] \ar@{=}[u] \ar[r]^(0.6){\psi ^G_{n}}  &  \widetilde{G} \ar[u]^{\widetilde{a}} \ar[r]^(0.4){\varphi _n ^G}  & \Sigma^{n}\Omega^nX  \ar[u]^{\pi _X^{(n, 0)} } \ar[r]^{\widetilde{\omega_n}^G} & \Omega ^nX [n] \ar@{=}[u] \\ 
 & \widetilde{L} \ar[u]^{\widetilde{b}} \ar[r] ^(0.4){\cong} &  \Delta^{(n, 0)} (X),  \ar[u]^{\tau}  &   
}
\end{equation}
where $\widetilde{a}$ and $\widetilde{b}$ are the induced {morphisms} from $\{ a_i \}$  and  $\{ b_i \}$ respectively. 
See Definition \ref{morphism of resolutions} and Theorem \ref{lower triangle matrix}. 
(In fact, to see the  second column is a triangle, note all the other columns and rows are triangles and apply the octahedron axiom.)
We know from Theorem \ref{Delta has finite res} that  $\Delta^{(n, 0)} (X)[1]$  has a finite $\F$-resolution. 
Hence it follows from Lemma \ref{perp}  that $\sigma$  in the diagram is zero. 
Thus $\lambda$  is also zero in the diagram by the commutativity of the upper square. 
This means that the second and the third columns are split triangles, hence  $\widetilde{a}$  and  $\pi _X^{(n, 0)}$  have right inverses. 
Notice from this that   $\widetilde{L}$,   and hence  $\Delta^{(n, 0)} (X)$ as well,  is  *torsion-free, since it is a direct summand of  $\Sigma^{n}\Omega ^nX$.  
{See Lemma \ref{torsion-free} and Theorem \ref{reflexive cor}.} 

{Taking all $F_i$'s and $G_i$'s to have no null complex as direct summands, we note} that the following diagram is commutative (cf. Theorem \ref{lower triangle matrix}).

\begin{equation}\label{FGLproj}
\xymatrix@C=48pt@R=32pt{
 \widetilde{F}  \ar[r]^(0.4){pr_{n-1} ^F} & F_{n-1} [n-1] \\ 
 \widetilde{G}  \ar[u]^{\widetilde{a}} \ar[r]^(0.4){pr_{n-1} ^G} & G_{n-1} [n-1] \ar[u]_{a_{n-1}[n-1]}  \\ 
 \widetilde{L}  \ar[u]^{\widetilde{b}} \ar[r]^(0.4){pr_{n-1} ^L} & L_{n-1} [n-1] \ar[u]_{b_{n-1}[n-1]} \\ 
}
\end{equation}
 
We shall now prove that  $pr _{n-1} ^L =0$   in $\K$. 

To prove this we note {from Theorem \ref{gen split resolution}} that  $\widetilde{L}$  has an $\F$-resolution of the form (\ref{Lsequence}) that is generically split. 
Therefore  we see from Corollary \ref{split pr} that $S^{-1}   (pr_{n-1} ^L) =0$ in ${\mathscr{K}} (S^{-1}R)$, where  $S$  is the set of all non-zero divisors in $R$ as before.   
Since  $\widetilde{L}$ is *torsion-free and $L_{n-1} \in \F$, it follows from Theorem \ref{S^{-1}f=0} that $pr _{n-1} ^L =0$   in $\K$ as desired. 

Then we have   $pr_{n-1} ^G  \widetilde{b} =0$ by the commutativity of the diagram (\ref{FGLproj}).  
Hence there is a morphism $e : \widetilde{F}  \to  G_{n-1}[n-1] $  in $\K$ such that  $e \ \widetilde{a} = pr_{n-1} ^G$. 
  
Let  $\widetilde{F'}$  and  $\widetilde{G'}$  be the contractions of the partial $\F$-resolutions {appearing} in the following diagram, thus $\widetilde{F'}  = F_{n-2} [n-2] \oplus \cdots \oplus F_0 \subseteq  \widetilde{F} =F_{n-1}[n-1] \oplus F_{n-2}[n-2] \oplus \cdots \oplus F_0$ and the same for $\widetilde{G'}$. 

\begin{equation}\label{FGn-1}
\xymatrix@C=32pt@R=28pt{
0 \ar[r] & \Omega^{n-1} X \ar[r]^{q_{n-1}} & F_{n-2} \ar[r]^{f_{n-2}} &  \cdots  \ar[r]^{f_{1}} & F_0 \ar[r]^{p_0} & X \ar[r] & 0 \\ 
0 \ar[r]  &  \Sigma \Omega^{n} X \ar[u]^{\pi_X^{(n, n-1)}}  \ar[r]^{q^{n-1}}  &  G_{n-2} \ar[u]^{a_{n-2}} \ar[r]^{g^{n-2}} &  \cdots \ar[r]^{g^{1}} & G_{0} \ar[u]^{a_{0}} \ar[r]^(0.4){p^{0}}  & \Sigma^{n}\Omega^nX \ar[u]^{\pi_X^{(n, 0)}}  \ar[r] & 0  \\
}
\end{equation}
We notice that $\widetilde{F'}$ and $\widetilde{G'}$ are subcomplexes  of  $\widetilde{F}$ and $\widetilde{G}$ respectively. 
Recall that $\widetilde{a}$ is a splitting epimorphism in $\K$ and it is represented by a lower {triangular} matrix whose diagonal entries are  $a_{n-1}, \cdots, a_0$  which are all split epimorphisms of graded $R$-modules. 
Thus  we have  that $\widetilde{F'} = \widetilde{a} (\widetilde{G'})$. 

There is a diagram whose rows are triangles and squares are commutative; 

\begin{equation}\label{FGproj}
\xymatrix@C=48pt@R=32pt{
\widetilde{F'} \ar[r] &  \widetilde{F}  \ar[r]^(0.4){pr_{n-1} ^F} \ar[dr]^e & F_{n-1} [n-1] \\ 
\widetilde{G'} \ar[r] \ar[u]^{\widetilde{a'}} & \widetilde{G}  \ar[u]^{\widetilde{a}} \ar[r]^(0.4){pr_{n-1} ^G} & G_{n-1} [n-1] \ar[u]_{a_{n-1}[n-1]}  \\ 
 }
\end{equation}
    
Since   $pr _{n-1}^G ( \widetilde{G'}) =0$, we have 
$e (\widetilde{F'}) = e \widetilde{a} (\widetilde{G'})  = pr _{n-1}^G ( \widetilde{G'}) = 0$. 
Thus $e$  induces a morphism  $f : F_{n-1}[n-1] \to  G_{n-1}[n-1]$  such that  $e = f \ pr_{n-1}^F$. 
Hence it holds that  $f \ pr_{n-1} ^F \widetilde{a} = pr _{n-1} ^G$. 

Now recalling in the diagram  (\ref{FGL0})  that  $q_n [n-1] = pr_{n-1}^F \psi _{n}^F$  and   
$q^n [n-1] = pr_{n-1}^G \psi _{n}^G$  by Theorem and Definition \ref{theorem/definition},     
we have equalities;  
$$
q^n [n-1] = pr _{n-1} ^G \psi _{n}^G  
= f \ pr_{n-1}^F \widetilde{a} \ \psi _n ^G 
=f \ pr_{n-1}^F \psi _n ^F
= f \ q_n [n-1], 
$$ 
{where we use the commutative diagram (\ref{FGLtilder}) for the third equality.} 

This shows the commutativity of the following diagram in which the rows are triangles: 
$$
\xymatrix@C=48pt@R=32pt{
\Omega ^n X [n-1] \ar@{=}[d] \ar[r]^{q_n[n-1]} & F_{n-1}[n-1] \ar[d]^f  \ar[r] & \Omega ^{n-1}X [n-1] \\ 
\Omega ^n X [n-1] \ar[r]^{q^n[n-1]} & G_{n-1}[n-1] \ar[r] & \Sigma \Omega ^n X [n-1] \\ 
}
$$

Recall that  $q^n[n-1]$ is a left $\F$-approximation. 
Then it is easy to see from Remark \ref{remark approximation} that  $q_n[n-1]$ is also a left $\F$-approximation. 
As a consequence of this we have   $\Omega ^{n-1} X [n-1] \cong  \Sigma \Omega^n X [n-1] $  in  $\uK$   
{(cf. Theorem \ref{cone})}. 
Thus  $\Omega ^{n-1} X $  is *torsion-free by Theorem \ref{torsion-free2}. 
Since  $n$  is any integer not less than $2$,  this completes the proof of {Lemma} \ref{main lemma to prove theorem}. 
\qed 

\medskip 

{Lemma} \ref{main lemma to prove theorem} can be strengthened as in the following form. 

\begin{theorem}\label{Omega reflexive}
Let  $R$   be  a generically Gorenstein ring. 
Assume  $H(X^*)=0$ for  $X \in \K$. 
Then  $\Omega ^{r} X$  is *reflexive for any non-negative integer $r$. 
\end{theorem}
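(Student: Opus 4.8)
The plan is to reduce \textbf{Theorem \ref{Omega reflexive}} to \textbf{Theorem \ref{main lemma to prove theorem}} and \textbf{Lemma \ref{reflexive}}, exploiting the fact that cosyzygies of $*$torsion-free complexes are $*$reflexive over a generically Gorenstein ring. The first step is to apply \textbf{Theorem \ref{main lemma to prove theorem}}: under the hypothesis $H(X^*)=0$, every syzygy $\Omega^s X$ is $*$torsion-free for all $s\geq 0$. In particular $\Omega^{r+1}X$ is $*$torsion-free, so by \textbf{Lemma \ref{reflexive}} its cosyzygy $\Sigma\Omega^{r+1}X$ is $*$reflexive.

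Next I would identify $\Sigma\Omega^{r+1}X$ with $\Omega^r X$ in the stable category $\uK$. Indeed, since $\Omega^r X$ is $*$torsion-free (again by \textbf{Theorem \ref{main lemma to prove theorem}}), \textbf{Theorem \ref{torsion-free2}} gives an isomorphism $\Omega^r X \cong \Sigma\Omega(\Omega^r X) = \Sigma\Omega^{r+1}X$ in $\uK$. Combining this with the previous paragraph, $\Omega^r X$ is isomorphic in $\uK$ to a $*$reflexive complex. Since $*$reflexivity is an invariant of the object in $\uK$ — a direct summand of a $*$reflexive complex plus an $\F$-summand is again $*$reflexive, because every complex in $\F$ is $*$reflexive and $*$reflexivity passes to direct summands (see the remark after Definition \ref{deftorfree} and the observation that all complexes in $\F$ are $*$reflexive) — it follows that $\Omega^r X$ itself is $*$reflexive. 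The case $r=0$ is covered as well, since $X$ is $*$torsion-free (as $H(X^*)=0$ forces $\rho^i_{X,R}$ injective, being a map out of the zero module), hence $\Sigma\Omega X\cong X$ in $\uK$ and the same argument applies.

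The main obstacle — and the reason this is genuinely a strengthening rather than a triviality — is making sure the passage through $\uK$ is legitimate: one needs that $*$reflexivity, a priori a property in $\K$, is actually detected in $\uK$, i.e. invariant under adding split summands. This is exactly why one must invoke that every $F\in\F$ is $*$reflexive together with closure of $*$reflexivity under direct summands; without both facts the chain of isomorphisms in $\uK$ would not transport $*$reflexivity back to $\Omega^r X$. The remaining verifications — that $\Omega^{r+1}X$ is $*$torsion-free, that \textbf{Lemma \ref{reflexive}} applies to it, and that \textbf{Theorem \ref{torsion-free2}} gives the stable isomorphism — are direct citations of results already established in the excerpt, so no further calculation is needed.
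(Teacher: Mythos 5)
Your proof is correct, but it takes a different route from the one in the paper. The paper argues directly on the defining triangle $\Omega^{r+1}X \to F_r \to \Omega^r X \to \Omega^{r+1}X[1]$: given $\alpha : H(\Omega^r X) \to R[i]$, it lifts $\alpha H(p)$ to a morphism $b : F_r \to R[i]$ (using that $F_r \in \F$ is *reflexive), kills $bq$ using the *torsion-freeness of $\Omega^{r+1}X$ from Theorem \ref{main lemma to prove theorem}, descends $b$ to a morphism $a : \Omega^r X \to R[i]$ with $H(a)=\alpha$ because $H(p)$ is surjective, and concludes via Lemma \ref{restatement}(2). You instead assemble already-proved results: $\Omega^{r+1}X$ is *torsion-free, so $\Sigma\Omega^{r+1}X$ is *reflexive by Lemma \ref{reflexive}; $\Omega^r X$ is *torsion-free, so Theorem \ref{torsion-free2} gives $\Omega^r X \cong \Sigma\Omega^{r+1}X$ in $\uK$; and *reflexivity transports across this stable isomorphism because every object of $\F$ is *reflexive, finite direct sums of *reflexive complexes are *reflexive, and direct summands of *reflexive complexes are *reflexive. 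All of these citations are legitimate, and your explicit attention to the stable-invariance bookkeeping is exactly the point that makes the shortcut valid. What each approach buys: your argument is shorter and conceptually cleaner — it is the same pattern the paper itself uses for the corollary characterizing *reflexivity by $\Sigma^2\Omega^2 X \cong X$ — whereas the paper's direct diagram chase is self-contained and avoids invoking the counit isomorphism and the invariance of *reflexivity under adding $\F$-summands; note, though, that the lifting work you avoid is not really eliminated but absorbed into the proof of Lemma \ref{reflexive}, which performs essentially the same kind of argument on the cosyzygy side.
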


\begin{proof}
Let  $r$  be a non-negative integer. 
Note from the definition that there is a triangle  
$$
\xymatrix@C=48pt@R=32pt{
\Omega ^{r+1} X \ar[r]^q & F_r  \ar[r]^{p}  & \Omega ^r X \ar[r] & \Omega ^{r+1} X[1],  \\
}
$$
where  $p$  is a right  $\F$-approximation. 
Hence the sequence  of graded $R$-modules;  
$$
\xymatrix@C=48pt@R=32pt{
0 \ar[r] & H(\Omega ^{r+1} X) \ar[r]^{H(q)} & H(F_r)  \ar[r]^{H(p)}  & H(\Omega ^r X) \ar[r] & 0  \\
}
$$
is exact. 
Given a graded $R$-module homomorphism  $\alpha : H (\Omega ^rX) \to R[i]$ for some $i \in \Z$, we find a morphism  $b : F_r \to R[i]$ in $\K$ with $H(b) = \alpha H(p)$,  since  $F_r$  is *reflexive. 
Then  we have $H(bq) = H(b) H(q) = \alpha H(pq)=0$. 
As we have shown in {Lemma} \ref{main lemma to prove theorem},  $\Omega ^{r+1}X$  is *torsion-free, we have that $bq =0$. 
{(See Lemma \ref{restatement} and also Theorem \ref{characterization of F}.)}
Then  there is an $a : \Omega ^r X \to R[i]$  that satisfies  $b= ap$.        
Since  $H(p)$  is a surjection, it thus follows that  $H(a) = \alpha$. 
Then one can apply Lemma \ref{restatement}(2) to conclude that  $\Omega ^{r}X$  is *reflexive. 
\end{proof}

\begin{proposition}\label{ext vanish}
Let   $Y \in \K$. 
Assume the following conditions are satisfied: 
\begin{enumerate}
\item 
$Y$  is *torsion-free. \medskip 
\item
$\Omega Y$  is *reflexive. 
\end{enumerate}  
Then we have  $\Ext_R^1(H(Y), R) =0$.  
\end{proposition}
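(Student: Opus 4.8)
The plan is to reduce the vanishing of $\Ext_R^1(H(Y),R)$ to the surjectivity of a dualized syzygy map, and then to establish that surjectivity by combining the realization property supplied by the *reflexivity of $\Omega Y$ with the rigidity property supplied by the *torsion-freeness of $Y$.

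First I would fix a right $\F$-approximation $p\colon F_0\to Y$ and embed it in a triangle $\Omega Y\xrightarrow{q}F_0\xrightarrow{p}Y\xrightarrow{\omega}\Omega Y[1]$, where by Theorem \ref{characterization of F} we may assume $F_0$ has zero differential, so that $H(F_0)=F_0$ is a graded projective $R$-module. Since $p$ is cohomologically surjective (Lemma \ref{right approximation}), the cohomology long exact sequence collapses to a short exact sequence of graded $R$-modules $0\to H(\Omega Y)\xrightarrow{H(q)}F_0\xrightarrow{H(p)}H(Y)\to 0$; in each internal degree $j$ this reads $0\to H^j(\Omega Y)\xrightarrow{H^j(q)}F_0^j\to H^j(Y)\to 0$ with $F_0^j$ finitely generated projective. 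Applying $\Hom_R(-,R)$ and using $\Ext_R^1(F_0^j,R)=0$, we obtain $\Ext_R^1(H^j(Y),R)\cong\operatorname{cok}\bigl((F_0^j)^{*}\xrightarrow{(H^j(q))^{*}}H^j(\Omega Y)^{*}\bigr)$. Since $\Ext_R^1(H(Y),R)\cong\prod_{j\in\Z}\Ext_R^1(H^j(Y),R)$, it suffices to prove that $(H^j(q))^{*}$ is surjective for every $j$.

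So I would take an arbitrary $\alpha\in H^j(\Omega Y)^{*}$, i.e.\ a homomorphism $\alpha\colon H^j(\Omega Y)\to R$, and exhibit it in the image of $(H^j(q))^{*}$. Because $\Omega Y$ is *reflexive, Lemma \ref{restatement}(2) gives a chain map $g\colon\Omega Y\to R[-j]$ with $H^j(g)=\alpha$. Rotating the defining triangle to $Y[-1]\xrightarrow{-\omega[-1]}\Omega Y\xrightarrow{q}F_0\xrightarrow{p}Y$ and applying $\Hom_{\K}(-,R[-j])$, the class $g$ lies in the image of $q^{*}$ precisely when the composite $o:=g\circ\omega[-1]\colon Y[-1]\to R[-j]$ is zero in $\K$. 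Now $H(\omega[-1])$ is, up to a degree shift, the connecting homomorphism of the cohomology long exact sequence of $\Omega Y\to F_0\to Y$, and this connecting map vanishes because $H(p)$ is surjective; hence $H(o)=H(g)\circ H(\omega[-1])=0$. Since $Y$, and therefore $Y[-1]$, is *torsion-free, Lemma \ref{restatement}(1) forces $o=0$ in $\K$, so $g=hq$ for some chain map $h\colon F_0\to R[-j]$. Passing to cohomology in degree $j$ yields $\alpha=H^j(g)=H^j(h)\circ H^j(q)=(H^j(q))^{*}(H^j(h))$, as required.

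The only genuinely delicate step is this obstruction computation: one must see that the obstruction $o$ to extending $g$ across the approximation $q$ dies on cohomology, and this is exactly where the right $\F$-approximation property of $p$ --- equivalently, the injectivity of $H(q)$ --- is used; the remainder is bookkeeping with the shifts $R[-j]$ and with the identification $\Ext_R^1(H(Y),R)\cong\prod_j\Ext_R^1(H^j(Y),R)$. It is worth noting where the two hypotheses enter: *reflexivity of $\Omega Y$ is what manufactures a chain map $g$ realizing a prescribed $\alpha$ (mere *torsion-freeness of $\Omega Y$ would not do), while *torsion-freeness of $Y$ is precisely what upgrades the cohomologically trivial obstruction $o$ to the honest vanishing $o=0$ in $\K$.
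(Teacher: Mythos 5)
Your proof is correct and follows essentially the same route as the paper: both reduce the claim to the surjectivity of $H(q)^*$ for the approximation triangle $\Omega Y \to F_0 \to Y$, use the *reflexivity of $\Omega Y$ to realize a given functional on $H(\Omega Y)$ by a chain map, and use the *torsion-freeness of $Y$ (together with the vanishing of the connecting map, coming from surjectivity of $H(p)$) to kill the obstruction to factoring through $q$. Your element-wise argument via Lemma \ref{restatement} and the rotated triangle is just the paper's diagram chase with the comparison maps $\rho$ applied to the dual triangle, unwound, since $H^{i}(X^*)\cong\Hom_{\K}(X,R[i])$.
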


\begin{proof}
From the definition of $\Omega Y$  there is a triangle in $\K$; 
$$
\xymatrix@C=32pt{
\Omega Y \ar[r]^q & F \ar[r]^{p} & Y \ar[r]^(0.4){\omega} &\Omega Y [1], 
}
$$
where  $p$  is a right $\F$-approximation. 
Hence there is an exact sequence of graded $R$-modules; 
$$
\xymatrix@C=32pt{
0 \ar[r] & H(\Omega Y) \ar[r]^{H(q)} & H(F) \ar[r]^{H(p)} & H(Y) \ar[r] & 0,  
}
$$
where  $H(F)$  is a projective graded $R$-module. 
Thus we have an exact sequence 
$$
\xymatrix@C=32pt{
0 \ar[r] & H(Y)^* \ar[r]^{H(p)^*}  & H(F)^* \ar[r]^{H(q)^*} & H(\Omega Y)^*  \ar[r] & \Ext _R ^1(H(Y), R) \ar[r] & 0.  
}
$$
On the other hand we also have a triangle; 
$$
\xymatrix@C=32pt{
Y ^* \ar[r]^{p^*}  & F ^* \ar[r]^{q^*}  & (\Omega Y)^* \ar[r]^{\omega^*[1]} & Y^* [1]. 
}
$$
Therefore we have the following commutative diagram of $R$-modules whose rows are exact sequences of graded $R$-modules: 
{\small $$
\xymatrix@C=30pt@R=36pt{
H((\Omega Y)^*)[-1] \ar[r]^(0.61){H(\omega^*)}  & H(Y ^*) \ar[r]^{H(p^*)} \ar[d]^{\rho _{Y R}}  & H(F ^*) \ar[r]^{H(q^*)}  \ar[d]^{=} & H((\Omega Y)^*) \ar[r]^{H(\omega^*[1])}  \ar[d]^{\rho_{\Omega Y R}} & H(Y^* [1]) \\ 
0 \ar[r] & H(Y)^* \ar[r]^{H(p)^*}  & H(F)^* \ar[r]^{H(q)^*} & H(\Omega Y)^*  \ar[r] & \Ext _R ^1(H(Y), R) \to 0.  
}
$$}
Since  $Y$ is *torsion-free, $\rho _{Y R}$ is injective and hence  so is $H(p^*)$. 
It thus follows that  $H(\omega ^*)=0$. 
Then we must have  that  $H(q^*)$  is surjective. 
Since  $\Omega Y$  is *reflexive,  $\rho _{\Omega Y R}$  is bijective. 
As a result we have that  $H(q)^*$  is surjective as well as $H(q^*)$. 
Thus it is concluded from the exactness of the second row that  $\Ext _R ^1(H(Y), R) =0$. 
\end{proof}

Combining Proposition \ref{Omega reflexive} with Proposition \ref{ext vanish}, we obtain the following proposition that is a key for the proof of Theorem \ref{last main theorem}.

\begin{proposition}\label{ext cor}
Let  $R$  be a generically Gorenstein ring, and assume  that  $H(X^*) =0$  for  $X \in \K$. 
Then we have   $$\Ext _R ^r (H(X), R) =0,$$  for  all $r>0$. 
\end{proposition}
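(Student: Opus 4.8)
The plan is to derive Proposition \ref{ext cor} from Theorem \ref{Omega reflexive} and Proposition \ref{ext vanish} by a short induction on $r$, using the elementary observation that the hypothesis $H(X^*)=0$ propagates to all syzygies in exactly the form needed.

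First I would record that, by Theorem \ref{Omega reflexive}, the assumption $H(X^*)=0$ gives that $\Omega^r X$ is *reflexive, hence in particular *torsion-free, for every $r\geq 0$. Thus for each $r\geq 0$ the pair $Y=\Omega^r X$ satisfies hypothesis (1) of Proposition \ref{ext vanish} ($Y$ is *torsion-free) and hypothesis (2) ($\Omega Y=\Omega^{r+1}X$ is *reflexive). Applying Proposition \ref{ext vanish} therefore yields $\Ext_R^1(H(\Omega^r X),R)=0$ for all $r\geq 0$.

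Next I would relate $\Ext_R^{r}(H(X),R)$ to $\Ext_R^1(H(\Omega^{r-1}X),R)$ by dimension shifting along the right $\F$-approximation triangles. Recall from the discussion in \S9 that for each $i$ there is a triangle $\Omega^{i+1}X\to F_i\to\Omega^i X\to\Omega^{i+1}X[1]$ with $F_i\in\F$ and $p_i$ cohomologically surjective, so that there is a short exact sequence of graded $R$-modules $0\to H(\Omega^{i+1}X)\to H(F_i)\to H(\Omega^i X)\to 0$ with $H(F_i)$ a projective graded $R$-module. Breaking this into its degree-$j$ components gives short exact sequences $0\to H^j(\Omega^{i+1}X)\to F_i^j\to H^j(\Omega^i X)\to 0$ of $R$-modules with $F_i^j$ finitely generated projective. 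The long exact sequence of $\Ext_R^\bullet(-,R)$ then gives isomorphisms $\Ext_R^{k+1}(H^j(\Omega^i X),R)\cong\Ext_R^{k}(H^j(\Omega^{i+1}X),R)$ for all $k\geq 1$, since $\Ext_R^{\geq 1}(F_i^j,R)=0$. Iterating, $\Ext_R^r(H^j(X),R)\cong\Ext_R^1(H^j(\Omega^{r-1}X),R)$ for every $r\geq 1$ and every $j$. Summing over $j$ (or equivalently working with the graded modules directly) gives $\Ext_R^r(H(X),R)\cong\Ext_R^1(H(\Omega^{r-1}X),R)$, and by the previous paragraph the right-hand side vanishes; hence $\Ext_R^r(H(X),R)=0$ for all $r>0$.

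The only real point requiring care is the bookkeeping of grading: one must check that applying Proposition \ref{ext vanish} with $Y=\Omega^{r-1}X$ genuinely controls $\Ext_R^1$ of the whole graded module $H(\Omega^{r-1}X)$, and that the dimension-shift isomorphisms assemble compatibly across all internal degrees $j\in\Z$; but since in each fixed degree the relevant modules are finitely generated and the $F_i^j$ are projective, this is routine. The substantive input — that syzygies stay *reflexive and that *reflexivity of $\Omega Y$ forces $\Ext_R^1(H(Y),R)=0$ — has already been established, so no further obstacle arises.
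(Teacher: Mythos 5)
Your proof is correct and follows essentially the same route as the paper: both rest on Theorem \ref{Omega reflexive} plus Proposition \ref{ext vanish} to get $\Ext_R^1(H(\Omega^{r-1}X),R)=0$, and then dimension-shift using the projectivity of the graded modules $H(F_i)$. The only cosmetic difference is that the paper shifts along the spliced long exact sequence of the partial $\F$-resolution, whereas you iterate the individual short exact sequences from the approximation triangles.
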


\begin{proof}
Recall from the argument after Theorem \ref{omega triangle} that one can take a partial $\F$-resolution of $X$     
$$
\xymatrix@C=32pt{ 
0 \ar[r] &  \Omega ^r X  \ar[r]^{q _r} &  F_{r-1} \ar[r]^{f_{r-1}} & \cdots \ar[r] & F_1 \ar[r]^{f_1} &  F_0 \ar[r]^{p_0}  & X \ar[r]  & 0, } 
$$
{where each $F_i$'s contains no null complex as a direct summand. 
Then} it induces an exact sequence of graded $R$-modules
{\small $$
\xymatrix@C=26pt{ 
0 \ar[r] & H( \Omega ^r X )  \ar[r]^(0.45){H(q _r)} &  H(F_{r-1}) \ar[r]^(0.6){H(f_{r-1})} & \cdots \ar[r] & H(F_1) \ar[r]^(0.45){H(f_1)} &  H(F_0) \ar[r]^(0.45){H(p_0)}  & H( X ) \ar[r]  & 0,  } 
$$}
{with  $H(F_i) = F_i$  for all $0 \leq i \leq r-1$. }
Since $R$  is generically Gorenstein and  $H(X^*)=0$, it follows from Theorem \ref{Omega reflexive} that  $\Omega ^r X$  is *reflexive for each $r>0$. 
Note also that  $X$ is *torsion-free {by Lemma \ref{main lemma to prove theorem}}.  
Then,  by Proposition \ref{ext vanish}, we have  $\Ext _R ^1 (H(\Omega ^{r-1}X), R) = 0$  for all $r>0$.  
Thus it follows from the long exact sequence above of graded $R$-modules that  $\Ext _R ^r (H(X), R) =0$ for $r>0$. 
\end{proof}

The following is the main theorem of this paper, which we can now prove as a result of the previous theorems and propositions. 

\begin{theorem}\label{last main theorem}
Let    $R$  be a generically Gorenstein ring, and  let $X \in \K $. 
Then,  $H ( X ) = 0$  if and only if  $H ( X^{*} ) = 0$.  
\end{theorem}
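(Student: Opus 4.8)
The plan is to exploit the duality $(-)^*$ on $\K$ to reduce to a single implication, then to combine the *reflexivity statement of Theorem \ref{Omega reflexive} with the $\Ext$-vanishing of Proposition \ref{ext cor} into a purely module-theoretic condition on the cohomologies $H^i(X)$, and finally to invoke a standard grade computation. First, since $X^{**}\cong X$ in $\K$, it suffices to establish the single implication: if $H(X^*)=0$ then $H(X)=0$; indeed, applying this to $X^*$ in place of $X$ and using $H(X^{**})=H(X)$ yields the converse. So assume $H(X^*)=0$. By Theorem \ref{Omega reflexive} taken with $r=0$, the complex $X$ is itself *reflexive, so by Definition \ref{deftorfree} the map $\rho^{i}_{X,R}\colon H^{-i}(X^*)\to H^{i}(X)^{*}$ is an isomorphism for every $i\in\Z$; since $H^{-i}(X^*)=0$ this forces $\Hom_R(H^i(X),R)=0$, that is, $\Ext^{0}_R(H(X),R)=0$. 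Combining this with Proposition \ref{ext cor}, which gives $\Ext^{r}_R(H(X),R)=0$ for all $r>0$, and recalling that each $H^i(X)$ is a finitely generated $R$-module (so that $\Ext$ commutes with the direct sum decomposition of $H(X)$), we conclude that $\Ext^{r}_R(H^i(X),R)=0$ for every $i\in\Z$ and every $r\ge 0$.

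It then remains only to see that a finitely generated module $M$ over a Noetherian ring with $\Ext^{r}_R(M,R)=0$ for all $r\ge 0$ must vanish. If $M\ne 0$, choose $\p$ minimal in $\Supp M$; then $M_\p$ is a nonzero $R_\p$-module of finite length, its annihilator $\ann_{R_\p}(M_\p)$ is $\p R_\p$-primary, and $\Ext^{r}_{R_\p}(M_\p,R_\p)\cong\bigl(\Ext^{r}_R(M,R)\bigr)_\p=0$ for all $r\ge 0$. But by the theorem of Rees identifying grade with the first non-vanishing $\Ext$ (see \cite[\S 1.2]{BH}), $\inf\{\,r : \Ext^{r}_{R_\p}(M_\p,R_\p)\ne 0\,\}$ equals $\depth R_\p$, which is finite — a contradiction. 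Hence each $M=H^i(X)$ vanishes, so $H(X)=0$, and by the duality argument above the two conditions $H(X)=0$ and $H(X^*)=0$ are equivalent.

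The bulk of the difficulty has in fact already been overcome before this point: the chain of results Theorem \ref{main lemma to prove theorem}, Theorem \ref{Omega reflexive} and Proposition \ref{ext cor}, resting on the whole stable theory of $\uK$ and on the contraction machinery of \S 8 and \S 10, is precisely what converts the generic Gorenstein hypothesis into the *reflexivity of all syzygies $\Omega^r X$ and thence into the total vanishing of $\Ext^{*}_R(H(X),R)$. Granting those, the only genuinely new steps here are the bookkeeping of the duality that lets one implication suffice, and the elementary deduction that a finitely generated module with $\Ext^{*}_R(-,R)$ identically zero must vanish; the latter, via the grade/depth computation recalled above, is the main obstacle in this final section.
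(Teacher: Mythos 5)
Your proposal is correct, and it relies on the same heavy input as the paper (the duality reduction to one implication, and the chain Theorem \ref{main lemma to prove theorem} $\to$ Theorem \ref{Omega reflexive} $\to$ Proposition \ref{ext cor}); where it genuinely diverges is the final module-theoretic step. The paper first reduces to a local ring $(R,\m,k)$ of positive dimension, tensors $X$ with the Koszul complex on a generating set of $\m$ to arrange $\m H(X)=0$, and then the vanishing $\Ext_R^r(H(X),R)=0$ for $r>0$ forces $\Ext_R^r(k,R)=0$ for $r>0$, i.e. $R$ Gorenstein of dimension zero, contradicting $\dim R>0$; note it never needs anything in degree $r=0$. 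You instead supplement Proposition \ref{ext cor} with the degree-zero vanishing $\Hom_R(H^i(X),R)=0$, extracted from the $r=0$ case of Theorem \ref{Omega reflexive} (the *reflexivity of $X$ itself, which is indeed available under $H(X^*)=0$ and which the paper's own proof does not use), and then kill each $M=H^i(X)$ directly: localize at a minimal prime $\p$ of $\Supp M$, observe $M_\p$ is nonzero of finite length with $\Ext^r_{R_\p}(M_\p,R_\p)=0$ for all $r\ge 0$, and invoke Rees's theorem that the first nonvanishing $\Ext^r_{R_\p}(M_\p,R_\p)$ sits in degree $\mathrm{grade}(\ann_{R_\p} M_\p,R_\p)=\depth R_\p<\infty$, a contradiction. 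Both arguments are sound; yours dispenses with the localization-plus-Koszul reduction and is a bit shorter, at the cost of needing the extra $\Ext^0$ input and the grade/depth theorem, while the paper's version gets by with only the $r>0$ vanishing but pays with the reduction steps. Your auxiliary points (the duality reduction, the passage from $\Ext^r_R(H(X),R)=0$ to $\Ext^r_R(H^i(X),R)=0$ for each $i$ since $\Ext$ turns direct sums in the first variable into products, and the localization of $\Ext$ for finitely generated modules) are all fine.
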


\begin{proof}
We have only to prove that  if   $H ( X^* ) = 0$  then $H ( X ) = 0$ under the assumption that  $R$  is generically Gorenstein.  
The other implication follows from this by the duality $X^{**} \cong X$. 
Thus in this proof we assume that  $H(X^*)=0$ and our aim is to show that $H(X)=0$.   

\medskip  
\noindent 
(1st step): 
We may assume that   $(R, \m, k)$  is a local ring, which is generically Gorenstein. 
Furthermore we may assume that $\dim R >0$. 

\medskip 
Note that  $H(X)=0$ if and only if $H(X_{\m})=H(X)_{\m}=0$ for all maximal {ideals} $\m$ of $R$, and that $\Hom_R(X, R)_{\m} = \Hom _{R_{\m}}(X_{\m}, R_{\m})$.  
It is also obvious that if $R$ is generically Gorenstein, then so are all $R_{\m}$. 
The first half is clear from these observations. 

If $\dim R =0$ then $R$ is a Gorenstein ring by the generic Gorenstein assumption and the theorem is trivial in this case, since $R$ is an injective $R$-module. 
Hence we may avoid this case. 

\medskip 
\noindent 
(2nd step): 
We may assume that  $\m H ( X ) =0$. 

To show this, let $x \in \m$ and consider the Koszul complex  $$K(x) = \left[ \xymatrix{0 \ar[r] &  R \ar[r]^x & R \ar[r] & 0}\right].$$ 
Set $X' = X \otimes _R K(x)$, and since there is a triangle  
$\xymatrix{X \ar[r]^x & X \ar[r] & X' \ar[r] & X[1]}$ in $\K$, we have the equivalence  $H(X)=0 \ \Leftrightarrow  \ H(X')=0$ by Nakayama Lemma. 
Taking the dual of the triangle above, we have a triangle 
$\xymatrix{(X')^* \ar[r] & X^* \ar[r]^x & X^* \ar[r] & (X')^*[1]}$, thus there is an isomorphism $(X')^* [1]\cong X^* \otimes _R K(x)$. 
Therefore we see that $H(X^*)=0 \ \Leftrightarrow  \ H({X'}^*)=0$ as well. 
 
 Now take a generating set $x_{1}, \ldots , x_{m}$ of the maximal ideal $\m$, and consider the Koszul complex 
  $X'' = X \otimes _{R} K(x_{1}, \ldots , x_{m}) = X \otimes _R K(x_1) \otimes _R \cdots \otimes _R K(x_m)$. 
 Then we have the equivalences  $H(X'') = 0 \ \Leftrightarrow  \ H( X ) = 0$, and also  $H( {X ''} ^{*} ) =0 \ \Leftrightarrow  \ H( X^{*}) =0$. 
Thus it is enough to show that  $H({X''}^*) = 0$ implies $H(X'') =0$.   
It is also clear that  for any element  $x \in \m$, the multiplication map on $X''$ is trivial in $\K$, hence  $\m H(X'') =0$. 

\medskip 
\noindent 
(3rd step): 
Now assume that $H(X) \not= 0$. 
Then there is an integer  $i$  with  $H^i (X) \not=0$. 
By the second step of this proof,  $H^i(X)$  is a non-trivial $k$-module, where  $k=R/\m$. 
On the other hand we have shown in Proposition \ref{ext cor} that  $\Ext _R ^r (H(X), R) =0$  for all $r >0$ under the condition that   $H(X^*)=0$. 
Therefore we have 
$$
\Ext_R^r (k, R) = 0  \quad \text{for all}\ \ r > 0.  
$$
This requires that  $R$  is a Gorenstein ring of dimension zero, which is not the case by the first step. 
Hence  $H(X)=0$  and the proof of the theorem is completed. 
 \end{proof}

\vspace{6pt}
\section{Applications}

Recall that a chain homomorphism $f$  between complexes is called a quasi-isomorphism if the cohomology mapping  $H(f)$  is an isomorphism of modules. 

\begin{theorem}[Corollary \ref{2}]\label{2'}
Assume that the ring  $R$ is a generically Gorenstein ring. 
Let  $f : X \to Y$  be a morphism in $\K$. 
Then, $f$  is a quasi-isomorphism if and only if  the $R$-dual  $f^* : Y^* \to X^*$  is  a quasi-isomorphism. 
\end{theorem}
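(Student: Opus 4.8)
The plan is to reduce the statement about a morphism $f$ to the already-proven Theorem \ref{last main theorem} about acyclicity of a single complex, via the mapping cone. Recall that in the triangulated category $\K$ there is a triangle $X \xrightarrow{f} Y \to Cone(f) \to X[1]$, and the associated long exact cohomology sequence shows that $f$ is a quasi-isomorphism (i.e. $H(f)$ is an isomorphism) precisely when $H(Cone(f)) = 0$. So the first step is simply to record this standard fact.

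Next I would dualize. Since $(-)^*$ is a triangle functor $\K \to \K^{\mathrm{op}}$, applying it to the triangle above gives a triangle in $\K$ of the form $Cone(f)^* \to Y^* \xrightarrow{f^*} X^* \to Cone(f)^*[1]$, so that $Cone(f)^*$ is (up to the usual shift and sign conventions) the cone of $f^*$, or more precisely $Cone(f^*) \cong Cone(f)^*[1]$ in $\K$. Hence, by the same cohomology long exact sequence argument applied to $f^*$, the dual morphism $f^*$ is a quasi-isomorphism if and only if $H(Cone(f)^*) = 0$. The key point is that the cone construction is compatible with the duality.

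Now I would invoke Theorem \ref{last main theorem}: since $R$ is generically Gorenstein and $Cone(f) \in \K$ is a complex of finitely generated projective modules, we have $H(Cone(f)) = 0$ if and only if $H(Cone(f)^*) = 0$. Chaining the three equivalences: $f$ is a quasi-isomorphism $\iff H(Cone(f)) = 0 \iff H(Cone(f)^*) = 0 \iff f^*$ is a quasi-isomorphism. This completes the argument.

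I do not anticipate a genuine obstacle here; the only care needed is bookkeeping with the shifts and signs in identifying $Cone(f^*)$ with a shift of $Cone(f)^*$, and making sure the duality functor on $\K$ really does send the defining triangle of $Cone(f)$ to a defining triangle of $Cone(f^*)$ — but this is exactly the statement, recorded in Section 2 of the paper, that $(-)^*$ is a triangle functor with $X^{**} \cong X$. Everything else is the formal translation between "quasi-isomorphism" and "acyclic cone" plus a direct appeal to Theorem \ref{last main theorem}.
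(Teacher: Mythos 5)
Your proposal is correct and is essentially identical to the paper's own proof: both embed $f$ in a triangle with third term $Cone(f)$, note that $f$ (resp. $f^*$) is a quasi-isomorphism exactly when the cone (resp. its dual, which is the cone of $f^*$ up to shift) is acyclic, and then apply Theorem \ref{last main theorem}. No difference in substance.
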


\begin{proof}
In fact, let  $\xymatrix{X \ar[r]^{f} & Y \ar[r] & Z \ar[r] & X[1]}$ be a triangle in $\K$. 
Then $f$  is a quasi-isomorphism if and only if  $H(Z)=0$. 
We have shown in Theorem \ref{last main theorem}   that  $H(Z) = 0$ if and only if  $H(Z^*)=0$.  
Since $\xymatrix{Z^*[-1] \ar[r] & Y^* \ar[r]^{f^*} & X^* \ar[r] & Z^*}$ is a triangle,  Theorem \ref{2'} follows. 
\end{proof}

Now we recall the definition of totally reflexive modules. 
A finitely generated module $M$ over a commutative Noetherian ring  $R$  is called 
a totally reflexive module or a module of G-dimension zero if 
$\Ext _R ^i(M, R) = \Ext _R^i (\transpose M, R ) =0$ for all  $i>0$. 
{See \cite[(3.8)]{AB} or \cite{JS}. }
This is equivalent to the  following three conditions; 
\begin{enumerate}
\item[(i)] $M$ is reflexive, i.e. the natural mapping  $M \to M^{**}$  is bijective. \medskip
\item[(ii)] $\Ext _R^i(M, R) =0$  for all $i >0$. \medskip
\item[(iii)] $\Ext _R^i(M^*, R) =0$  for all $i >0$. \medskip
\end{enumerate}
See \cite{AB} for the detail {on} totally reflexive modules.   
The following theorem says that  condition $\rm{(ii)}$ is sufficient for totally reflexivity if the ring $R$ is generically Gorenstein.

\begin{theorem}[Corollary \ref{3}]\label{3'}
Assume that the ring  $R$ is a generically Gorenstein ring. 
Let  $M$ be a finitely generated $R$-module. 
Then the following conditions are equivalent: 
\begin{enumerate}
\item
$M$ is a totally reflexive $R$-module. \medskip
\item
$\Ext_R^i (M, R) =0$  for all $i >0$. \medskip
\item
$M$ is an infinite syzygy, i.e. there is an exact sequence of infinite length of the form 
$\xymatrix{0 \ar[r] & M \ar[r]&P_0 \ar[r]&P_1 \ar[r]& P_2 \ar[r]& \cdots,}$
where  each $P_i$  is a finitely generated projective $R$-module. 
\end{enumerate}
\end{theorem}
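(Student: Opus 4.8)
The plan is to establish $(1)\Rightarrow(2)$, $(3)\Rightarrow(2)$, $(2)\Rightarrow(3)$ and $(2)\Rightarrow(1)$, using Theorem \ref{last main theorem} (duality preserves acyclicity) and Theorem \ref{2'} (duality preserves quasi-isomorphisms) as the essential inputs. The implication $(1)\Rightarrow(2)$ is immediate from the description of totally reflexive modules recalled just before the statement, where condition $\mathrm{(ii)}$ is exactly $(2)$.

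For $(3)\Rightarrow(2)$ I would splice an exact coresolution $0\to M\to P^0\to P^1\to\cdots$ witnessing that $M$ is an infinite syzygy together with a projective resolution $\cdots\to P_1\to P_0\to M\to 0$, glued along the composite $P_0\twoheadrightarrow M\hookrightarrow P^0$. This produces an acyclic complex $X\in\K$. By Theorem \ref{last main theorem}, $X^{*}$ is acyclic; since near the terms $P_1^{*},P_2^{*},\dots$ the complex $X^{*}$ coincides with the complex $0\to M^{*}\to P_0^{*}\to P_1^{*}\to\cdots$ obtained by dualizing the projective resolution, acyclicity forces $\Ext_R^i(M,R)=0$ for all $i>0$.

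For $(2)\Rightarrow(3)$ and $(2)\Rightarrow(1)$, assume $\Ext_R^i(M,R)=0$ for $i>0$ and set $X=[\cdots\to P_1\to P_0\to 0]\in\K$ a truncated projective resolution of $M$. Then $X^{*}=[0\to P_0^{*}\to P_1^{*}\to\cdots]$ has $H^0(X^{*})=M^{*}$ and $H^i(X^{*})=\Ext_R^i(M,R)=0$ for $i>0$, so $0\to M^{*}\to P_0^{*}\to P_1^{*}\to\cdots$ is exact, i.e. $M^{*}$ is an infinite syzygy. Applying the already-proved implication $(3)\Rightarrow(2)$ to the module $M^{*}$ gives $\Ext_R^i(M^{*},R)=0$ for $i>0$, and hence, for a projective resolution $\cdots\to Q_1\to Q_0\to M^{*}\to 0$, the sequence $0\to M^{**}\to Q_0^{*}\to Q_1^{*}\to\cdots$ is likewise exact. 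To identify $M$ with $M^{**}$, note that the augmentation followed by the inclusion of $M^{*}$ as $H^0$ gives a quasi-isomorphism $g\colon[\cdots\to Q_1\to Q_0\to 0]\to X^{*}$ in $\K$; by Theorem \ref{2'} the dual $g^{*}\colon X^{**}\to[0\to Q_0^{*}\to Q_1^{*}\to\cdots]$ is again a quasi-isomorphism, and since $X^{**}\cong X$, passing to $H^0$ shows the natural map $M\to M^{**}$ is an isomorphism. Combining $0\to M^{**}\to Q_0^{*}\to\cdots$ exact with $M\cong M^{**}$ exhibits $M$ as an infinite syzygy, giving $(3)$; and $M$ reflexive together with $\Ext_R^{>0}(M,R)=0$ (the hypothesis) and $\Ext_R^{>0}(M^{*},R)=0$ is precisely the definition of total reflexivity, giving $(1)$.

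The main obstacle is the reflexivity step inside $(2)\Rightarrow(1)$: the Auslander--Bridger sequence of Example \ref{transpose} relates $M\to M^{**}$ only to $\Ext^1$ and $\Ext^2$ of the transpose, and in this situation those expressions turn out to be self-referential, so one genuinely needs the principle that duality preserves quasi-isomorphisms to break the circularity. Some care is also required to check that all the spliced complexes indeed lie in $\K$, that the cohomology computations at the splicing points are correct, and that the isomorphism $M\cong M^{**}$ extracted from $g^{*}$ is the canonical evaluation map and not merely an abstract isomorphism.
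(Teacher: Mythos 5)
Your proposal is correct and runs on the same engine as the paper's proof: everything is reduced to Theorem \ref{last main theorem} by playing a resolution of $M^*$ against the dual of a resolution of $M$, and your $(3)\Rightarrow(2)$ is literally the paper's argument. The difference is in how you get $(2)\Rightarrow(1)$: the paper splices a projective resolution of $M^*$ directly onto the dualized projective resolution of $M$ to form one acyclic complex, applies Theorem \ref{last main theorem} once, and reads off both $M\cong M^{**}$ and $\Ext_R^{i}(M^*,R)=0$ ($i>0$) from exactness of the dual of that single complex; you instead first note that $(2)$ exhibits $M^*$ as an infinite syzygy, reuse $(3)\Rightarrow(2)$ for $M^*$, and obtain reflexivity by applying Corollary \ref{2} to the comparison quasi-isomorphism $g\colon Q\to X^*$. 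These are equivalent in substance: the mapping cone of your $g$ is precisely the paper's spliced complex, and Corollary \ref{2} is itself deduced from Theorem \ref{last main theorem} by exactly that cone construction, so your route costs one extra layer of packaging but yields $(2)\Rightarrow(3)$ directly as a byproduct (the paper gets $(3)$ from the well-known $(1)\Rightarrow(3)$). The point you flag, that the map induced on $H^0$ by $X\cong X^{**}\xrightarrow{\ g^*\ }Q^*$ is the canonical evaluation map and not just some isomorphism, is indeed needed for total reflexivity, but it is a routine naturality check: in degree $0$ the composite is $P_0\to P_0^{**}\to Q_0^*$, where the second map is dual to $Q_0\twoheadrightarrow M^*\hookrightarrow P_0^*$, and naturality of biduality identifies the induced map $M\to \Ker(Q_0^*\to Q_1^*)=M^{**}$ with the evaluation map; the paper's own proof performs the same silent identification when it concludes $M\cong M^{**}$ from exactness of the dual spliced complex, so your argument is complete once this one-line verification is written out.
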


\begin{proof}
The implications  $(1) \Rightarrow (2)$  and  $(1) \Rightarrow (3)$  are well-known and easily proved. 
{For example see \cite[Proposition (3.8)]{AB}.}

\vspace{6pt}
$(2) \Rightarrow (1)$: 
Take projective resolutions for $M$ and $M^*$ respectively as 
$$
\xymatrix{
\cdots \ar[r]^{f_2} & F_1 \ar[r]^{f_1} & F_0 \ar[r]^{f_0} & M \ar[r] & 0  \ \text{and} \ 
\cdots \ar[r]^(0.6){g_2} & G_1 \ar[r]^{g_1} & G_0 \ar[r]^{g_0} & M^* \ar[r] & 0.  
 \\
}
$$
Then we consider the complex  
$$
X = \left[
\xymatrix@C=36pt{
\cdots \ar[r]^{g_2} & G_1 \ar[r]^{g_1} & G_0 \ar[r]^{f_0^*g_0} & F_0^* \ar[r]^{f_1^*} & F_1^* \ar[r]^{f_2^*} & \cdots 
 \\
}\right], 
$$
which belongs to $\K$,   and acyclic by the condition (2). 
Hence by Theorem \ref{last main theorem} the dual $X^*$  is acyclic as well. 
Since 
$$
X^* = \left[
\xymatrix@C=36pt{
\cdots \ar[r]^{f_2} & F_1 \ar[r]^{f_1} & F_0 \ar[r] & G_0^* \ar[r]^{g_1^*} & G_1^* \ar[r]^{g_2^*} & \cdots 
 \\
}\right], 
$$
is an exact sequence, it follows that  $M \cong M^{**}$  and  $\Ext _R ^i (M^*, R) =0$ for $i >0$. 

\vspace{6pt}
$(3) \Rightarrow (2)$: 
As in (3) we assume that  there is an exact sequence 
$$
\xymatrix{
0 \ar[r] & M \ar[r] & P_0  \ar[r] & P_1 \ar[r] & P_2 \ar[r] & \cdots.    \\ 
}
$$
Then combining this with the projective resolution 
$\xymatrix{ \cdots \ar[r] & F_1 \ar[r]& F_0 \ar[r]& M \ar[r]& 0}$  of $M$, we have an acyclic complex in $\K$ 
$$
Y =\left[ 
\xymatrix{
\cdots \ar[r] & F_1 \ar[r] & F_0 \ar[r] & P_0 \ar[r] & P_1 \ar[r] & P_2 \ar[r] & \cdots  
}\right].
$$
It follows from Theorem \ref{last main theorem} that  $Y^*$ is acyclic again. 
In particular,  the sequence  $\xymatrix{F_0^* \ar[r]& F_1^* \ar[r]& F_2^* \ar[r]&\cdots}$  is exact, and hence 
$\Ext _R^i(M, R) =0$  for $i>0$.  
\end{proof}

Recall that a finitely generated module $M$ has the G-dimension at most $n$, denoted by $\mathrm{G\text{-}dim} _R M \leq n$,  if there is an exact sequence 
of the form  
$$\xymatrix{0 \ar[r]& G_n \ar[r]& G_{n-1} \ar[r]& \cdots \ar[r]&  G_1 \ar[r]& G_0 \ar[r]&  M \ar[r]& 0,}$$
 where all $G_i \ (0 \leq i \leq n)$ are totally reflexive.

\begin{theorem}
Under the assumption that $R$ is a generically Gorenstein ring, we have the equality 
$$
\mathrm{G\text{-}dim} _R M = \sup \{ i \in \Z \ | \ \Ext _R^i (M, R) \not= 0 \ \}, 
$$ 
for a finitely generated $R$-module $M$. 
\end{theorem}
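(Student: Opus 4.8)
The plan is to prove the two inequalities
$$
\mathrm{G\text{-}dim}_R M \leq \sup\{i \mid \Ext_R^i(M,R) \neq 0\}
\quad\text{and}\quad
\mathrm{G\text{-}dim}_R M \geq \sup\{i \mid \Ext_R^i(M,R) \neq 0\}
$$
separately, writing $n := \sup\{i \in \Z \mid \Ext_R^i(M,R) \neq 0\}$ (allowing $n = \infty$ or $n = -\infty$, i.e. the supremum over the empty set, which we interpret as $-\infty$; but note that $\Ext^0_R(M,R) = M^*$ could be zero, so more precisely we should restrict to $i > 0$ in one direction and handle small cases at the end). The inequality $\geq$ is the easy, ring-independent one: if $\mathrm{G\text{-}dim}_R M \leq m$, then $\Ext_R^i(M,R) = 0$ for all $i > m$ by the standard theory of G-dimension (a finite resolution by totally reflexive modules, each of which has no higher self-Ext, gives vanishing of $\Ext^i_R(M,R)$ for $i$ above the length), so $n \leq m$; taking $m = \mathrm{G\text{-}dim}_R M$ gives $n \leq \mathrm{G\text{-}dim}_R M$. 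This uses only \cite{AB} and requires no generic Gorenstein hypothesis.

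For the reverse inequality $\mathrm{G\text{-}dim}_R M \leq n$, I would first dispose of the case $n = \infty$ (nothing to prove) and assume $n < \infty$; I may also assume $n \geq 0$, since if $\Ext_R^i(M,R) = 0$ for all $i > 0$ then Theorem \ref{3'} (Corollary \ref{3}) already gives that $M$ is totally reflexive, i.e. $\mathrm{G\text{-}dim}_R M = 0 \leq n$ (or $M = 0$). Now take a projective resolution $\cdots \to F_1 \to F_0 \to M \to 0$ and let $N = \Omega^n M$ be the $n$-th syzygy, so there is an exact sequence $0 \to N \to F_{n-1} \to \cdots \to F_0 \to M \to 0$ with each $F_i$ finitely generated projective. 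The standard dimension-shifting identity $\Ext_R^{i}(N,R) \cong \Ext_R^{i+n}(M,R)$ for all $i > 0$ shows that $\Ext_R^i(N,R) = 0$ for all $i > 0$, whence by Theorem \ref{3'} again $N$ is totally reflexive (or zero). Splicing the totally reflexive $N$ into the finite sequence above exhibits $M$ as having a totally reflexive resolution of length at most $n$, so $\mathrm{G\text{-}dim}_R M \leq n$.

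Combining the two inequalities gives the equality. The main subtlety to watch is the boundary behaviour of the supremum: the set $\{i \in \Z \mid \Ext_R^i(M,R) \neq 0\}$ includes $i = 0$, and $\Ext_R^0(M,R) = M^*$ may vanish even for nonzero $M$ (e.g. torsion modules when $R$ is a domain of positive dimension), so one should be a little careful whether the supremum is taken over all $i \in \Z$ or over $i > 0$; in the statement it is over all $i \in \Z$, and when $M \neq 0$ is torsion over a domain the right-hand side can be strictly negative while $\mathrm{G\text{-}dim}_R M$ is a nonnegative integer or $\infty$—so implicitly one reads the supremum with the convention that handles these degenerate cases (or the intended reading is that $M$ has finite G-dimension iff the right side is finite, and then the two agree). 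I expect the only real work is invoking Theorem \ref{3'}, which is where the generic Gorenstein hypothesis genuinely enters; everything else is the classical dimension-shifting argument from \cite{AB}, so the ``hard part'' has effectively already been done in proving the main theorem and Corollary \ref{3}.
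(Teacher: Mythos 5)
Your proposal is correct and follows essentially the same route as the paper: the inequality $n \leq \mathrm{G\text{-}dim}_R M$ is the standard vanishing consequence of finite G-dimension, and the reverse inequality is obtained by passing to the $n$-th syzygy, using dimension shifting to kill all positive Ext against $R$, and invoking Corollary \ref{3} (where the generic Gorenstein hypothesis enters) to conclude the syzygy is totally reflexive. Your extra remarks on the degenerate behaviour of the supremum are harmless (and in fact, under the generic Gorenstein hypothesis a nonzero module with all higher Ext vanishing is reflexive, so its dual cannot vanish), but the core argument coincides with the paper's proof.
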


\begin{proof}
Setting   $n = \sup \{ i \in \Z \ | \ \Ext _R^i (M, R) \not= 0 \ \}$, 
we have only to consider the case $n < +\infty$. 
In this case it is easy to see that  $n \leq \mathrm{G\text{-}dim} _R M$. 
(This is just because $\Ext _R ^i (M, R) =0$  for any $i > \mathrm{G\text{-}dim} _R M$.)     
Now we take part of projective resolution of $M$ and get the $n$th syzygy module $\Omega_R^n(M)$, that is,  
$$
\xymatrix{
0 \ar[r] & \Omega _R ^n(M) \ar[r] & P_{n-1} \ar[r] & P_{n-2} \ar[r] & \cdots \ar[r] & P_1 \ar[r] & P_0 \ar[r] & M \ar[r] & 0   
}
$$
is an exact sequence of $R$-modules with  each  $P_i$ being projective. 
Then, since it holds that  $\Ext _R ^i (\Omega _R^n(M), R) = 0$  for $i > 0$, $\Omega _R^n(M)$  is totally reflexive by Theorem \ref{3'}.
Therefore  we have  $\mathrm{G\text{-}dim} _R M \leq n$. 
\end{proof}

Jorgensen and \c{S}ega \cite{JS} gave {an example of a module over a non-Gorenstein Artinian ring for which the implication $(2) \Rightarrow (1)$ in Theoreom \ref{3'} does not hold}, hence the generic Gorensteinness assumption in the theorem is indispensable.

The following is a commutative version of Tachikawa conjecture, which we obtain as a corollary to Theorem \ref{last main theorem}. 
It should be noted that it has been proved by Avramov, Buchweitz and \c{S}ega \cite{ABS}. 
 
\begin{theorem}[Corollary \ref{4}]\label{tachikawa}
Let  $R$  be a Cohen-Macaulay ring with canonical module $\omega$. 
Furthermore assume that  $R$ is a generically Gorenstein ring. 
If  $\Ext _R ^i (\omega ,  R) =0$  for all $i >0$, then $R$ is Gorenstein. 
\end{theorem}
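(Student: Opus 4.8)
The plan is to deduce this commutative Tachikawa-type statement from Theorem \ref{3'} (equivalently Corollary \ref{3}) applied to the canonical module $\omega$, together with standard facts about canonical modules of Cohen--Macaulay rings. First I would reduce to the local case: Gorensteinness is a local property, $\Ext_R^i(\omega,R)=0$ localizes, and $\omega_\p$ is the canonical module of $R_\p$; moreover if $R$ is generically Gorenstein then so is each $R_\p$. So henceforth assume $(R,\m)$ is a Cohen--Macaulay local ring with canonical module $\omega$, generically Gorenstein, and $\Ext_R^i(\omega,R)=0$ for all $i>0$.

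The key step is to invoke Theorem \ref{3'}: since $\Ext_R^i(\omega,R)=0$ for all $i>0$ and $R$ is generically Gorenstein, the module $\omega$ is totally reflexive, i.e.\ has G-dimension zero. In particular $\omega$ is reflexive, $\Ext_R^i(\omega^*,R)=0$ for all $i>0$, and (by the Auslander--Bridger theory recalled before Theorem \ref{3'}) $\Ext_R^i(\transpose\omega,R)=0$ for $i>0$ as well. Now I would use the standard structure of $\omega$: for a Cohen--Macaulay local ring, $\omega$ is a maximal Cohen--Macaulay module and $\Hom_R(\omega,\omega)\cong R$. The total reflexivity of $\omega$ forces $\omega^*=\Hom_R(\omega,R)$ to also be maximal Cohen--Macaulay (being the dual of an MCM over a CM ring, or directly since $\omega$ is an infinite syzygy by Corollary \ref{3}(3)).

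The concluding step is to show $R$ is Gorenstein, i.e.\ $\omega\cong R$. Here I would argue as follows: since $R$ is generically Gorenstein, $\omega_\p\cong R_\p$ for every minimal prime $\p$, so $\omega$ has rank $1$; hence $\omega^*=\Hom_R(\omega,R)$ is a rank-one reflexive module, and the natural pairing gives an embedding $\omega\otimes_R\omega^*\to R$ whose image is an ideal $\mathfrak a$ with $R/\mathfrak a$ of positive grade unless $\mathfrak a=R$. Using $\Hom_R(\omega,\omega)\cong R$ and the vanishing $\Ext_R^1(\omega,R)=0$, together with the depth lemma, one checks the canonical map $\omega^*\otimes_R\omega\to\Hom_R(\omega,\omega)\cong R$ is an isomorphism; an MCM module $\omega$ with a free direct summand over a local ring must itself be free once it has rank one, whence $\omega\cong R$ and $R$ is Gorenstein.

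I expect the main obstacle to be the final passage from ``$\omega$ is totally reflexive'' to ``$\omega\cong R$'': total reflexivity alone does not force freeness in general, so one genuinely needs to exploit the special properties of the canonical module---namely $\Hom_R(\omega,\omega)\cong R$, that $\omega$ has rank one under generic Gorensteinness, and a depth/reflexivity argument (in the spirit of Lemma \ref{lemma ref}) to identify $\omega^*\otimes_R\omega$ with $R$. This is the step where the Cohen--Macaulay hypothesis and the existence of the canonical module are really used, beyond what Theorem \ref{last main theorem} and Theorem \ref{3'} provide; the rest is formal reduction and citation of standard results on canonical modules (e.g.\ \cite{BH}).
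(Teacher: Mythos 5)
Your first two steps (localizing and applying Theorem \ref{3'} to conclude that $\omega$ is totally reflexive) agree with the paper, but your concluding step has a genuine gap: the claim that the evaluation map $\omega^*\otimes_R\omega\to\Hom_R(\omega,\omega)\cong R$ is an isomorphism (even just surjective) is not proved, and it is essentially equivalent to the statement you are trying to establish. Surjectivity means the identity of $\omega$ lies in the image of the evaluation map, i.e.\ $\mathrm{id}_\omega$ factors through a finitely generated free module, which already forces $\omega$ to be a direct summand of a free module, hence free of rank one, hence $R$ Gorenstein; equivalently, it says the trace ideal of $\omega$ equals $R$, and the vanishing locus of that trace ideal is precisely the non-Gorenstein locus. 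So as written the crux of your argument is circular. The tools you invoke do not supply it: the cokernel of the evaluation map is $\Tor_1^R(\transpose\,\omega,\omega)$, whose vanishing does not follow from $\Ext_R^1(\omega,R)=0$ or from total reflexivity of $\omega$; and a depth-lemma comparison in the style of Lemma \ref{lemma ref}(2) would require the trace ideal to become trivial at all primes $\p$ with $\depth R_\p\leqq 1$, i.e.\ Gorensteinness in depth one, whereas generic Gorensteinness only controls the associated (here minimal) primes. Indeed for a typical non-Gorenstein generically Gorenstein CM ring the trace of $\omega$ is a proper ideal of positive grade, so "the quotient has positive grade unless $\mathfrak a=R$" cannot close the argument.

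What your sketch never uses, and what the paper's proof turns on, is the duality property of the canonical module: for a maximal Cohen--Macaulay module $M$ one has $\Ext_R^{i}(M,\omega)=0$ for $i>0$ (local duality). The paper takes the coresolution $0\to\omega\to P_0\to P_1\to P_2\to\cdots$ furnished by total reflexivity, observes that $M=\Ker(P_1\to P_2)$ is maximal Cohen--Macaulay, so $\Ext_R^1(M,\omega)=0$, and concludes that $0\to\omega\to P_0\to M\to 0$ splits; thus $\omega$ is a direct summand of $P_0$, hence projective, and $R$ is Gorenstein. If you replace your evaluation-map step by this splitting argument (or cite the known theorem that a canonical module of finite G-dimension forces Gorensteinness), the rest of your reduction goes through.
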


\begin{proof}
Assume  $\Ext _R ^i (\omega ,  R) =0$  for all $i >0$. 
It is enough to show that  $\omega$ is a projective $R$-module. 
We see from Theorem \ref{3'} that  $\omega$  is a totally reflexive $R$-module, and hence there is an exact sequence 
of the form $\xymatrix{0 \ar[r]& \omega  \ar[r]& P_0 \ar[r]& P_1 \ar[r]& P_2 \ar[r]& \cdots}$, where  each $P_i$  is a finitely generated projective $R$-module. 
Setting  $M = \Ker (P_1 \to P_2)$, we note that  $M$  is an maximal Cohen-Macaulay module, since there is an exact sequence  $\xymatrix{0 \ar[r]& M \ar[r]& P_1 \ar[r]& P_2 \ar[r]& \cdots}$. 

Therefore we have $\Ext _R^1 (M, \omega)=0$ by the local duality theorem. 
It however means that a short exact sequence $\xymatrix{0 \ar[r]& \omega \ar[r]& P_0 \ar[r]& M \ar[r]& 0}$ splits, 
and $\omega$ is a direct summand of the projective module $P_0$, and it is projective. 
\end{proof}

{
\begin{remark}
In Theorem \ref{tachikawa} the canonical module has finite Gorenstein dimension, and existence of a finitely generated module of finite injective dimension and finite Gorenstein dimension implies that the ring is Gorenstein. 
This was announced in \cite[4.1]{F}; it was proved in \cite[3.2]{Ho} and can also be deduced from earlier results \cite[3.3.5]{C2} and \cite[8.3]{C3}. 
\end{remark}
}

\begin{theorem}[Corollary \ref{5}]
Assume that the ring  $R$ is a generically Gorenstein ring. 
Let  $X$  be a complex of finitely generated projective modules. 
{
\begin{enumerate}
\item 
If $H(X)$ is bounded above, i.e. $X  \in D ^- (R)$, then there is an isomorphism  $X^* \cong \RHom _R(X, R)$ in the derived category $D(R)$
\item 
If  $H(X)$ and $H(X^*)$ are bounded above, i.e. $X, X^* \in D ^- (R)$, 
then we have the isomorphism in the derived category: 
$$
X \cong \RHom _R (\RHom _R (X, R), R). 
$$
\end{enumerate}}
\end{theorem}

\begin{proof}
(1) There is an integer $a$ such that $H^i(X) = 0$ for all $i \geqq a$. 
Let  
$$\xymatrix{\cdots \ar[r]^{d_W^{a-3}} & W^{a-2} \ar[r]^{d_W^{a-2}} & W^{a-1} \ar[r]^{\alpha} & Z^a(X) \ar[r] & 0}$$
be an $R$-projective resolution of the $a$th cocycle $Z^a(X) = \ker (\xymatrix@C=16pt{X^a \ar[r]^{d_X^a} & X^{a+1}})$, where each $W^i$ are finitely generated projective. 
Then we consider the complex 
$$
Y = [\xymatrix{\cdots \ar[r]^{d_W^{a-3}}& W^{a-2} \ar[r]^{d_W^{a-2}}& W^{a-1} \ar[r]^{\beta} & X^a \ar[r]^{d_X^a} & X^{a+1} \ar[r]^{d_X^{a+1}} & \cdots} ], 
 $$
where $\beta$ is the composition of $\alpha$ with the natural injection $Z^a(X) \hookrightarrow X^a$. 
Then we see that  $Y \in \K$  and $H(Y)=0$ by the construction. 

Then the identity mappings on $X^i$ for $i \geqq a$ can be extended to a chain map $\varphi : X \to Y$ as follows: 
$$
\xymatrix{\cdots \ar[r] & X^{a-3} \ar[r]^{d_X^{a-3}} \ar[d]^{\varphi ^{a-3}} & X^{a-2} \ar[r]^{d_X^{a-2}} \ar[d]^{\varphi ^{a-2}}& X^{a-1} \ar[r]^{d_X^{a-1}} \ar[d]^{\varphi ^{a-1}} & X^a \ar[r]^{d_X^a} \ar[d]^{=}& X^{a+1} \ar[r]^{d_X^{a+1}} \ar[d]^{=} & \cdots \\
\cdots \ar[r] & W^{a-3} \ar[r]^{d_W^{a-3}}& W^{a-2} \ar[r]^{d_W^{a-2}}& W^{a-1} \ar[r]^{\beta} & X^a \ar[r]^{d_X^a} & X^{a+1} \ar[r]^{d_X^{a+1}} & \cdots}
$$
Now set $U = Cone (\varphi)$.   
Then $U$ is isomorphic in  $\K$ to the complex 
$$
\xymatrix{\cdots \ar[r] & X^{a-2} \oplus W^{a-3} \ar[r] & X^{a-1} \oplus W^{a-2} \ar[r] & W^{a-1} \ar[r] & 0}.
$$
Since there is a triangle $\xymatrix{X \ar[r] & Y \ar[r] & U \ar[r] & X[1]}$ in $\K$ and since $H(Y)=0$, there is an isomorphism  $U \cong X[1]$ or $U[-1] \cong X$ in $D(R)$. 
Since $U[-1]$ is a complex bounded above that consists of projective modules, 
we have an isomorphism in $D(R)$; 
$$\RHom _R (X, R) \cong \RHom_R(U[-1], R) \cong (U[-1])^*.$$ 

On the other hand, taking the $R$-dual,  we also have a triangle in $\K$; 
$$\xymatrix{U^* \ar[r] & Y^* \ar[r] & X^* \ar[r] & (U[-1])^*}. $$
Since $H(Y^*)=0$ by Theorem \ref{last main theorem}, we have $X^* \cong (U[-1])^*$ in $D(R)$. 
Combining this with the above we consequently have $X^* \cong (U[-1])^* \cong \RHom _R (X, R)$ as desired. 

(2) is clear from (1), since the isomorphism $X^{**} \cong X$ holds  in $D(R)$ as well as  in $\K$. 
\end{proof}

As a miscellaneous result we obtain the following. 

\begin{theorem}[Corollary \ref{6}]
Assume that the ring  $R$ is a generically Gorenstein ring. 
Let  $X$  be a complex of finitely generated projective modules. 

If all the cohomology modules $H^i(X) \ (i \in \Z)$ have dimension at most $\ell$  as $R$-modules,  
then so {do} the modules  $H^i (X^*) \ (i \in \Z)$.  
\end{theorem}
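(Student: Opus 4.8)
The plan is to reduce the dimension bound to the length zero case — i.e. to the already proven Theorem \ref{last main theorem}, which handles $\ell = -\infty$ (acyclicity) — by localizing at primes of small dimension. Recall that for a finitely generated $R$-module $N$ we have $\Kdim N \le \ell$ if and only if $N_\p = 0$ for every prime $\p$ with $\Kdim R/\p > \ell$, equivalently $\dim N_\p \le \ell - \Kdim R/\p$ never forces a contradiction; the clean statement is: $\dim_R H^i(X^*) \le \ell$ for all $i$ iff $H^i(X^*)_\p = 0$ for all $i$ and all $\p$ with $\Kdim R/\p \ge \ell+1$. So fix such a prime $\p$; I want to show $H(X^*)_\p = 0$, i.e. $H((X^*)_\p) = 0$. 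Since localization commutes with $\Hom_R(-,R)$ for complexes of finitely generated projectives, $(X^*)_\p = (X_\p)^{*}$ as a complex over $R_\p$ (the dual now taken over $R_\p$). By Theorem \ref{last main theorem} applied over the ring $R_\p$ — which is again generically Gorenstein, as localizations of generically Gorenstein rings are generically Gorenstein (noted in the first step of the proof of Theorem \ref{last main theorem}) — it suffices to check that $H(X_\p) = 0$, i.e. $H^i(X)_\p = 0$ for all $i$.

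That last vanishing is where the dimension hypothesis on $H(X)$ enters: by assumption each $H^i(X)$ has $\dim_R H^i(X) \le \ell$, so $H^i(X)$ is supported only on primes $\q$ with $\Kdim R/\q \le \ell$. Since our chosen $\p$ satisfies $\Kdim R/\p \ge \ell+1$, in particular $\p$ does not lie in the support of any $H^i(X)$ — indeed if $\p \in \Supp H^i(X)$ then $R/\p$ is a quotient of a submodule of a power of $H^i(X)$ localized appropriately, forcing $\Kdim R/\p \le \dim H^i(X) \le \ell$, a contradiction. Hence $H^i(X)_\p = 0$ for all $i$, so $H(X_\p) = 0$, so by Theorem \ref{last main theorem} over $R_\p$ we get $H((X_\p)^*) = 0$, i.e. $H^i(X^*)_\p = 0$ for all $i$. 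As $\p$ was an arbitrary prime with $\Kdim R/\p \ge \ell + 1$, this gives $\dim_R H^i(X^*) \le \ell$ for every $i$, as desired.

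For the final sentence of the corollary (finite length iff finite length for $X^*$): this is just the case $\ell = 0$ of the statement, applied in both directions using $X^{**} \cong X$, together with the standard fact that a finitely generated module has finite length iff it has dimension $\le 0$ (for this one may also want to invoke Noetherianness so that "dimension zero or the zero module" coincides with "finite length"). If one wants to be careful about the degenerate possibility that some $H^i(X^*)$ vanishes while others do not, note that the argument above shows the support condition uniformly in $i$, so no separate bookkeeping is needed.

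I expect the main obstacle to be purely bookkeeping rather than conceptual: one must make sure that the equality $(X^*)_\p = (X_\p)^{*}$ over $R_\p$ is set up correctly with the grading/shift conventions of Notation \ref{c} and the definition of the dual complex, and that the reduction "$\dim_R N \le \ell$ $\iff$ $N_\p = 0$ for all $\p$ with $\Kdim R/\p \ge \ell+1$" is stated in the form actually used — this is elementary commutative algebra but must be quoted precisely. Everything substantive has already been done in Theorem \ref{last main theorem}; the present corollary is a localization wrapper around it.
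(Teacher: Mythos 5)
Your argument is correct and is essentially the paper's own proof: the paper likewise observes that the dimension hypothesis means $X_\p$ is acyclic for every prime $\p$ with $\Kdim R/\p > \ell$, that $R_\p$ is again generically Gorenstein, and then applies Theorem \ref{last main theorem} over $R_\p$ to conclude $(X^*)_\p = \Hom_{R_\p}(X_\p, R_\p)$ is acyclic, giving $\dim_R H^i(X^*) \le \ell$. Your extra care about the support criterion and the compatibility of localization with the dual is fine but adds nothing beyond what the paper's one-paragraph proof already uses.
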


\begin{proof}
The assumption exactly means that  $X_{\p}$  is acyclic for a prime ideal  $\p$  with $\dim R/\p > \ell$. 
Note that each localization  $R_{\p}$  is generically Gorenstein. 
Therefore  $(X^*)_{\p} = \Hom _{R_{\p}} (X_{\p},  R_{\p})$  is acyclic again for such $\p$  with  $\dim R/\p > \ell$,  by Theorem \ref{last main theorem}. 
\end{proof}

Now we introduce the dimension of {total cohomology module of} a complex $X$  as 
$$
\dim _R {H(X)} = \sup \{ \dim H^i(X) \ | \ i \in \Z \},  
$$  
which is the dimension of the big support of $H(X)$. 
(Note that we use the convention that $\dim _R M = -1 $ for  the trivial $R$-module $M=\{ 0 \}$.)    
Then the theorem above includes the following generalization of Theorem \ref{last main theorem}. 

\begin{corollary}
Let $R$ be a generically Gorenstein ring. 
Then, for a complex  $X \in \K$, we have the equality  
 $
 \dim _R {H(X)} = \dim _R {H(X^*)}.
 $
\end{corollary}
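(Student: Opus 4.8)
The plan is to read this off immediately from the statement labelled Corollary~\ref{6} above, together with the self-duality $X^{**}\cong X$ in $\K$ and, at the very bottom of the dimension hierarchy, from Theorem~\ref{last main theorem}.

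First I would set $\ell=\dim_R X$. By the definition of $\dim_R X$ every cohomology module $H^i(X)$ has dimension at most $\ell$; here we use the convention $\dim_R\{0\}=-1$, so this statement is also meaningful when $\ell=-1$, i.e. when $X$ is acyclic. Applying Corollary~\ref{6} to the complex $X$, we conclude that each $H^i(X^*)$ likewise has dimension at most $\ell$, and therefore $\dim_R X^*\le\ell=\dim_R X$. (When $\ell=-1$ this says $X^*$ is acyclic, which is also a direct instance of Theorem~\ref{last main theorem}.)

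Next I would run exactly the same argument with $X^*$ in place of $X$. Since $X^*\in\K$ and $X^{**}\cong X$, Corollary~\ref{6} applied to $X^*$ gives that every $H^i(X^{**})\cong H^i(X)$ has dimension at most $\dim_R X^*$, i.e. $\dim_R X=\dim_R X^{**}\le\dim_R X^*$. Combining the two inequalities yields the desired equality $\dim_R X=\dim_R X^*$.

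I do not expect any real obstacle: all of the substance is already packaged in Corollary~\ref{6}, which itself rests on localizing $X$ at primes $\p$ with $\dim R/\p>\ell$ and invoking Theorem~\ref{last main theorem} over the generically Gorenstein local rings $R_\p$. The only thing that needs a moment's care is the bookkeeping with the dimension convention, so that the equality continues to hold (as $-1=-1$) in the degenerate case where $X$, and hence $X^*$, is acyclic.
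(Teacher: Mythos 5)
Your proposal is correct and matches the paper's intended argument: the paper derives this corollary directly from the preceding theorem (Corollary \ref{6}) applied to both $X$ and $X^*$, using $X^{**}\cong X$, exactly as you do. Your extra remark on the convention $\dim_R\{0\}=-1$ in the acyclic case (where the statement reduces to Theorem \ref{last main theorem}) is a harmless and accurate clarification.
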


\vspace{12pt}

\end{document}